\newenvironment{Proof of}[1]{\emph{Proof of #1.}}{\hfill $\square$\par}
\DeclareMathOperator{\dashind}{-Ind}
\DeclareMathOperator{\Aut}{Aut}
\DeclareMathOperator{\clsp}{\overline{span}}
\DeclareMathOperator{\Irr}{Irr}
\newcommand{\QQ}{\mathcal Q}
\newcommand{\K}{\mathcal K}
\newcommand{\X}{\widehat X}
\newcommand{\F}{\mathcal F}
\newcommand{\sal}{\widehat \alpha}
\DeclareMathOperator{\Int}{Int}
\newcommand{\LL}{\mathcal{L}}
\newcommand{\al}{\alpha}
\newcommand{\OO}{\mathcal O}
\newcommand{\B}{\mathcal B}
\newcommand{\SA}{\widehat{A}}
\newcommand{\SB}{\widehat{B}}
\newcommand{\SX}{\widehat{X}}
\newcommand{\SI}{\widehat{I}}
\newcommand{\C}{\mathbb C}
\newcommand{\Z}{\mathbb Z}
\newcommand{\N}{\mathbb N}
\newcommand{\T}{\mathbb T}
\newcommand{\TT}{\mathcal T}
\newcommand{\supp}{\textrm{supp}\,}
\newtheorem{thm}{Theorem}[section]\newtheorem{lem}[thm]{Lemma} 
\newtheorem{prop}[thm]{Proposition} 
\newtheorem{cor}[thm]{Corollary}
\theoremstyle{definition} 
\newtheorem{defn}[thm]{Definition}
\newtheorem{ex}[thm]{Example}
\newtheorem{rem}[thm]{Remark}
\title{Topological freeness for $C^*$-correspondences}
\author[T.M. Carlsen]{Toke Meier Carlsen} 
\address{Department of Science and Technology\\University of the Faroe Islands\\
Vestara Bryggja 15\\ FO-100 T\'orshavn\\the Faroe Islands}
\email{toke.carlsen@gmail.com}
\author[B. K. Kwa\'sniewski]{Bartosz Kosma Kwa\'sniewski} 
\address{Institute of Mathematics, University of Bia\l ystok\\
ul. K. Cio\l kowskiego 1M, 15-245, Bia\l ystok, Poland}
\email{bartoszk@math.uwb.edu.pl}
\author[E. Ortega]{Eduard Ortega} 
\address{Department of Mathematical Sciences\\NTNU\\NO-7491 Trondheim\\Norway}
\email{eduard.ortega@ntnu.no}
\keywords{Topological freeness; $C^*$-correspondence; Cuntz--Pimsner algebra}
\begin{document}

\begin{abstract} 
We study conditions that ensure uniqueness theorems of Cuntz--Krieger type for relative Cuntz--Pimsner algebras $\OO(J,X)$ associated to a $C^*$-correspondence $X$ over a $C^*$-algebra $A$. We give general sufficient conditions phrased in terms of a multivalued map $\widehat{X}$ acting on the spectrum $\SA$ of $A$. When $X(J)$ is of Type I we construct a directed graph dual to $X$ and prove a uniqueness theorem using this graph. When $X(J)$ is liminal, we show that topological freeness of this graph is equivalent to the uniqueness property for $\OO(J,X)$, as well as to an algebraic condition which we call $J$-acyclicity of $X$.

As an application we improve the Fowler--Raeburn uniqueness theorem for the Toeplitz algebra $\TT_X$. We give new simplicity criteria for $\OO_X$. We generalize and enhance uniqueness results for relative quiver $C^*$-algebras of Muhly and Tomforde. We also discuss applications to crossed products by endomorphisms. 
\end{abstract}

\maketitle

\setcounter{tocdepth}{1}
\section{Introduction} 

The condition, known as \emph{topological freeness}, was probably first formulated in \cite{OD}, in the context of crossed products of $\Z$-actions (see \cite[pp. 225, 226]{Anton_Lebed} or \cite{kwa} for more history of this notion). It implies that any faithful covariant representation of the system generates an isomorphic copy of the crossed product. A similar uniqueness property for Cuntz-- Krieger algebras was identified in \cite{CK}. Since then, uniqueness results concerning various generalizations of Cuntz--Krieger algebras are typically called Cuntz--Krieger uniqueness theorems. This concerns, for instance, graph $C^*$-algebras \cite{Raeburn}; Exel--Laca algebras \cite{Exel_Laca}, Matsumoto algebras \cite{Matsumoto1}, \cite{Matsumoto2}, ultragraph algebras \cite{Tomforde}, \cite{Gon_Li-Royer-Rae}, $C^*$-algebras of labelled graphs \cite{Bates_Pask}, $C^*$-algebras of topological graphs \cite{ka1}, quivers $C^*$-algebras \cite{mt}, Exel's crossed products \cite{exel_vershik}, \cite{er}, \cite{CS}, \cite{brv}, crossed products by endomorphisms \cite{kwa-lebedev}, \cite{kwa-rever}, \cite{kwa-endo}, and numerous semigroup generalizations of these constructions. Importantly, Katsura's uniqueness theorem \cite{ka1} unified the combinatorial condition for graphs, called condition (L), and topological freeness for a homeomorphism.

Nowadays a standard model for all of the above mentioned constructions is a Cuntz--Pimsner algebra, introduced by Pimsner \cite{Pim} and further developed by Katsura \cite{ka2, katsura}. A Cuntz--Pimsner algebra is a universal $C^*$-algebra $\OO_X$ associated to a $C^*$-correspondence $X$ over a $C^*$-algebra $A$. It is thus desirable to have uniqueness theorems for general Cuntz-Pimsner algebras $\OO_X$, and some partial results in this direction have been obtained. For instance, when $X$ is a Hilbert bimodule, then $\OO_X$ coincides with the crossed product $A\rtimes_X \Z$ introduced in \cite{AEE}, and $X$ induces a partial homeomorphism $\X$ on the spectrum $\SA$. The second named author proved in \cite{kwa} that topologically freeness of $\X$ implies the uniqueness property for $A\rtimes_X \Z$. By \cite{KM} the converse implication holds when $A$ contains an essential ideal which is separable or of Type I. The result of \cite{kwa} was adapted in \cite{kwa-szym} to $C^*$-correspondences whose left action is injective and by compacts. The authors of \cite{kwa-szym} introduced a multivalued map $\X$ dual to a regular $C^*$-correspondence $X$ and proved a uniqueness theorem for (the semigroup version of) $\OO_X$ under the condition they called \emph{topological aperiodicity}. This condition seems to be well-suited for Hilbert bimodules and $C^*$-correspondences associated to transfer operators (cf. Example~\ref{Exel's crossed products} below). However, already for $C^*$-correspondences coming from graphs it is too strong as the multivalued map $\X$ does not capture multiplicities of edges. The original motivation behind the present paper was the need of developing a theory that does not have this deficiency.

In the present article we consider general $C^*$-correspondences and prove uniqueness theorems for relative Cuntz--Pimsner algebras $\OO(J,X)$ where $J$ is an ideal contained in Katsura's ideal $J_X$. These relative Cuntz--Pimsner algebras $\OO(J,X)$ were introduced by Muhly and Solel in \cite{MuSo1998}. They contain as extreme cases the Cuntz--Pimsner algebra $\OO_X$, for $J=J_X$, and the Toeplitz algebra $\TT_X$, when $J=\{0\}$. This allows us to cover another line of uniqueness theorems inspired by Coburn's theorem \cite{coburn}. The latter type of results, proved for various generalized Toeplitz algebras, play a fundamental role in the theory of $C^*$-algebras associated to semigroups and semigroup actions \cite{N}, \cite{FR}, \cite{Fow-Rae}, \cite{F99}, \cite{kwa-larI}, \cite{kwa-larII}. An important contribution in this line of research is Fowler--Raeburn's version of the Coburn theorem \cite[Theorem 2.1]{Fow-Rae} proved for the Teoplitz algebra $\TT_X$. As the authors of \cite{Fow-Rae} show the geometric condition they introduce is necessary and sufficient for the uniqueness property of $\TT_X$ when the left action of $A$ on $X$ is by generalized compacts. In general, it is only sufficient. Thus, for instance, in order to deduce simplicity of the Cuntz algebra $\OO_\infty$ one needs a stronger result. As an upshot of our uniqueness theorem we get an improvement of the Fowler--Raeburn theorem. Namely, we show that a representation of $X$ generates a copy of the Toeplitz algebra $\TT_X$ if and only if its ideal of covariance is $\{0\}$.

Our uniqueness theorem (Theorem~\ref{uniqueness theorem}) consists in fact of two statements. Firstly, we extend the construction of dual multivalued maps from \cite{kwa-szym} to arbitrary $C^*$-corre\-sponden\-ces $X$. We show that a pair $(X,J)$ has the uniqueness property (Definition~\ref{defn:uniqueness_property}) whenever the dual multivalued map $\X$ satisfies on $\widehat{J}$ a weaker form of the condition introduced in \cite{kwa-szym}. We call this condition \emph{weak topological aperiodicity} (Definition~\ref{def:weakly_top_aperiod}). It works better than topological aperiodicity, for instance, with $C^*$-correspondences coming from endomorphisms (cf. Example~\ref{endomorphisms of C(V)-algebras}). It can be used to characterise simplicity of $\OO_X$ when the left action is not injective (Proposition~\ref{simple:non-injective}). Moreover, when $J=\{0\}$ this condition is void and hence the pair $(X,\{0\})$ always has the uniqueness property. This gives the aforementioned improvement of the Fowler--Raeburn theorem (see Theorem~\ref{theorem for Toeplitz algebras}).

Secondly, in order to get sharper results we adjust the construction of the dual multivalued map. We take into account multiplicities of the corresponding subrepresentations and construct a directed graph $E_X=(\SA, E^1_X,r,s)$ dual to $X$. This can be done whenever the multiplicity theory is available, so for instance in the Type I case. Moreover, in order to detect the uniqueness property it suffices to consider the restriction of $X$ to the ideal $K=J+X(J)$ where $X(J)$ is the ideal generated by $\langle X, JX \rangle_A$. Therefore we assume that $X(J)$ is of Type I. Then $K$ is of Type I and we may consider the graph $E_{KXK}=(\widehat{K}, E^1_{KXK},r,s)$ dual to the restricted $C^*$-correspondence $KXK$. It seems that in this generality Katsura's condition needs a slight strengthening. Therefore we introduce a notion of \emph{strong topological freeness} (Definition~\ref{top_free_graph1}). If $X(J)$ is liminal (so for instance, when it is commutative), then strong topological freeness is equivalent to Katsura's topological freeness (Proposition~\ref{prop:strong_vs_normal}). We prove that if $E_{KXK}$ is strongly topologically free, then $(X,J)$ has the uniqueness property (Theorem~\ref{uniqueness theorem} (A2)).

We get the nicest results in the case when $X(J)$ is liminal. Then we are able to show that topological freeness is necessary for the uniqueness property for $\OO(J,X)$, and in addition we characterise it using an algebraic condition that we call \emph{$J$-acyclicity} (Definition~\ref{def:cycling}). This condition was identified in a purely algebraic setting in \cite{Carlsen_Ortega_Pardo}. In particular, by Theorem~\ref{thm:acyclicity} we have:

\begin{thm}
Let $X$ be a $C^*$-correspondence over $A$ and let $J$ be an ideal in $J_X$. If the ideal $X(J)$ is liminal then the following conditions are equivalent:
\begin{enumerate}
\item for every injective representation $(\psi_0,\psi_1)$ of $X$ whose ideal of covariance is $J$ the map $\psi_0\rtimes_{J}\psi_1: \OO(J,X) \to C^* (\psi_0(A)\cup \psi_1(A))$ is an isomorphism;
\item $X$ is $J$-acyclic, i.e. there are no non-zero positively invariant ideals $I$ in $J$ such that $(IX)^{\otimes n}\cong I$ for some $n>0$;
\item the corresponding dual graph is topologically free on $\widehat{J}$, i.e. the set of base points of cycles that have no entrances and whose base points are contained in $\widehat{J}$ has empty interior. 
\end{enumerate}
\end{thm}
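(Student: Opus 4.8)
The plan is to prove the three-way equivalence by establishing a cycle of implications, using the dual graph $E_{KXK}$ (available because $X(J)$ liminal forces $K = J + X(J)$ to be of Type I) as the bridge between the analytic uniqueness property~(1) and the algebraic $J$-acyclicity~(2). I would first invoke Proposition~\ref{prop:strong_vs_normal} to collapse the distinction between strong topological freeness and ordinary topological freeness, so that condition~(3) can be read interchangeably in either form throughout. The backbone of the argument is then: (3)~$\Rightarrow$~(1) comes essentially for free from the already-established uniqueness theorem, since strong topological freeness of $E_{KXK}$ gives the uniqueness property for $(X,J)$ by Theorem~\ref{uniqueness theorem}~(A2), and unwinding Definition~\ref{defn:uniqueness_property} in the liminal case yields precisely the injective-representation statement in~(1).

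For the implication (1)~$\Rightarrow$~(3) I would argue by contraposition: assuming topological freeness fails, the set of base points of entrance-free cycles whose base points lie in $\widehat J$ has nonempty interior, and I would use such a cycle to manufacture a concrete injective representation $(\psi_0,\psi_1)$ with ideal of covariance exactly $J$ for which $\psi_0\rtimes_J\psi_1$ is \emph{not} injective. The standard device here is that an isolated entrance-free cycle of length $n$ based in an open set of $\widehat J$ produces, via the gauge action and the corresponding rank-one/multiplicity data encoded in $E_{KXK}$, a quotient on which the generators satisfy an extra relation (a unitary implementing the $n$-fold shift) that is absent in $\OO(J,X)$; evaluating a representation that factors through this quotient furnishes the desired non-isomorphism. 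The liminality of $X(J)$ is what guarantees the multiplicity data is one-dimensional enough that the cycle genuinely obstructs faithfulness rather than merely being consistent with it.

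The equivalence (2)~$\Leftrightarrow$~(3) is the translation between the algebraic and graph-theoretic formulations and should follow by matching definitions: a non-zero positively invariant ideal $I\subseteq J$ with $(IX)^{\otimes n}\cong I$ corresponds, after passing to spectra, exactly to a closed invariant subset of $\widehat J$ on which $\X$ acts ``$n$-periodically with full multiplicity,'' i.e.\ to an entrance-free cycle of length $n$ based in $\widehat J$ (with the open-interior condition matching the non-triviality of $I$ as an ideal). I would make this precise by tracing the correspondence between ideals invariant under $X$ and $\X$-invariant subsets of $\SA$, as set up in the earlier sections, and then observing that the isomorphism $(IX)^{\otimes n}\cong I$ says precisely that the restricted correspondence is an $n$-periodic Hilbert bimodule, which on the Type I spectrum $\widehat K$ is the graph-theoretic statement of an entrance-free cycle. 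The citation to \cite{Carlsen_Ortega_Pardo} confirms that this $J$-acyclicity condition is the right algebraic shadow of acyclicity of the dual graph.

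The main obstacle I expect is the (1)~$\Rightarrow$~(3) direction, specifically the explicit construction of an injective representation whose ideal of covariance is \emph{exactly} $J$ (not larger) yet which fails to be faithful on $\OO(J,X)$. Controlling the ideal of covariance on the nose, while simultaneously building in the extra relation coming from the entrance-free cycle, requires careful bookkeeping: one must perturb or extend the representation so that the covariance relations hold precisely on $J$ and no further, which in the non-injective left-action setting is delicate. I anticipate that the liminality hypothesis on $X(J)$ is used most heavily here, via a direct-integral or fibrewise decomposition of the Type I algebra $K$, to reduce the construction to the model cycle and to verify that the resulting representation is injective as a representation of $X$ while killing a nonzero element of $\OO(J,X)$.
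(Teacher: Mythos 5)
Your implication (3)$\Rightarrow$(1) matches the paper (Theorem~\ref{uniqueness theorem}~(A2) plus Proposition~\ref{prop:strong_vs_normal}), but there is a genuine gap in the rest of the plan: you declare the equivalence (2)$\Leftrightarrow$(3) to be a matter of ``matching definitions,'' when in fact the direction ``not topologically free on $J$ $\Rightarrow$ not $J$-acyclic'' is the technical heart of the whole theorem. The point you miss is that spectrum-level cycle data does \emph{not} determine the correspondence up to isomorphism: an entrance-free cycle of length $n$ based in $\widehat{J}$ only tells you that the restricted correspondence induces an $n$-periodic partial homeomorphism of the spectrum; it does not give the isomorphism $(IX)^{\otimes n}\cong I$ of $C^*$-correspondences, because there is a line-bundle (Picard-type) obstruction. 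Concretely, the identity correspondence over $C(S^2)$ twisted by a nontrivial line bundle has a dual graph consisting entirely of entrance-free loops, yet no tensor power of it is trivial; cyclicity only appears after shrinking to an invariant open set trivializing the bundle. This is exactly why the paper's Proposition~\ref{tf_liminal} is long: it must (a) pass, using that a liminal algebra contains an essential Type I$_0$ ideal, to subideals Morita equivalent to \emph{commutative} $C^*$-algebras; (b) prove Lemma~\ref{cyclic implies equivalence} (a commutative correspondence whose dual graph is a bijection is an equivalence bimodule --- this uses Stone--Weierstrass, not definition-chasing); and (c) invoke local triviality of line bundles to extract a smaller invariant ideal $I'$ with $(I'X)^{\otimes m}\cong I'$. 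None of this is visible in your sketch.

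The same obstruction is hidden inside your direct construction for (1)$\Rightarrow$(3): to build an injective representation with covariance ideal exactly $J$ on which a ``unitary implementing the $n$-fold shift'' makes sense, you first need an invariant ideal on which the correspondence is an honest periodic equivalence bimodule --- which is precisely the cyclicity statement (2) you were trying to bypass. The paper instead factors this implication through (2): Corollary~\ref{cor_1} shows cyclicity kills uniqueness, via Lemma~\ref{lem:cyclicy_implies_equivalence} (cyclic $\Rightarrow$ equivalence bimodule), Lemma~\ref{lem:periodicity_implies_non-uniqueness} (periodic bimodule $\Rightarrow$ non-uniqueness, proved by identifying $\OO(A,X)$ with $A\rtimes_X\Z$, applying Takai duality to reduce to a finite-order automorphism, and exhibiting a covariant pair with $U^n=1$), and Proposition~\ref{prop:inducing non-detections} (uniqueness passes to restrictions to positively invariant ideals). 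So the correct logical skeleton is the cycle (1)$\Rightarrow$(2)$\Rightarrow$(3)$\Rightarrow$(1), with the analytic work concentrated in (2)$\Rightarrow$(3), not where you placed it; your liminality heuristic (``multiplicity data is one-dimensional enough'') also misidentifies how the hypothesis is used --- its actual roles are continuity of the dual map (Corollary~\ref{continuity of dual graphs}), the equality $\X^{-1}(\widehat{J})=\widehat{X(J)}$ from Lemma~\ref{range of duals to correspondences}(ii), and the reduction to commutative algebras just described.
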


Our results have strong potential for numerous applications, including description of ideal lattice, cf. \cite{ka3}, \cite{kwa-doplicher}, or pure infiniteness criteria for $\OO(X,J)$, cf. \cite{KM}. A thorough discussion of such applications is beyond the scope of the present paper. As an illustration we discuss here simplicity criteria for $\OO_X$ (Subsection~\ref{sub:Simple Cuntz-Pimsner algebras}). We also generalize and improve uniqueness results for relative quiver $C^*$-algebras (Subsection~\ref{sub:Topological correspondences}) and crossed products by endomorphisms (Subsection~\ref{sub:Crossed products by endomorphisms}).

The paper is organized as follows. In Section~\ref{Preliminaries} we present the terminology and some well known facts that will be used along the paper. In Section~\ref{Section 3} we discuss and establish relationships between various non-triviality conditions for multivalued maps and directed graphs. Section~\ref{sect:Multivalued maps} presents the construction and properties of the multivalued map dual to a $C^*$-correspondence $X$. In Section~\ref{sec:Dual graphs} we construct graphs dual to $C^*$-correspondences and prove certain technical statements that we need in the proof of our main result. In Section~\ref{sec:The uniqueness property} we discuss some general characterizations and criteria for the uniqueness property for the pair $(X,J)$. Section~\ref{sec:The uniqueness theorem} contains the proof of our general uniqueness theorem. In Section~\ref{sec:Conditions necessary for uniqueness property} we introduce $J$-acyclicity of $X$ and examine necessity of this condition and topological freeness for the uniqueness property. Finally, in Section~\ref{sec:Applications} we discuss applications of our results to $\TT_X$, simplicity of $\OO_X$, relative quiver $C^*$-algebras and crossed products by endomorphisms.

\subsection{Acknowledgements}

The research leading to these results was supported by the project ``Operator algebras and single operators via dynamical properties of dual objects'' funded by the European Commission's Seventh Framework Programme\linebreak (FP7/2007-2013) under grant agreement number 621724.

The authors would like to thank the referee for carefully reading our manuscript and for giving such constructive comments.

\section{Preliminaries}\label{Preliminaries}

In this section, we fix terminology and recall some known facts, that we need in the sequel. We stress that for actions $\gamma\colon A\times B\to C$ such as multiplications, inner products, etc., we use the notation: $\gamma(A,B)=\clsp\{\gamma(a,b) :a\in A, b\in B\}$.

\subsection{Multivalued maps}

We adopt the same conventions concerning multivalued maps as in \cite{kwa-szym}. Thus a {\em multivalued mapping} from a set $V$ to a set $W$ is by definition a mapping from $V$ to $2^W$ (the family of all subsets of $W$). We denote such a multivalued mapping by $f:V\to W$. Also, we identify the usual (single-valued) mappings with multivalued mappings taking values in singletons. We define the image of $V'\subseteq V$ under $f$ by $f(V'):=\bigcup_{x\in V'} f(x)$. The \emph{range of $f$} is $f(V)$. We say that $f$ is \emph{surjective} if $f(V)=W$. We define the preimage of $W'\subseteq W$ to be the set $ f^{-1}(W'):=\{v\in V: f(v)\cap W' \neq \emptyset \}. $ In particular, we call $ f^{-1}(W)=\{v\in V: f(v)\neq \emptyset\}$ the \emph{domain} of $f$. If $V$ and $W$ are topological spaces, we say that a multivalued map $f:V\to W$ is \emph{continuous} if $f^{-1}(U)$ is open for every open subset $U$ of $W$. 

The \emph{composition} of two multivalued maps $f:V\to W$ and $g:W\to Z$ is the multivalued map $g\circ f:V\to Z$ given by $(g\circ f)(x):=\bigcup_{y \in f(x)} g(y)$. The class of sets and multivalued maps with the above composition form a category (the composition is associative and identity maps are identity morphisms). A multivalued map is invertible in this category if and only if it is a bijective single valued map.

Given $V'\subseteq V$ we define the restriction $f_{V'}:V'\to W$ of $f:V\to W$ in the obvious way. If $W'\subseteq W$ we define $_{W'}f:V\to W'$ by the formula $ _{W'}f(v)=f(v)\cap W'$, $v\in V. $ We also define ${_{W'}}f_{V'}:V'\to W'$ by $_{W'}f_{V'}(v):={_{W'}f}(v)$, for $v\in V'$.

\subsection{$C^*$-correspondences and Hilbert bimodules.} 

We adopt standard notations and definitions of objects related to Hilbert $C^*$-modules, cf. \cite{lance} and \cite{morita}. Thus given right Hilbert modules $X$ and $Y$ over a $C^*$-algebra $A$ we denote by $\LL(X,Y)$ the space of adjointable operators from $X$ to $Y$. Given $x\in X$ and $y\in Y$ we define $\Theta_{y,x}\in \LL(X,Y)$ by $\Theta_{y,x}(z):=y\langle x,z\rangle_A$, for $z\in X$. The elements of $\K(X,Y):=\clsp\{\Theta_{y,x}: x\in X,y\in Y\}$ are called \emph{generalized compact operators}. In particular, $\LL(X):=\LL(X,X)$ is a $C^*$-algebra and $\K(X):=\K(X,X)$ is an essential ideal of $\LL(X)$.

If $I$ is an ideal of $A$ (which will always be closed and two-sided) then $XI:=\{xa:x\in X,\,a\in I\}$ is both a Hilbert $A$-submodule of $X$ and a Hilbert $I$-module \cite[Proposition 1.3]{ka3}. In particular, we can treat $\K(XI)$ as an ideal of $\K(X)$.

A representation of a Hilbert $A$-module $X$ on a $C^*$-algebra $C$ is a pair $\psi=(\psi_0,\psi_1)$ consisting of a $*$-homomorphism $\psi_0:A\to C$ and a linear map $\psi:X\to C$ such that
$$ \psi_1(x) \psi_0( a)=\psi_1(x \cdot a),
\qquad \psi_1(x)^*\psi_1(y)=\psi_0(\langle x, y\rangle_A),\quad a\in A,\, x\in X.$$ 
If $C=\B(H)$, where $H$ is a Hilbert space, we say that $\psi$ is a representation on $H$. Any representation $\psi$ of $X$ induces a representation $\psi^{(1)}:\K(X)\to C$ where 
\begin{equation}\label{eq:Pimnser representation} 
\psi^{(1)}(\Theta_{x,y})=\psi_1(x)\psi_1(y)^*\qquad\text{for all } x, y\in X. 
\end{equation}

Given $C^*$-algebras $A$ and $B$, a \emph{$C^*$-correspondence from $A$ to $B$}, written $X:A\to B$, is a right Hilbert $B$-module $X$ together with a $*$-homomorphism $\phi_X:A\rightarrow \LL(X)$ called \emph{left action of $A$ on $X$}. We will write $a\cdot x:=\phi_X(a)(x)$ for $a\in A$ and $x\in X$. In case that $A=B$ we say that $X$ is a \emph{$C^*$-correspondence over $A$}.

Let $A$ and $B$ be $C^*$-algebras. A Hilbert \emph{$A$-$B$-bimodule} $M$ is both a right Hilbert $B$-module and a left Hilbert $A$-module such that the respective $A$- and $B$-valued inner products, ${}_A\langle\cdot,\cdot\rangle$ and $\langle\cdot,\cdot\rangle_B$ satisfy $ {}_A\langle x,y\rangle z=x\langle y, z\rangle_B, $ for every $x,y,z\in M$. Then $M$ is a $C^*$-correspondence from $A$ to $B$, cf. \cite[Section 3.3]{ka2}, \cite[Proposition 1.11]{kwa-doplicher}. An \emph{equivalence $A$-$B$-bimodule} is a Hilbert $A$-$B$-bimodule which is full on the left and right, that is ${}_A\langle M,M\rangle=A$ and $\langle M, M\rangle_B=B$. If such a bimodule exists we say that $A$ and $B$ are \emph{Morita(-Rieffel) equivalent}. We write $A\sim_M B$ when we want to indicate that $M$ is an equivalence $A$-$B$-bimodule. Given an $A$-$B$-bimodule $M$, we denote by $M^{*}$ the $B$-$A$-bimodule obtained from $M$ by exchanging the roles of left and right actions ($M$ and $M^*$ are anti-isomorphic as linear spaces).

Given $C^*$-algebras $A$, $B$, $C$, and $C^*$-correspondences $X:A\to B$ and $Y:B\to C$, we define a $C^*$-correspondence $X\otimes_B Y:A\to C$ in the following way. Let $X\odot Y$ denote the quotient of the algebraic tensor product of $X$ and $Y$ by the subspace generated by $xb\otimes y-x\otimes (b\cdot y)$ for $x\in X$, $y\in Y$ and $b\in B$. There is a $C$-valued inner product on $X\odot Y$ determined by the formula 
$$ \langle x_1\otimes y_1 , x_2\otimes y_2\rangle_C:=\langle y_1, \phi_Y(\langle x_1, x_2\rangle_B) y_2\rangle_C, $$ 
for $x_1,x_2\in X$ and $y_1,y_2\in Y$. The \emph{(inner) tensor product} of $X$ and $Y$, denoted by $X\otimes_B Y$, is the completion of $X\odot Y$ with respect to the norm coming from the $C$-valued inner product defined above. Then $X\otimes_B Y$ is a $C^*$-correspondence from $A$ to $C$ with left action given by $\phi_{X\otimes_B Y}:=\phi_X\otimes 1_Y$ where $1_Y$ is the identity map on $Y$. In order to lighten the notation, we will often write $X\otimes Y$ instead of $X\otimes_B Y$.

If $M$ is an equivalence $A$-$B$-bimodule then we have $C^*$-correspondence isomorphisms $m_A:M\otimes_BM^*\rightarrow A$ and $m_B:M^*\otimes_AM\rightarrow B$ given by $x^*\otimes y\mapsto \langle x,y\rangle_B$ and $x\otimes y^*\mapsto {}_A\langle x,y\rangle$, where we treat $A$ and $B$ as trivial $C^*$-correspondences with the structure inherited from $C^*$-algebraic operations.

The correspondences $X$ and $Y$ over $A$ and $B$ respectively are \emph{Morita equivalent}, see \cite[Definition 2.1]{ms}, if there is an equivalence $A$-$B$-bimodule $M$ and an $A$-$B$-correspondence isomorphism $W$ from $M\otimes_B Y$ onto $X\otimes_A M$. This is equivalent to saying that there is an equivalence $A$-$B$-bimodule $M$ such that we have a correspondence isomorphism $M\otimes_B\otimes Y\otimes _B M^* \cong X$. If this holds we write $X\sim_M Y$.

\subsection{Cuntz--Pimsner algebras}

Let us fix a $C^*$-correspondence $X$ over a $C^*$-algebra $A$. For each $n=0,1,2,\ldots$ we denote by $X^{\otimes n}$ the $C^*$-correspondence given by the $n$-fold tensor product $X\otimes \cdots \otimes X$ ($X^{\otimes 0}:=A$). Relative Cuntz--Pimsner algebras \cite{MuSo1998}, could be viewed as crossed products associated to the product system $\{X^{\otimes n}\}_{n\in \N}$ or as $C^*$-algebras associated to the ideal $\{\K(X^{\otimes n},X^{\otimes m})\}_{n,m\in \N}$ in a right tensor $C^*$-precategory $\{\LL(X^{\otimes n},X^{\otimes m})\}_{n,m\in \N}$, see \cite{kwa-doplicher}. We define them in terms of universal representations \cite{fmr}.

\begin{defn} 
A representation of a $C^*$-correspondence $X$ over a $C^*$-algebra $A$ is a representation $\psi=(\psi_0,\psi_1)$ of the Hilbert $A$-module such that $ \psi_1(a\cdot x)=\psi_0(a)\psi_1(x)$, $a\in A$, $x\in X$. The $C^*$-algebra generated by $\psi_0(A)\cup \psi_1(X)$ is denoted by $C^*(\psi)$.
\end{defn}

Let $\psi=(\psi_0,\psi_1)$ be a representation of a $C^*$-correspondence $X$ on a $C^*$-algebra $B$. Then $\psi_1$ is automatically contractive map. If $\psi_0$ is injective, then $\psi_1$ is isometric and we say that $\psi$ is \emph{injective}. For each $n>0$ there is a unique representation $(\psi_0,\psi_n)$ of $X^{\otimes n}$ where 
$$ 
\psi_n(x_1\otimes x_2\otimes\cdots\otimes x_n):=\psi_1(x_1)\psi_1(x_2)\dots \psi_1(x_n) 
$$ 
for all $x_1,...,x_n\in X$. Thus we also have a representation $\psi^{(n)}:\K(X^{\otimes n})\to B$ of the $C^*$-algebra $\K(X^{\otimes n})$, cf. \eqref{eq:Pimnser representation}. If $\psi$ is injective, then $\psi^{(n)}$ is also injective. If $\psi$ is a representation on a Hilbert space $H$, then the above maps $\psi^{(n)}$ extend uniquely to representations $\overline{\psi^{(n)}}:\LL(X^{\otimes n})\to \B(H)$ such that $\overline{\psi^{(n)}}(T)|_{(\psi_n(X^{\otimes n})H)^\bot}\equiv 0$. In the sequel we will need the following: 

\begin{lem}[Lemma 2.5 in \cite{Fow-Rae}]\label{Fowler-Reaburn lemma} 
Suppose $X$ is a $C^*$-correspondence over a $C^*$-algebra $A$ and that $\psi=(\psi_0,\psi_1)$ is a representation of $X$ on a Hilbert space $H$. For each $n>0$, let $\K_n:=\psi^{(n)}(\K(X^{\otimes n}))$ and let $P_n$ be the orthogonal projection of $H$ onto $\K_nH=\psi_n(X^{\otimes n})H$. Then $P_1\geq P_2 \geq ...$, $P_n\in (\K_n)'$, and for $x\in X^{\otimes k}$ and $k \geq 0$ we have 
\begin{equation}\label{essential projections versus fibers} 
\psi_k(x)P_n=P_{n+k}\psi_{k}(x). 
\end{equation} 
\end{lem}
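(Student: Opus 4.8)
The plan is to prove each assertion of Lemma~\ref{Fowler-Reaburn lemma} directly from the defining relations of a representation, working entirely inside $\B(H)$. The key observation to isolate first is the identity $\psi_n(X^{\otimes n})H = \K_n H$, so that $P_n$ is unambiguously defined as the projection onto this common closed subspace. The inclusion $\K_n H \subseteq \overline{\psi_n(X^{\otimes n})H}$ is clear since $\psi^{(n)}(\Theta_{x,y})h = \psi_n(x)\psi_n(y)^* h \in \psi_n(X^{\otimes n})H$. For the reverse inclusion I would use an approximate unit $(u_\lambda)$ for $\K(X^{\otimes n})$: for $x \in X^{\otimes n}$ one has $x u_\lambda \to x$ in $X^{\otimes n}$, and since $\psi_n$ is contractive this gives $\psi_n(x u_\lambda) = \psi^{(n)}(\text{something})\psi_n(\cdots) \to \psi_n(x)$, placing $\psi_n(x)h$ in $\overline{\K_n H}$. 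This establishes that both images have the same closure.

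Next I would verify $P_n \in (\K_n)'$. The space $\K_n H$ is invariant under every element of $\K_n$ because $\K_n$ is an algebra; moreover, since $\K_n$ is a $*$-algebra, $(\K_n H)^\perp$ is also invariant under $\K_n$. A closed subspace that reduces the self-adjoint family $\K_n$ has its orthogonal projection commuting with $\K_n$, which is exactly $P_n \in (\K_n)'$. The monotonicity $P_1 \geq P_2 \geq \cdots$ then follows once I show $\K_{n+1} H \subseteq \K_n H$, equivalently $\psi_{n+1}(X^{\otimes(n+1)})H \subseteq \psi_n(X^{\otimes n})H$; this is immediate from the factorization $\psi_{n+1}(x_1 \otimes \cdots \otimes x_{n+1}) = \psi_n(x_1 \otimes \cdots \otimes x_n)\psi_1(x_{n+1})$, so every vector in the range of $P_{n+1}$ already lies in the range of $P_n$, giving $P_{n+1} \leq P_n$.

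The heart of the lemma is the intertwining relation \eqref{essential projections versus fibers}, $\psi_k(x)P_n = P_{n+k}\psi_k(x)$ for $x \in X^{\otimes k}$. I would prove this by checking that $\psi_k(x)$ maps $\K_n H$ into $\K_{n+k} H$ and maps $(\K_n H)^\perp$ into $(\K_{n+k}H)^\perp$. For the first, note $\psi_k(x)\psi_n(y)h = \psi_{k+n}(x \otimes y)h \in \K_{n+k}H$ using the reformulation of $\K_n H$ as $\psi_n(X^{\otimes n})H$. For the orthogonal complements it is cleaner to argue with adjoints: a vector $h$ lies in $(\K_n H)^\perp = (\psi_n(X^{\otimes n})H)^\perp$ exactly when $\psi_n(y)^* h = 0$ for all $y \in X^{\otimes n}$, and one computes that $\psi_{n+k}(z)^*\psi_k(x)h$, for $z \in X^{\otimes(n+k)}$, reduces via the inner-product relation $\psi_1(x)^*\psi_1(y) = \psi_0(\langle x,y\rangle_A)$ to an expression involving $\psi_n(\cdot)^* h$, hence vanishes. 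These two containments together say that $\psi_k(x)$ carries the range of $P_n$ into the range of $P_{n+k}$ and the kernel into the kernel, which is equivalent to $P_{n+k}\psi_k(x) = \psi_k(x)P_n$.

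The step I expect to require the most care is the orthogonal-complement half of the intertwining identity, since it is the only place where the full $C^*$-correspondence relations (not just algebraic manipulation of ranges) must be exploited; the correct bookkeeping of the inner-product contraction $\psi_1(x)^*\psi_1(y) = \psi_0(\langle x,y\rangle_A)$ under the tensor decomposition of $X^{\otimes(n+k)}$ as $X^{\otimes k} \otimes X^{\otimes n}$ is what makes the argument go through, and one must take a little care that the resulting inner products land in $A$ acting on $X^{\otimes n}$ in the way the definitions demand.
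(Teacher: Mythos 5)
Your proof is correct; note that the paper does not actually prove this lemma but imports it verbatim from Fowler--Raeburn (\cite[Lemma 2.5]{Fow-Rae}), and your argument is essentially the standard one given there: identify $\K_nH$ with $\overline{\psi_n(X^{\otimes n})H}$, observe that this subspace reduces the $*$-algebra $\K_n$, and obtain the intertwining identity by showing $\psi_k(x)$ maps $\K_nH$ into $\K_{n+k}H$ (via $\psi_k(x)\psi_n(y)=\psi_{n+k}(x\otimes y)$) and $(\K_nH)^\perp$ into $(\K_{n+k}H)^\perp$ (via the adjoint computation $\psi_{n+k}(z_1\otimes z_2)^*\psi_k(x)h=\psi_n(z_2)^*\psi_0(\langle z_1,x\rangle_A)h=\psi_n(\langle x,z_1\rangle_A\cdot z_2)^*h=0$, extended to general $z\in X^{\otimes(n+k)}$ by linearity and continuity). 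The only blemishes are cosmetic: the approximate unit of $\K(X^{\otimes n})$ acts on the \emph{left} of the module, so one should write $u_\lambda x\to x$ rather than $xu_\lambda\to x$, and the statement also covers $k=0$, where the same computation goes through with $\psi_0(a)$ in place of $\psi_k(x)$.
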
 

One associates to $X$ the following two ideals in \(A\): 
$$ J(X):=\phi_{X}^{-1}(\K(X))\qquad\text{and}\qquad J_X:=J(X) \cap
(\ker\phi_{X})^\bot. $$ 
Their significance is indicated in the following lemma.

\begin{lem}\label{lemma on tensoring regular correspondences} 
Let $X$ and $Y$ be two $C^*$-correspondences over $A$. 
\begin{itemize} 
\item[(i)] If $T\in \K(YJ(X))$, then $T \otimes 1_X \in \K(Y\otimes X)$. 
\item[(ii)] If $T\in \K(Y(\ker\phi_{X})^\bot)$, then $\|T\|=\|T \otimes 1_X\|$. 
\end{itemize}
\end{lem}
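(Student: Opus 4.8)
The plan is to prove each statement by reducing to the special cases of rank-one operators $\Theta_{y,x}$ and then passing to norm limits, exploiting the fact that $\K(YI)$ for an ideal $I$ is the closed span of such operators with entries drawn from $YI$. The two statements are genuinely different in character: (i) is about membership in a compact ideal and should follow from an algebraic identity on elementary tensors, while (ii) is about isometry of an amplification map and will require a more careful analysis of inner products.

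For part (i), I would first record the natural candidate for $T \otimes 1_X$ on elementary tensors. For a rank-one operator $\Theta_{y,x}$ with $x, y \in Y$, the operator $\Theta_{y,x} \otimes 1_X$ acts by $(\Theta_{y,x} \otimes 1_X)(z \otimes \xi) = y \langle x, z\rangle_A \cdot \xi$. The key computation is to rewrite this using the left action of $A$ on $X$. When $x \in YJ(X)$, we have $\langle x, z\rangle_A \in J(X)$ for all $z$, so $\phi_X(\langle x,z\rangle_A) \in \K(X)$ by definition of $J(X)$. The plan is to show that $\Theta_{y,x} \otimes 1_X$ can be written as a generalized compact operator on $Y \otimes X$ by expressing it in terms of rank-one operators $\Theta_{y \otimes \eta, z \otimes \zeta}$; concretely, if $\phi_X(\langle x, z\rangle_A)$ is approximated by finite sums $\sum_i \Theta_{\eta_i, \zeta_i}$ in $\K(X)$, then $\Theta_{y,x} \otimes 1_X$ should be approximated by corresponding sums of rank-one operators on the tensor product. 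I would verify this identity on elementary tensors and then extend by density and the estimate that $\K(YJ(X))$ is generated by $\Theta_{y,x}$ with $x \in YJ(X)$, concluding that the assignment $T \mapsto T \otimes 1_X$ carries $\K(YJ(X))$ into $\K(Y \otimes X)$.

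For part (ii), I would compute $\|T \otimes 1_X\|$ directly via inner products. The norm of an adjointable operator on $Y \otimes X$ is controlled by $\|(T \otimes 1_X)w\|^2 = \|\langle w, (T^*T \otimes 1_X) w\rangle_A\|$ for $w \in Y \otimes X$, and since $\|T \otimes 1_X\| = \|T^*T \otimes 1_X\|^{1/2}$ it suffices to treat positive $T$. The essential point is that when $T$ is supported on $(\ker \phi_X)^\perp$, the left action $\phi_X$ is \emph{isometric} on the relevant inner-product entries: for $a \in (\ker\phi_X)^\perp$ one has $\|\phi_X(a)\| = \|a\|$, which is exactly the statement that $\phi_X$ restricts to an isometry on $(\ker\phi_X)^\perp$. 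I would exploit that $\langle y_1 \otimes \xi_1, (T \otimes 1_X)(y_2 \otimes \xi_2)\rangle = \langle \xi_1, \phi_X(\langle y_1, Ty_2\rangle_A)\xi_2\rangle$ and that the entries $\langle y_1, Ty_2\rangle_A$ lie in $(\ker\phi_X)^\perp$ when $T \in \K(Y(\ker\phi_X)^\perp)$, so no norm is lost under $\phi_X$. The inequality $\|T \otimes 1_X\| \le \|T\|$ is automatic; the reverse inequality is where the isometry of $\phi_X$ on $(\ker\phi_X)^\perp$ does the work.

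I expect the main obstacle to be part (ii), specifically producing enough elements of $X$ to witness that no norm collapses under amplification. The difficulty is that $X$ itself need not be full over $(\ker\phi_X)^\perp$, so one cannot simply test against arbitrary vectors; instead I would use an approximate unit for $\K(X)$ or a careful choice of $\xi \in X$ realizing the norm of $\phi_X(\langle y, Ty\rangle_A)$, together with the isometry property $\|\phi_X(a)\| = \|a\|$ on the orthogonal complement of the kernel. Making the approximation argument uniform — so that the supremum over test vectors recovers $\|T\|$ exactly rather than a strictly smaller number — is the delicate step, and I would handle it by passing to an approximate identity $(u_\lambda)$ for $\K(X)$ and noting that $\langle y, Ty\rangle_A \in (\ker\phi_X)^\perp$ forces $\phi_X(\langle y, Ty\rangle_A)$ to have the same norm, after which a standard positivity and density argument closes the gap.
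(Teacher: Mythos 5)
The paper itself does not prove this lemma; it simply cites \cite[Proposition 4.7]{lance} and \cite[Lemma 1.9 i)]{kwa-doplicher}, so the comparison below is against the standard arguments behind those citations.

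Your part (i) has a genuine gap. You propose to approximate $\phi_X(\langle x,z\rangle_A)$ by finite sums $\sum_i\Theta_{\eta_i,\zeta_i}$, but this approximation depends on the test vector $z$, and a pointwise (in $z$) approximation cannot yield an operator-norm approximation of $\Theta_{y,x}\otimes 1_X$; worse, the resulting expressions $\sum_i (y\otimes \eta_i)\langle \zeta_i,\xi\rangle_A$ with $z$-dependent $\eta_i,\zeta_i$ do not assemble into any adjointable operator on $Y\otimes X$ at all --- they are not even compatible with the balancing relation $zb\otimes\xi=z\otimes\phi_X(b)\xi$. Note also that your rank-one approximants $\Theta_{y\otimes\eta_i,\,?\otimes\zeta_i}$ have no candidate for the left entry of the second leg. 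The missing idea is to extract a \emph{fixed} element of $J(X)$ before approximating: either factor $x=x'a$ with $x'\in Y$, $a\in J(X)$ (Cohen--Hewitt factorization, since $x\in YJ(X)$), or take a self-adjoint approximate unit $(u_\lambda)$ of $J(X)$ and use $\|\Theta_{y,x}-\Theta_{y,xu_\lambda}\|\leq\|y\|\,\|x-xu_\lambda\|\to 0$. Then the compact operator to be approximated is $\phi_X(a)$ (resp.\ $\phi_X(u_\lambda)$), independent of $z$, and choosing $\sum_i\Theta_{\eta_i,\zeta_i}\approx\phi_X(a)$ gives $\Theta_{y,x'a}\otimes 1_X\approx \sum_i\Theta_{y\otimes\eta_i,\,x'\otimes\zeta_i}$ with operator-norm error at most $\|y\|\,\|x'\|\,\|\phi_X(a)-\sum_i\Theta_{\eta_i,\zeta_i}\|$. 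This uniformity step is exactly what Lance's Proposition 4.7 supplies, and without it the argument fails.

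Your part (ii), by contrast, is essentially correct, and the ``delicate step'' you anticipate is not actually there: no approximate unit for $\K(X)$ and no fullness of $X$ is needed. After reducing to $S=T^*T\geq 0$, use twice the standard fact that a positive adjointable operator $R$ satisfies $\|R\|=\sup_{\|\zeta\|\leq 1}\|\langle\zeta,R\zeta\rangle\|$: since $\langle y,Sy\rangle_A\in(\ker\phi_X)^\bot$ and $\phi_X$ is isometric there (an injective $*$-homomorphism is isometric), one gets $\|S\otimes 1_X\|\geq \sup_{\|y\|,\|\xi\|\leq1}\|\langle\xi,\phi_X(\langle y,Sy\rangle_A)\xi\rangle\|=\sup_{\|y\|\leq1}\|\langle y,Sy\rangle_A\|=\|S\|$, and the reverse inequality is automatic. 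Even quicker: $T\mapsto T\otimes 1_X$ is a $*$-homomorphism on $\K(Y(\ker\phi_X)^\bot)$ whose kernel is trivial, because $T\otimes 1_X=0$ forces $\langle Tz,Tz\rangle_A\in\ker\phi_X\cap(\ker\phi_X)^\bot=\{0\}$ for all $z\in Y$, and injective $*$-homomorphisms between $C^*$-algebras are isometric. Either way, the supremum recovers $\|T\|$ exactly without any density or uniformity argument.
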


\begin{proof}
See for instance \cite[Proposition 4.7]{lance} and \cite[Lemma 1.9 i)]{kwa-doplicher}.
\end{proof}

\begin{defn}
Let $X$ be a $C^*$-correspondence over a $C^*$-algebra $A$ and let $J$ be an ideal in $J(X)$. We say that a representation $\psi$ of $X$ is $J$-\emph{covariant} if 
$$
J\subseteq I_\psi:=\{a\in J(X): \psi_0(a)=\psi^{(1)}(\phi_X(a))\}.
$$
We call $I_\psi$ the \emph{ideal of covariance} for $\psi$ (it is an ideal in $J(X)$). We denote by $j=(j_A,j_X)$ the universal $J$-covariant representation of $X$, and we call the $C^*$-algebra $\OO(J,X):=C^*(j_A,j_X)$ the \emph{relative Cuntz--Pimsner algebra} determined by $J$. In particular, if $\psi$ is a $J$-covariant representation of $X$ the maps
$$
j_A(a)\longmapsto \psi_0(a), \qquad j_X(x) \longmapsto \psi_1(x), \qquad a\in A,\, x\in X
$$
define an epimorphism $\psi\rtimes_J X: \OO(J,X) \to C^*(\psi)$. The (unrelative) \emph{Cuntz--Pimsner} is $\OO_X:=\OO(J_X,X)$.
\end{defn} 

\begin{rem}
The universal representation $(j_A,j_X)$ is injective if and only if $J\subseteq J_X$, see \cite[Proposition 2.21]{MuSo1998} and \cite[Proposition 3.3]{katsura}, or \cite[Corollary 4.15]{kwa-doplicher}. Moreover, by passing to a quotient $C^*$-correspondence one may reduce the general case to the case when $J\subseteq J_X$, see \cite{kwa-lebedev} or \cite[Theorem 6.23]{kwa-doplicher}.
\end{rem}

The $C^*$-algebra $\OO(J,X)$ is equipped with the \emph{gauge circle action} $\gamma:\T \to \Aut(\OO(J,X))$ determined by $\gamma_z(j_A(a))=j_A(a)$ and $\gamma_z(j_X(x))=zj_X(x)$, for $a\in A$, $x\in X$ and $z\in \T$. We say that an ideal in $\OO(J,X)$ is \emph{gauge invariant} if it is invariant with respect to this action.

We recall that an image and preimage of an ideal $I$ in $A$ with respect to the $C^*$-correspondence $X$ are defined as follows:
\begin{align*}
X(I) & :=\langle X, \phi_X(I)X\rangle_A=\clsp\{\langle x,a\cdot y\rangle_A: a\in I,\,\, x,y\in X \},\\ 
X^{-1}(I) & :=\{a\in A: \langle x,a\cdot y\rangle_A \in I \textrm{ for all } x,y\in X \}. 
\end{align*}
Both $X(I)$ and $X^{-1}(I)$ are ideals in $A$, cf. \cite{ka3}. If $X(I)\subseteq I$, then the ideal $I$ is said to be \emph{positively $X$-invariant}, \cite[Definition 4.8]{ka3}.

Given a positively $X$-invariant ideal $I$ of $A$, the quotient space $X/XI$ is naturally equipped with the structure of a $C^*$-correspondence over $A/I$. For any positively $X$-invariant ideal $I$ we put
$$
J(I):=\{a\in A: \phi_{X/XI}(a+I)\in\K(X/XI)\,, aX^{-1}(I)\subseteq I \}.
$$
A \emph{$T$-pair} of $X$ is a pair $(I,I')$ of ideals $I,I'$ of $A$ such that $I$ is positively $X$-invariant and $I\subseteq I'\subseteq J(I)$ \cite[Definition 5.6]{ka3}. Given an ideal $J$ of $J_X$ we have a bijective correspondence between the gauge invariant ideals of the relative Cuntz--Pimsner algebra $\OO(J,X)$ and the $T$-pairs $(I,I')$ of $X$ satisfying $J\subseteq I'$, see \cite[Proposition 11.9]{ka3}.

\subsection{Induced and irreducible representations.} 

Let $A$ be a $C^*$-algebra. If $\pi$ is a representation of $A$ we will usually denote the underlying Hilbert space by $H_\pi$, so that we have 	 $\pi:A\rightarrow \mathcal{B}(H_\pi)$. Given a representation $\pi$ of $A$ we denote by $[\pi]$ the class of representations of $A$ that are unitary equivalent to $\pi$. We denote by $\Irr(A)$ the class of all non-zero irreducible representations of $A$. The \emph{spectrum of $A$} is the set $\SA$ of unitary equivalence classes of representations from $\Irr(A)$, equipped with the standard Jacobson topology.

Let $B\subseteq A$ be a sub-$C^*$-algebra of $A$, and let $\pi$ and $\rho$ be representations of $B$ and $A$, respectively. Then $\rho$ is an \emph{extension} of $\pi$, denoted by $\pi\leq \rho$, if $H_\pi$ is a subspace of $H_\rho$, and $\pi(b)h=\rho(b)h$ for every $b\in B$ and $h\in H_\pi$. Given a representation $\pi$ of $B$ there exists a (not necessarily unique) representation $\rho$ of $A$ with $\pi\leq \rho$ (see for example \cite[Proposition 2.10.2]{Dixmier}). Moreover, if $\pi$ is irreducible, then $\rho$ can be chosen to be irreducible. If $I$ is an ideal of $A$, then the map $[\pi]\to[\pi|_I]$ defines a homeomorphism between $\{[\pi]:\,\pi\in\Irr(A),\,\pi(I)\neq 0 \}$ and $\SI$ (see for example \cite[Proposition 3.2.1]{Dixmier}). We will use this map to identify $\SI$ with the open subset $\{[\pi]:\,\pi\in\Irr(A),\,\pi(I)\neq 0 \}$ of $\SA$.

Let $X$ be a right Hilbert $A$-module and let $\pi:A\rightarrow\mathcal{B}(H_\pi)$ be a representation. We may view $X$ as a $C^*$-correspondence from $\LL(X)$ to $A$, and $H_\pi$ as a $C^*$-correspondence from $A$ to $\C$. The corresponding tensor product, which we denote by $X\otimes_\pi H_\pi$ is a $C^*$-correspondence from $\LL(X)$ to $\C$. In other words, $X\otimes_\pi H_\pi$ is a Hilbert space and we have the representation
$$X\dashind (\pi):\LL(X)\rightarrow \mathcal{B}(X\otimes_\pi H_\pi)\qquad\text{where} \qquad X\dashind(\pi)(a) (x\otimes_\pi h)= (ax)\otimes_\pi h \,,$$
for every $a\in \LL(X)$ and $x\otimes_\pi h\in X\otimes_\pi H_\pi$. This defines the inducing functor $X\dashind =X\dashind_{A}^{\LL(X)}$, cf. \cite{morita}. Since $X$ is an equivalence $\K(X)$-$\langle X,X\rangle$-bimodule \cite[Proposition 3.8]{morita}, the map $[\pi]\mapsto [X\dashind (\pi)]$ restricts to a homeomorphism $[X\dashind]$ from $\widehat{\langle X,X\rangle}$ onto $\widehat{\K(X)}$ (see for example \cite[Corollary 3.33]{morita}). Since we identify $\widehat{\langle X,X\rangle}_A$ and $\widehat{\K(X)}$ with open subsets of $\SA$ and $\widehat{\LL(X)}$ respectively, we may view this map as a partial homeomorphism from $\SA$ to $\widehat{\LL(X)}$.

\begin{defn}\label{def:dual_partial_homeomorphism}
Let $X$ be a right Hilbert $A$-module. The \emph{map dual to $X$} is the partial homeomorphism $[X\dashind]:\SA\rightarrow \widehat{\LL(X)}$ constructed above. 
\end{defn}

Let $X^*$ be the Hilbert $\K(X)$-module adjoint to \(X\) (see for example \cite[page 49]{morita}). Then $X^*$ is naturally an equivalence $\langle X,X\rangle_A$-$\K(X)$-bimodule, defining a partial homeomorphism $[X^*\dashind]:\widehat{\LL(X)}\rightarrow \SA$ with domain $\widehat{\K(X)}$ and range $\widehat{\langle X,X\rangle}_A$, such that the restriction ${}_{\widehat{\K(X)}}[X^*\dashind]_{\widehat{\langle X,X\rangle}_A}$ is the inverse of ${}_{\widehat{\langle X,X\rangle}_A}[X\dashind]_{\widehat{\K(X)}}$ (cf. \cite[Theorem 3.29]{morita}. We therefore often write $[X\dashind]^{-1}$ for $[X^*\dashind]$.

\begin{lem}\label{representation structure lemma2}
Suppose that $\psi=(\psi_0,\psi_1)$ is a representation of a $C^*$-correspondence $X$ over $A$ on a Hilbert space $H$. Let $\pi\leq \psi_0$ be a representation of $A$. The map $\psi_1(x)h\mapsto x\otimes_{\pi}h$ extends to a unitary $U:\psi_1(X)H_\pi\to X\otimes_{\pi} H_{\pi}$ such that 
$$
\overline{\psi^{(1)}}(T)|_{\psi_1(X)H_\pi}= U^* ( X\dashind (\pi)(T)) U, \qquad T\in \LL(X).
$$
In particular, $ \LL(X) \ni T \to \overline{\psi^{(1)}}(T)|_{\psi_1(X)H_\pi}$, is a representation which is equivalent to $X\dashind (\pi)$. 
\end{lem}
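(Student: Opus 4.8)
The plan is to exhibit $U$ explicitly on the dense subspace $\spane\{\psi_1(x)h : x\in X,\, h\in H_\pi\}$ of $\psi_1(X)H_\pi$ by the prescribed formula $U(\psi_1(x)h):=x\otimes_\pi h$, and then to verify that it is a well-defined isometry. By sesquilinearity this reduces to comparing $\langle \psi_1(x_1)h_1,\psi_1(x_2)h_2\rangle_H$ with $\langle x_1\otimes_\pi h_1, x_2\otimes_\pi h_2\rangle$. Using the module relation $\psi_1(x_1)^*\psi_1(x_2)=\psi_0(\langle x_1,x_2\rangle_A)$, the former equals $\langle h_1, \psi_0(\langle x_1,x_2\rangle_A)h_2\rangle_H$; since $\pi\leq\psi_0$ forces $H_\pi$ to be $\psi_0(A)$-invariant with $\psi_0(a)|_{H_\pi}=\pi(a)$, this in turn equals $\langle h_1,\pi(\langle x_1,x_2\rangle_A)h_2\rangle_{H_\pi}$, which is by definition the inner product of $x_1\otimes_\pi h_1$ and $x_2\otimes_\pi h_2$ in $X\otimes_\pi H_\pi$. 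Isometry yields both well-definedness (vectors mapping to zero are exactly those annihilated on the left) and extension to an isometry on the closure.

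For surjectivity I would note that the range of $U$ contains every elementary tensor $x\otimes_\pi h$, hence is dense in $X\otimes_\pi H_\pi$; being isometric, $U$ has closed range, so $U$ is onto and therefore unitary.

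The heart of the argument is the intertwining identity, which I would derive from the pointwise formula $\overline{\psi^{(1)}}(T)(\psi_1(x)h)=\psi_1(Tx)h$ valid for all $T\in\LL(X)$. For $T=\Theta_{y,z}\in\K(X)$ this is immediate, since $\psi^{(1)}(\Theta_{y,z})\psi_1(x)=\psi_1(y)\psi_1(z)^*\psi_1(x)=\psi_1(y)\psi_0(\langle z,x\rangle_A)=\psi_1(\Theta_{y,z}x)$, and one passes to all of $\K(X)$ by linearity and continuity. For general $T\in\LL(X)$ the obstacle is that $\overline{\psi^{(1)}}$ is only specified as the canonical extension killing $(\psi_1(X)H)^{\perp}$; here I would use that $\psi_1(X)H$ is the essential subspace of $\psi^{(1)}$, choose an approximate unit $(e_i)$ of $\K(X)$ (for which $e_i x\to x$ in $X$ by non-degeneracy of the $\K(X)$-action), and exploit the multiplier relation $\overline{\psi^{(1)}}(T)\psi^{(1)}(e_i)=\psi^{(1)}(Te_i)$. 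Since $\psi^{(1)}(e_i)(\psi_1(x)h)=\psi_1(e_i x)h\to\psi_1(x)h$, continuity of the bounded operator $\overline{\psi^{(1)}}(T)$ and of $\psi_1$ gives $\overline{\psi^{(1)}}(T)(\psi_1(x)h)=\lim_i\psi^{(1)}(Te_i)(\psi_1(x)h)=\lim_i\psi_1\big(T(e_i x)\big)h=\psi_1(Tx)h$. In particular $\overline{\psi^{(1)}}(T)$ preserves $\psi_1(X)H_\pi$, so its restriction there makes sense.

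Finally I would read off the intertwining relation by direct comparison on the dense span: $U^*\,X\dashind(\pi)(T)\,U(\psi_1(x)h)=U^*\big((Tx)\otimes_\pi h\big)=\psi_1(Tx)h=\overline{\psi^{(1)}}(T)(\psi_1(x)h)$, the middle equality using $U(\psi_1(Tx)h)=(Tx)\otimes_\pi h$. Passing to the closure gives $\overline{\psi^{(1)}}(T)|_{\psi_1(X)H_\pi}=U^*(X\dashind(\pi)(T))U$ for every $T\in\LL(X)$, and the ``in particular'' clause is then simply the statement that conjugation by the unitary $U$ is a unitary equivalence onto $X\dashind(\pi)$. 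The step I expect to require the most care is the passage from $\K(X)$ to $\LL(X)$ in the pointwise formula, that is, verifying that the abstract multiplier extension $\overline{\psi^{(1)}}$ genuinely acts by $x\mapsto Tx$ on the fibres $\psi_1(X)H_\pi$.
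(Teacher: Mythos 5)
Your proof is correct, and it is exactly the standard argument that the paper itself omits (its ``proof'' is just a citation of \cite[Lemma 1.3]{kwa} and \cite[Lemma 4.10]{kwa-szym}): isometry of $U$ from the representation relations, the pointwise formula $\overline{\psi^{(1)}}(T)\psi_1(x)h=\psi_1(Tx)h$ first on $\K(X)$ and then on all of $\LL(X)$ via an approximate unit and the vanishing of $\overline{\psi^{(1)}}$ on $(\psi_1(X)H)^{\bot}$, and finally the intertwining identity on a dense span. In particular your care with the passage from $\K(X)$ to $\LL(X)$ correctly handles the only genuinely non-trivial point.
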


\begin{proof} 
This is straightforward, see for instance \cite[Lemma 1.3]{kwa} or \cite[Lemma 4.10]{kwa-szym}.
\end{proof}

\section{Non-triviality conditions for multivalued maps and directed graphs.} \label{Section 3}

The aim of the present section is to clarify relationships between various nontriviality conditions for multivalued maps and graphs. The notion of topological aperiodicity was introduced in the context of semigroups of multivalued maps in \cite[Definition 5.3]{kwa-szym}. We recall the corresponding condition for a single multivalued map, see \cite[Proposition 5.5.iii)]{kwa-szym}.

\begin{defn} 
We say that a multivalued map $f:V\to V$ defined on a topological space is \emph{topologically aperiodic} if for every $n>0$ the set $\{ v\in V : v\in f^n(v)\} $ has empty interior in $V$ (here $f^n$ stands for $n$-times composition of the multivalued map $f$).
\end{defn}

Multivalued maps can be viewed as directed graphs without multiple edges. More specifically, by a \emph{graph from $V$ to $W$} we mean a triple $E=(E^1,s,r)$ where $E^1$ is a set, whose elements are called edges, and $s:E^1\to V$ and $r:E^1\to W$ are maps, indicating a source and a range of an edge. Sets $s(E^1)$ and $r(E^1)$ are called, respectively, the \emph{domain} and the \emph{range} of the graph $E$. We define a \emph{multiplicity} of a pair of vertices $(w,v)\in W\times V$ to be the cardinality $m_{w,v}^E$ of the set $\{e\in E^1:r(e)=w, s(e)=v\}=r^{-1}(w)\cap s^{-1}(v)$. We say that $E=(E^1,s,r)$ \emph{has no multiple edges} if every $(w,v)\in W\times V$ has multiplicity at most one. We say that two graphs $E=(E^1,r,s)$ and $F=(F^1,r',s')$ from $V$ to $W$ \emph{are equivalent}, written $E\sim F$, if there is a bijection $\Phi:E^1\to F^1$ such that $s' \circ \Phi =s$ and $r' \circ \Phi =r$. Clearly, two graphs are equivalent if and only if the corresponding multiplicities coincide: $m_{w,v}^E=m_{w,v}^F$ for all $(w,v)\in W\times V$. In other words, up to equivalence, a directed graph is given by a matrix $\{m_{v,w}\}_{v\in V,w\in W }$ of cardinal numbers. Obviously, given any such matrix $M=\{m_{v,w}\}_{v\in V,w\in W}$ there is a graph $E_M=(E^1,r,s)$ whose multiplicities coincide with $M$.

Now, if $f:V\to W$ is a multivalued map then we may associate to it a directed graph $E_{f}:=(\{(w,v)\in W\times V: w\in f(v)\} ,r,s)$ where $r(w,v):=w$ and $s(w,v):=v$. Clearly, $E_{f}$ has no multiple edges and $f(v)=r(s^{-1}(v))$. Conversely, for every directed graph $E=(E^1,r,s)$ from $V$ to $W$ the formula $f^E(v):=r(s^{-1}(v))$, $v\in V$, defines a multivalued map $f^E:V\to W$. We have $E\sim E_{f^E}$ if and only if $E$ has no multiple edges.

In case $V=W$, we will usually write $E^0=V=W$ and $E=(E^0,E^1,s,r)$, and call $E$ a \emph{self-graph}, a \emph{directed graph}, or a \emph{graph with vertex set $E^0$}. In this case, for $n = 2, 3,...$, we define the space $E^n$ of \emph{paths with length }$n$ by
$$
E^n:=\{ (e_n,\dots,e_{2},e_1) \in E^1\times ... E^1\times E^1: r(e_{k})=s(e_{k+1}) \text{ for all } 0 < k < n\}.
$$
Then $(E^0,E^n, r^n,s^n)$ where $r^n(e_n,\dots,e_{2},e_1):=r(e_n)$ and $s^n(e_n,\dots,e_{2},e_1):=s(e_1)$, is a directed graph. We define the space $E^\infty$ of infinite paths and the map $r^\infty:E^\infty\to E^0$ in an obvious way. Let $e=(e_n,\dots,e_{2},e_1) \in E^n$ be a finite path. We call the vertices $s(e_1)$ and $r(e_k)$, $k=1,...,n$, the \emph{base points} of $e$. We say, after \cite[Definition 5.5]{ka1}, that the path $e$ \emph{non-returning} if 	 $e_k\neq e_1$ for every $k=2,...,n$. A path $e=(e_n,\dots,e_{2},e_1) \in E^n$ is a \emph{cycle} if $r(e_n)=s(e_1)$. If $e=(e_n,\dots,e_1) $ is a cycle, we say that $e$ \emph{has no entrances} if $r^{-1}(r(e_k))=\{e_k\}$ for every $k=1,\dots,n$. For any subsets $V, W\subseteq E^0$ we define the restricted graph ${_W}E_{V}$ to be the graph from $V$ to $W$ with the set of edges ${_{W} E}_{V}^1:=\{e\in E^1: s(e)\in V\text{ and } r(e)\in W\}=s^{-1}(V)\cap r^{-1}(W)$, and source and range maps being $s|_V$ and $r|_W$. The following definition is an obvious generalization of \cite[Definition 5.4]{ka1}, cf. \cite[Proposition 6.12]{ka4}.

\begin{defn}\label{top_free_graph}
Let $E=(E^0,E^1,r,s)$ be a directed graph whose set of vertices $E^0$ is a topological space. We say that $E$ is \emph{topologically free} if for every $n>0$ the set of base points of cycles of length $n$ which have no entrances has empty interior. More generally, let $U$ be an open subset of $E^0$. We say that $E$ is \emph{topologically free on $U$} if for every $n>0$ the set of base points of cycles in $({_U}E_U)^n$ which have no entrances in the initial graph $E$ has empty interior.
\end{defn}

In the proof of uniqueness theorem we will use a slightly stronger version of the above condition. We say that the \emph{set $V$ has a past in the restricted graph} ${_U}E_U$ if $r^{-n}(V)\subseteq ({_U}E_U)^n$ for every $n\geq 0$.

\begin{defn}\label{top_free_graph1}
Let $E=(E^0,E^1,r,s)$ be a directed graph where $E^0$ is a topological space. Let $U$ be an open subset of $E^0$. We say that $E$ is \emph{strongly topologically free on $U$} if for every non-empty open set $V\subseteq r^{\infty}(E^\infty)$ that has a past in ${_UE}_U$, and for every $n>0$ there is a non-returning path $e\in r^{-m}(V)$ with $m\geq n$. We say that $E$ is \emph{strongly topologically free} if it is strongly topologically free on $U=E^0$.
\end{defn}

\begin{ex} 
Consider a multivalued map $f:V\to V$ where $V=\{v_0,v_1\}$ is equipped with the topology $\tau=\{\emptyset, V, \{v_0\}\}$, and $f(v_1)=V$ and $f(v_0)=\emptyset$. The corresponding graph $E_f$ is topologically free because $\{v_1\}$ has empty interior, but $E$ is not strongly topologically free because every path of length $n>2$ that ends in $\{v_0\}$ is returning.
\end{ex}

Fortunately, the two notions of topological freeness coincide for continuous graphs:

\begin{defn}
We say that a graph $E$ is \emph{continuous} 	if the set of vertices $E^0$ is a topological space and the associated multivalued map $r\circ s^{-1}:E^0\to E^0$ is continuous, that is for every open set $V\subseteq	 E^0$ the set $E^{-1}(V):=s(r^{-1}(V))$ is open 	in $E^0$. 
\end{defn}

Note that if $E$ is continuous then for every $n>0$ the graph $(E^0,E^n, r^n,s^n)$ is continuous. Topological graphs considered by Katsura \cite{ka1} and the graphs underlying topological quivers \cite{mt} are continuous. 

\begin{lem}\label{lem:topological_freeness} 
Let $E=(E^0,E^1,r,s)$ be a directed graph such that $E^0$ is a topological space. Let $U$ be an open subset of $E^0$. Consider the following conditions:
\begin{itemize}
\item[(i)] $E$ is strongly topologically free on $U$;
\item[(ii)] $E$ is topologically free on $U$. 
\end{itemize}
Then (i)$\Rightarrow$(ii). If ${_UE}_U$ is continuous\footnote{In fact it suffices to assume that ${_UE}_U$ is ``quasi-continuous'' in the sense that for every open $V\subseteq	 U$, if ${_UE}_U^{-1}(V)=s(r^{-1}(V))$ is non-empty, then it has a non-empty interior.}, then (i)$\Leftrightarrow$(ii).
\end{lem}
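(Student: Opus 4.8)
The plan is to prove both implications by contraposition, in each case building the required witness out of cycles with no entrances; continuity will only be needed for the harder direction (ii)$\Rightarrow$(i).

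For (i)$\Rightarrow$(ii) I assume $E$ is \emph{not} topologically free on $U$ and produce an open set violating strong topological freeness. So I fix $n>0$ for which the set $B$ of base points of cycles in $({}_{U}E_{U})^{n}$ having no entrances in $E$ satisfies $\Int B\neq\emptyset$, and I put $V:=\Int B$. Every point of $V$ lies on a no-entrance cycle, so running that cycle backwards shows $V\subseteq r^{\infty}(E^{\infty})$. Since a no-entrance cycle vertex has a \emph{unique} incoming edge in $E$, any path of $E$ ending in $V$ is forced to run backwards along such a cycle; as that cycle lies in $({}_{U}E_{U})^{n}$ this shows at once that $V$ has a past in ${}_{U}E_{U}$ and that any such path of length $m\ge n+1$ repeats its source edge after $n$ steps, i.e. is returning. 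Hence no non-returning path of length $\ge n+1$ ends in $V$, so $V$ witnesses the failure of strong topological freeness. This argument uses no continuity.

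For (ii)$\Rightarrow$(i) I argue the contrapositive, using that continuity of ${}_{U}E_{U}$ makes the backward map $\beta(W):={}_{U}E_{U}^{-1}(W)=s(r^{-1}(W))$ send open subsets of $U$ to open sets (the weaker quasi-continuity of the footnote in fact suffices). Assume $E$ is not strongly topologically free on $U$: there is a non-empty open $V\subseteq r^{\infty}(E^{\infty})$ with a past in ${}_{U}E_{U}$, and an $N>0$ such that every path ending in $V$ of length $\ge N$ is returning. First I record a uniform edge bound: prolonging a backward path from a point of $V$ one edge at a time at its source end, once the length reaches $N$ the newly prepended edge must coincide with an earlier one, so every backward path from $V$ uses at most $N-1$ distinct edges. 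The key step is then to show that \emph{every cycle in the backward closure of $V$ has no entrance in $E$}: if such a cycle $C$ had an entrance $g$, I choose a shortest forward path $\pi$ from $C$ to $V$ (which, being shortest, leaves $C$ and never returns to it, hence contains no edge with range in $C$, so $g\notin\pi$); then the paths formed by $g$ as source edge, an arbitrarily long run around $C$, and finally $\pi$, are non-returning of unbounded length ending in $V$, contradicting the choice of $V,N$.

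It follows that for $w\in\beta^{N}(V)$ the returning property yields a cycle based at $w$, which by the key step has no entrance; thus $w$ has a unique incoming edge, its backward orbit runs deterministically around a simple no-entrance cycle, and by the edge bound that cycle has length at most $N-1$. Setting $N':=\operatorname{lcm}(1,\dots,N-1)$ and traversing each such cycle $N'/(\text{its length})$ times exhibits every $w\in\beta^{N}(V)$ as a base point of a no-entrance cycle of the \emph{fixed} length $N'$. Finally $\beta^{N}(V)$ is non-empty (since $V\subseteq r^{\infty}(E^{\infty})$) and, by iterating continuity/quasi-continuity, has non-empty interior; hence the base points of no-entrance cycles of length $N'$ have non-empty interior, so $E$ is not topologically free on $U$. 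I expect the main obstacle to be precisely this direction: guaranteeing in the key step that the chosen source edge never recurs (forcing the shortest-return-path choice), and then upgrading ``no-entrance cycles of bounded but varying length'' to a single fixed length via the least common multiple while keeping the relevant backward images open.
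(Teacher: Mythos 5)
Your implication (i)$\Rightarrow$(ii) is correct and is essentially the paper's own argument, just spelled out in more detail. Your plan for (ii)$\Rightarrow$(i) also parallels the paper's proof — extract entrance-free cycles from the returning hypothesis, use continuity to make $s^{N}(r^{-N}(V))$ open, and normalize the cycle length — except that the paper outsources the combinatorial extraction to the proof of Lemma 5.9 of \cite{ka1}, whereas you prove it directly via your ``key step''. That key step is where you have a genuine gap.

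The gap: cycles in this setting need not be simple — a cycle $C=(e_l,\dots,e_1)$ may pass through a vertex more than once, and then an entrance $g$ (an edge with $r(g)=r(e_k)$, $g\neq e_k$) can itself be an edge of $C$. In that case your witness path ``$g$ as source edge, an arbitrarily long run around $C$, then $\pi$'' contains $g$ again inside the run around $C$, hence is \emph{returning}, and no contradiction with the choice of $V,N$ is obtained. You prove $g\notin\pi$ (the shortest-path argument is fine) but nowhere $g\notin C$, and this case cannot be dismissed: the cycle you later feed into the key step comes from the first return of a length-$N$ path, and minimality of the first return only prevents the \emph{first edge} from recurring, not other edges or vertices, so that cycle may be non-simple and carry exactly such a self-entrance. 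The repair is standard but is a genuinely missing step: first prove the key step for \emph{simple} cycles (distinct edges of a simple cycle have distinct ranges, so any entrance is automatically not a cycle edge and your construction applies verbatim); then observe that a simple entrance-free cycle gives each of its vertices a unique incoming edge in $E$, so any cycle in the backward closure meeting it must trace it backwards deterministically and is therefore a repetition of it — this both completes the key step for general cycles and is what actually justifies your subsequent ``deterministic backward orbit'' claim. A secondary caveat: your parenthetical assertion that quasi-continuity suffices does not follow by ``iterating'', since after passing to $\Int\bigl(s(r^{-1}(V))\bigr)$ the points there may have no incoming edges at all, so the next backward image can be empty and quasi-continuity then gives nothing; this would need a separate argument (the paper's displayed proof, too, only treats the continuous case).
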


\begin{proof} 
(i)$\Rightarrow$(ii). Suppose $E$ is not topologically free on $U$. Then there are $n>0$ and a non-empty open set $V$ consisting of base points of cycles in $({_UE}_U)^n$ without entrances in $E$. Clearly, $V\subseteq r^{\infty}(E^{\infty})$ has a past in ${_UE}_U$ and every $e\in r^{-m}(V)$ with $m\geq n+1$ is returning.

(ii)$\Rightarrow$(i). Assume, for contradiction, that there are $n>0$ and a non-empty open set $V\subseteq r^{\infty}(E^\infty)$ such that every path in $E$ that ends in $V$ is a path in ${_UE}_U$, and every path $e\in r^{-m}(V)$ with $m\geq n$ is returning (necessarily in ${_UE}_U$). By the proof of \cite[Lemma 5.9]{ka1} we get that for every $(e_n,\dots,e_1) \in r^{-n}(V)$, there is $k_0$ with $2 \leq k_0\leq n$ such that $(e_{k_0},\dots,e_1)$ is a cycle without entrances in ${_UE}_U$ (and hence also necessarily in $E$). Thus, using continuity of ${_UE}_U$, we see that $s^n(r^{-n}(V))$ is a non-empty open set that consists of base points of cycles in ${_UE}_U$, of length $(n-1)!$, without entrances in $E$.
\end{proof}

Both version of	topological freeness of $E$ on $U$ depend only on the restriction of $E$ to the union of $U$ with $E^{-1}(U)=s(r^{-1}(U))$:

\begin{lem}\label{lem:stupid_lemma}
Let $E=(E^0,E^1,r,s)$ be a directed graph where $E^0$ is a topological space. Let $U\subseteq E^0$ be an open set. The graph $E$ is (strongly) topologically free on $U$ if and only if its restriction to $U \cup E^{-1}(U)$ is (strongly) topologically free on $U$.
\end{lem}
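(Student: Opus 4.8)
The plan is to reduce both equivalences to a single combinatorial observation about edges entering $U$. Write $W := U \cup E^{-1}(U)$ and let $E' := {_W}E_W$ denote the restriction. Since $U \subseteq W$, restricting $E'$ again to $U$ returns ${_U}E_U$, so cycles, finite paths lying inside $U$, and the very notion of ``having a past in ${_U}E_U$'' refer to identical objects for $E$ and for $E'$. Because $U$ is open in $E^0$ and $U \subseteq W$, the subspace topologies that $E^0$ and $W$ induce on subsets of $U$ agree; hence for any $V \subseteq U$ the predicates ``$V$ is open'' and ``$V$ has empty interior'' are unambiguous and may be tested in either $E^0$ or $W$. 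The engine of the proof is the assertion that \emph{every edge $e \in E^1$ with $r(e) \in U$ already lies in $E'$}: indeed $r(e)\in U$ gives $e \in r^{-1}(U)$, so $s(e) \in s(r^{-1}(U)) = E^{-1}(U) \subseteq W$ while $r(e) \in U \subseteq W$, whence $e$ has both endpoints in $W$. In short, no edge entering $U$ is destroyed by the restriction.

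For ordinary topological freeness on $U$ (Definition~\ref{top_free_graph}) I would fix $n>0$ and compare the two candidate sets. The cycles in $({_U}E_U)^n$ are the same for $E$ and $E'$, and each of their base points lies in $U$. An entrance to such a cycle at a base point $v\in U$ is an edge with range $v\in U$, which by the engine already belongs to $E'$; conversely every edge of $E'$ is an edge of $E$. Hence $r^{-1}(v)$ is the same set computed in $E$ or in $E'$ for each base point $v$, so ``no entrances in $E$'' is literally the same condition as ``no entrances in $E'$''. Consequently the set of base points of entrance-free cycles of length $n$ is identical for the two graphs, and it has empty interior for $E$ iff it does for $E'$.

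For strong topological freeness (Definition~\ref{top_free_graph1}) I would first show the class of admissible test sets $V$ coincides. Any $V$ with a past satisfies $V\subseteq U$ (the $n=0$ instance of the past condition). The implication ``past in $E$ $\Rightarrow$ past in $E'$'' is immediate since paths of $E'$ are paths of $E$. For the converse I would induct on length: assuming every length-$m$ path of $E$ ending in $V$ lies in ${_U}E_U$, a path of $E$ of length $m+1$ ending in $V$ has its length-$m$ head in ${_U}E_U$, so its remaining edge has range in $U$ and therefore lies in $E'$ by the engine; the whole path is thus a path of $E'$ ending in $V$, to which the $E'$-past hypothesis applies. This induction simultaneously shows that $r^{-m}(V)$ is the same set whether computed in $E$ or $E'$. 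Likewise, for each $v\in V$ any infinite path of $E$ with $r^\infty = v$ has all its finite truncations inside ${_U}E_U \subseteq E'$, hence is already an infinite path of $E'$; so $V \subseteq r^\infty(E^\infty)$ iff $V \subseteq r^\infty((E')^\infty)$. Since the sets of paths ending in $V$ agree and ``non-returning'' is an intrinsic property of a path, the existence of non-returning paths in $r^{-m}(V)$ with $m\geq n$ demanded by Definition~\ref{top_free_graph1} is the same requirement for $E$ and $E'$, giving the equivalence.

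The only genuinely non-formal steps are the converse ``past'' implication and the matching of infinite paths; I expect the length induction in the strong case to be the main obstacle, as it is where one must propagate the engine from the range end of a path back through all of its edges. Everything else---equality of cycles, of entrance sets, and of the relevant topological notions---is bookkeeping built directly on the observation that edges entering $U$ survive the restriction to $W$.
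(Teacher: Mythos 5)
Your proof is correct and takes essentially the same approach as the paper's: the paper's (very terse) proof rests on exactly your two claims, namely that a cycle in $({_U}E_U)^n$ has an entrance in $E$ iff it has one in ${_{U\cup E^{-1}(U)}}E_{U\cup E^{-1}(U)}$, and that having a past in ${_U}E_U$ is the same whether ${_U}E_U$ is viewed as a restriction of $E$ or of ${_{U\cup E^{-1}(U)}}E_{U\cup E^{-1}(U)}$. Your "engine" observation and the induction on path length are precisely the details the paper leaves implicit.
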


\begin{proof}
For topological freeness it suffices to note that a cycle in $({_U}E_U)^n$ has an entrance in $E$ if and only if it has an entrance in ${_{U \cup E^{-1}(U)}}E_{U \cup E^{-1}(U)}$. For strong topological freeness note that a set $V\subseteq U$ has the past in ${_U}E_U$ treated as a restriction of $E$ if and only if it has the past in ${_U}E_U$ treated as the restriction of ${_{U\cup E^{-1}(U)}}E_{U \cup E^{-1}(U)}$.
\end{proof}

Now we explain	 the relationship between topological freeness for graphs and topological aperiodicity for multivalued maps. In addition we will introduce yet another condition for multivalued maps which will also be useful.

\begin{lem}\label{lem:topological_freeness_vs_aperiodicity}
Let $E=(E^0,E^1,r,s)$ be a directed graph such that $E^0$ is a topological space. Let $U$ be an open subset of $E^0$. Consider the following conditions:
\begin{itemize}
\item[(i)] the restricted multivalued map ${_U}(r\circ s^{-1})_U$ is topologically aperiodic; 
\item[(ii)] for every non-empty open set $V\subseteq r^{\infty}(({_UE}_U)^{\infty})$ and every $n>0$ there is $v\in V$ such that for every path $(e_n,...,e_1)\in r^{-n}(v)\cap ({_UE}_U)^{n}$ we have $v=r(e_n) \neq s(e_k)$ for every $k=1,...,n$;\item[(iii)] for every non-empty open set $V\subseteq r^{\infty}(E^{\infty})$ that has the past in ${_UE}_U$, and every $n>0$ there is a path $(e_n,...,e_1)\in r^{-n}(V)$ such that $r(e_n)\neq s(e_k)$ for every $k=1,...,n$;
\item[(iv)] $E$ is topologically free on $U$.
\end{itemize}
Then (i)$\Leftrightarrow$(ii)$\Rightarrow$(iii)$\Rightarrow$(iv). If $r:{_UE}^1\to U$ is injective, then (iv)$\Rightarrow$(i). If $s:{_UE}^1_U\to U$ is injective, then (iv)$\Rightarrow$(iii).
\end{lem}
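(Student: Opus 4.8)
The plan is to prove the forward chain (i)$\Leftrightarrow$(ii)$\Rightarrow$(iii)$\Rightarrow$(iv) first, and then to treat the two conditional converses separately. Throughout I set $g:={}_{U}(r\circ s^{-1})_{U}$ and, for $m>0$, write $C_m:=\{v\in U: v\in g^m(v)\}$; unwinding the composition shows that $v\in C_m$ precisely when $v$ is the base point of a length-$m$ cycle in ${}_{U}E_{U}$. The first move is to reformulate (ii). For any $v\in r^{\infty}(({}_{U}E_{U})^{\infty})$ there are paths of every length in ${}_{U}E_{U}$ ending at $v$ (truncate an infinite one), so a length-$n$ path ending at $v$ has $v$ among its sources exactly when $v$ lies on a cycle of some length $m\le n$ (the sub-path between two visits of $v$), completed by an arbitrary path of the complementary length. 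Hence, on $r^{\infty}(({}_{U}E_{U})^{\infty})$, condition (ii) asserts precisely that no non-empty open $V$ is contained in $\bigcup_{m=1}^{n}C_m$.

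The engine of (i)$\Leftrightarrow$(ii) is then the elementary observation that retracing a length-$m$ cycle $k$ times gives $C_m\subseteq C_{km}$, whence $\bigcup_{m=1}^{n}C_m\subseteq C_{n!}$. Thus if (i) holds, $C_{n!}$ has empty interior and no non-empty open set lies inside it, giving (ii); conversely each $C_m$ is contained in $r^{\infty}(({}_{U}E_{U})^{\infty})$, so testing (ii) against an open subset of $C_m$ forces $\Int C_m=\emptyset$, i.e. (i). For (ii)$\Rightarrow$(iii) I would take a set $V$ admissible for (iii) and use its two defining properties, $V\subseteq r^{\infty}(E^{\infty})$ and $r^{-m}(V)\subseteq ({}_{U}E_{U})^m$ for all $m$, to upgrade it to $V\subseteq r^{\infty}(({}_{U}E_{U})^{\infty})$; then (ii) supplies $v\in V$ no length-$n$ path ending at which revisits $v$ as a source, and any such path exists and lies in ${}_{U}E_{U}$ by the past condition, so it is the required non-returning path in $r^{-n}(V)$. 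For (iii)$\Rightarrow$(iv) I would argue by contraposition: a failure of (iv) yields $n$ and a non-empty open $V$ of base points of entrance-free length-$n$ cycles, and entrance-freeness forces every path ending in $V$ to run backwards along its own cycle, so $V\subseteq r^{\infty}(E^{\infty})$ has the past in ${}_{U}E_{U}$ while every path in $r^{-n}(V)$ returns to its endpoint, contradicting (iii).

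The two converses are where the real work lies, and both hinge on using injectivity of a structure map to make the local dynamics deterministic. If the range map is injective on the edges ending in $U$, then the unique edge into each vertex of $U$ is forced to be the relevant cycle edge, so every cycle in ${}_{U}E_{U}$ is automatically entrance-free; consequently the base-point set appearing in (iv) coincides with $\bigcup_n C_n$, and (iv)$\Rightarrow$(i) is immediate. For the source-map hypothesis and the implication (iv)$\Rightarrow$(iii) I would exploit that injectivity of $s$ on $({}_{U}E_{U})^1$ makes the forward dynamics a partial single-valued map, so that each point that every long path returns to carries a \emph{unique} forward cycle; the scheme is then to show this cycle has no entrances and to promote its length to a fixed $n!$ via $C_m\subseteq C_{n!}$.

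I expect this last step to be the main obstacle. The delicate point is to rule out what one might call \emph{phantom entrances}: an edge into a cycle whose source admits no sufficiently long backward continuation, and which therefore cannot be assembled into a non-returning path in $r^{-n}(V)$. Such phantom entrances are exactly what can make ``every length-$n$ path ending in $V$ returns'' compatible with a cycle that is not literally entrance-free, so the verification that the returning phenomenon genuinely reflects an entrance-free cycle is where the injectivity hypothesis must be used with care; this is the step I would scrutinise most closely, and I would check it against the examples where backward branching is present before committing to the direct argument.
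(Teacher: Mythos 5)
Your forward chain and your proof of the $r$-injective converse are correct and follow essentially the paper's route: both halves of (i)$\Leftrightarrow$(ii) by contraposition with the factorial trick $\bigcup_{m\le n}C_m\subseteq C_{n!}$, the upgrade of a set $V$ admissible for (iii) to a subset of $r^{\infty}(({}_{U}E_{U})^{\infty})$ for (ii)$\Rightarrow$(iii), the contraposition for (iii)$\Rightarrow$(iv) using that entrance-freeness forces every backward path to wind around its cycle, and the observation that injectivity of $r$ on ${}_{U}E^1$ makes every cycle in ${}_{U}E_{U}$ automatically entrance-free, so that (iv) literally becomes (i). The genuine gap is the last claim: you never prove (iv)$\Rightarrow$(iii) under injectivity of $s:{}_{U}E^1_U\to U$. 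You set up the right framework (the forward dynamics in ${}_{U}E_{U}$ is a partial single-valued map, so each relevant point carries a unique cycle) and then explicitly defer the decisive step, the exclusion of what you call \emph{phantom entrances}: an edge entering the cycle whose source admits no sufficiently long backward continuation, so that the hypothesis ``every path of length $n$ ending in $V$ is returning'' never applies to any path through that edge. Naming the obstacle is not the same as overcoming it, so as written the proposal is incomplete.

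You should know, however, that your suspicion is exactly on target, and the paper's own proof does not overcome it either: at the corresponding point the paper asserts, ``by our assumption,'' that the path which follows the entrance and then the cycle into $v$ can be extended to a cycle in ${}_{U}E_{U}$ --- and that backward extension is precisely what phantom entrances prevent. In fact the implication is false as stated. Take $E^0=\{v,w,z\}$ with the discrete topology, $U=E^0$, and edges $a$ from $v$ to $w$, $b$ from $w$ to $v$, $f$ from $z$ to $w$. The source map is injective on $E^1$. Since no edge has range $z$, no cycle uses $f$; every cycle alternates $a$ and $b$, hence passes through $w$, and $r^{-1}(w)=\{a,f\}$, so no cycle is entrance-free and (iv) holds vacuously. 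But $V=\{v\}$ is open, lies in $r^{\infty}(E^{\infty})$ (via the infinite path $(b,a,b,a,\dots)$) and trivially has a past in ${}_{U}E_{U}$, while the only path of length $3$ ending at $v$ is $(b,a,b)$, which is returning because its middle edge has source $v=r(e_3)$; so (iii) fails. (The same graph, being the graph of the single-valued map $\varphi(v)=w$, $\varphi(w)=v$, $\varphi(z)=w$, also refutes the equivalence between weak topological aperiodicity and topological freeness claimed in Example 3.10.) So the step you flagged is not merely delicate; it is a genuine obstruction, and neither your plan nor the paper's argument can close it without an additional hypothesis, for instance that the sources of all edges entering cycles based in $U$ themselves have arbitrarily long pasts.
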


\begin{proof}
(ii)$\Rightarrow$(i). Assume that ${_U}(r\circ s^{-1})_U$ is not topologically aperiodic. Then there are a non-empty open set $V\subseteq U$ and $n>0$ such that every $v\in V$ is a starting point of a cycle $e_v$ in ${_UE}_U$ of length not greater than $n$. This implies that $V\subseteq r^{\infty}(({_UE}_U)^{\infty})$ and for each $v\in V$ we may extend the cycle $e_v$ to a path of the form $e_ve \in r^{-n}(v)\cap ({_UE}_U)^{n}$. Since $r(e_v)=s(e_v)=v$, we see that (ii) fails.

(i)$\Rightarrow$(ii). Assume that (ii) fails. Let $V\subseteq r^{\infty}(({_UE}_U)^{\infty})$ be a non-empty open set and let $n>0$ be such that for every $v\in V$ there exists a path $(e_n,...,e_1)\in r^{-n}(v)\cap ({_UE}_U)^{n}$ with $v=r(e_n) =s(e_k)$ for some $k=1,...,n$. This implies that every point in $V$ is a starting point of cycle in $({_UE}_U)^{n!}$. Hence (i) fails.

(ii)$\Rightarrow$(iii). This obvious.

(iii)$\Rightarrow$(iv). Assume that $E$ is not topologically free. Let $n>0$ and let $V\subseteq r^{\infty}(E^\infty)$ be a non-empty open set that has a past in ${_UE}_U$ and every path $e\in r^{-m}(V)$ with $m\geq n$ is returning. By the proof of \cite[Lemma 5.9]{ka1} we get that for every $(e_n,\dots,e_1) \in r^{-n}(V)$, there is $k_0$ with $2 \leq k_0\leq n$ such that $(e_{k_0},\dots,e_1)$ is a cycle without entrances in ${_UE}_U$ (and hence also necessarily in $E$). This implies that for every $(e_{n+1},\dots,e_1) \in r^{-(n+1)}(V)$ we have $r(e_{n+1})\neq s(e_k)$ for some $k=1,...,n$.

Let $V$ be a non-empty open set consisting of base points of cycles in $({_U}E_U)^n$ without entrances in $E$. Then $V\subseteq r^{\infty}(E^{\infty})$ has a past in ${_UE}_U$ and for every path $(e_n,...,e_1)\in r^{-n}(V)$ we have $r(e_n)=s(e_1)$.

(iv)$\Rightarrow$(i). Suppose that $r:{_UE}^1\to U$ is injective. Then every cycle in $E$ that ends in $U$ has no entrances. Thus $E$ is topologically free on $U$ if and only if for every $n>0$ the set of base points of cycles in $({_UE}_U)^{n}$ has empty interior. The latter is clearly equivalent to topological aperiodicity of ${_U}(r\circ s^{-1})_U$. This shows (iv)$\Rightarrow$(i).

(iv)$\Rightarrow$(iii). Suppose that $s:{_U}E^1_U\to U$ is injective and assume that (iii) does not hold. Let $V\subseteq r^{\infty}(E^{\infty})$ be a non-empty open set which has a past in ${_UE}_U$, and let $n>0$ be such that for every path $(e_n,...,e_1)\in r^{-n}(V)$ we have $r(e_n)=s(e_k)$ for some $k=1,...,n$. Thus $V$ consists of base points of cycles (of period $n!$). We claim that these cycles have no entrances. Indeed, let $v \in V$ and let $k=1,...,n$ be the smallest number for which there is a cycle $\mu_v:=(e_n,...,e_k)$ in $E$ with $v=r(e_n)=s(e_k)$ (then $\mu_v$ is necessarily a cycle in ${_UE}_U$). Assume on the contrary that $(e_n,...,e_k)$ has an entrance, that is for some $k'=1,...,k$, there is $e_{k'}'\neq e_{k'}$ in $r^{-1}(r(e_{k'}))$. By our assumption the path $(e_n,...,e_{k-1},e_{k'}')$ can be extended to a cycle $\mu_v':=(e_n,...,e_{k-1},e_{k'}', e_{k'+1}',...,e_l' )$ in ${_UE}_U$, $l'\leq n$. However, since $s:{_UE}^1_U\to U$ is injective, every path in ${_UE}_U$ is determined by its length and starting point. We see that either $\mu_v=\mu_v'$ or $\mu_v'$ is a concatenation of a number of cycles $\mu_v$. In both cases we get $e_{k'}'=e_{k'}$, a contradiction.
\end{proof}

\begin{defn}\label{def:weakly_top_aperiod} 
Let $f:W\to W$ be a multivalued map on a topological space $W$. Let $U$ be an open subset of $W$. We say that $f$ is \emph{weakly topologically aperiodic on $U$} if the graph $E_f$ associated to $f$ satisfies condition (iii) in Lemma~\ref{lem:topological_freeness_vs_aperiodicity}. We say that $f$ is \emph{topologically aperiodic on $U$} if ${_Uf}_U:U\to U$ is topologically aperiodic.
\end{defn}

The following example illustrates the relationships between weak topological aperiodicity, topological aperiodicity, and topological freeness.

\begin{ex}\label{partial map example} Let $\varphi:\Delta \to V$ be an ordinary map defined on a subset $\Delta\subseteq V$ of a topological space $V$. Treating $\varphi^{-1}$ as a multivalued map the corresponding graph $E_{\varphi^{-1}}$ is given by putting $E^1_{\varphi^{-1}}:=\{(x,y):x\in \varphi^{-1}(y)\}$, $r(x,y):=x$ and $s(x,y):=y$. Note that $r:E^1_{\varphi^{-1}}\to \Delta$ is injective. In view of Lemma~\ref{lem:topological_freeness_vs_aperiodicity}, for any open $U\subseteq V$, we get 
\begin{align*}
\varphi\text{ is topologically aperiodic on $U$} &\,\,\Longleftrightarrow\,\,\varphi^{-1}\text{ is topologically aperiodic on $U$}\\
&\,\,\Longleftrightarrow\,\,\text{the graph $E_{\varphi^{-1}}$ is topologically free on $U$}.
\end{align*}
Treating $\varphi$ as a multivalued map the associated graph $E_\varphi$ is given by putting $E^1_\varphi:=\{(\varphi(x),x):x\in \Delta\}$, $r(y,x):=y$ and $s(y,x):=x$. Note that the source map $s$ is injective. Recall after \cite[Definition 4.8]{kwa-rever}, see also \cite[Definition 2.37]{kwa-endo}, that $\varphi$ is \emph{topologically free outside a set $Y\subseteq V$} if the set of periodic points whose orbits do not intersect $Y$ and have no entrances have empty interior (a periodic orbit $\OO=\{x, \varphi(x),..., \varphi^{n-1}(x)\}$ of a periodic point $x=\varphi^{n}(x)$ \emph{has an entrance} if there is $y\in\Delta\setminus \OO$ such that $\varphi(y)\in \OO$). In view of Lemma~\ref{lem:topological_freeness_vs_aperiodicity}, for any open $U\subseteq V$, we get 
\begin{align*}
\varphi\text{ is weakly topologically aperiodic on $U$} &\,\,\Longleftrightarrow\,\, \text{the graph $E_{\varphi}$ is topologically free on $U$} \\
&\,\,\Longleftrightarrow\,\,\varphi \text{ is topologically free outside $V\setminus U$}.
\end{align*}
Let us restrict to the case where $V=\{v_0,v_1\}$ is a discrete space. If $\Delta=V$ and $\varphi(V)=v_1$, then $\varphi$ is weakly topologically aperiodic but is not topologically aperiodic on $V$. If $f:V\to V$ is a multivalued map such that $f(v_1)=V$ and $f(v_0)=\{v_1\}$, then $f$ is not weakly topologically aperiodic on $V$ but the corresponding graph $E_f$ is topologically free.
\end{ex}

\section{Multivalued maps dual to \(C^*\)-correspondences} \label{sect:Multivalued maps}

Let $\alpha:A\rightarrow B$ be a $*$-homomorphism between two $C^*$-algebras $A$ and $B$. Following \cite[Definition 4.1]{kwa-szym} we define the \emph{multivalued map $\sal:\SB\rightarrow \SA$ dual to $\alpha$} by the formula
$$
\sal([\rho]):= \{[\pi]\in \SA: \pi\leq \rho\circ \al\}, \qquad [\rho]\in \SB.
$$

\begin{lem}[Proposition 4.2 in \cite{kwa-szym}]\label{range of duals to homomorphisms} 
Let $\sal:\SB\rightarrow \SA$ be the multivalued map dual to a $*$-homomorphism $\al:A\to B$. Then
$$
\sal(\SB)=\SA\setminus \widehat{\ker{\alpha}} \quad \text{ and }\quad \sal^{-1}(\SA)\subseteq \widehat{B\alpha(A)B}.
$$
In particular, $\sal$ is onto $\widehat{A}$ if and only if $\alpha$ is injective. Moreover, if $B$ is liminal, then $\sal^{-1}(\SA) = \widehat{B\alpha(A)B}$ and $\sal$ is continuous.
\end{lem}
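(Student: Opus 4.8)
The plan is to treat the four assertions in turn, reducing the first three to the extension theory of representations recalled above and the last two to a structural fact about $C^*$-algebras of compact operators. Throughout I work directly from the definition $\sal([\rho])=\{[\pi]\in\SA:\pi\le\rho\circ\al\}$, and I use the identification of $\widehat I$ with $\{[\pi]:\pi(I)\ne 0\}$.

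First I would establish the range formula $\sal(\SB)=\SA\setminus\widehat{\ker\al}$ by proving the two inclusions. For ``$\subseteq$'', if $\pi\le\rho\circ\al$ then every $a\in\ker\al$ gives $\pi(a)h=\rho(\al(a))h=0$ for $h\in H_\pi$, so $\pi(\ker\al)=0$, i.e.\ $[\pi]\notin\widehat{\ker\al}$. For ``$\supseteq$'', given $[\pi]$ with $\pi(\ker\al)=0$, the formula $\sigma(\al(a)):=\pi(a)$ is well defined and yields an irreducible representation $\sigma$ of the sub-$C^*$-algebra $\al(A)\subseteq B$ (its image equals $\pi(A)$, which acts irreducibly). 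Applying the extension result \cite[Proposition 2.10.2]{Dixmier} in its irreducible form, I obtain $\rho\in\Irr(B)$ with $\sigma\le\rho$; then $\rho\circ\al$ restricts to $\pi$ on $H_\pi$, so $[\pi]\in\sal([\rho])$. The claim ``$\sal$ is onto $\widehat A$ iff $\al$ is injective'' is then immediate, since $\sal(\SB)=\SA$ holds exactly when $\widehat{\ker\al}=\emptyset$, i.e.\ $\ker\al=0$.

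For the containment $\sal^{-1}(\SA)\subseteq\widehat{B\al(A)B}$ I would argue that if $\sal([\rho])\ne\emptyset$, witnessed by $\pi\le\rho\circ\al$, then $\rho\circ\al\ne 0$ because $\pi\ne 0$; hence $\rho(\al(A))\ne 0$, and since $\al(A)\subseteq B\al(A)B$ (approximate the elements of $\al(A)$ by $e_\lambda\al(a)e_\lambda$ for an approximate unit $(e_\lambda)$ of $B$), we get $\rho(B\al(A)B)\ne 0$, i.e.\ $[\rho]\in\widehat{B\al(A)B}$. This direction needs no hypothesis on $B$.

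The substantive part is the liminal case, and I would prove at once the stronger statement that $\sal^{-1}(\widehat I)=\widehat{B\al(I)B}$ for \emph{every} ideal $I$ of $A$: continuity then follows because the open subsets of $\SA$ are exactly the sets $\widehat I$, and the domain equality is the case $I=A$. The inclusion ``$\subseteq$'' is proved as in the previous paragraph. The reverse inclusion is where liminality is used and is the main obstacle, because in general a representation need not contain \emph{any} irreducible subrepresentation. Given $[\rho]\in\widehat{B\al(I)B}$ we have $\rho(\al(I))\ne 0$, and liminality of $B$ forces $\rho(B)\subseteq\K(H_\rho)$, so $D:=\rho(\al(I))$ is a non-zero $C^*$-algebra of compact operators. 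Such an algebra is a $c_0$-direct sum of elementary algebras $\K(H_i)$, whence its inclusion representation, restricted to the essential subspace $\overline{D\,H_\rho}$, decomposes as a direct sum of irreducibles; choosing one irreducible summand yields a closed subspace $K\subseteq H_\rho$ on which $\rho\circ\al|_I$ acts irreducibly. Since $I$ is an ideal and $K=\overline{DK}$ (irreducible representations are nondegenerate), for $a\in A$ one has $\rho(\al(a))DK=\rho(\al(aI))K\subseteq DK\subseteq K$, so $K$ is invariant under all of $\rho(\al(A))$. Then $\pi:=(\rho\circ\al)|_K$ is a representation of $A$ whose restriction to $I$ is a non-zero irreducible, hence $\pi\in\Irr(A)$ with $\pi(I)\ne 0$ and $\pi\le\rho\circ\al$, producing the required point of $\sal([\rho])\cap\widehat I$. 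I expect the bookkeeping in this final paragraph---the decomposition of $D$ into irreducibles, the verification of $\rho(\al(A))$-invariance of $K$, and the passage from irreducibility over $I$ to irreducibility over $A$---to be the only genuine work, everything else being formal.
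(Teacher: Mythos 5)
Your proposal is correct. Note that the paper itself contains no argument for Lemma~\ref{range of duals to homomorphisms}: it is imported verbatim from \cite[Proposition 4.2]{kwa-szym}, so the only comparison available is with the standard proof, which is essentially what you give. Your treatment of the range formula (factoring $\pi$ through $A/\ker\al\cong\al(A)$ and extending the resulting irreducible representation of the subalgebra $\al(A)$ to an irreducible representation of $B$ via \cite[Proposition 2.10.2]{Dixmier}) and of the domain inclusion are the expected arguments, and the key steps of your liminal case are sound: $\rho(\al(I))$ is a nonzero $C^*$-algebra of compact operators, so its essential subspace decomposes into irreducible subspaces, and your computation $\rho(\al(a))DK=\rho(\al(aI))K\subseteq DK$ (valid since $aI\subseteq I$) is exactly what promotes a chosen irreducible $D$-subspace $K$ to an $A$-invariant subspace carrying an irreducible representation of $A$ that is nonzero on $I$. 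It is also worth pointing out that your strengthened, ideal-wise identity $\sal^{-1}(\widehat{I})=\widehat{B\al(I)B}$ is precisely the right formulation for continuity, and it mirrors how the paper handles the analogous statement for correspondences: in the proof of Lemma~\ref{range of duals to correspondences}(ii) the reference \cite[5.4.13]{Dixmier} plays the role of your decomposition of the compact-operator algebra, and Corollary~\ref{continuity of dual graphs} deduces continuity from the ideal-wise formula in the same way you do.
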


The following definition is a generalization of \cite[Definition 4.4]{kwa-szym}, originally formulated for regular $C^*$-correspondences.

\begin{defn}
Let $A$ and $B$ be $C^*$-algebras, and let $X:A\to B$ be a $C^*$-correspondence from $A$ to $B$ with left action $\phi_X:A\rightarrow \LL(X)$. We define the \emph{multivalued map $\SX:\SB\to \SA$ dual to $X$} as the composition of multivalued maps $\widehat{\phi_X}\circ [X\dashind]$, cf. Definition~\ref{def:dual_partial_homeomorphism}.
\end{defn}

It follows from the definition that the multivalued map $\SX$ is given by the formula
$$
\SX([\rho]):= \{[\pi]\in \SA: \pi \leq X\dashind(\rho)\circ \phi_X\}, \qquad [\rho]\in \SB.
$$

\begin{lem}\label{lem:restrictions of graphs vs correspondences} 
Let $A$ and $B$ be $C^*$-algebras. Let $X:A\to B$ be a $C^*$-correspondence, and let $\SX:\SB\to\SA$ be the associated dual multivalued map. Given an ideal $I$ in $A$ and an ideal $J$ in $B$, the following statements hold:
\begin{itemize}
\item[(i)] $IX$ is a $C^*$-correspondence from $I$ to $B$ whose dual multivalued map coincides with the restriction ${_{\widehat{I}}}\X$.
\item[(ii)] $XJ$	is a $C^*$-correspondence from $A$ to $J$ whose dual multivalued map coincides with the restriction $\X{_{\widehat{J}}}$.
\item[(iii)] $IXJ$ is a $C^*$-correspondence from $I$ to $J$ whose multivalued map coincides with the restriction ${_{\widehat{I}}}\X{_{\widehat{J}}}$.
\end{itemize}
\end{lem}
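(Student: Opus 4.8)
The plan is to first record the $C^*$-correspondence structures and reduce (iii) to (i) and (ii). For an ideal $I$ of $A$ the submodule $IX:=\clsp\{\phi_X(a)x: a\in I,\, x\in X\}$ is a right Hilbert $B$-module that is invariant under $\phi_X(I)$, so $\phi_X$ restricts to a left action $\phi_{IX}:I\to\LL(IX)$, $\phi_{IX}(a)=\phi_X(a)|_{IX}$, and $IX$ becomes a correspondence from $I$ to $B$. Likewise $XJ$ is a right Hilbert $J$-module on which $A$ still acts on the left, giving a correspondence from $A$ to $J$, and $IXJ=I(XJ)=(IX)J$ is a correspondence from $I$ to $J$. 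Since restricting the range to $\widehat{I}$ and the domain to $\widehat{J}$ commute, once (i) and (ii) are available one obtains (iii): applying (i) to the correspondence $XJ:A\to J$ and the ideal $I$ shows that the dual map of $I(XJ)=IXJ$ is ${_{\widehat{I}}}(\widehat{XJ})={_{\widehat{I}}}\X{_{\widehat{J}}}$.

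For (ii) I would fix $[\rho]\in\widehat{J}\subseteq\SB$, so that $\rho\in\Irr(B)$ with $\rho(J)\neq 0$ and $\rho|_J$ is irreducible and nondegenerate on $H_\rho$, and compare the two representations of $A$ defining $\widehat{XJ}([\rho|_J])$ and $\X([\rho])$. The inclusion $XJ\hookrightarrow X$ induces a linear map $V:XJ\otimes_{\rho|_J}H_\rho\to X\otimes_\rho H_\rho$, $\xi\otimes h\mapsto \xi\otimes h$, which is isometric because $\langle\xi,\xi'\rangle_J=\langle\xi,\xi'\rangle_B$ for $\xi,\xi'\in XJ$. Using density of $\rho(J)H_\rho$ in $H_\rho$ one checks that $V$ is onto, and since $\phi_{XJ}(a)=\phi_X(a)|_{XJ}$ it intertwines $XJ\dashind(\rho|_J)\circ\phi_{XJ}$ with $X\dashind(\rho)\circ\phi_X$. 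These representations of $A$ are therefore unitarily equivalent, hence have the same irreducible subrepresentations, so $\widehat{XJ}([\rho|_J])=\X([\rho])$, which is precisely $\X{_{\widehat{J}}}$ under the identification of $\widehat{J}$ with $\{[\rho]:\rho(J)\neq0\}\subseteq\SB$.

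For (i) I would set $\sigma:=X\dashind(\rho)\circ\phi_X$ on $\HH:=X\otimes_\rho H_\rho$ and let $K:=\overline{\sigma(I)\HH}$. Because $I$ is an ideal, $K$ is $\sigma(A)$-invariant, $\sigma|_I$ is nondegenerate on $K$ and vanishes on $K^\perp$. The inclusion $IX\hookrightarrow X$ induces an isometry $W:IX\otimes_\rho H_\rho\to\HH$ whose range is exactly $K=\clsp\{(a\cdot x)\otimes h:a\in I\}$ and which intertwines $IX\dashind(\rho)\circ\phi_{IX}$ with the restriction of $\sigma|_I$ to $K$. Hence $\widehat{IX}([\rho])$ is, under $\SI\cong\widehat{I}$, the set of classes of irreducible subrepresentations of $\sigma|_I$. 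It then remains to establish the general fact that for $\pi\in\Irr(A)$ with $\pi(I)\neq0$ (so that $\pi|_I$ is irreducible) one has $\pi\leq\sigma$ if and only if $\pi|_I\leq\sigma|_I$; combined with the homeomorphism $[\pi]\mapsto[\pi|_I]$ from $\{[\pi]:\pi(I)\neq0\}$ onto $\widehat{I}$, this yields $\widehat{IX}([\rho])={_{\widehat{I}}}\X([\rho])$.

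The forward direction of this fact is immediate, since a $\sigma(A)$-invariant subspace is automatically $\sigma(I)$-invariant. The main obstacle is the converse: given a $\sigma(I)$-invariant subspace $L$ with $\sigma|_I|_L\cong\pi|_I$, I must produce a $\sigma(A)$-invariant subspace carrying $\pi$. I plan to build it directly. Fixing a unitary $u:L\to H_\pi$ with $u\,\sigma(c)|_L=\pi(c)\,u$ for $c\in I$ (equivalently $\sigma(c)|_L=u^{*}\pi(c)u$), the assignment $\pi(b)\eta\mapsto\sigma(b)\,u^{*}\eta$ for $b\in I$, $\eta\in H_\pi$, is well defined and isometric by a short computation, extends by nondegeneracy of $\pi|_I$ to an isometry $H_\pi\to K$, and intertwines $\pi$ with $\sigma$; its range is the desired invariant subspace, giving $\pi\leq\sigma$. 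This completes (i), and with it (iii). Alternatively, this purely representation-theoretic statement about restrictions to ideals may be cited from Dixmier.
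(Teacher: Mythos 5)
Your proof is correct and follows essentially the same route as the paper's: parts (ii) and (iii) are argued identically (density of $\rho(J)H_\rho$ in $H_\rho$ gives $XJ\otimes_\rho H_\rho = X\otimes_\rho H_\rho$, and (iii) is obtained by composing (i) and (ii)), and in (i) both you and the paper use that irreducibility forces $H_\pi=\overline{\phi_X(I)H_\pi}\subseteq IX\otimes_\rho H_\rho$. The genuine difference is in the converse half of (i): the paper stops after the forward direction and asserts the ``if and only if,'' leaving implicit the fact that every irreducible $I$-subrepresentation of $\sigma|_I$, where $\sigma:=X\dashind(\rho)\circ\phi_X$, comes from an irreducible $A$-subrepresentation of $\sigma$. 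You correctly identify this as the main obstacle and prove it with the explicit intertwining isometry determined by $\pi(b)\eta\mapsto\sigma(b)u^{*}\eta$; the computation goes through (it is the standard Dixmier-style extension argument), so your write-up is in fact more complete than the paper's own proof. For what it is worth, there is a one-line shortcut you could have used instead: if $L$ is a $\sigma(I)$-invariant subspace on which $I$ acts irreducibly, then $L=\overline{\sigma(I)L}$, hence $\sigma(A)L\subseteq\overline{\sigma(AI)L}\subseteq L$, so $L$ is automatically $\sigma(A)$-invariant; the resulting $A$-representation on $L$ is irreducible (any $A$-invariant subspace of $L$ is $I$-invariant) and extends the given $I$-representation, which yields the converse without constructing any isometry.
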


\begin{proof} 
(i). Let $\rho\in \Irr(B)$ and $\pi\in \Irr(A)$ be such that $\pi \leq X\dashind(\rho)\circ \phi_X$. If $\pi(I)\neq 0$, then by the irreducibility of $\pi(I)$, we have $H_\pi=\phi_X(I)H_\pi\subseteq IX\otimes_\rho H_\rho$. Thus, $[\pi]\in \widehat{IX}([\rho])$ if and only if $[\pi]\in {_{\widehat{I}}\X}([\rho])$.

(ii). Let $\pi\in \Irr(A)$ and $\rho\in \Irr(J)$ be such that $\pi \leq X\dashind(\rho)\circ \phi_X$. Then $X\otimes_\rho H_\rho=X\otimes_\rho \rho(J)H_\pi=XJ\otimes_\rho H_\rho$, which gives the assertion.

(iii). It follows from (i) and (ii).
\end{proof}

For the sake of simplicity, in the remainder of this section we only consider \(C^*\)-correspondences over \(A\) (though the following facts could be easily extended to $C^*$-correspondences from \(A\) to \(B\)).

The following lemma is a generalization of \cite[Proposition 4.5]{kwa-szym}.

\begin{lem}\label{range of duals to correspondences}
Let $X$ be a $C^*$-correspondence over $A$ and let $\SX:\SA\to \SA$ be the multivalued map dual to $X$.
\begin{itemize}
\item[(i)] We have $\widehat{J(X)}\setminus \widehat{\ker\phi_X} \subseteq \X(\SA)\subseteq \widehat{A}\setminus \widehat{\ker\phi_X}$. In particular, the range of $\SX$ contains $\widehat{J_X}$.
\item[(ii)] For any ideal $J$ in $A$ we have $\SX^{-1}(\widehat{J})\subseteq \widehat{X(J)}$. Moreover, if $X(J)$ is liminal and $J\subseteq J(X)$, then $\SX^{-1}(\widehat{J})=\widehat{X(J)}$.
\end{itemize}
\end{lem}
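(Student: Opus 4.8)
The plan is to analyse $\SX=\widehat{\phi_X}\circ[X\dashind]$ through its two factors, reducing everything to the dual of the single $*$-homomorphism $\phi_X\colon A\to\LL(X)$ together with the ideal $\K(X)$ of $\LL(X)$, for which $\phi_X^{-1}(\K(X))=J(X)$. Since $[X\dashind]$ is a homeomorphism of $\widehat{\langle X,X\rangle}_A$ onto $\widehat{\K(X)}$, its range is all of $\widehat{\K(X)}$, so $\SX(\SA)=\widehat{\phi_X}(\widehat{\K(X)})$; similarly, as preimages compose, $\SX^{-1}(\widehat J)=[X\dashind]^{-1}\big(\widehat{\phi_X}^{-1}(\widehat J)\big)$, although for (ii) I will argue directly at the level of representations.

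For (i) the upper bound is immediate from Lemma~\ref{range of duals to homomorphisms}: $\SX(\SA)=\widehat{\phi_X}(\widehat{\K(X)})\subseteq\widehat{\phi_X}(\widehat{\LL(X)})=\SA\setminus\widehat{\ker\phi_X}$. For the lower bound I take $[\pi]\in\widehat{J(X)}\setminus\widehat{\ker\phi_X}$, so that $\pi$ factors as $\pi=\bar\pi\circ\phi_X$ for an irreducible representation $\bar\pi$ of $\phi_X(A)$ with $\bar\pi(\phi_X(J(X)))=\pi(J(X))\neq0$, and I extend $\bar\pi$ to an irreducible representation $\sigma$ of $\LL(X)$ (see \cite[Proposition 2.10.2]{Dixmier}). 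The key point is that, because $\phi_X(J(X))\subseteq\K(X)$ and $\sigma$ restricts to $\bar\pi$ on $\phi_X(A)$, the extension automatically satisfies $\sigma(\K(X))\supseteq\sigma(\phi_X(J(X)))\neq0$; hence $[\sigma]\in\widehat{\K(X)}$ lies in the range of $[X\dashind]$, and $\pi\leq\sigma\circ\phi_X$ gives $[\pi]\in\SX(\SA)$. The ``in particular'' clause then follows, since $J_X\subseteq(\ker\phi_X)^\bot$ forces $\widehat{J_X}\cap\widehat{\ker\phi_X}=\widehat{J_X\cap\ker\phi_X}=\emptyset$, whence $\widehat{J_X}\subseteq\widehat{J(X)}\setminus\widehat{\ker\phi_X}$.

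For the first inclusion of (ii) I fix $[\rho]\in\SX^{-1}(\widehat J)$ together with an irreducible $\pi\leq X\dashind(\rho)\circ\phi_X$ satisfying $\pi(J)\neq0$, so that $X\dashind(\rho)(\phi_X(J))\neq0$. Computing $X\dashind(\rho)(\phi_X(a))(x\otimes_\rho h)=\phi_X(a)x\otimes_\rho h$ and using the identity $X(J)=\langle\phi_X(J)X,\phi_X(J)X\rangle_A$ (an easy approximate-unit argument) shows that this is equivalent to $\rho(X(J))\neq0$, i.e. $[\rho]\in\widehat{X(J)}$. The reverse inclusion, under the hypotheses that $X(J)$ is liminal and $J\subseteq J(X)$, is the crux. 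Given $[\rho]\in\widehat{X(J)}$ I put $\sigma:=X\dashind(\rho)$ and $\theta:=\sigma\circ\phi_X$; since $X(J)\subseteq\langle X,X\rangle_A$, the condition $\rho(X(J))\neq0$ forces $[\rho]\in\widehat{\langle X,X\rangle}_A$, so $\sigma|_{\K(X)}$ is irreducible and $\theta(J)=\sigma(\phi_X(J))\neq0$. I must now produce an irreducible subrepresentation of $\theta$ nonzero on $J$, which (as $J$ is an ideal) is equivalent to $\theta|_J$ having an irreducible subrepresentation.

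The main obstacle is exactly this last step, because a nonzero representation of $J$ need not admit any irreducible subrepresentation; liminality is what rescues it. The plan is to show that $\theta(J)$ consists of compact operators. First, $J\subseteq J(X)$ gives $\phi_X(a)\in\K(X)$ for $a\in J$, and an approximate-unit computation based on $\phi_X(u)\Theta_{x,y}\phi_X(v)=\Theta_{\phi_X(u)x,\phi_X(v)y}$ shows $\phi_X(J)\subseteq\K(X\cdot X(J))$, the ideal of $\K(X)$ attached to the submodule $X\cdot X(J)$. Next, $\K(X\cdot X(J))$ is Morita equivalent to an ideal of $X(J)$, hence liminal; being a nonzero ideal of $\K(X)$ on which the irreducible $\sigma|_{\K(X)}$ does not vanish (it cannot, as $\sigma(\phi_X(J))\neq0$), its image is the full algebra of compacts, $\sigma(\K(X\cdot X(J)))=\K(H_\sigma)$. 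Therefore $\theta(J)=\sigma(\phi_X(J))\subseteq\K(H_\sigma)$, so $\theta|_J$ is a nonzero representation into the compacts and thus decomposes as a direct sum of irreducibles; in particular it has an irreducible subrepresentation, whose essential subspace is $\theta(A)$-invariant. Restricting $\theta$ there yields an irreducible $\pi\leq\theta$ with $\pi(J)\neq0$, giving $[\rho]\in\SX^{-1}(\widehat J)$ and completing the proof.
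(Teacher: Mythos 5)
Your proof is correct, and its skeleton matches the paper's: in (i) both arguments obtain the upper bound from Lemma~\ref{range of duals to homomorphisms} and the lower bound by factoring $\pi$ through $\ker\phi_X$ and invoking the irreducible-extension theorem \cite[Proposition 2.10.2]{Dixmier}; in (ii) both prove the easy inclusion by the same inner-product computation, and the hard one by showing that $\theta:=X\dashind(\rho)\circ\phi_X$ maps $J$ into compact operators and then extracting an irreducible subrepresentation nonzero on $J$. The differences lie in the auxiliary objects. In (i) the paper extends the irreducible representation of $\phi_X(J(X))$ inside $\K(X)$ and pulls it back with $[X^*\dashind]$, whereas you extend a representation of $\phi_X(A)$ all the way to $\LL(X)$ and observe that the extension cannot annihilate $\K(X)$ because it agrees with $\bar\pi$ on $H_{\bar\pi}$; these are interchangeable. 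In (ii) the paper localizes to the hereditary subalgebra $\K(JX)$ of $\K(X)$ (with $JX$ a $\K(JX)$-$X(J)$-equivalence bimodule) and must argue separately that $\theta(J)$ vanishes on the orthogonal complement of $JX\otimes_\rho H_\rho$; you instead pass to $\K(X\cdot X(J))$, which is an honest ideal of $\K(X)$ by the preliminaries ($\K(XI)\triangleleft\K(X)$), so that irreducibility of $X\dashind(\rho)|_{\K(X)}$ on a non-annihilated ideal, together with liminality of $\K(X\cdot X(J))$ (Morita equivalent to the ideal $X(J)\cap\langle X,X\rangle_A$ of $X(J)$), yields $\sigma(\K(X\cdot X(J)))=\K(H_\sigma)$ in one stroke. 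Your variant trades the paper's orthogonal-complement step for the small verifications that $JX\subseteq X\cdot X(J)$ (so that $\phi_X(J)\subseteq\K(X\cdot X(J))$) and that the restriction of an irreducible representation to a non-annihilated ideal remains irreducible; both versions rest on the same two pillars, namely that Morita equivalence preserves liminality and that a nondegenerate representation by compact operators decomposes into irreducibles.
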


\begin{proof}
(i). By Lemma~\ref{range of duals to homomorphisms}, the range of $\widehat{\phi_X}$ is $\SA\setminus \widehat{\ker\phi_X}$, and hence the range of $\SX=\widehat{\phi_X}\circ [X\dashind]$ is contained in $\SA\setminus \widehat{\ker\phi_X}$ too. Now let $\pi\in \Irr(J(X))$ such that $\pi(\ker\phi_X)= 0$, that is $[\pi]\in \widehat{J(X)}\setminus \widehat{\ker\phi_X}$. By \cite[Proposition 3.2.1]{Dixmier}, $\pi$ factors to an irreducible representation $\tilde{\pi}\in \Irr(J(X)/\ker\phi_X)$, and since $\phi_X$ factors to an isomorphism $\tilde{\phi_X}:J(X)/\ker\phi_X\rightarrow \phi_X(J(X))\subseteq \K(X)$, we get that $\tilde{\pi}\circ \tilde{\phi_X}^{-1}$ is an irreducible representation of $\phi_X(J(X))$. Extending $\tilde{\pi}\circ \tilde{\phi_X}^{-1}$ to an irreducible representation $\bar{\pi}$ of $\K(X)$, so that $\tilde{\pi}\circ \tilde{\phi_X}^{-1}\leq \bar{\pi}$, and applying $\tilde{\phi_X}$, we have $\tilde{\pi}\leq \bar{\pi}\circ \tilde{\phi_X}$. But then $\pi\leq \bar{\pi}\circ\phi_X$. Thus, if we define $\rho:=X^*\dashind(\bar{\pi})$ we get $[\pi]\in \SX([\rho])$. Hence $[\pi]\in \SX(\SA)$.

(ii). Let $\pi,\rho\in\Irr(A)$ with $\pi\leq X\dashind(\rho)\circ \phi_X$, that is $[\pi] \in \SX([\rho])$, and suppose that $[\pi]\in \widehat{J}$. So $\pi(J)\neq 0$. This implies that $0\neq \pi(J)H_\pi\subseteq \phi_X(J)X\otimes_\rho H_\rho$, and hence $0\neq\langle \phi_X(J) X\otimes_{\rho} H_{\rho} , \phi_X(J) X\otimes_{\rho} H_{\rho} \rangle= \langle H_{\rho} , \rho(\langle \phi_X(J) X , \phi_X(J) X\rangle_A) H_{\rho}\rangle=\langle H_\rho, \rho(X(J))H_\rho\rangle\,.$ Therefore, $\rho( X(J)) \neq 0$, and this shows that $[\rho]\in \widehat{ X(J)}$.

Now assume that $ X(J)$ is liminal and $J\subseteq J(X)$. Let $\rho\in\Irr(A)$ be such that $\rho(X(J))\neq 0$, that is $[\rho]\in\widehat{X(J)}$. Note that $JX$ establishes Morita equivalence between $\K(JX)$ and $X(J)=\langle JX,JX\rangle_A$. Hence $\K(JX)$ is liminal, cf. \cite{HRW}, and therefore $JX\dashind(\rho)(\K(JX))=\K(JX\otimes_\rho H_\rho)$. The inclusion $\phi_X(J)\subseteq \K(X)$ implies that in fact $\phi_X(J)\subseteq \K(JX)$. Since $X\dashind(\rho)(\K(JX))|_{JX\otimes_\rho H_\rho}=JX\dashind(\rho)(\K(JX))$ and $X\dashind(\rho)\circ\phi_X(J)$ vanishes on the orthogonal complement of $JX\otimes_\rho H_\rho$, we see that $X\dashind(\rho)\circ\phi_X(J)$ consists of compact operators. Accordingly, by \cite[5.4.13]{Dixmier}, the representation $X\dashind(\rho)\circ\phi_X$ contains an irreducible subrepresentation $\pi$ of $A$ with $\pi(J)\neq 0$, that is $[\pi]\in \widehat{J}$. Thus, $[\pi]\in \SX([\rho])$, and hence $[\rho]\in \SX^{-1}(\widehat{J})$, as desired.
\end{proof}

\begin{rem}
In general, the range of $\SX$ is not equal to $\widehat{J(X)}\setminus \widehat{\ker\phi_X}$. Consider for instance an infinite dimensional space $H$ as a $C^*$-correspondence over $\C$. Then $\widehat{J(X)}=\emptyset\neq [1_{\C}]=r(\SX^1)$.
\end{rem}

\begin{cor}\label{continuity of dual graphs}
Let $X$ be a $C^*$-correspondence over $A$ and let $J$ be an ideal in $J_X$. Consider the multivalued map $\widehat{Y}$ dual to the $C^*$-correspondence $Y:=JXJ$ over the $C^*$-algebra $J$. If $X(J)$ is liminal, then $\widehat{Y}$ is continuous.
\end{cor}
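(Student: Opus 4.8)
The plan is to reduce the statement to Lemma~\ref{range of duals to correspondences}(ii) applied to $X$ \emph{itself}, rather than to the restricted correspondence $Y$ over $J$. First recall that, by Lemma~\ref{lem:restrictions of graphs vs correspondences}(iii) (with both ideals taken to be $J$), the $C^*$-correspondence $Y=JXJ$ over $J$ has dual multivalued map $\widehat{Y}={}_{\widehat{J}}\X{}_{\widehat{J}}\colon \widehat{J}\to\widehat{J}$. Since the open subsets of the spectrum $\widehat{J}$ are precisely the sets $\widehat{I}$ with $I$ a closed two-sided ideal of $J$, continuity of $\widehat{Y}$ amounts to showing that $\widehat{Y}^{-1}(\widehat{I})$ is open in $\widehat{J}$ for every such $I$.

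The key observation is the identity $\widehat{Y}^{-1}(\widehat{I})=\X^{-1}(\widehat{I})\cap\widehat{J}$. Indeed, regarding each $[\rho]\in\widehat{J}$ as an element of $\widehat{A}$ via the usual identification, the definition of the restricted map gives $\widehat{Y}([\rho])=\X([\rho])\cap\widehat{J}$, and since $\widehat{I}\subseteq\widehat{J}$ we get $\widehat{Y}([\rho])\cap\widehat{I}=\X([\rho])\cap\widehat{I}$. Taking preimages yields the claimed identity, so it remains only to prove that $\X^{-1}(\widehat{I})$ is open in $\widehat{A}$.

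For this I would invoke Lemma~\ref{range of duals to correspondences}(ii) for the pair $(X,I)$, which requires checking that $I$ is an ideal of $A$ with $I\subseteq J(X)$ and that $X(I)$ is liminal. Being a closed ideal of the closed ideal $J$ of $A$, the set $I$ is automatically a closed ideal of $A$ (the standard approximate-unit argument valid in any $C^*$-algebra); moreover $I\subseteq J\subseteq J_X\subseteq J(X)$. Finally, from $I\subseteq J$ we get $\phi_X(I)X\subseteq\phi_X(J)X$ and hence $X(I)\subseteq X(J)$; as both are ideals of $A$, the algebra $X(I)$ is an ideal of the liminal algebra $X(J)$, hence liminal. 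Lemma~\ref{range of duals to correspondences}(ii) then gives $\X^{-1}(\widehat{I})=\widehat{X(I)}$, which is open in $\widehat{A}$, so that $\widehat{Y}^{-1}(\widehat{I})=\widehat{X(I)}\cap\widehat{J}$ is open in the subspace $\widehat{J}$, as desired.

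The two genuinely substantive verifications are exactly the hypotheses of Lemma~\ref{range of duals to correspondences}(ii): that $I$ is an ideal of the whole algebra $A$ and that $X(I)$ is liminal. Neither is hard, but they are where the argument actually uses the $C^*$-structure and the hypothesis on $X(J)$. The main pitfall to avoid is the temptation to check the conditions of Lemma~\ref{range of duals to correspondences}(ii) \emph{directly} for the correspondence $Y$ over $J$: that route would force one to establish $J\subseteq J(Y)$, i.e.\ that the left action of $J$ on $JXJ$ is by generalized compacts, which is a considerably more delicate Hilbert-module computation. Restricting instead to $\widehat{J}$ inside $\widehat{A}$ and working with $\X$ bypasses this entirely.
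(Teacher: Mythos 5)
Your proof is correct and follows essentially the same route as the paper's: both arguments reduce, via the restriction lemma (Lemma~\ref{lem:restrictions of graphs vs correspondences}), to computing $\widehat{Y}^{-1}(\widehat{I})=\X^{-1}(\widehat{I})\cap\widehat{J}$ and then apply Lemma~\ref{range of duals to correspondences}(ii) to the pair $(X,I)$, using that $X(I)\subseteq X(J)$ is liminal and $I\subseteq J\subseteq J_X\subseteq J(X)$. The paper's proof is just a terser version of yours (it additionally notes that the resulting open set equals $\widehat{Y(I)}$), and, like you, it deliberately avoids verifying the hypotheses of that lemma for $Y$ over $J$ directly.
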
 

\begin{proof}
Let $I$ be an ideal in $J$. Since $X(I)\subseteq X(J)$ is liminal, we have $\SX^{-1}(\widehat{I})= \widehat{X(I)}$ by Lemma~\ref{range of duals to correspondences}(ii). Hence in view of Lemma~\ref{lem:restrictions of graphs vs correspondences} we get $\widehat{Y}^{-1}(\widehat{I})=\SX^{-1}(\widehat{I})\cap \widehat{J}= \widehat{X(I)}\cap \widehat{J}=\widehat{Y(I)}$. 
\end{proof}

\begin{cor}\label{invariance via dual graphs}
Let $X$ be a $C^*$-correspondence over $A$ and let $J$ be an ideal of $A$. If $J$ is positively $X$-invariant, then $\widehat{J}$ is negatively invariant with respect to $\SX$. If $X(J)$ is liminal and $J\subseteq J(X)$, then $J$ is a positively $X$-invariant if and only if $\widehat{J}$ is negatively $\SX$-invariant.
\end{cor}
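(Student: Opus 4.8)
The plan is to read everything off from Lemma~\ref{range of duals to correspondences}(ii) together with the standard order isomorphism $I\mapsto \widehat{I}$ between the lattice of (closed, two-sided) ideals of $A$ and the lattice of open subsets of $\SA$. Recall that this map is a bijection with $I_1\subseteq I_2$ if and only if $\widehat{I_1}\subseteq\widehat{I_2}$ (see \cite{Dixmier}, and the identification fixed in the Preliminaries), and that, by definition, $\widehat{J}$ is negatively $\SX$-invariant precisely when $\SX^{-1}(\widehat{J})\subseteq\widehat{J}$.

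First I would prove the unconditional implication. Assuming $J$ is positively $X$-invariant, i.e.\ $X(J)\subseteq J$, monotonicity of $I\mapsto\widehat{I}$ gives $\widehat{X(J)}\subseteq\widehat{J}$. Since Lemma~\ref{range of duals to correspondences}(ii) furnishes the inclusion $\SX^{-1}(\widehat{J})\subseteq\widehat{X(J)}$ for an \emph{arbitrary} ideal $J$, we obtain $\SX^{-1}(\widehat{J})\subseteq\widehat{X(J)}\subseteq\widehat{J}$, which is exactly negative $\SX$-invariance of $\widehat{J}$. No extra hypotheses are needed here, precisely because only the inclusion part of Lemma~\ref{range of duals to correspondences}(ii) is invoked.

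Next, under the additional assumptions that $X(J)$ is liminal and $J\subseteq J(X)$, Lemma~\ref{range of duals to correspondences}(ii) upgrades the inclusion to the equality $\SX^{-1}(\widehat{J})=\widehat{X(J)}$. The forward implication of the equivalence is already covered, so it remains to treat the converse. Assuming $\widehat{J}$ is negatively $\SX$-invariant, the equality yields $\widehat{X(J)}=\SX^{-1}(\widehat{J})\subseteq\widehat{J}$, and applying the order isomorphism in the reverse direction gives $X(J)\subseteq J$, that is, $J$ is positively $X$-invariant. This closes the equivalence.

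I do not anticipate a real obstacle: the content is entirely contained in Lemma~\ref{range of duals to correspondences}(ii). The only points requiring care are invoking the inclusion form of that lemma for the unconditional statement versus its equality form for the equivalence, and transporting inclusions back across the spectrum via the lattice isomorphism. It is exactly this backward transport that forces the liminality of $X(J)$ and the hypothesis $J\subseteq J(X)$, since these are what promote the inclusion $\SX^{-1}(\widehat{J})\subseteq\widehat{X(J)}$ to an equality and hence allow one to recover $X(J)\subseteq J$ from invariance data on the dual side.
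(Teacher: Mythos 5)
Your proposal is correct and follows essentially the same argument as the paper: the inclusion $\SX^{-1}(\widehat{J})\subseteq\widehat{X(J)}$ from Lemma~\ref{range of duals to correspondences}(ii) handles the unconditional implication, and the equality $\SX^{-1}(\widehat{J})=\widehat{X(J)}$ under the liminality hypothesis, transported back through the order isomorphism $I\mapsto\widehat{I}$, yields the converse.
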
 

\begin{proof}
If $J$ is a positively invariant ideal of $A$, then $\widehat{X(J)}\subseteq \widehat{J}$. Hence $\SX^{-1}(J)\subseteq \widehat{X(J)}\subseteq \widehat{J}$ by Lemma~\ref{range of duals to correspondences}(ii). If we assume that $X(J)$ is liminal, $J\subseteq J(X)$ and $\widehat{J}$ is negatively invariant with respect to $\SX$, then by Lemma~\ref{range of duals to correspondences}(ii) we have $\widehat{X(J)}=\SX^{-1}(\widehat{J})\subseteq \widehat{J}$. Therefore, $X(J)$ is contained in $J$.
\end{proof}

The following proposition is a generalization of \cite[Proposition 4.6]{kwa-szym}. 

\begin{prop}\label{tensoring vs duals}
Let $X$ and $Y$ be two $C^*$-correspondences over a $C^*$-algebra $A$, and let $\otimes 1_X:\LL(Y)\to\LL(Y\otimes X)$ be the $*$-homomorphism defined by $T\to T\otimes 1_X$ for $T\in \LL(Y)$. Then the following diagram of multivalued maps commute:
\begin{equation}\label{correspondences vs compacts graphs}
\xymatrix{ \SA \ar[rr]^{[(Y\otimes X)\dashind]\,\,\,\,\,} \ar[d]^{\X} &\qquad & \widehat{\LL(Y\otimes X)} \ar[d]^{\widehat{\otimes 1_X}} \\
\SA \ar[rr]^{[Y\dashind]} & & \widehat{\LL(Y)} }.
\end{equation} 
Moreover, if $J$ is an ideal of $J_X$ then $\SX^{-1}(\widehat{\langle YJ,YJ\rangle}_A)\subseteq \widehat{\langle YJ\otimes X,YJ\otimes X\rangle}_A$, and the diagram \eqref{correspondences vs compacts graphs} restricts to the following commutative diagram:
\begin{equation}\label{correspondences vs compacts graphs2}
\xymatrix{ \widehat{\langle Y \otimes X, Y \otimes X\rangle}_A \ar[rr]^{[(Y\otimes X)\dashind]\,\,\,\,\,} \ar[d]^{\X} &\qquad & \widehat{\K(Y\otimes X)} \ar[d]^{\widehat{\otimes 1_X}} \\
\widehat{\langle YJ, YJ\rangle}_A \ar[rr]^{[Y\dashind]} & & \widehat{\K(Y J)} },
\end{equation} 
where the vertical multivalued maps are surjective, and the horizontal arrows are homeomorphisms.
\end{prop}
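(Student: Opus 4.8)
The plan is to dispatch the three assertions in turn, reducing everything to the associativity of the internal tensor product together with the Morita theory recorded in the preliminaries. The computational heart of the commutativity of \eqref{correspondences vs compacts graphs} is the associativity isomorphism of balanced tensor products. For $[\rho]\in\SA$ I would write $\sigma_\rho:=X\dashind(\rho)\circ\phi_X$, the representation of $A$ on $X\otimes_\rho H_\rho$, and note that by definition $\X([\rho])$ is exactly the set of (classes of) irreducible subrepresentations of $\sigma_\rho$. First I would check on elementary tensors that the canonical unitary $(Y\otimes X)\otimes_\rho H_\rho\cong Y\otimes_{\sigma_\rho}(X\otimes_\rho H_\rho)$ intertwines $(Y\otimes X)\dashind(\rho)(T\otimes 1_X)$ with $Y\dashind(\sigma_\rho)(T)$ for $T\in\LL(Y)$, so that $(Y\otimes X)\dashind(\rho)\circ(\otimes 1_X)\cong Y\dashind(\sigma_\rho)$. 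Consequently the upper-right path sends $[\rho]$ to the set of irreducible subrepresentations of $Y\dashind(\sigma_\rho)$, while the lower-left path sends it to $[Y\dashind](\X([\rho]))=\{[Y\dashind(\pi)]:[\pi]\in\X([\rho]),\ \pi(\langle Y,Y\rangle_A)\neq 0\}$ (the condition being forced by $Y\otimes_\pi H_\pi=0$ when $\pi$ kills $\langle Y,Y\rangle_A$). It then remains to identify these two sets.

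One inclusion is easy: if $[\pi]\in\X([\rho])$, then the invariant embedding $H_\pi\hookrightarrow H_{\sigma_\rho}$ induces an isometric $\LL(Y)$-equivariant embedding $Y\otimes_\pi H_\pi\hookrightarrow Y\otimes_{\sigma_\rho}H_{\sigma_\rho}$, so $Y\dashind(\pi)\leq Y\dashind(\sigma_\rho)$. For the reverse inclusion, let $\sigma\leq Y\dashind(\sigma_\rho)$ be irreducible. Since $Y\dashind(\sigma_\rho)$ is non-degenerate on the essential ideal $\K(Y)$, no nonzero vector is annihilated by $\K(Y)$, whence $\sigma(\K(Y))\neq 0$ and $\sigma|_{\K(Y)}$ is irreducible. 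Inducing back along the adjoint bimodule $Y^*$ and using $Y^*\dashind\circ Y\dashind\cong\mathrm{id}$ — a consequence of the equivalence-bimodule identity $Y^*\otimes_{\K(Y)}Y\cong\langle Y,Y\rangle_A$ from \cite{morita} — I would produce $\pi:=Y^*\dashind(\sigma|_{\K(Y)})$, irreducible with $\pi(\langle Y,Y\rangle_A)\neq 0$ and $Y\dashind(\pi)\cong\sigma$; monotonicity of induction with respect to $\leq$ then gives $\pi\leq\sigma_\rho$, i.e. $[\pi]\in\X([\rho])$. This matches the two sets. The main obstacle lies precisely here: transporting the \emph{literal} subrepresentation relation of the Definition through Rieffel induction in both directions, which forces one to invoke the equivalence-bimodule machinery rather than argue naively by decomposing $\sigma_\rho$ into irreducibles (which would be problematic outside the Type I case).

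For the displayed inclusion I would simply record the identity $\langle YJ\otimes X, YJ\otimes X\rangle_A=\langle X,\phi_X(\langle YJ,YJ\rangle_A)X\rangle_A=X(\langle YJ,YJ\rangle_A)$, so that Lemma~\ref{range of duals to correspondences}(ii), applied to the ideal $\langle YJ,YJ\rangle_A$, yields $\X^{-1}(\widehat{\langle YJ,YJ\rangle}_A)\subseteq\widehat{X(\langle YJ,YJ\rangle_A)}=\widehat{\langle YJ\otimes X, YJ\otimes X\rangle}_A$. No liminality is needed here, and this inclusion guarantees that the restricted left vertical map of \eqref{correspondences vs compacts graphs2} has domain inside the top-left corner, so that the restriction is well posed.

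Finally, for the properties in \eqref{correspondences vs compacts graphs2}: the horizontal arrows are the homeomorphisms $\widehat{\langle Z,Z\rangle}_A\to\widehat{\K(Z)}$ attached to the modules $Z=Y\otimes X$ and $Z=YJ$ (using $\langle YJ,YJ\rangle_A=J\cap\langle Y,Y\rangle_A$ to identify the lower corner and to match $[Y\dashind]$ with $[YJ\dashind]$ there). Since $J\subseteq J_X\subseteq J(X)\cap(\ker\phi_X)^\bot$, Lemma~\ref{lemma on tensoring regular correspondences}(i),(ii) show that $\otimes 1_X$ restricts to an isometric, hence injective, $*$-homomorphism $\K(YJ)\to\K(Y\otimes X)$; its dual, the right vertical map, is therefore surjective onto $\widehat{\K(YJ)}$ by Lemma~\ref{range of duals to homomorphisms}. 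Commutativity of \eqref{correspondences vs compacts graphs2} then follows by intersecting the already-proven commutativity of \eqref{correspondences vs compacts graphs} with the open set $\widehat{\K(YJ)}$ downstairs, using that $[Y\dashind]$ carries $\widehat{\langle YJ,YJ\rangle}_A$ onto $\widehat{\K(YJ)}$. Surjectivity of the left vertical map is then formal, since on the restricted corners it equals $[Y\dashind]^{-1}\circ\widehat{\otimes 1_X}\circ[(Y\otimes X)\dashind]$, a composite of two homeomorphisms with a surjection.
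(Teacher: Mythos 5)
Your proposal is correct and, in its key step, genuinely different from the paper's proof. For the reverse inclusion in the commutativity of \eqref{correspondences vs compacts graphs}, the paper works concretely inside the fixed Hilbert space $(Y\otimes X)\otimes_\rho H_\rho\cong Y\otimes_{\sigma_\rho}H_{\sigma_\rho}$: given an irreducible $\sigma\leq Y\dashind(\sigma_\rho)$ it ``divides by $Y$'' by hand, setting $H_\pi:=\{h\in X\otimes_\rho H_\rho:\ y\otimes h\in H_\sigma \text{ for some (equivalently all) } y\in Y\}$ (this uses that $\K(Y)$ acts transitively on $Y$), proves $Y\otimes H_\pi=H_\sigma$, and takes $\pi$ to be the compression of $X\dashind(\rho)\circ\phi_X$ to $H_\pi$, pulling irreducibility of $\pi$ back through the imprimitivity bimodule. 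You instead restrict $\sigma$ to the essential ideal $\K(Y)$ and induce back along $Y^*$, using $Y^*\dashind\circ Y\dashind\cong\mathrm{id}$ and monotonicity of induction. Both arguments rest on $Y$ being a $\K(Y)$--$\langle Y,Y\rangle_A$ imprimitivity bimodule and neither needs Type~I hypotheses or decompositions into irreducibles; the paper's construction keeps everything inside one Hilbert space and produces a literal subrepresentation directly, while yours is shorter to state but pushes the work into the induction formalism. Two points in your route deserve to be made explicit: (a) since $\sigma_\rho|_{\langle Y,Y\rangle_A}$ may be degenerate, $Y^*\dashind\bigl(Y\dashind(\sigma_\rho)|_{\K(Y)}\bigr)$ is only the \emph{essential part} of $\sigma_\rho|_{\langle Y,Y\rangle_A}$, so monotonicity first places $\pi$ under that essential part as a representation of the ideal, and one must then extend canonically to $A$ (the relevant subspaces are $\sigma_\rho(A)$-invariant by an approximate-unit argument) to land in $\X([\rho])$; (b) the equivalence $Y\dashind(\pi)\cong\sigma$ obtained on $\K(Y)$ must be upgraded to $\LL(Y)$ via uniqueness of non-degenerate extensions from an essential ideal. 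These are routine and comparable to what the paper itself leaves implicit.

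The rest of your argument matches the paper: the ``Moreover'' inclusion is obtained exactly as in the paper from Lemma~\ref{range of duals to correspondences}(ii) applied to $\langle YJ,YJ\rangle_A$, and surjectivity of the right vertical map from Lemma~\ref{lemma on tensoring regular correspondences} together with Lemma~\ref{range of duals to homomorphisms}. For surjectivity of the left vertical map you argue formally from commutativity of \eqref{correspondences vs compacts graphs2} plus surjectivity of the other three arrows, whereas the paper quotes Lemma~\ref{range of duals to correspondences}(i) to see that $\X$ maps onto $\widehat{J}\supseteq\widehat{\langle YJ,YJ\rangle}_A$ and combines this with the domain inclusion; both are valid, and yours is the more economical once commutativity of the restricted diagram is in hand.
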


\begin{proof}
First we describe the multivalued maps $E:=[Y\dashind]\circ \SX$ and $F:=\widehat{\otimes 1_X}\circ [(Y\otimes X)\dashind]$ acting from $\SA$ to (subsets of) $\widehat{\LL(Y)}$. For each $\rho\in \Irr(A)$ we have
$$E([\rho]) =\{[Y\dashind(\pi)] : \pi\in \Irr(A),\,\pi\leq X\dashind(\rho)\circ\phi_X \text{ and }\pi(\langle Y,Y\rangle_A)\neq 0 \},$$
and
$$F([\rho])=\{[\sigma]:\sigma\in\Irr(\LL(Y)),\,\sigma\leq (Y\otimes X)\dashind(\rho)\circ 1_X\}.$$
We need to show that $E([\rho])=F([\rho])$. To this end, let $[Y\dashind(\pi)] \in E([\rho])$. Then 
$$Y\dashind(\pi)\leq Y\dashind(X\dashind(\rho)\circ\phi_X).$$ 
It is straightforward to check that $Y\dashind(X\dashind(\rho)\circ\phi_X)=(Y\otimes X)\dashind(\rho)\circ 1_X$ as representation of $\LL(Y)$. Thus, $Y\dashind(\pi)\leq (Y\otimes X)\dashind(\rho)\circ 1_X$, and hence $[Y\dashind(\pi)] \in F([\rho])$.

Conversely, let $[\sigma] \in F([\rho])$. Since $\sigma \le (Y\otimes X)\dashind(\rho) \circ (\otimes 1_X)$, we see that $H_\sigma$ is a closed subspace of $Y\otimes X\otimes H_{\rho}$ such that $(T\otimes 1_X\otimes 1_{H_{\rho}})(H_\sigma)\subseteq H_\sigma$ for every $T\in\LL(Y)$. Since $\K(Y)\subseteq \LL(Y)$ acts transitively on $Y$, it follows that 
\begin{align*}
H_\pi:&=\{h\in X\otimes H_{\rho}:y\otimes h\in H_\sigma\text{ for some }y\in Y\}\\&=\{h\in X\otimes H_{\rho}:y\otimes h\in H_\sigma\text{ for all }y\in Y\}
\end{align*}
is a closed subspace of $X\otimes H_{\rho}$ such that $Y\otimes H_\pi=H_\sigma$ and $\phi_X(\langle Y,Y\rangle_A)H_\pi=H_\pi$. Define $\pi:A\to\B(H_\pi)$ by $\pi(a)h=X\dashind(\rho)(\phi_X(a))h$. Then $\pi$ is a representation of $A$ and $\pi\le X\dashind (\rho)\circ \phi_X$. Let $T\in\LL(Y)$, $y\in Y$, and $h\in H_\pi$. Since $\sigma \le (Y\otimes X)\dashind(\rho) \circ (\otimes 1_X)$ and $H_\pi\subseteq X\otimes H_{\rho}$, we have
$$Y\dashind(\pi)(T)(y\otimes h)=Ty\otimes h=\sigma(T)(y\otimes h).$$
Because $Y\otimes H_\pi=H_\sigma$, this shows that $Y\dashind(\pi)=\sigma$. Since $Y\dashind(\pi)=\sigma$, $Y\dashind(\pi)$ is irreducible. Since $Y$ is a $\K(Y)-\langle Y,Y\rangle_A$-imprimitivity bimodule, $\pi$ is irreducible on $\langle Y,Y\rangle_A$. So $\pi$ is irreducible. Hence $[\sigma]=[Y\dashind(\pi)]\in E([\rho])$. Thus the diagram \eqref{correspondences vs compacts graphs} commutes.

Now, let $J$ be an ideal in $J_X$. The map $\otimes 1_X:\K(Y J) \to \K(Y\otimes X)$ is injective by Lemma~\ref{lemma on tensoring regular correspondences}(ii). Hence the dual multivalued map $\widehat{\K(Y\otimes X)} \stackrel{\widehat{\otimes 1_X}}{\rightarrow} \widehat{\K(Y J)}$ is surjective, by Lemma~\ref{range of duals to homomorphisms}. Since $\widehat{J}\subseteq\widehat{J_X}\subseteq\widehat{J(X)}\setminus\widehat{\ker\phi_X}$, it follows from Lemma~\ref{range of duals to correspondences} (i) that the multivalued map $\X:\SA{\longrightarrow}\widehat{J} $ is surjective. According to Lemma~\ref{range of duals to correspondences} (ii) the domain of $\X:\SA\longrightarrow \widehat{\langle YJ, YJ\rangle}_A $ is contained in the spectrum of $X(Y(J))=\langle X , \phi_X(\langle YJ, YJ\rangle_A) X\rangle_A=\langle YJ \otimes X, YJ \otimes X\rangle_A$. Since $\langle YJ, YJ\rangle_A\subseteq J$, the surjective multivalued map $\X:\SA{\longrightarrow}\widehat{J} $ restricts to a surjective multivalued map $\X:\widehat{\langle Y \otimes X, Y \otimes X\rangle}_A\longrightarrow \widehat{\langle YJ, YJ\rangle}_A $. Since $\widehat{\langle Y \otimes X, Y \otimes X\rangle}_A\stackrel{[(Y \otimes X)\dashind]}{\longrightarrow} \widehat{\K(Y \otimes X)}$ and $\widehat{\langle Y J, Y J\rangle}_A\stackrel{[Y\dashind]}{\longrightarrow} \widehat{\K(Y J)}$ are homeomorphisms, we see that \eqref{correspondences vs compacts graphs} restricts to the commutative diagram \eqref{correspondences vs compacts graphs2}, with arrows having the desired properties. 
\end{proof}

\section{Dual graphs to \(C^*\)-correspondences}\label{sec:Dual graphs}

Let $X$ be a $C^*$-correspondence from $A$ to $B$. We will now adjust the construction of the dual multivalued map $\X$, by counting the multiplicities of the corresponding subrepresentations. In order to do that we will assume that $A$ is of Type I, as for such $C^*$-algebras a convenient multiplicity theory is available.

We will construct the graph $E_X$ dual $X$ by specifying its multiplicities on the set of vertices $\SA$. Thus $E_X$ will be determined only up to equivalence. Let $\rho\in\Irr(A)$. Consider the representation $\sigma:A\to \B(H_\sigma)$ where 	$\sigma:=X\dashind(\rho)\circ\phi_X$ and $H_\sigma:=X\otimes_\rho H_\rho$. Since $A$ is of type $I$, cf. e.g. \cite[Proposition 5.4.9]{Dixmier}, there is a unique, up to permutation, family of mutually orthogonal projections $\{E_j\}_{j\in J}\subseteq \B(H_\sigma)$ such that 
\begin{itemize}
\item[a)] projections $E_j$ belong to the center of $\sigma(A)'$ and sum up to identity on $H_\sigma$;
\item[b)] the subrepresentations $\sigma_j$ of $\sigma$ corresponding to the $E_j$ are mutually disjoint;
\item[c)] $\sigma_j$ is of multiplicity $m_j$;
\item[d)] the cardinals $m_j$'s are mutually distinct.
\end{itemize}
Now, let $[\pi]\in \SA$. If there are no subrepresentation of $\sigma$ equivalent to $\pi$, we put $m_{[\pi],[\rho]}^X:=0$. If there is an irreducible subrepresentation $\pi'\cong\pi$ of $\sigma$, then there is a unique $j\in J$ such that $\pi'\leq \sigma_j$. Since $\sigma_j$ has multiplicity $m_j$, it follows, cf. \cite[Proposition 5.4.7]{Dixmier}, that there are $m_j$ mutually orthogonal representations $\pi^i$ such that $[\pi^i]=[\pi]$, $\oplus_{i}\pi^i\leq \sigma$ and there are no subrepresentations of $\sigma \ominus (\oplus_{i}\pi^i)$ equivalent to $\pi$. In this case we put $m_{[\pi],[\rho]}^X:=m_j$. Note that the above description of cardinals $m_{[\pi],[\rho]}^X$ does not depend on the choice of a representative of $[\rho]$, and thus they are well defined.

\begin{defn}\label{def:dual_graph}
Let $X$ be a $C^*$-correspondence over a $C^*$-algebra $A$ of Type I to a $C^*$-algebra $B$. By a graph dual to $X$ we mean a graph $E_X$ from $\SB$ to $\SA$ with multiplicities $\{m_{[\pi],[\rho]}^X\}_{[\pi]\in \SA,[\rho]\in \SB}$ defined as above.
\end{defn}

\begin{rem}
If $E_X=(E^1_X, r,s)$ is a graph dual to a $C^*$-correspondence $X$ from $A$ to $B$, then the associated multivalued map coincides with the multivalued map dual to $X$ from $\SB$ to $\SA$. That is, we have $\SX([\pi])=r(s^{-1}([\pi]))$ for every $[\pi]\in \SB$. Moreover, given ideals $I$ in $A$ and $J$ in $B$, similarly as in Lemma~\ref{lem:restrictions of graphs vs correspondences}, we see that the graph dual to the restricted $C^*$-correspondence $IXJ$ from $I$ to $J$ can be identified with the restriction of $E_X$ to the graph from $\widehat{J}$ to $\widehat{I}$, i.e. $E_{IXJ}={_{\widehat{I}}}(E_X)_{\widehat{J}}$.
\end{rem}

In the discussion preceding Definition~\ref{def:dual_graph} the cardinals $m_{[\pi],[\rho]}^X$ are uniquely determined by the equivalence classes $[\pi]$ and $[\rho]$. However, the decomposition $\oplus_{i}\pi^i$ of a subrepresentation of $\sigma$ into the $m_{[\pi],[\rho]}^X$-number of copies of $\pi$ is not unique even for the fixed representatives. In order to have a control over the choice of the corresponding decompositions we will use the map $Q$ in the following proposition. This map takes values in the von Neumann enveloping $C^*$-algebra $\K(X)''$ of $\K(X)$.

\begin{lem}\label{lem:stanislaw}
Let $X$ be a $C^*$-correspondence over a $C^*$-algebra $A$. If $J$ is an ideal in $J_X$ such that $X(J)$ is of Type I (resp. liminal), then $J$ is of Type I (resp. liminal). 
\end{lem}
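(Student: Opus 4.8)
The plan is to reduce both assertions to properties of the algebra $\K(JX)$, which is Morita equivalent to $X(J)$, and then to transfer these properties to $J$ along the injective $*$-homomorphism $\phi_X|_J\colon J\to\K(JX)$. First I would record two structural facts about $JX$, both already used in the proof of Lemma~\ref{range of duals to correspondences}: that $X(J)=\langle JX,JX\rangle_A$, so that $JX$ is a $\K(JX)$--$X(J)$ imprimitivity bimodule and hence $\K(JX)\sim_M X(J)$; and that the inclusion $J\subseteq J(X)$ yields $\phi_X(J)\subseteq\K(JX)$. Next I would check that $\phi_X|_J$ is injective: since $J\subseteq J_X\subseteq(\ker\phi_X)^{\bot}$ we have $J\cap\ker\phi_X=0$, so that $J\cong\phi_X(J)$ sits as a $C^*$-subalgebra of $\K(JX)$. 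Because Type I-ness and liminality are invariant under Morita equivalence (cf.~\cite{HRW}), the hypothesis on $X(J)$ transfers to $\K(JX)$, which is therefore of Type I (resp.\ liminal).

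For the Type I case I would then simply invoke the standard fact that a $C^*$-subalgebra of a Type I $C^*$-algebra is again of Type I (see \cite{Dixmier}); applied to $J\cong\phi_X(J)\subseteq\K(JX)$ this gives the claim immediately.

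For the liminal case the corresponding ``subalgebra'' statement is more delicate, and rather than appealing to a black-box result I would use the dual-map machinery already at hand. Since $\phi_X|_J$ is injective, Lemma~\ref{range of duals to homomorphisms} shows that the dual map $\widehat{\phi_X}\colon\widehat{\K(JX)}\to\widehat{J}$ is onto $\widehat{J}$. Hence every $[\pi]\in\widehat{J}$ satisfies $\pi\leq\sigma\circ\phi_X$ for some $[\sigma]\in\widehat{\K(JX)}$. Liminality of $\K(JX)$ gives $\sigma(\K(JX))=\K(H_\sigma)$, so $\sigma(\phi_X(J))\subseteq\K(H_\sigma)$; restricting these compact operators to the reducing subspace $H_\pi\subseteq H_\sigma$ then yields $\pi(J)\subseteq\K(H_\pi)$. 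As $[\pi]$ was arbitrary, $J$ is liminal.

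I expect the liminal case to be the main obstacle, precisely because ``a $C^*$-subalgebra of a liminal algebra is liminal'' is not as transparent as its Type I analogue. The resolution is the surjectivity of $\widehat{\phi_X}$ for injective $\phi_X$: it guarantees that every irreducible representation of $J$ is subordinate to some irreducible representation of $\K(JX)$, whose image consists of compact operators, and compactness survives restriction to a reducing subspace.
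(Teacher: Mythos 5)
Your proposal is correct, and its skeleton is exactly that of the paper's proof: use the fact that $JX$ is a $\K(JX)$-$X(J)$-equivalence bimodule to transfer Type I-ness (resp.\ liminality) from $X(J)$ to $\K(JX)$ via Morita invariance \cite{HRW}, and then pass to $J$ along the injective homomorphism $\phi_X|_J\colon J\to\K(JX)$. The only divergence is in the liminal case. The paper treats both cases uniformly by invoking the standard hereditary facts that Type I-ness and liminality pass to $C^*$-subalgebras (both are in \cite{Dixmier}, so the ``black box'' you were reluctant to use for liminality is in fact classical and unproblematic). You instead give a self-contained argument for the liminal case: injectivity of $\phi_X|_J$ makes the dual map $\widehat{\K(JX)}\to\widehat{J}$ surjective by Lemma~\ref{range of duals to homomorphisms}, so every $[\pi]\in\widehat{J}$ is a subrepresentation of some $\sigma\circ\phi_X$ with $\sigma$ irreducible on $\K(JX)$; liminality of $\K(JX)$ puts $\sigma(\phi_X(J))$ inside $\K(H_\sigma)$, and the restriction of a compact operator to a closed invariant subspace is compact, whence $\pi(J)\subseteq\K(H_\pi)$. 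This argument is valid; what it buys is independence from the subalgebra-hereditarity of liminality, at the cost of being longer than the paper's one-line conclusion.
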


\begin{proof}
Since $JX$ is an $\K(JX)$-$X(J)$-equivalence bimodule, we get that $\K(JX)$ is of Type I (resp. liminal). Thus $J$ is of Type I (resp. liminal) because $\phi_X:J \to \K(JX)$ is an injective homomorphism and being of Type I (resp. liminal) passes to subalgebras. 
\end{proof}

\begin{prop}\label{space_decomposition_according_to_graph}
Let $X$ be a $C^*$-correspondence over a $C^*$-algebra $A$. Let $J$ be an ideal in $J_X$. Suppose that $X(J)$ is of Type I, so that the graph $E_{Y}=(\widehat{J}, E^1, r,s)$ dual to $Y:=JXJ$ is well defined by Lemma~\ref{lem:stanislaw}. There is a map $Q:E^1_Y\to \K(X)''$ such that for any $\rho\in \Irr(A)$ with $\rho(J)\neq 0$, the family $\{X\dashind (\rho)''(Q_e)\}_{e \in s^{-1}([\rho])}\subseteq \B(X \otimes_\rho H_\rho)$ consists of mutually orthogonal projections and 
\begin{equation}\label{decomposition into subspaces}
X\otimes_\rho H_\rho= H_0\oplus \bigoplus_{e \in s^{-1}([\rho])} X\dashind (\rho)''(Q_e) (X \otimes_\rho H_\rho)
\end{equation}
where the space $X\dashind (\rho)''(Q_e) (X \otimes_\rho H_\rho)$, for $e \in s^{-1}([\rho])$, is irreducible for $X\dashind(\rho)\circ\phi_X(J)$ and the equivalence class of the corresponding representation of $J$ is $[r(e)]$, and $H_0$ does not contain non-zero irreducible subspace for $X\dashind(\rho)\circ\phi_X(J)$.
\end{prop}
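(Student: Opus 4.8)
The plan is to construct the projections one source vertex at a time and to realise them inside the single algebra $\K(X)''$ by pulling back concrete spectral projections along the normal extension of an irreducible induced representation.

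First I would fix a vertex $[\rho]\in\widehat J$ together with its unique representative $\rho\in\Irr(A)$ with $\rho(J)\neq 0$ (available through the homeomorphism $[\pi]\mapsto[\pi|_J]$ recalled in the Preliminaries). If $\rho(\langle X,X\rangle_A)=0$ then $X\otimes_\rho H_\rho=0$ and there is nothing to prove, so I may assume $\rho(\langle X,X\rangle_A)\neq 0$. Since $J\subseteq J_X\subseteq J(X)$ we have $\phi_X(J)\subseteq\K(JX)\subseteq\K(X)$, so the representation $\sigma_\rho:=X\dashind(\rho)\circ\phi_X|_J$ of $J$ on $X\otimes_\rho H_\rho$ is non-degenerate precisely on $JX\otimes_\rho H_\rho$ and vanishes on its orthogonal complement. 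Arguing as in Lemma~\ref{lem:restrictions of graphs vs correspondences}, I would identify the restriction of $\sigma_\rho$ to $JX\otimes_\rho H_\rho$ with the representation $Y\dashind(\rho|_J)\circ\phi_Y$ that is used to define the multiplicities of $E_Y$. Since $J$ is of Type I by Lemma~\ref{lem:stanislaw}, the multiplicity theory recalled before Definition~\ref{def:dual_graph} applies and shows that $\sigma_\rho$ splits as an atomic part, equal to a maximal orthogonal sum of irreducible $J$-subspaces whose types, counted with multiplicity, are exactly the vertices $r(e)$ for $e\in s^{-1}([\rho])$, together with a remainder that contains no irreducible subspace.

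The decisive observation, and the step I expect to do the real work, is that $X$ is a $\K(X)$–$\langle X,X\rangle_A$ imprimitivity bimodule, so because $\rho(\langle X,X\rangle_A)\neq 0$ the representation $X\dashind(\rho)|_{\K(X)}$ is irreducible; consequently its normal extension $X\dashind(\rho)'':\K(X)''\to\B(X\otimes_\rho H_\rho)$ is a surjective normal $*$-homomorphism. I would then choose, for the fixed $[\rho]$, a maximal orthogonal family of irreducible $J$-subspaces of $\sigma_\rho$ and index it by $s^{-1}([\rho])$ so that the member attached to an edge $e$ has type $[r(e)]$; by the previous paragraph this indexing is possible. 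Letting $q_e\in\B(X\otimes_\rho H_\rho)$ be the projection onto the subspace attached to $e$, the $q_e$ are mutually orthogonal, and, because the kernel of the surjection $X\dashind(\rho)''$ is the complementary central summand of $\K(X)''$, each $q_e$ lifts to a projection $Q_e\in\K(X)''$ with $X\dashind(\rho)''(Q_e)=q_e$. Performing this construction independently over every source vertex $[\rho]\in\widehat J$ produces a well-defined map $Q:E^1_Y\to\K(X)''$, since every edge has a single source and the choices made at distinct vertices do not interact.

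Finally I would verify the stated properties. For the chosen $\rho$ the family $\{X\dashind(\rho)''(Q_e)\}_{e\in s^{-1}([\rho])}=\{q_e\}$ is by construction a family of mutually orthogonal projections, each image $q_e(X\otimes_\rho H_\rho)$ is irreducible for $\sigma_\rho$ of class $[r(e)]$, and the sum of the $q_e$ is the projection onto the atomic part of $\sigma_\rho$; hence the complement $H_0$, which is the sum of the degenerate subspace $(JX\otimes_\rho H_\rho)^\bot$ and the non-atomic part of $\sigma_\rho$, contains no non-zero irreducible subspace for $\sigma_\rho=X\dashind(\rho)\circ\phi_X(J)$. Independence of the representative is handled by transporting everything along a unitary implementing $\rho'\cong\rho$. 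The routine but essential points to check carefully are the identification of $\sigma_\rho$ with $Y\dashind(\rho|_J)\circ\phi_Y$ underlying the multiplicity count, and the fact that a projection lifts to a projection through the central summand. The genuine obstacle, however, is securing the surjectivity of $X\dashind(\rho)''$, which is exactly what lets the per-$\rho$ spectral projections be recorded as elements of one fixed algebra $\K(X)''$ and thereby turns them into a bona fide map $Q$.
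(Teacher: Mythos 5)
Your proposal is correct, and its overall architecture coincides with the paper's: show that $\sigma_\rho:=X\dashind(\rho)\circ\phi_X|_J$ vanishes off $JX\otimes_\rho H_\rho=Y\otimes_\rho H_\rho$ and is non-degenerate there, decompose it by Type I multiplicity theory into a maximal atomic part whose irreducible pieces (with multiplicity) are indexed exactly by $s^{-1}([\rho])$ plus a remainder with no irreducible subspaces, lift the resulting projections into an enveloping von Neumann algebra, glue the constructions over the vertices of $\widehat{J}$, and transport along $X\dashind(U)$ to handle change of representative. The one genuine difference is where the lifting takes place and how surjectivity of the normal extension is secured. The paper works inside $\K(JX)''\subseteq \K(X)''$: it observes that $X\dashind(\rho)(\K(JX))$ acts irreducibly on $Y\otimes_\rho H_\rho$ (via the $\K(JX)$--$X(J)$ equivalence bimodule $JX$), invokes Type I-ness of $\K(JX)$ to get $X\dashind(\rho)(\K(JX))\supseteq \K(Y\otimes_\rho H_\rho)$, concludes $X\dashind(\rho)''(\K(JX)'')=\B(Y\otimes_\rho H_\rho)$, and lifts each $P_e$ to a positive element of $\K(JX)''$. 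You instead work with all of $\K(X)''$: since $\rho$ restricted to the ideal $\langle X,X\rangle_A$ is irreducible and non-degenerate, Rieffel induction along the $\K(X)$--$\langle X,X\rangle_A$ imprimitivity bimodule makes $X\dashind(\rho)|_{\K(X)}$ irreducible on $X\otimes_\rho H_\rho$, so its normal extension is onto $\B(X\otimes_\rho H_\rho)$ by the double commutant theorem, and projections lift to projections through the central summand complementary to the kernel. Both arguments are valid and yield the same statement; your route is marginally cleaner in that it shows the Type I hypothesis is needed only for the multiplicity count (i.e., for the graph $E_Y$ to exist), not for the lifting step --- indeed the paper's appeal to compact operators there is not essential, since irreducibility already gives weak density --- while the paper's choice of $\K(JX)''$ keeps the lifted elements localized in the part of $\K(X)''$ actually seen by $\phi_X(J)$.
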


\begin{proof} 
Let $\rho\in \Irr(A)$ with $\rho(J)\neq 0$. Note that $X\otimes_\rho H_\rho=X\otimes_\rho \rho(J)H_\rho=XJ\otimes_\rho H_\rho$, and hence $JX\otimes_\rho H_\rho=Y\otimes_\rho H_\rho$. As $JX$ is an $\K(JX)$-$X(J)$-equivalence bimodule it follows that $X\dashind (\rho)(\K(JX))$ acts on $JX\otimes_\rho H_\rho=Y\otimes_\rho H_\rho$ in an irreducible way. In particular, $Y\otimes_\rho H_\rho$ is non-zero if and only if $\rho(X(J))\neq 0$. If $\rho(X(J))= 0$, then $[\rho]$ is a source in $E_Y$, i.e. $s^{-1}([\rho])=\emptyset$, and we get \eqref{decomposition into subspaces} by interpreting the empty sum as zero.

Assume then that $\rho(X(J))\neq 0$. The algebra $\K(JX)$ is of Type I because $X(J)$ is of Type I. Hence $X\dashind (\rho)(\K(JX))\supseteq \K(Y\otimes_\rho H_\rho)$. This implies that $X\dashind (\rho)''(\K(JX)'')=\B(Y\otimes_\rho H_\rho)$. Since $\phi_X(J)\subseteq \K(JX)$ we conclude that $X\dashind(\rho)\circ\phi_X(J)$ acts as zero on the orthogonal complement of $Y\otimes_\rho H_\rho$ and $\sigma:=X\dashind(\rho)\circ\phi_X|_J: J \to \B(Y\otimes_\rho H_\rho)$ is a non-degenerate representation. Decomposing $\sigma$ into irreducible parts we get a family of mutually orthogonal projections $\{P_e\}_{e \in s^{-1}([\rho])}\subseteq \sigma(J)'\subseteq \B(Y\otimes_\rho H_\rho)$ such that $\sigma$ compressed to $P_e (Y\otimes_\rho H_\rho)$ is an irreducible representation whose equivalence class is $r(e)\in \widehat{J}$. Moreover, the orthogonal complement $H_0$ of $\oplus_{e \in s^{-1}([\rho])} P_{e}(Y\otimes_\rho H_\rho)$ in $X\otimes_\rho H_\rho$ has no irreducible subrepresentations for $X\dashind(\rho)\circ\phi_X(J)$.

For $e \in s^{-1}([\rho])$ we may find a positive element $Q_e\in\K(JX)'']\subseteq \K(X)''$ such that $X\dashind (\rho)''(Q_e)=P_e$. In this way we get operators satisfying \eqref{decomposition into subspaces}. In particular, if $\rho'\cong\rho$ with the equivalence given by a unitary $U$, then $X\dashind (\rho)\cong X\dashind (\rho')$ with the equivalence given by the unitary $X\dashind(U):X\otimes_{\rho} H_\rho\to X\otimes_{\rho'} H_{\rho'}$ where $x\otimes\xi\mapsto x\otimes U\xi$ for $x\in X$ and $\xi\in H_\rho$. Thus we get a similar decomposition to \eqref{decomposition into subspaces} with $\rho$ replaced by $\rho'$. In other words, the map $s^{-1}([\rho])\ni e \mapsto Q_e\in \K(X)''$ is well defined. Gluing together the maps corresponding to each $[\rho]\in \widehat{J}\subseteq \SA$ we get the desired map defined on $E^1_Y=\bigsqcup_{[\rho]\in \widehat{J}}s^{-1}([\rho])$. 
\end{proof}

We recall, see \cite{Schweizer2}, that if X is a Hilbert $A$-$B$-bimodule, then its Banach space bidual $X''$ is a Hilbert $W^*$-module over the enveloping von Neumann algebras $A''$, $B''$. In particular, the bidual of a Hilbert $A$-module $X$ is a Hilbert $W^*$-bimodule from $\K(X)''$ to $A''$. Moreover, every representation $(\psi_0,\psi_1)$ of the Hilbert $A$-module $X$ on a Hilbert space $H$ extends to a weakly continuous representation $(\psi_0'',\psi_1'')$ of $X''$ on $H$ and the associated representation of $\K(X)''$ coincides with the weakly continuous extension $\psi^{(1)}{''}$ of the representation $\psi^{(1)}$ of $\K(X)$.

Property (iv) in the following lemma will be crucial in the proof of our uniqueness theorem.

\begin{lem}\label{lem:proposition_in_terms_of_representations}
Retain the assumption of Proposition~\ref{space_decomposition_according_to_graph} and let $Q:E^1_Y\to \K(X)''$ be the map defined there ($Y:=JXJ$). Let $\psi=(\psi_0,\psi_1)$ be a representation of the $C^*$-correspondence $X$ on a Hilbert space $H$ and let $\psi''=(\psi_0'',\psi_1'')$ be the corresponding representation of the dual Hilbert module $X''$ over $A''$. 
\begin{itemize}
\item[(i)] If $\pi\leq \psi_0$ is an irreducible subrepresentation of $A$ with $\pi(J)\neq 0$ ($[\pi]\in \widehat{J}$), then the operators $\{\psi^{(1)}{''}(Q_e)\}_{e \in s^{-1}([\pi])}$ restricted to the space $\psi_1(X)H_\pi$ are mutually orthogonal projections. Moreover,
$$
\psi_1(X)H_\pi=K\oplus \bigoplus_{e \in s^{-1}([\pi])} \psi_1''(Q_e X) H_\pi
$$
where for each $e \in s^{-1}([\pi])$ the space $\psi_1''(Q_e X) H_\pi=\psi^{(1)}{''}(Q_e)\psi_1(X)H_\pi$ is irreducible for $\psi_0(A)$ where the corresponding representation of $A$ is equivalent to $[r(e)]\in \widehat{J}$, and $K$ does not contain any non-zero irreducible subspaces for $\psi_0(A)$.

\item[(ii)] If, for $i=1,2$, we have $\pi_i\leq \psi_0$ with $[\pi_i]\in\widehat{J}$ then $ \psi_1''(Q_eX)H_{\pi_1}\,\, \bot \,\, \psi_1''(Q_fX)H_{\pi_2}$ for every $e \in s^{-1}([\pi_1]), \,\, f\in s^{-1}([\pi_2])$ with $e\neq f$.

\item[(iii)] For every $n>0$ and $\pi\leq \psi_0$ with $[\pi]\in\widehat{J}$ we have
$$
\psi_n(X^{\otimes n})H_\pi= \bigoplus_{(e_n,...,e_1)\in s^{-n}([\pi])} \psi_1''(Q_{e_n} X)...\psi_1''(Q_{e_1} X) H_\pi \oplus \bigoplus_{i=0}^{n-1} \psi_i(X^{\otimes i})K_i^n
$$
where $K^n_i$ is a $\psi_0(A)$-invariant subspace which has no non-zero irreducible subspaces for $\psi_0(J)$.

\item[(iv)] Suppose that $\pi\leq \psi_0$ with $[\pi]\in\widehat{J}$ and $(e_n,...,e_1)\in s^{-n}([\pi])$ is a non-returning path, i.e. $e_k\neq e_1$ for $k=2,...,n$. For every $k=1,..., n-1$ we have
$$
\psi_k(X^{\otimes k})\psi_1''(Q_{e_n} X)...\psi_1''(Q_{e_1} X) H_\pi \,\, \bot\,\, \psi_1''(Q_{e_n} X)...\psi_1''(Q_{e_1} X) H_\pi.
$$
\end{itemize}
\end{lem}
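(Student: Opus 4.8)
The plan is to prove (i) by transporting the decomposition \eqref{decomposition into subspaces} of Proposition~\ref{space_decomposition_according_to_graph} (taken with $\rho=\pi$) to $\psi_1(X)H_\pi\subseteq H$ via the unitary $U\colon\psi_1(X)H_\pi\to X\otimes_\pi H_\pi$ of Lemma~\ref{representation structure lemma2}; since $\pi\le\psi_0$ with $[\pi]\in\widehat J$, that proposition applies. First I would note that $\psi_1(X)H_\pi$ is invariant under $\psi^{(1)}(\K(X))$ (from $\psi^{(1)}(\Theta_{x,y})\psi_1(z)h=\psi_1(x)\psi_0(\langle y,z\rangle)h$ and invariance of $H_\pi$), so its projection commutes with $\psi^{(1)}{''}(\K(X)'')$ and $U$ intertwines $\overline{\psi^{(1)}}|_{\psi_1(X)H_\pi}$ with $X\dashind(\pi)$; by weak continuity $U$ then intertwines $\psi^{(1)}{''}|_{\psi_1(X)H_\pi}$ with $X\dashind(\pi)''$. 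Applying $U^*$ to \eqref{decomposition into subspaces} and using $\psi_1''(Q_eX)H_\pi=\psi^{(1)}{''}(Q_e)\psi_1(X)H_\pi$ yields the asserted decomposition with $K:=U^*H_0$. Finally, since the $\psi_0(A)$-action on $\psi_1(X)H_\pi$ corresponds under $U$ to $X\dashind(\pi)\circ\phi_X$ (because $\psi_0(a)\psi_1(x)=\overline{\psi^{(1)}}(\phi_X(a))\psi_1(x)$), and $J$ is an ideal, a $\psi_0(J)$-irreducible subspace on which $J$ acts non-degenerately is automatically $\psi_0(A)$-invariant and irreducible with the class of its $A$-representation determined by its $J$-restriction; this upgrades the $J$-statements of Proposition~\ref{space_decomposition_according_to_graph} to the $A$-statements of (i).

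For (ii) I would split according to whether $s(e)=s(f)$. If $[\pi_1]=[\pi_2]$, then $e,f\in s^{-1}([\pi_1])$ and, writing the inner product as $\langle\psi^{(1)}{''}(Q_f)\psi^{(1)}{''}(Q_e)\zeta_1,\zeta_2\rangle$ for $\zeta_1\in\psi_1(X)H_{\pi_1}$, $\zeta_2\in\psi_1(X)H_{\pi_2}$, orthogonality follows from (i): on $\psi_1(X)H_{\pi_1}$ the operators $\psi^{(1)}{''}(Q_e),\psi^{(1)}{''}(Q_f)$ are mutually orthogonal projections and $\psi^{(1)}{''}(Q_e)\zeta_1$ stays in $\psi_1(X)H_{\pi_1}$. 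If $[\pi_1]\ne[\pi_2]$, then $H_{\pi_1}\perp H_{\pi_2}$, since inequivalent irreducible subspaces of the $*$-representation $\psi_0$ reduce and are orthogonal by Schur's lemma; the computation $\langle\psi_1(x)h_1,\psi_1(y)h_2\rangle=\langle h_1,\psi_0(\langle x,y\rangle)h_2\rangle$ with $\psi_0(\langle x,y\rangle)h_2\in H_{\pi_2}$ then gives $\psi_1(X)H_{\pi_1}\perp\psi_1(X)H_{\pi_2}$, whence the two subspaces are orthogonal.

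Part (iii) I would prove by induction on $n$, the case $n=1$ being (i). The key general principle, proved by the inner-product computation in (ii), is that for $\psi_0(A)$-invariant $V\perp V'$ one has $\psi_1(X)V\perp\psi_1(X)V'$; applied to the mutually orthogonal, $\psi_0(A)$-invariant summands at level $n$ it shows that $\psi_1(X)$ preserves orthogonality. I then refine each $\psi_1(X)W_q$ (for a length-$n$ path piece $W_q$, irreducible with class in $\widehat J$) by (i), producing the length-$(n+1)$ path pieces and a remainder with no irreducible $\psi_0(J)$-subspaces, while $\psi_1(X)\psi_i(X^{\otimes i})K_i^n=\psi_{i+1}(X^{\otimes(i+1)})K_i^n$ shifts the junk. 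Collecting terms gives the decomposition with $K_0^{n+1}$ the orthogonal sum of the new remainders and $K_j^{n+1}=K_{j-1}^n$ otherwise; that $K_0^{n+1}$ has no irreducible $\psi_0(J)$-subspaces uses that ``having no irreducible subrepresentation'' is stable under orthogonal direct sums (the atomic part of a direct sum is the sum of the atomic parts).

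For (iv), write $W$ for the path piece of $(e_n,\dots,e_1)$ and apply (iii) to the irreducible subspace $W$ (class $[r(e_n)]\in\widehat J$) at level $k$, so $\psi_k(X^{\otimes k})W$ is the orthogonal sum of the path pieces $W_{(f_k,\dots,f_1,e_n,\dots,e_1)}$ and a junk part $\bigoplus_i\psi_i(X^{\otimes i})\tilde K_i^k$; it then suffices to show $W$ is orthogonal to each summand. For a path piece I would use a peeling argument: comparing $W$ and $W_{(f_k,\dots,f_1,e_n,\dots,e_1)}$ outermost-edge by outermost-edge and cancelling equal leading factors (legitimate since $V\perp V'\Rightarrow\psi_1''(Q_gX)V\perp\psi_1''(Q_gX)V'$), orthogonality follows from (ii) at the first edge-mismatch, the point being that a complete match down to the bottom of $W$ would force $e_{k+1}=e_1$ with $2\le k+1\le n$, contradicting the non-returning hypothesis. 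For the junk I would use that $W$, being irreducible with class in $\widehat J$, is orthogonal to any $\psi_0(A)$-invariant subspace with no irreducible $\psi_0(J)$-subspaces (again a reducing-subspace/Schur argument), after checking via $\psi_1(\eta)^*\psi^{(1)}{''}(Q)=\psi_1''(Q\eta)^*$ and $\psi_1''(z)^*\psi_1(x)=\psi_0''(\langle z,x\rangle)$ that $\psi_i(X^{\otimes i})^*W$ lands in the closed span of length-$(n-i)$ path pieces, i.e.\ in the $\psi_0(J)$-atomic part against which $\tilde K_i^k$ is orthogonal. I expect the combinatorial peeling together with the bookkeeping of the junk---making precise that non-returning is exactly what rules out a full edge-match, and that the adjoints do not leak into the continuous part---to be the main obstacle.
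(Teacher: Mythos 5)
Your proposal is correct and follows essentially the same route as the paper's proof: (i) by transporting the decomposition of Proposition~\ref{space_decomposition_according_to_graph} through the unitary of Lemma~\ref{representation structure lemma2} and its weakly continuous bidual extension, (ii) by the same split into $[\pi_1]=[\pi_2]$ (mutually orthogonal projections from (i)) versus inequivalent subrepresentations (inner-product computation), (iii) by the same induction using that $\psi_1(X)$ preserves orthogonality of $\psi_0(A)$-invariant subspaces, and (iv) by applying (iii) to the path piece, peeling matching outer edges via (ii) until the non-returning hypothesis forces $e_{k+1}=e_1$, and killing the junk by the Schur/adjoint argument. If anything, you are more explicit than the paper on the technical points (invariance of $\psi_1(X)H_\pi$ under $\psi^{(1)}(\K(X))$, the upgrade from $\psi_0(J)$-statements to $\psi_0(A)$-statements, and stability of ``no irreducible subrepresentations'' under orthogonal direct sums), so no gaps beyond those already present in the paper's own exposition.
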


\begin{proof}
(i). By Lemma~\ref{representation structure lemma2} we have a unitary $U:\psi_1(X)H\to X\otimes_{\psi_0} H$ where $\psi_1(x)h\mapsto x\otimes_{\psi_0}h$. It intertwines the representations $\overline{\psi^{(1)}}:\LL(X)\to \B(\psi_1(X)H)$ and $X\dashind (\psi_0):\LL(X)\to \B(X\otimes_{\psi_0} H)$. Hence it also intertwines the corresponding extended representations $\overline{\psi^{(1)}}{''}$ and $X\dashind (\psi_0)''$ of $\LL(X)''$. Moreover, identifying $X\otimes_{\pi} H_{\pi}$ with a subspace of $X\otimes_{\psi_0} H$ we have $U(\psi_1(X)H_\pi)=X\otimes_{\pi} H_{\pi}$ and therefore $U$ also intertwines $\psi^{(1)}{''}:\K(X)''\to \B(\psi_1(X)H_\pi)$ and $X\dashind (\psi_0)'':\K(X)''\to \B(X\otimes_{\psi_0} H_\pi)$. Hence the assertion follows from Proposition~\ref{space_decomposition_according_to_graph}.

(ii). Let $P_{\pi_i}$ be the projection onto $H_{\pi_i}$, $i=1,2$. Projections $P_{\pi_1}$, $P_{\pi_2}$ belong to the commutant of $\psi_0(A)$ and therefore also of $\psi_0''(A'')$. Hence
$$
(\psi_1''(Q_eX)P_{\pi_1})^* (\psi_1''(Q_fX)P_{\pi_2})= \psi_0''(\langle Q_eX, Q_fX\rangle_{A''}) P_{\pi_1} P_{\pi_2}.
$$
If $[\pi_1]\neq [\pi_2]$, then $P_{\pi_1} P_{\pi_2}=0$ and in view of above $\psi_1''(Q_eX)H_{\pi_1}\,\, \bot \,\, \psi_1''(Q_fX)H_{\pi_2}$. If $[\pi_1]=[\pi_2]$ and $e\neq f$, then $\psi^{(1)}{''}(Q_e) \psi^{(1)}{''}(Q_f)\psi_1(X) H_{\pi_2}=0$ by part (i). Accordingly $\psi_1''(Q_eX)H_{\pi_1}\,\, \bot \,\, \psi_1''(Q_fX)H_{\pi_2}$.

(iii). The proof is by induction on $n$. For $n=1$ the assertion follows from (i). If the assertion holds for a certain $n \geq 1$, then it also holds for $n+1$. Indeed, it suffices to note that if $H_1$ and $H_2$ are orthogonal and $\psi_0(A)$-invariant subspaces of $H$, then so are the spaces $\psi_1(X)H_1$ and $\psi_1(X)H_2$. The latter follows because
$$
\langle \psi_1(X)H_1, \psi_1(X)H_2\rangle = \langle H_1, \psi_0(\langle X,X\rangle_A) H_2\rangle=0.
$$

(iv). Let $k=1,..., n-1$. By part (iii) we see that $\psi_k(X^{\otimes k})\psi_1''(Q_{e_n} X)...\psi_1''(Q_{e_1} X) H_\pi$ is equal to the following direct sum 
$$
\bigoplus_{(e_{n+k},...,e_1)\in s^{-(n+k)}([\pi])} \psi_1''(Q_{e_{n+k}} X)...\psi_1''(Q_{e_1} X) H_\pi \oplus 
\bigoplus_{i=0}^{k-1} \psi_i(X^{\otimes i}) K_i^{k}
$$
where $K^k_i$ is a $\psi_0(A)$-invariant subspace which has no non-zero irreducible subspaces for $\psi_0(J)$, $i=0,1,...,k-1$. Note that $K_i^{k}$ is orthogonal to $\psi_1''(Q_{e_{n-i}} X)...\psi_1''(Q_{e_1} X) H_\pi$ because the latter is irreducible for $\psi_0(A)$. This implies that $\psi_1''(Q_{e_n} X)...\psi_1''(Q_{e_1} X) H_\pi$ is orthogonal to each space $\psi_i(X^{\otimes i}) K_i^{k}$ because
$$
\langle \psi_i(X^{\otimes i}) K_i^{k}, \psi_1''(Q_{e_n} X)...\psi_1''(Q_{e_1} X) H_\pi\rangle = \langle K_i^{k}, \psi_1''(Q_{e_{n-i}} X)...\psi_1''(Q_{e_1} X) H_\pi\rangle=0.
$$
Thus it suffices to show that 
$$
\forall_{(e_{n+k},...,e_{n+1})\in s^{-k}([r(e_n)])}\qquad \psi_1''(Q_{e_{n+k}} X)...\psi_1''(Q_{e_1} X) H_\pi\,\, \bot \,\, \psi_1''(Q_{e_n} X)...\psi_1''(Q_{e_1} X) H_\pi.
$$
However, if assume that this is not true, then by part (ii) there exists $(e_{n+k},...,e_{n+1})\in s^{-k}([r(e_n)])$ such that $(e_{n+k},...,e_{k+1})=(e_n,...,e_1)$. This contradicts that $e_{k+1}\neq e_1$.
\end{proof}

\section{The uniqueness property}\label{sec:The uniqueness property}

Let $X$ be a $C^*$-correspondence over $A$, let $J$ be an ideal in $J_X$ and let $(\psi_0,\psi_1)$ be a $J$-covariant representation of $X$. It is well known, cf. for instance \cite[Corollary 11.7]{ka3}, \cite[Theorem 9.1]{kwa-doplicher} or \cite{Kak}, that if the epimorphism $\psi_0\rtimes_{J}\psi_1: \OO(J,X) \to C^* (\psi)$ is injective, then necessarily $\psi_0$ is injective and 
\begin{equation}\label{eq:strict_J__covariance0}
J=\{a\in A: \psi_0(a)\in \psi^{(1)}(\K(X))\}.
\end{equation}
In fact, $\psi_0$ is injective and satisfies \eqref{eq:strict_J__covariance0} if and only if the representation $\psi_0\rtimes_{J}\psi_1$ is faithful on the core subalgebra $\clsp\{j^{(n)}(T): T\in \K(X^{\otimes n}), n\in \N\} \subseteq \OO(J,X)$, cf. \cite[Theorem 7.3]{kwa-doplicher} or the proof in \cite{Kak}. Moreover, since we consider the case where $J\subseteq J_X$ condition \eqref{eq:strict_J__covariance0} is equivalent to 
\begin{equation}\label{eq:strict_J__covariance}
J=\{a\in J(X): \psi_0(a)=\psi^{(1)}(\phi_X(a))\},
\end{equation}
see \cite[Theorem 9.1 (i)]{kwa-doplicher}. We note that if $J=\{0\}$, then \eqref{eq:strict_J__covariance0} implies injectivity of $\psi_0$. If $J=J_X$ then \eqref{eq:strict_J__covariance} is automatically satisfied by any injective covariant representation $(\psi_0,\psi_1)$, cf. \cite[Proposition 3.3]{katsura}. We are interested in the case when these necessary algebraic conditions are also sufficient.

\begin{defn}\label{defn:uniqueness_property}
Let $X$ be a $C^*$-correspondence over $A$ and let $J$ be an ideal in $ J_X$. We say that the pair $(X,J)$ has the \emph{uniqueness property} if for any injective representation $(\psi_0,\psi_1)$ of $X$ satisfying \eqref{eq:strict_J__covariance} the map $\psi_0\rtimes_{J}\psi_1: \OO(J,X) \to C^* (\psi)$ is an isomorphism.
\end{defn}

In this section we gather certain general facts concerning the above property. We start with some useful characterizations (which are known to experts).

\begin{lem}\label{lemma on intersection property}
Let $X$ be a $C^*$-correspondence over $A$ and let $J$ be an ideal in $J_X$. The following conditions are equivalent:
\begin{itemize}
\item[(i)] The pair $(X,J)$ has the uniqueness property.
\item[(ii)] For every injective representation $\psi=(\psi_0,\psi_1)$ of $X$ satisfying \eqref{eq:strict_J__covariance} there is a circle action
$\gamma:\T \to \Aut(C^*(\psi))$ determined by $\gamma_z(\psi_0(a))=\psi_0(a)$ and $\gamma_z(\psi_1(x))=z\psi_1(x)$, for $a\in A$, $x\in X$ and $z\in \T$.
\item[(iii)] For every injective representation $\psi=(\psi_0,\psi_1)$ of $X$ satisfying \eqref{eq:strict_J__covariance} there is a linear map $E_\psi:C^*(\psi)\to \clsp\{\psi^{(n)}(T): T\in \K(X^{\otimes n}), n\in \N\}$ such that 
$$ E_\psi(\psi_n(x)\psi_{m}(y)^*)=
\begin{cases}
\psi_n(x)\psi_{n}(y)^*, & n=m,\\
0, & n\neq m.
\end{cases}
$$
(Then $E_\psi$ is necessarily a conditional expectation). 
\item[(iv)] The $C^*$-subalgebra $j_A(A)+j^{(1)}(\K(X))$ of $\OO(J,X)$ detects ideals in $\OO(J,X)$, that is, every non-zero ideal $K$ of $\OO(J,X)$ has non-zero intersection with $j_A(A)+j^{(1)}(\K(X))$.
\item[(v)] Every non-zero ideal in $\OO(J ,X)$ contains a non-zero gauge-invariant ideal.
\end{itemize}
\end{lem}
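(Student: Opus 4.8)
The plan is to run two implication cycles. First I would establish $(i)\Rightarrow(ii)\Rightarrow(iii)\Rightarrow(i)$, which recasts the uniqueness property in terms of a gauge action, respectively a conditional expectation, on $C^*(\psi)$. Then I would close the remaining conditions through $(i)\Rightarrow(iv)\Rightarrow(v)\Rightarrow(i)$. Throughout, the two engines of the argument are the facts recalled before Definition~\ref{defn:uniqueness_property}: that injectivity of $\psi_0$ together with \eqref{eq:strict_J__covariance} is \emph{equivalent} to $\psi_0\rtimes_J\psi_1$ being faithful on the core $\clsp\{j^{(n)}(T):T\in\K(X^{\otimes n}),\,n\in\N\}$; and the faithful conditional expectation $E=\int_\T\gamma_z\,dz\colon\OO(J,X)\to\clsp\{j^{(n)}(T):T\in\K(X^{\otimes n}),\,n\in\N\}$ obtained by integrating the canonical gauge action $\gamma$, faithfulness being automatic since each $\gamma_z$ is positive. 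Note in particular that the universal representation $j=(j_A,j_X)$ is injective and satisfies \eqref{eq:strict_J__covariance} (its ideal of covariance is exactly $J$), as it is faithful on the core via the identity map.

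For $(i)\Rightarrow(ii)$ I would simply transport the gauge action: if $\Phi:=\psi_0\rtimes_J\psi_1$ is an isomorphism, then $z\mapsto\Phi\circ\gamma_z\circ\Phi^{-1}$ is a circle action on $C^*(\psi)$ fixing $\psi_0(a)$ and scaling $\psi_1(x)$ by $z$. For $(ii)\Rightarrow(iii)$ I would set $E_\psi(b):=\int_\T\gamma_z(b)\,dz$ for the action $\gamma$ supplied by $(ii)$; since $C^*(\psi)=\clsp\{\psi_n(x)\psi_m(y)^*\}$ and $\gamma_z(\psi_n(x)\psi_m(y)^*)=z^{n-m}\psi_n(x)\psi_m(y)^*$, the integral annihilates the off-diagonal monomials and reproduces the stated formula, with range $\clsp\{\psi^{(n)}(\K(X^{\otimes n})):n\in\N\}$; being a norm-one idempotent onto a subalgebra, $E_\psi$ is automatically a conditional expectation.

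The crux is $(iii)\Rightarrow(i)$, together with the analogous $(v)\Rightarrow(i)$. For $(iii)\Rightarrow(i)$ I would verify on spanning monomials that $E_\psi\circ\Phi=\Phi\circ E$, so that for $b\in\ker\Phi$ we get $\Phi(E(b^*b))=E_\psi(\Phi(b^*b))=0$; since $E(b^*b)$ lies in the core and $\Phi$ is faithful there, $E(b^*b)=0$, whence faithfulness of $E$ forces $b^*b=0$ and $\Phi$ is injective, hence an isomorphism. This faithfulness-transfer step, the heart of every gauge-invariant uniqueness theorem, is the one I expect to be the main obstacle, since it is where the two engines must be combined exactly and where the precise equivalence ``$\psi_0$ injective and \eqref{eq:strict_J__covariance}'' $\Leftrightarrow$ ``faithful on the core'' is essential.

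Finally I would treat $(iv)$ and $(v)$. For $(i)\Rightarrow(iv)$ I use a quotient trick: given a non-zero ideal $K$ with $K\cap\bigl(j_A(A)+j^{(1)}(\K(X))\bigr)=0$, the quotient representation $\psi=(q\circ j_A,\,q\circ j_X)$, where $q\colon\OO(J,X)\to\OO(J,X)/K$, has $\psi_0$ injective (as $K\cap j_A(A)=0$) and satisfies \eqref{eq:strict_J__covariance}, the latter because $j_A(a)-j^{(1)}(\phi_X(a))$ is a non-zero element of $j_A(A)+j^{(1)}(\K(X))$ for every $a\in J(X)\setminus J$ and thus stays outside $K$, forcing the ideal of covariance of $\psi$ to be exactly $J$. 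Then $(i)$ makes $q=\Phi$ an isomorphism, so $K=0$, a contradiction; hence $(iv)$ holds. For $(iv)\Rightarrow(v)$ I would pick $0\neq c\in K\cap\bigl(j_A(A)+j^{(1)}(\K(X))\bigr)$; as this subalgebra sits inside the fixed-point algebra of $\gamma$, the element $c$ is gauge-invariant, so the ideal it generates is gauge-invariant, non-zero and contained in $K$. Lastly, $(v)\Rightarrow(i)$ mirrors $(iii)\Rightarrow(i)$: if $\ker\Phi\neq 0$ it contains a non-zero gauge-invariant ideal $K'$, and $E(K')\subseteq K'\cap\clsp\{j^{(n)}(T):T\in\K(X^{\otimes n}),\,n\in\N\}$ is non-zero by faithfulness of $E$, contradicting faithfulness of $\Phi$ on the core. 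This closes all equivalences, and notably avoids invoking the $T$-pair classification.
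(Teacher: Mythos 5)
Your proposal is correct and, on the central equivalences, reproduces the paper's own argument: (ii)$\Rightarrow$(iii) by integrating the gauge action, and (iii)$\Rightarrow$(i) via the intertwining $E_\psi\circ\Phi=\Phi\circ E$ (writing $\Phi=\psi_0\rtimes_J\psi_1$) combined with faithfulness of $\Phi$ on the core and faithfulness of $E$; your (i)$\Rightarrow$(iv) quotient trick is also exactly the paper's, except that you supply the detail, left implicit there, of why the quotient representation has ideal of covariance precisely $J$ (namely $j_A(a)-j^{(1)}(\phi_X(a))\neq 0$ for $a\in J(X)\setminus J$ and this element lies in $j_A(A)+j^{(1)}(\K(X))$, hence outside $K$). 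The genuine divergence is in how (ii), (iv) and (v) are tied to the rest. The paper disposes of (i)$\Leftrightarrow$(ii) by citing \cite[Corollary 11.7]{ka3} (or \cite{kwa-doplicher}, \cite{Kak}), and of (iv)$\Leftrightarrow$(v) by citing the classification of gauge-invariant ideals \cite[Proposition 11.9]{ka3} (they are exactly the ideals generated by their intersection with $j_A(A)+j^{(1)}(\K(X))$). You instead prove (i)$\Rightarrow$(ii) by transporting the canonical gauge action through $\Phi$, prove (iv)$\Rightarrow$(v) by the elementary observation that $j_A(A)+j^{(1)}(\K(X))$ sits inside the fixed-point algebra of $\gamma$, so the ideal generated by a non-zero element of $K\cap\bigl(j_A(A)+j^{(1)}(\K(X))\bigr)$ is automatically gauge-invariant, non-zero and contained in $K$, and close with a direct (v)$\Rightarrow$(i) using $E(K')\subseteq K'\cap\clsp\{j^{(n)}(T):T\in\K(X^{\otimes n}),\,n\in\N\}$. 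This buys self-containedness -- no appeal to the $T$-pair/gauge-invariant-ideal classification -- at essentially no cost, the only loss being that (v)$\Rightarrow$(iv) is obtained by going around your cycle rather than directly. One caveat, which applies verbatim to the paper's proof as well: condition (iii) posits only a \emph{linear} $E_\psi$, and the identity $E_\psi\circ\Phi=\Phi\circ E$, verified on monomials, extends to all of $C^*(\psi)$ only if $E_\psi$ (or at least $E_\psi\circ\Phi$) is bounded; with $E_\psi$ merely linear the identity holds only on the dense span of monomials, and one cannot approximate elements of $\ker\Phi$ by elements of that span lying in $\ker\Phi$. So your argument carries exactly the same implicit boundedness assumption as the published one, no more.
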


\begin{proof} 
Equivalence (i)$\Leftrightarrow$(ii) holds by \cite[Corollary 11.7]{ka3}, see also \cite[Theorem 9.1]{kwa-doplicher} or \cite{Kak}. The implication (ii)$\Rightarrow$(iii) is straightforward, as the formula $E_\psi(b)=\int_{\T}\gamma_z(b)\, dz$, $b\in C^*(\psi)$, gives the desired conditional expectation. To see that (iii) implies (i), note that for $E_\psi$ as in (iii) and $E$ the corresponding conditional expectation on $\OO(J,X)$ we have $E_\psi\circ (\psi_0\rtimes_{J}\psi_1)= (\psi_0\rtimes_{J}\psi_1)\circ E$. Thus if $a\in \ker(\psi_0\rtimes_{J}\psi_1)$, then $(\psi_0\rtimes_{J}\psi_1)(E(a^*a))=0$. But since $(\psi_0\rtimes_{J}\psi_1)$ is injective on the range of $E$, we get $E(a^*a)=0$. However, $E$ is well known to be faithful. Thus $a=0$. Therefore $\psi_0\rtimes_{J}\psi_1$ is an isomorphism. In particular $E_\psi$ is a conditional expectation. We have proved that (i)$\Leftrightarrow$(ii)$\Leftrightarrow$(iii).

If $K$ is an ideal in $\OO(J,X)$ such that $K\cap (j_A(A)+j^{(1)}(\K(X)))=\{0\}$, then composing $(j_A, j_X)$ with the quotient map $\OO(J,X)\to \OO(J,X)/K$ we get an injective representation $(\psi_0,\psi_1)$ of $X$ satisfying \eqref{eq:strict_J__covariance}. Moreover, every injective representation $(\psi_0,\psi_1)$ of $X$ satisfying \eqref{eq:strict_J__covariance} arises in this way and we have $K=\ker(\psi_0\rtimes_J\psi_1)$. This immediately gives (i)$\Leftrightarrow$(iv).

It follows from the description of gauge-invariant ideals in $\OO(J,X)$, see \cite[Proposition 11.9]{ka3}, cf. also \cite[Theorem A.4]{kwa-endo}, that gauge-invariant ideals are exactly those ideals in $\OO(J,X)$ which are generated by their intersection with $j_A(A)+j^{(1)}(\K(X))$. In particular, every ideal in $\OO(J,X)$ which intersects non-trivially $j_A(A)+j^{(1)}(\K(X))$ contains a non-zero gauge-invariant ideal. This readily gives (iii)$\Leftrightarrow$(iv).
\end{proof}

Given two Morita equivalent $C^*$-correspondences $X$ and $Y$ with the equivalence given by an $A$-$B$ bimodule $M$, it is proved in \cite[Proposition 4.2]{ekk} that the Rieffel correspondence $A \triangleright J\longmapsto M(J):=\langle M, JM \rangle_B\triangleleft B$ restricts to an order bijection between the ideals of $J_X$ and $J_Y$. Moreover, the corresponding relative Cuntz--Pimsner algebras $\OO(J,X)$ and $\OO(M(J),Y)$ are Morita equivalent, see \cite[Theorem 4.4]{ekk}. An inspection of the proof shows that this Morita equivalence sends gauge invariant ideals of $\OO(J,X)$ to gauge invariant ideals of $\OO(M(J),Y)$. Thus modulo Lemma~\ref{lemma on intersection property} we get that uniqueness property is preserved under Morita equivalence. Namely, we have:

\begin{prop}\label{me_gauge}
Let $X$ and $Y$ be two $C^*$-correspondences over $A$ and $B$ respectively, and let $J$ be an ideal of $J_X$. Suppose that $X$ and $Y$ are Morita equivalent via an equivalence $A$-$B$-bimodule $M$. Then the pair $(X,J)$ has the uniqueness property if and only if $(Y,M(J))$ has the uniqueness property.
\end{prop}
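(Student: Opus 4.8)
The plan is to reduce the whole statement to characterization (v) in Lemma~\ref{lemma on intersection property}, which says that $(X,J)$ has the uniqueness property precisely when every non-zero ideal of $\OO(J,X)$ contains a non-zero gauge-invariant ideal. This is a purely lattice-theoretic condition on the ideals of $\OO(J,X)$ together with the distinguished sublattice of gauge-invariant ones, and the point is that it transfers verbatim across the Morita equivalence. First I would invoke the Rieffel correspondence attached to the $\OO(J,X)$--$\OO(M(J),Y)$ imprimitivity bimodule furnished by \cite[Theorem 4.4]{ekk}. This yields an order isomorphism $\Phi$ between the lattice of closed two-sided ideals of $\OO(J,X)$ and that of $\OO(M(J),Y)$. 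As a lattice isomorphism, $\Phi$ is a bijection preserving inclusions in both directions; in particular it sends the zero ideal to the zero ideal, hence carries non-zero ideals to non-zero ideals, and the same holds for $\Phi^{-1}$.

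Next I would use the fact, already recorded in the paragraph preceding the statement (obtained by inspecting the proof of \cite[Theorem 4.4]{ekk}), that $\Phi$ restricts to a bijection between the gauge-invariant ideals of $\OO(J,X)$ and those of $\OO(M(J),Y)$. Combining these two observations shows that condition (v) holds for $\OO(J,X)$ if and only if it holds for $\OO(M(J),Y)$. Indeed, assume $(X,J)$ has the uniqueness property and let $K'$ be a non-zero ideal of $\OO(M(J),Y)$. Then $\Phi^{-1}(K')$ is a non-zero ideal of $\OO(J,X)$, so by (v) it contains a non-zero gauge-invariant ideal $L$; applying $\Phi$ gives $\Phi(L)\subseteq K'$, a non-zero gauge-invariant ideal, which establishes (v) for $\OO(M(J),Y)$. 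The reverse implication is symmetric, using $\Phi^{-1}$ in place of $\Phi$. By Lemma~\ref{lemma on intersection property} this is exactly the asserted equivalence of uniqueness properties for $(X,J)$ and $(Y,M(J))$.

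The only genuinely non-formal input is that the Rieffel correspondence respects gauge-invariance, and this is where the real work would lie were it not already granted: concretely one would have to check that the imprimitivity bimodule of \cite[Theorem 4.4]{ekk} carries a compatible circle action intertwining the gauge actions on $\OO(J,X)$ and $\OO(M(J),Y)$, so that $\Phi$ commutes with the induced $\T$-actions on the two ideal lattices. Since the excerpt supplies precisely this, everything else is the routine lattice bookkeeping indicated above.
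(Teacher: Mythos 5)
Your proposal is correct and follows essentially the same route as the paper: the paper likewise combines the Rieffel correspondence coming from the Morita equivalence $\OO(J,X)\sim\OO(M(J),Y)$ of \cite[Theorem 4.4]{ekk}, the observation that this equivalence matches up gauge-invariant ideals, and characterization (v) of Lemma~\ref{lemma on intersection property}. The only difference is that you spell out the lattice bookkeeping that the paper leaves implicit in the phrase ``modulo Lemma~\ref{lemma on intersection property}''.
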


Recall that given a positively $X$-invariant ideal $I$ in $A$ the space $IX$ is naturally a $C^*$-correspondence over $I$. The uniqueness property is preserved under restrictions to positively invariant ideals:

\begin{prop}\label{prop:inducing non-detections}
Let $X$ be a $C^*$-correspondence over a $C^*$-algebra $A$, and let $J$ be an ideal of $J_X$. If $(J,X)$ has the uniqueness property, then $(I\cap J,IX)$ has the uniqueness property for every positively $X$-invariant ideal $I$ in $A$.
\end{prop}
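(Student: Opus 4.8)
The plan is to deduce the uniqueness property for $(IX,I\cap J)$ from that of $(X,J)$ through the gauge-invariant-ideal characterization in Lemma~\ref{lemma on intersection property}(v). Thus it suffices to prove that every non-zero ideal of $\OO(I\cap J, IX)$ contains a non-zero gauge-invariant ideal, knowing the analogous statement for $\OO(J,X)$. Call this property $(\star)$.

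First I would realize $\OO(I\cap J, IX)$ concretely inside $\OO(J,X)$. Let $(j_A,j_X)$ be the universal $J$-covariant representation; since $J\subseteq J_X$ it is injective, and restricting it yields a representation $(j_A|_I, j_X|_{IX})$ of the correspondence $IX$ over $I$. A direct check shows this representation is injective and satisfies \eqref{eq:strict_J__covariance} with ideal $I\cap J$ (using $I\cap J\subseteq J_{IX}$, and the fact that for $a\in I$ the operator $\phi_X(a)$ maps $X$ into $IX$ and annihilates $(IX)^\bot$, so that $j^{(1)}(\phi_X(a))$ already lies in the range of $(j_X|_{IX})^{(1)}$). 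Because $j_A(I)$ is gauge-fixed and $j_X(IX)$ is scaled by the gauge action, the gauge circle action of $\OO(J,X)$ restricts to $\B:=C^*(j_A(I)\cup j_X(IX))$. Applying the per-representation implication (ii)$\Rightarrow$(iii)$\Rightarrow$(i) from the proof of Lemma~\ref{lemma on intersection property} to the representation $(j_A|_I,j_X|_{IX})$, the existence of this action forces $(j_A|_I)\rtimes_{I\cap J}(j_X|_{IX})$ to be an isomorphism $\OO(I\cap J, IX)\cong \B$, and this isomorphism is gauge-equivariant.

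Next I would relate $\B$ to the ideal $\I:=\overline{\OO(J,X)\, j_A(I)\, \OO(J,X)}$ generated by $j_A(I)$, which is gauge-invariant since $j_A(I)$ is gauge-fixed. The key point, and the place where positive $X$-invariance of $I$ is used, is the identity $\B=\overline{j_A(I)\,\OO(J,X)\, j_A(I)}$. The inclusion $\subseteq$ follows by Cohen factorization in the Hilbert $I$-module $IX$; for $\supseteq$ one writes a spanning element $j_A(i)\,j_n(x)j_m(y)^*\,j_A(i')$ and repeatedly rebalances the tensor legs, using $X(I)\subseteq I$ to push the ideal $I$ through every factor so that each leg lands in $IX$. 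Hence $\B$ is the hereditary subalgebra of $\OO(J,X)$ generated by $j_A(I)$, it is full in the gauge-invariant ideal $\I$, and therefore $\B$ is gauge-equivariantly Morita equivalent to $\I$.

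Finally I would transfer $(\star)$ along the chain. By hypothesis and Lemma~\ref{lemma on intersection property}(v), $\OO(J,X)$ has $(\star)$. Since a closed ideal of the ideal $\I$ is automatically a closed ideal of $\OO(J,X)$, the ideal $\I$ inherits $(\star)$, the witnessing gauge-invariant ideal automatically lying inside $\I$. As $(\star)$ is preserved by gauge-equivariant Morita equivalence (the Rieffel correspondence is an order isomorphism of ideal lattices carrying gauge-invariant ideals to gauge-invariant ideals), it passes to $\B\cong\OO(I\cap J, IX)$, and Lemma~\ref{lemma on intersection property}(v) then gives the uniqueness property for $(IX,I\cap J)$. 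I expect the main obstacle to be the structural identification in the third paragraph—proving that $\B$ is exactly the hereditary subalgebra generated by $j_A(I)$, and hence Morita equivalent to $\I$—since this is precisely where positive invariance of $I$ is indispensable; the verification of strict covariance and of $I\cap J\subseteq J_{IX}$ in the second paragraph is routine but should be carried out with care.
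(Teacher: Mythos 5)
Your proposal is correct and follows essentially the same route as the paper's proof: both identify $\OO(I\cap J, IX)$ with the hereditary subalgebra $\overline{j_A(I)\,\OO(J,X)\,j_A(I)}$ of $\OO(J,X)$, invoke the gauge-equivariant Morita equivalence with the gauge-invariant ideal generated by $j_A(I)$, and transfer the characterization in Lemma~\ref{lemma on intersection property}(v) along the resulting lattice isomorphism of (gauge-invariant) ideals. The only differences are cosmetic: you argue directly where the paper argues contrapositively, and you sketch inline (via the per-representation gauge-invariant uniqueness argument and the rebalancing computation using $X(I)\subseteq I$) the identification that the paper obtains by citing the proofs of \cite[Theorem A.4]{kwa-endo} and \cite[Proposition 9.3]{ka3}.
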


\begin{proof} Suppose that there is a positively $X$-invariant ideal $I$ such that $(J\cap I,IX)$ does not have the uniqueness property. Equivalently (see Lemma~\ref{lemma on intersection property}), there is a non-zero ideal $L$ in $\OO(J\cap I ,IX)$ which does not contain any non-zero gauge-invariant ideal. By the proof of \cite[Theorem A.4]{kwa-endo}, see also the proof of \cite[Proposition 9.3]{ka3}, we may identify $\OO(J\cap I ,IX)$ with the $C^*$-subalgebra of $\OO(J ,X)$ generated by $j_A(I)$ and $j_X(IX)$, and then $\OO(J\cap I ,IX)=j_A(I)\OO(J ,X)j_A(I)$ is the hereditary subalgebra of $\OO(J ,X)$ generated by $j_A(I)$. Thus $\OO(J\cap I ,IX)$ and the ideal $\OO(J ,X) j_A(I) \OO(J ,X)$, generated by $j_A(I)$, are Morita equivalent. Moreover, the equivalence bimodule $j_A(I)\OO(J ,X)$ respects the gauge actions in the $C^*$-algebras $\OO(J\cap I ,IX)=j_A(I)\OO(J ,X)j_A(I)$ and $\OO(J ,X) j_A(I) \OO(J ,X)$ (they are gauge-invariant subalgebras of $\OO(J,X)$). Thus we have a lattice isomorphism $K\mapsto K\cap \OO(J ,X) j_A(I) \OO(J ,X)$ from the set of ideals in $\OO(J\cap I ,IX)$ to that of $\OO(J ,X) j_A(I) \OO(J ,X)$, which restricts to a lattice isomorphism between the gauge-invariant ideals. Hence, if $K$ is an ideal in $\OO(J ,X) j_A(I) \OO(J ,X)$ corresponding to the ideal $L$ in $\OO(J\cap I ,IX)$, then $K$ is a non-zero ideal in $\OO(J ,X)$ and does not contain a non-zero gauge-invariant ideal. Accordingly, $(J,X)$ does not have the uniqueness property by Lemma~\ref{lemma on intersection property}.
\end{proof}

\section{The uniqueness theorem}\label{sec:The uniqueness theorem}

The aim of this section is to give sufficient conditions for a uniqueness property of the relations defining the relative Cuntz--Pimsner algebra $\OO(J,X)$. In view of Lemma~\ref{lem:stupid_lemma}, to define (strong) topological freeness of a graph $E$ on a given set $U$, it suffices to consider the restriction of $E$ to the set $U\cup E^{-1}(U)$. This motivates the following definition.

\begin{defn}\label{defn:topological_free_correspondence}
Let $X$ be a $C^*$-correspondence over a $C^*$-algebra $A$, and let $J$ be an ideal of $J_X$. Suppose that $X(J)$ is of Type I, so that the ideal $K:=J+X(J)$ is of Type I, by Lemma~\ref{lem:stanislaw}, and hence the dual graph $E_{KXK}$ to the $C^*$-correspondence $KXK$ over $K$ is well defined. We say that $X$ is \emph{topologically free on $J$} if $E_{KXK}$ is topologically free on $\widehat{J}\subseteq \widehat{K}$. Similarly, we say that $X$ is \emph{strongly topologically free on $J$} if $E_{KXK}$ is strongly topologically free on $\widehat{J}$.
\end{defn}

\begin{prop}\label{prop:strong_vs_normal}
If $X(J)$ is liminal, then $X$ is strongly topologically free on $J$ if and only if $X$ is topologically free on $J$.
\end{prop}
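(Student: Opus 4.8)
The plan is to reduce the statement to the general graph-theoretic criterion of Lemma~\ref{lem:topological_freeness}, applied to the graph $E:=E_{KXK}$ and the open set $U:=\widehat{J}\subseteq\widehat{K}$. By Definition~\ref{defn:topological_free_correspondence}, strong topological freeness (resp.\ topological freeness) of $X$ on $J$ \emph{is} strong topological freeness (resp.\ topological freeness) of $E_{KXK}$ on $\widehat{J}$, so the assertion is exactly the equivalence (i)$\Leftrightarrow$(ii) of Lemma~\ref{lem:topological_freeness} for this $E$ and $U$. The implication from strong to ordinary topological freeness is the always-valid direction (i)$\Rightarrow$(ii) of that lemma and needs no hypothesis. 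Hence the entire content lies in the converse, which Lemma~\ref{lem:topological_freeness} supplies as (ii)$\Rightarrow$(i) the moment the restricted graph ${_{\widehat{J}}}E_{\widehat{J}}$ is continuous.

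So the real task is to verify that ${_{\widehat{J}}}(E_{KXK})_{\widehat{J}}$ is continuous. First I would identify this restriction with the dual graph of a restricted correspondence. By the remark following Definition~\ref{def:dual_graph}, restricting a dual graph to sub-spectra corresponds to restricting the underlying correspondence to the respective ideals; since $J\subseteq K$ is an ideal of $K$ and $JK=KJ=J$, this yields ${_{\widehat{J}}}(E_{KXK})_{\widehat{J}}=E_{J(KXK)J}=E_{Y}$, where $Y:=JXJ$ is the $C^*$-correspondence over $J$. Consequently the associated multivalued map $r\circ s^{-1}$ of ${_{\widehat{J}}}E_{\widehat{J}}$ coincides with the dual multivalued map $\Y$ of $Y$.

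Finally, continuity of the graph ${_{\widehat{J}}}E_{\widehat{J}}$ means precisely continuity of $\Y$, and this is provided by Corollary~\ref{continuity of dual graphs}: since $X(J)$ is liminal, $\Y$ is continuous. Feeding this into the continuity case of Lemma~\ref{lem:topological_freeness} gives (ii)$\Rightarrow$(i) and completes the argument. The one genuinely nontrivial ingredient is the continuity of $\Y$, which is exactly where the liminality of $X(J)$ enters (through the equality $\X^{-1}(\widehat{I})=\widehat{X(I)}$ of Lemma~\ref{range of duals to correspondences}(ii) underlying Corollary~\ref{continuity of dual graphs}); everything else is a bookkeeping reduction to results already established, so I expect no further obstacle.
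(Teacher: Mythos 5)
Your proposal is correct and is exactly the paper's argument: the paper's proof reads ``Combine Lemma~\ref{lem:topological_freeness} and Corollary~\ref{continuity of dual graphs},'' and your write-up simply makes explicit the bookkeeping (identifying ${_{\widehat{J}}}(E_{KXK})_{\widehat{J}}$ with $E_{JXJ}$ and its multivalued map with $\widehat{Y}$) that this combination implicitly requires.
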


\begin{proof}
Combine Lemma~\ref{lem:topological_freeness} and Corollary~\ref{continuity of dual graphs}.
\end{proof}

\begin{thm}[Uniqueness theorem]\label{uniqueness theorem}
Let $X$ be a $C^*$-correspondence over a $C^*$-algebra $A$ and let $J$ be an ideal in $J_X$. Suppose that either
\begin{itemize}
\item[A1)] the dual multivalued map $\widehat{X}$ is weakly topologically aperiodic on $\widehat{J}$, or 
\item[A2)] $X(J)$ is of Type I and $X$ is strongly topologically free on $J$. 
\end{itemize} 
Then the pair $(X,J)$ has the uniqueness property.
\end{thm}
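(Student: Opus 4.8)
The plan is to prove the uniqueness property directly, i.e.\ that $\Psi:=\psi_0\rtimes_J\psi_1:\OO(J,X)\to C^*(\psi)$ is an isomorphism for every injective representation $\psi=(\psi_0,\psi_1)$ satisfying \eqref{eq:strict_J__covariance}. Recall from the start of Section~\ref{sec:The uniqueness property} that such a $\Psi$ is automatically faithful on the core $\clsp\{j^{(n)}(T):T\in\K(X^{\otimes n}),n\in\N\}$, and that $\OO(J,X)$ carries a faithful gauge-invariant conditional expectation $E$ onto this core. Hence, for $a\in\ker\Psi$ it suffices to show $E(a^*a)=0$: faithfulness of $\Psi$ on the core and then of $E$ forces $a=0$. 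Approximating the positive element $a^*a$ in norm by self-adjoint $b$ in the dense $*$-subalgebra spanned by monomials in $j_A(A)$ and $j_X(X)$, the whole theorem reduces to the single estimate $\|\Psi(E(b))\|\le\|\Psi(b)\|$. Writing $c:=\Psi(b)=\sum_{n,m\le N}\psi_n(x_{n,m})\psi_m(y_{n,m})^*$ and $c_0:=\Psi(E(b))=\sum_{n\le N}\psi^{(n)}(T_n)$ for its diagonal part, the goal becomes $\|c_0\|\le\|c\|$, which I would obtain by a compression argument: given $\varepsilon>0$, produce a unit vector $w\in H$ with $|\langle w,c_0w\rangle|\ge\|c_0\|-\varepsilon$ and $\langle w,\psi_n(x)\psi_m(y)^*w\rangle=0$ for every off-diagonal monomial ($n\ne m$); then $\|c_0\|-\varepsilon\le|\langle w,cw\rangle|\le\|c\|$.

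The vector $w$ will live in a subspace indexed by a long non-returning path. In case (A2) I would use the graph $E_{KXK}$ dual to $Y=JXJ$ (well defined by Lemma~\ref{lem:stanislaw}, since $K:=J+X(J)$ is of Type~I) together with the map $Q$ of Proposition~\ref{space_decomposition_according_to_graph}. The orthogonality in Lemma~\ref{lem:proposition_in_terms_of_representations}(iv) is exactly tailored to annihilate off-diagonal terms: for $[\pi]\in\widehat J$ with $\pi\le\psi_0$ and a non-returning path $(e_l,\dots,e_1)\in s^{-l}([\pi])$ of length $l>N$, the subspace $W:=\psi_1''(Q_{e_l}X)\cdots\psi_1''(Q_{e_1}X)H_\pi$ satisfies $\psi_k(X^{\otimes k})W\perp W$ for all $1\le k\le l-1$. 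After factoring each off-diagonal monomial $\psi_n(x)\psi_m(y)^*$ through the common bottom segment $(e_{l-m},\dots,e_1)$ of the path and invoking this orthogonality with $k=|n-m|$, every off-diagonal matrix element over $W$ vanishes. The decisive role of the covariance condition \eqref{eq:strict_J__covariance} is to force base points in $\widehat J$ to \emph{have a past}: it gives $\psi_0(J)H\subseteq\overline{\psi_1(X)H}$, so any irreducible subspace with class in $\widehat J$ descends one step through $\psi_1(X)$, and recursively through every lower level; hence every open $V\subseteq\widehat J$ has a past in the restriction of $E_{KXK}$ to $\widehat J$. Strong topological freeness on $\widehat J$ (Definition~\ref{defn:topological_free_correspondence}) then supplies, over any non-empty such $V$, non-returning paths of arbitrary length $l>N$, which is precisely the input the compression requires.

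It remains to place $w$ where $c_0$ is nearly maximal, and here I would first localize to $\widehat J$. Invoking the reduction to $K=J+X(J)$ indicated in the introduction together with the projections $P_n$ of Lemma~\ref{Fowler-Reaburn lemma}, one argues that over $\SA\setminus\widehat J$ no covariance is imposed, the grading $P_1\ge P_2\ge\cdots$ survives, and a pure-degree vector already separates the off-diagonal monomials; thus the obstruction is confined to $\widehat J$, and it suffices to realise the norm of $c_0$ on a vector whose class lies in $\widehat J$. A standard lower-semicontinuity argument shows that the set $V$ of base points admitting near-maximal vectors is open, so intersecting with $\widehat J$ yields the non-empty open set to feed into strong topological freeness, and one then seeks a unit vector $w$ in the corresponding path subspace $W$ realising $\|c_0\|$ up to $\varepsilon$. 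Case (A1) runs in parallel but multiplicity-free: lacking the Type~I hypothesis one cannot use $Q$, so I would instead decompose along the dual multivalued map $\X$ via Lemma~\ref{representation structure lemma2}, choosing at each stage an irreducible subrepresentation whose class lies in $\X$ applied to the previous class, thereby building a path in $E_{\X}$. Here weak topological aperiodicity on $\widehat J$ is condition (iii) of Lemma~\ref{lem:topological_freeness_vs_aperiodicity}, which delivers a path whose top vertex differs from every vertex below it, and the orthogonality of inequivalent irreducible isotypic subspaces plays the role of Lemma~\ref{lem:proposition_in_terms_of_representations}(iv).

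The step I expect to be the main obstacle is the coordination carried out in the third paragraph: upgrading ``$\|c_0\|$ is nearly attained somewhere'' to ``it is nearly attained on a non-empty open subset of $\widehat J$ possessing a past,'' and then synchronising the \emph{length} of the non-returning path with the top degree $N$ occurring in $c$ so that Lemma~\ref{lem:proposition_in_terms_of_representations}(iv) annihilates all off-diagonal degrees simultaneously. This fuses three ingredients that must hold at once: lower semicontinuity of fibre norms on the spectrum, the past property extracted from \eqref{eq:strict_J__covariance}, and the purely combinatorial output of (strong) topological freeness, all while keeping the chosen vector $w$ inside the path subspace $W$ at the moment it realises the norm of $c_0$. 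Verifying that these can always be met together, and that the localisation to $\widehat J$ via the $P_n$ is genuinely valid outside $\widehat J$, is the delicate heart of the argument.
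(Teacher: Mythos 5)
Your skeleton is the paper's: reduce via the faithful gauge expectation to the estimate "diagonal part has smaller norm" (Lemma~\ref{lemma on intersection property}), then kill the off-diagonal terms by compressing to a subspace attached to a long non-returning path, using Lemma~\ref{lem:proposition_in_terms_of_representations}(iv) in case (A2) and disjointness of inequivalent irreducibles in case (A1). However, the two steps you defer are exactly where the content of Proposition~\ref{2.6} lies, and as you have set them up they fail. First, your assertion that covariance forces \emph{every} open $V\subseteq\widehat J$ to have a past in ${}_{\widehat J}(E_{KXK})_{\widehat J}$ is false. Covariance only gives descent one step: a class in $\widehat J$ sits inside $\overline{\psi_1(X)H}$, hence is the \emph{range} of some edge; it says nothing about where the \emph{source} of that edge lies, and an edge into $\widehat J$ can come from outside $\widehat J$, in which case $V$ has no past and strong topological freeness (which quantifies only over sets with a past) gives you nothing. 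What the paper actually proves (Claim 2 in the proof of Proposition~\ref{2.6}, resting on Lemmas~\ref{embedding of spectra2}, \ref{embedding of spectra} and Proposition~\ref{tensoring vs duals}) is that the past property holds for the particular set $V$ on which the Case 2 alternative holds: every representation over $V$, at every level of the tower $\F_0\subseteq\F_1\subseteq\cdots$, has all its irreducible extensions nonvanishing on $\K_k^J$. So the correct dichotomy is not your "inside $\widehat J$ versus outside $\widehat J$" split, but "some iterated extension eventually kills the top compacts $\K_{n+N}$ (Case 1, disposed of with the Fowler--Raeburn projections $P_i-P_{i+1}$ of Lemma~\ref{Fowler-Reaburn lemma}) versus no iterated extension ever does (Case 2, which is what yields the past property)". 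A point of $\widehat J$ can perfectly well fall under Case 1, and your localisation argument does not detect this.

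Second, your single-vector compression cannot realise the norm of the diagonal part. Your $c_0$ lies in $\F_N$, a sum of compacts of all ranks up to $N$, while the path subspace $W$ is built to be irreducible only for $\psi_0(A)$ with prescribed class $r(e_l)$; there is no reason the compression of $\psi^{(n)}(T_n)$, $n\geq 1$, to $W$ comes within $\varepsilon$ of $\|c_0\|$, and lower semicontinuity of fibre norms lives on $\widehat{\F_N}$, not on $\SA$, so it cannot by itself produce the open subset of $\widehat J$ you want to feed into topological freeness. The paper's resolution is Claim 1 of Proposition~\ref{2.6}: identify the multiplier algebra of $\K_n$ with $\LL(X^{\otimes n})$, find $\xi,\eta\in X^{\otimes n}$ with $\|\psi_n(\xi)^* a\, \psi_n(\eta)\|>\|a\|-\varepsilon/2$, replace $a$ by $g=\psi_n(\xi)^* a\,\psi_n(\eta)\in\psi_0(A)$ (for which lower semicontinuity on $\SA$ does apply), run the whole path argument for $g$, and only then conjugate back by setting $Q_1=Q\nu(\psi_n(\xi))^*$ and $Q_2=\nu(\psi_n(\eta))Q$. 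This produces a genuinely two-sided compression with $Q_1\neq Q_2$; your insistence on a single unit vector $w$ with $\langle w,c_0w\rangle$ nearly maximal is precisely what this reduction is designed to avoid. Without these two devices --- the extension-tower dichotomy giving the past property, and the reduction of the diagonal part to an element of $\psi_0(A)$ --- the proposal does not close.
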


The proof of Theorem 7.3 is given at the end of this section. For the proof, we need a couple of lemmas and a proposition.

We fix a $C^*$-correspondence $X$ over a $C^*$-algebra $A$, an ideal $J$ of $J_X$ and an injective representation $\psi=(\psi_0,\psi_1)$ of $X$ satisfying \eqref{eq:strict_J__covariance}. We start with the analysis of representations of certain subalgebras of $C^*(\psi)$. For each $n\in \N$ we define the following $C^*$-subalgebras of $C^*(\psi)$: 
$$ 
\K_n:= \psi^{(n)}(\K(X^{\otimes n})),\quad \K^J_n:= \psi^{(n)}(\K(X^{\otimes n}J)),\quad \F_n:=\sum_{i=0}^n \K_i, \quad \F_\infty:=\sum_{i=0}^\infty \K_i. 
$$
We call $\F_\infty$ the \emph{core subalgebra of} $C^*(\psi)$. The following lemma is well-known to experts, cf. \cite[Section 5]{katsura}.

\begin{lem}\label{katsura's lemma}
Let $X$ be a $C^*$-correspondence over a $C^*$-algebra $A$ and let $J$ be an ideal in $J_X$. We have the following properties:
\begin{itemize}
\item[(i)] If $a\in \K(X^{\otimes n}J)$, then $a\otimes 1_X\in \K(X^{\otimes n+1})$ and $\psi^{(n)}(a)=\psi^{(n+1)}(a\otimes 1_X)$.
\item[(ii)] $\K_n\cap \K_{n+1}=\K^J_n$ for every $n\in \N$;
\item[(iii)] the sum $\sum_{k=i}^n \K_k$ is an ideal in $\F_n$ for every $0\leq i \leq n$.
\end{itemize}
\end{lem}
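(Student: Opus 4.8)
The plan is to handle the three parts in increasing order of depth, the real content being the inclusion $\K_n\cap\K_{n+1}\subseteq\K^J_n$ in (ii).

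For (i), I would argue directly from covariance. Since $J\subseteq J(X)$ we have $\K(X^{\otimes n}J)\subseteq\K(X^{\otimes n}J(X))$, so Lemma~\ref{lemma on tensoring regular correspondences}(i) (with $Y=X^{\otimes n}$) gives $a\otimes 1_X\in\K(X^{\otimes n+1})$ for $a\in\K(X^{\otimes n}J)$. For the identity it suffices, by norm-continuity and the fact that $\K(X^{\otimes n}J)=\clsp\{\Theta_{\xi b,\eta}:b\in J,\ \xi,\eta\in X^{\otimes n}\}$, to treat $a=\Theta_{\xi b,\eta}$. Writing $\phi_X(b)=\sum_i\Theta_{u_i,v_i}$ and using $\psi_n(\xi b)=\psi_n(\xi)\psi_0(b)$, the covariance $\psi_0(b)=\psi^{(1)}(\phi_X(b))$, and $\psi_n(\xi)\psi_1(u)=\psi_{n+1}(\xi\otimes u)$, one gets $\psi^{(n)}(\Theta_{\xi b,\eta})=\sum_i\psi^{(n+1)}(\Theta_{\xi\otimes u_i,\eta\otimes v_i})$, and a short computation on elementary tensors identifies $\sum_i\Theta_{\xi\otimes u_i,\eta\otimes v_i}$ with $\Theta_{\xi b,\eta}\otimes 1_X$. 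This is exactly the step where covariance enters. Part (iii) is then purely algebraic: factoring $X^{\otimes k}=X^{\otimes j}\otimes X^{\otimes(k-j)}$ for $j\le k$ and applying the representation relations yields $\K_j\K_k\subseteq\K_{\max(j,k)}$ (the case $j>k$ follows by adjoints), so for $i\le k\le n$ and any $0\le j\le n$ we have $\K_j\K_k\subseteq\sum_{l=i}^n\K_l$, whence $\sum_{l=i}^n\K_l$ is a two-sided ideal of $\F_n$. The inclusion $\K^J_n\subseteq\K_n\cap\K_{n+1}$ is immediate from (i), as $\K^J_n\subseteq\K_n$ and $\K^J_n=\psi^{(n+1)}(\K(X^{\otimes n}J)\otimes 1_X)\subseteq\K_{n+1}$.

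The heart is $\K_n\cap\K_{n+1}\subseteq\K^J_n$. Since everything concerns $C^*(\psi)$, I may assume $\psi$ acts on a Hilbert space $H$ and use the Fowler--Raeburn projections $P_n$ of Lemma~\ref{Fowler-Reaburn lemma}. Let $c=\psi^{(n)}(S)\in\K_n\cap\K_{n+1}$ with $S\in\K(X^{\otimes n})$ ($\psi^{(n)}$ injective). As $c\in\K_{n+1}$ is supported on $P_{n+1}H$, we have $c=cP_{n+1}$, i.e. $\psi^{(n)}(S)(1-P_{n+1})=0$. From this I would extract two facts. First, the operator identity $\overline{\psi^{(n+1)}}(S\otimes 1_X)=\psi^{(n)}(S)P_{n+1}$ (checked on rank-one $S$ using $\overline{\psi^{(m)}}(T)\psi_m(w)=\psi_m(Tw)$ from Lemma~\ref{representation structure lemma2}) gives $\overline{\psi^{(n+1)}}(S\otimes 1_X)=cP_{n+1}=c=\psi^{(n+1)}(T)$; since $\K(X^{\otimes n+1})$ is essential in $\LL(X^{\otimes n+1})$ and $\psi^{(n+1)}$ is injective, $\overline{\psi^{(n+1)}}$ is injective, so $S\otimes 1_X=T\in\K(X^{\otimes n+1})$, and Katsura's converse to Lemma~\ref{lemma on tensoring regular correspondences}(i) (\cite{katsura}) yields $S\in\K(X^{\otimes n}J(X))$, hence $\langle\eta,S\xi\rangle\in J(X)$ for all $\xi,\eta\in X^{\otimes n}$. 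Second, combining $c=cP_{n+1}$ with $P_{n+1}\psi_n(\xi)=\psi_n(\xi)P_1$ from \eqref{essential projections versus fibers} and $\psi_n(\eta)^*\psi^{(n)}(S)\psi_n(\xi)=\psi_0(\langle\eta,S\xi\rangle)$ gives $\psi_0(\langle\eta,S\xi\rangle)=\psi_0(\langle\eta,S\xi\rangle)P_1$, that is $\psi_0(\langle\eta,S\xi\rangle)(1-P_1)=0$. To conclude, I use that for $a\in J(X)$ one has $\overline{\psi^{(1)}}(\phi_X(a))=\psi_0(a)P_1$ (again verified on rank-one operators), so $\psi_0(a)(1-P_1)=0$ is equivalent to $\psi_0(a)=\psi^{(1)}(\phi_X(a))$, which by \eqref{eq:strict_J__covariance} means $a\in J$. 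Applying this to $a=\langle\eta,S\xi\rangle\in J(X)$ gives $\langle\eta,S\xi\rangle\in J$ for all $\xi,\eta$; therefore $S\xi\in X^{\otimes n}J$ for every $\xi$, so $S(X^{\otimes n})\subseteq X^{\otimes n}J$ and hence $S\in\K(X^{\otimes n}J)$, i.e. $c\in\K^J_n$.

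I expect the main obstacle to be precisely this inclusion, and within it the fact that one must run two logically independent arguments in parallel. The compactness argument (through $S\otimes 1_X$ and the essential-ideal injectivity of $\overline{\psi^{(n+1)}}$) is representation-independent and only locates $S$ in $\K(X^{\otimes n}J(X))$; it is the covariance argument (through the projections $P_1$, $P_{n+1}$ together with \eqref{eq:strict_J__covariance}) that sharpens $J(X)$ down to the actual covariance ideal $J\subseteq J_X$. Keeping careful track of which identities genuinely use covariance — only part (i) and the last step via \eqref{eq:strict_J__covariance} — and verifying the two operator identities $\overline{\psi^{(n+1)}}(S\otimes 1_X)=\psi^{(n)}(S)P_{n+1}$ and $\overline{\psi^{(1)}}(\phi_X(a))=\psi_0(a)P_1$ on rank-one operators before extending by continuity, is where the care is required.
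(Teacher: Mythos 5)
Your proof is correct; the comparison with the paper is lopsided because the paper contains no argument at all --- it disposes of (i), (ii), (iii) by citing \cite[Proposition 3.22]{kwa-doplicher}, \cite[Proposition 5.9]{katsura} and \cite[Lemma 5.4]{katsura} respectively --- whereas you reconstruct complete proofs inside the paper's own toolkit (Lemma~\ref{Fowler-Reaburn lemma} and Lemma~\ref{representation structure lemma2}). Your handling of the essential inclusion $\K_n\cap\K_{n+1}\subseteq\K^J_n$ is sound: both operator identities $\overline{\psi^{(n+1)}}(S\otimes 1_X)=\psi^{(n)}(S)P_{n+1}$ and $\overline{\psi^{(1)}}(\phi_X(a))=\psi_0(a)P_1$ hold (verified on rank-one operators and extended by continuity), $\overline{\psi^{(n+1)}}$ is injective because a faithful nondegenerate representation of $\K(X^{\otimes n+1})$ extends faithfully to $\LL(X^{\otimes n+1})\subseteq M(\K(X^{\otimes n+1}))$, and \eqref{essential projections versus fibers} gives exactly the commutations you use. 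Three remarks. (a) Like the paper, you still import one nontrivial fact, the converse of Lemma~\ref{lemma on tensoring regular correspondences}(i); it is true --- compressing with the adjointable maps $V_y\colon x\mapsto y\otimes x$ gives $V_y^*(S\otimes 1_X)V_{y'}=\phi_X(\langle y,Sy'\rangle)\in\K(X)$, whence $S\in\K(X^{\otimes n}J(X))$ --- so this is a citation, not a gap. (b) A Hilbert-space-free shortcut, closer in spirit to the cited proofs, is available: for $c=\psi^{(n)}(S)\in\K_{n+1}$ one has $\psi_0(\langle\eta,S\xi\rangle)=\psi_n(\eta)^*c\,\psi_n(\xi)\in\psi_n(\eta)^*\K_{n+1}\psi_n(\xi)\subseteq\K_1$, so \eqref{eq:strict_J__covariance0} yields $\langle\eta,S\xi\rangle\in J$ at once; this bypasses the projections $P_1$, $P_{n+1}$ and the converse compactness lemma altogether, at the cost of invoking the equivalence of \eqref{eq:strict_J__covariance0} and \eqref{eq:strict_J__covariance} recorded (with citation) in the paper. (c) Two harmless informalities: in (i), $\phi_X(b)$ is only a norm limit of finite sums $\sum_i\Theta_{u_i,v_i}$, so your identification with $\Theta_{\xi b,\eta}\otimes 1_X$ should be read through an approximation; and in (iii) you should also record that $\sum_{k=i}^n\K_k$ is closed (induction, using that $B+I$ is closed whenever $B,I$ are $C^*$-subalgebras with $BI\cup IB\subseteq I$), since ideals are closed by the paper's convention.
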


\begin{proof}
(i). It follows, for instance, from \cite[Proposition 3.22]{kwa-doplicher}.

(ii). This is \cite[Proposition 5.9]{katsura}.

(iii). It follows from \cite[Lemma 5.4]{katsura}.
\end{proof}

For every $n\geq 0$, the $*$-homomorphism $\psi^{(n)}:\K(X^{\otimes n})\rightarrow C^*(\psi)$ is injective. Therefore, $\psi^{(n)}$ induces $*$-isomorphisms $\K(X^{\otimes n})\cong\K_n$ and $\K(X^{\otimes n}J)\cong \K_n^J$, and homeomorphisms $\widehat{\K(X^{\otimes n})}\cong\widehat{\K_n}$ and $\widehat{\K(X^{\otimes n}J)}\cong \widehat{\K_n^J}$. We will identify these topological spaces via these homeomorphisms.

\begin{lem}\label{embedding of spectra2} 
Given $\pi \in \Irr(\F_n)$ with $\pi(\K_n^J)\neq 0$ and any extension $\tilde{\pi}\in \Irr(\F_{n+1})$ of $\pi$, we have $\tilde{\pi}(\K_{n+1})\neq 0$. Moreover, the following diagram of multivalued maps commutes
$$
\xymatrix{
\widehat{\K_n^J} \ar[d]_{\widehat{\psi^{(n)}}}^{\cong} & & \ar[ll]_{\widehat{\iota} }\widehat{\K_{n+1}} 
\ar[d]^{\widehat{\psi^{(n+1)}}}_{\cong}
\\
\widehat{\K(X^{\otimes n}J)} &\qquad & \widehat{ \K(X^{\otimes n+1})}, \ar[ll]_{\widehat{\otimes 1_X}\,\,\,\,\,}
}
$$
where $\iota:\K_n^J \to \K_{n+1}$ is the inclusion, $\otimes 1_X:\K(X^{\otimes n}J) \stackrel{}{\longrightarrow}\K(X^{\otimes n+1})$ is an embedding by Lemma~\ref{lemma on tensoring regular correspondences}, and $\widehat{\K_n^J} \subseteq \widehat{\F_{n}}	 $ and $ \widehat{\K_{n+1}}	\subseteq \widehat{\F_{n+1}}$ are open subsets.
\end{lem}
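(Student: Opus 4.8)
The first assertion is almost immediate. The plan is to use Lemma~\ref{katsura's lemma}(ii), which gives $\K_n^J=\K_n\cap\K_{n+1}$ and in particular $\K_n^J\subseteq\K_{n+1}$. Since $\K_n^J\subseteq\F_n$ and $\tilde{\pi}$ extends $\pi$, for every $a\in\K_n^J$ the operator $\tilde{\pi}(a)$ restricts to $\pi(a)$ on $H_\pi$; hence $\pi(\K_n^J)\neq 0$ forces $\tilde{\pi}(\K_n^J)\neq 0$, and a fortiori $\tilde{\pi}(\K_{n+1})\neq 0$. The same inclusion, combined with Lemma~\ref{katsura's lemma}(iii), also accounts for the two open embeddings of spectra recorded at the end: $\K_{n+1}\triangleleft\F_{n+1}$ by (iii), while $\K_n^J$ is the image under the isomorphism $\psi^{(n)}$ of the ideal $\K(X^{\otimes n}J)\triangleleft\K(X^{\otimes n})$, so $\K_n^J\triangleleft\K_n$, and $\K_n\triangleleft\F_n$ again by (iii); since a closed ideal of a closed ideal of a $C^*$-algebra is again a closed ideal (use an approximate unit), $\K_n^J\triangleleft\F_n$, and therefore $\widehat{\K_n^J}$ and $\widehat{\K_{n+1}}$ sit as open subsets of $\widehat{\F_n}$ and $\widehat{\F_{n+1}}$.

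For the commuting diagram the plan is first to lift it to a commuting square of $C^*$-algebras. The vertical maps $\psi^{(n)}\colon\K(X^{\otimes n}J)\to\K_n^J$ and $\psi^{(n+1)}\colon\K(X^{\otimes n+1})\to\K_{n+1}$ are $*$-isomorphisms onto their ranges (each $\psi^{(k)}$ is injective), $\iota$ is the inclusion $\K_n^J\hookrightarrow\K_{n+1}$, and $\otimes 1_X$ is the embedding supplied by Lemma~\ref{lemma on tensoring regular correspondences}. Lemma~\ref{katsura's lemma}(i) states precisely that $\psi^{(n)}(a)=\psi^{(n+1)}(a\otimes 1_X)$ for $a\in\K(X^{\otimes n}J)$, i.e. $\iota\circ\psi^{(n)}=\psi^{(n+1)}\circ(\otimes 1_X)$, so the square of $C^*$-algebras commutes.

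Passing to spectra is the step requiring care, and it is here that I expect the main (if modest) obstacle. I would deliberately avoid invoking a general contravariant functoriality $\widehat{\beta\circ\alpha}=\widehat{\alpha}\circ\widehat{\beta}$ of the dual-map construction, since for arbitrary $*$-homomorphisms the inclusion ``$\subseteq$'' can fail: a representation of the form $\sigma\circ\beta$ need not possess any irreducible subrepresentation through which a given $\pi\leq\sigma\circ\beta\circ\alpha$ factors. What rescues the argument is that $\psi^{(n)}$ and $\psi^{(n+1)}$ are \emph{isomorphisms}, so composing a representation with either of them is an order-isomorphism on subrepresentations and preserves irreducibility. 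Concretely, I would fix $[\tilde{\pi}]\in\widehat{\K_{n+1}}$, set $\rho:=\tilde{\pi}\circ\psi^{(n+1)}$ so that $\widehat{\psi^{(n+1)}}([\tilde{\pi}])=[\rho]$, and observe that an irreducible representation $\nu$ of $\K_n^J$ satisfies $\nu\leq\tilde{\pi}\circ\iota\,(=\tilde{\pi}|_{\K_n^J})$ if and only if $\nu\circ\psi^{(n)}\leq\tilde{\pi}\circ\iota\circ\psi^{(n)}=\tilde{\pi}\circ\psi^{(n+1)}\circ(\otimes 1_X)=\rho\circ(\otimes 1_X)$, the first of these last equalities being the commuting square. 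Thus, as $\nu$ ranges over the irreducible subrepresentations of $\tilde{\pi}|_{\K_n^J}$, the class $\widehat{\psi^{(n)}}([\nu])=[\nu\circ\psi^{(n)}]$ ranges exactly over the classes $[\mu]$ with $\mu\leq\rho\circ(\otimes 1_X)$, i.e. over $\widehat{\otimes 1_X}([\rho])$. This yields $\widehat{\psi^{(n)}}(\widehat{\iota}([\tilde{\pi}]))=\widehat{\otimes 1_X}(\widehat{\psi^{(n+1)}}([\tilde{\pi}]))$, which is the asserted commutativity; everything else is bookkeeping with the identifications from the first paragraph.
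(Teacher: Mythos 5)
Your proof is correct and follows essentially the same route as the paper: the commuting square of $C^*$-algebras via Lemma~\ref{katsura's lemma}(i), the ideal inclusions $\K_n^J\triangleleft\F_n$ and $\K_{n+1}\triangleleft\F_{n+1}$ via Lemma~\ref{katsura's lemma}(iii) together with the ideal-of-an-ideal fact, and then passage to spectra. The only difference is that the paper compresses the final step into ``this readily implies the assertion,'' whereas you carefully justify it (correctly observing that the dual-map construction is not functorial in general but is harmless here because $\psi^{(n)}$ and $\psi^{(n+1)}$ are isomorphisms onto their ranges).
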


\begin{proof} By Lemma~\ref{katsura's lemma} (i), the following diagram commutes
$$
\xymatrix{ 
\K_n^J \ar[rr]^{\iota } & & \K_{n+1} 
\\
\K(X^{\otimes n}J) \ar[u]^{\psi^{(n)}}_{\cong} \ar[rr]^{\otimes 1_X\,\,\,\,\,} &\qquad & \K(X^{\otimes n+1}). \ar[u]_{\psi^{(n+1)}}^{\cong} \\
}
$$
By Lemma~\ref{katsura's lemma} (iii), $\K_n$ is an ideal in $\F_n$. Since $\K_n^J$ is an ideal in $\K_n$, also $\K_n^J$ is an ideal in $\F_n$. In particular, $\widehat{\K_n^J} \subseteq \widehat{\F_{n}}	 $ and $ \widehat{\K_{n+1}}	\subseteq \widehat{\F_{n+1}}$ are open subsets. This readily implies the assertion. 
\end{proof}

\begin{lem}\label{embedding of spectra} 
Given $\pi \in \Irr(\F_n)$ such that $\pi(\K_{n}^J)=0$, there exists $\tilde{\pi}\in \Irr(\F_{n+1})$ that extends $\pi$ such that $\tilde{\pi}(\K_{n+1})=0$. 
\end{lem}

\begin{proof}
Since $\pi(\K_{n}^J)=0$, there is an irreducible representation $\overline{\pi}:\F_n/\K_{n}^J\to B(H_\pi)$ such that $\overline{\pi}(x+\F_n)=\pi(x)$ for $x\in\F_n$. Let $q:\F_{n+1}\to\F_{n+1}/\K_{n+1}$ be the quotient map, and let $\Phi:\F_{n+1}/\K_{n+1}\to\F_n/(\F_n\cap\K_{n+1})$ be the canonical isomorphism. By \cite[Proposition 5.12]{katsura}, $\F_n\cap\K_{n+1}=\K_{n}^J$, so $\tilde{\pi}:=\overline{\pi}\circ\Phi\circ q$ is an irreducible representation of $\F_{n+1}$ that extends $\pi$. Moreover, $\tilde{\pi}(\K_{n+1})=0$.
\end{proof}

The following proposition is the main ingredient in the proof of Theorem~\ref{uniqueness theorem}.

\begin{prop}\label{2.6} 
Retain the assumptions of Theorem~\ref{uniqueness theorem}. Let $n>0$ and $m>0$. For each $a \in \F_n$ and each $\varepsilon >0$ there exist a representation $\nu: C^* (\psi)\to \B(H_\nu)$ and contractions $Q_1,Q_2 \in \B(H_\nu)$, such that 
\begin{itemize}
\item[1)] $ \Vert Q_1\nu (a) Q_2\Vert \ge \Vert a \Vert - \varepsilon$,
\item[2)] $Q_1\nu(\psi_{k}(X^{\otimes k}) \F_n)\, Q_{2 } = 0$ and $Q_1\nu(\F_n \psi_{k}(X^{\otimes k})^*)\, Q_{2 } = 0$ for $k=1,2,\dots,m$.
\end{itemize}
\end{prop}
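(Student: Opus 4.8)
The plan is to manufacture the triple $(\nu,Q_1,Q_2)$ from a single irreducible representation of the subalgebra $\F_n$ that nearly attains $\|a\|$, and then to annihilate the off-diagonal terms by realizing this representation as deep as possible along a non-returning path supplied by the hypothesis of Theorem~\ref{uniqueness theorem}.

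First I would reduce the two requirements to a single orthogonality condition. As $\F_n$ is a $C^*$-algebra, choose $\pi\in\Irr(\F_n)$ with $\|\pi(a)\|\ge\|a\|-\varepsilon$ and, using that an irreducible representation of a $C^*$-subalgebra extends to an irreducible representation of the ambient algebra (Section~\ref{Preliminaries}), extend $\pi$ to an irreducible representation $\nu$ of $C^*(\psi)$ on $H_\nu\supseteq H_\pi$. Let $P_\pi$ be the projection onto $H_\pi$ and put $Q_1=Q_2:=P_\pi$. Since $\pi\le\nu|_{\F_n}$, the space $H_\pi$ is invariant under $\nu(\F_n)$; hence $P_\pi\nu(a)P_\pi=\pi(a)$, which settles condition~1), and moreover $\nu(b)P_\pi=P_\pi\nu(b)P_\pi$ and $P_\pi\nu(b)=P_\pi\nu(b)P_\pi$ for all $b\in\F_n$. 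Feeding these identities into condition~2) shows that both its equalities follow from
\begin{equation}\label{eq:plan_star}
P_\pi\,\nu(\psi_k(X^{\otimes k}))\,P_\pi=0,\qquad k=1,\dots,m
\end{equation}
(the second being the adjoint of the first). Thus everything reduces to placing $\pi$ so that $\nu(\psi_k(X^{\otimes k}))H_\pi\perp H_\pi$ for $1\le k\le m$, which is \eqref{eq:plan_star}.

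Next I would produce \eqref{eq:plan_star} from the dynamics. The set $U_\varepsilon:=\{[\pi]\in\widehat{\F_n}:\|\pi(a)\|>\|a\|-\varepsilon\}$ is non-empty and open, by lower semicontinuity of $[\pi]\mapsto\|\pi(a)\|$. Using the ideal filtration $\sum_{k\ge l}\K_k$ of $\F_n$ (Lemma~\ref{katsura's lemma}(iii)) and Lemmas~\ref{embedding of spectra2} and~\ref{embedding of spectra}, each point of $U_\varepsilon$ is supported at some level $l\le n$; restricting to one level and transporting through $\K_l\cong\K(X^{\otimes l})$ and $[X^{\otimes l}\dashind]^{-1}$ yields a non-empty open set $V$ of irreducible representations of $A$, viewed as vertices of the dual graph $E_{KXK}$ in case A2) and of the dual multivalued map $\X$ in case A1). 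On $V$ I would invoke the hypothesis: because the set of base points of entrance-free cycles has empty interior, strong topological freeness of $E_{KXK}$ on $\widehat{J}$ (case A2)), respectively weak topological aperiodicity of $\X$ on $\widehat{J}$ (case A1)), produces for the prescribed $m$ a non-returning path of length exceeding $m$ whose range vertex lies in $V$. I would then realize the norm-attaining representation at the range end of this path. In case A2) this uses the multiplicities of $E_{KXK}$ and the projections $Q_e\in\K(X)''$ of Proposition~\ref{space_decomposition_according_to_graph}: Lemma~\ref{lem:proposition_in_terms_of_representations}(iv) shows that the deep subspace attached to a non-returning path of length $>m$ is orthogonal to each of its first $m$ shifts under $\psi_k(X^{\otimes k})$, which is exactly \eqref{eq:plan_star}. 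In case A1), where Type I is not available, I would run the parallel argument with Lemma~\ref{representation structure lemma2} in place of the multiplicity bookkeeping: the spaces $\nu(\psi_k(X^{\otimes k}))H_\pi$ decompose, along the path, into $\psi_0(A)$-invariant pieces whose irreducible labels are forced by the non-returning condition to differ from the label of $H_\pi$, and inequivalent irreducible invariant subspaces are automatically orthogonal.

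The main obstacle is the reconciliation in the last step. The norm-attaining representation lives on $\F_n$ and may be supported at an intermediate level $l<n$, whereas the orthogonality furnished by a non-returning path is most naturally seen on the $\psi_0(A)$-irreducible deep subspaces of Lemma~\ref{lem:proposition_in_terms_of_representations}. One must therefore carry the full $\F_n$-structure of $\pi$ to the deep end of the path while simultaneously keeping the norm (for condition~1)) and the orthogonality of all $m$ shifts (for \eqref{eq:plan_star}); controlling how $\F_n$ mixes the distinct path-subspaces is precisely what the orthogonality of distinct paths in Lemma~\ref{lem:proposition_in_terms_of_representations}(ii),(iii) and the maps $Q_e$ are designed to handle. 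A secondary point, to be dealt with using the covariance condition~\eqref{eq:strict_J__covariance} together with Lemmas~\ref{embedding of spectra} and~\ref{embedding of spectra2}, is the case in which the relevant orbit leaves $\widehat{J}$: there the representation terminates after finitely many levels and \eqref{eq:plan_star} holds for trivial reasons.
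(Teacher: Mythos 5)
There is a genuine gap, and it sits exactly where you flag your ``main obstacle'' --- but the tools you propose for closing it cannot do the job. Your plan rests on taking $Q_1=Q_2=P_\pi$ with $H_\pi$ an $\F_n$-irreducible subspace nearly attaining $\|a\|$, and then extracting $P_\pi\nu(\psi_k(X^{\otimes k}))P_\pi=0$ from the path machinery. But that machinery works one level down: the vertices of $\X$ and of the dual graph are points of $\SA$ (resp.\ $\widehat{J}$), and Proposition~\ref{space_decomposition_according_to_graph} and Lemma~\ref{lem:proposition_in_terms_of_representations}(iv) produce orthogonality only for $\psi_0(A)$-irreducible ``deep'' subspaces. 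In the regime where the dynamics is actually needed (the paper's Case~2, where every extension of $\pi$ stays nondegenerate on the top compacts, so the norm-attaining open set lies in $\widehat{\K_n^J}$), an $\F_n$-irreducible subspace with the right class has the form $H_\pi=\overline{\nu(\psi_n(X^{\otimes n}))H_v}$ with $H_v$ an $A$-irreducible subspace, and Lemma~\ref{lem:proposition_in_terms_of_representations}(iv) gives only $\psi_k(X^{\otimes k})H_v\perp H_v$. To pass from this to $\psi_k(X^{\otimes k})H_\pi\perp H_\pi$ one needs the compression identity $\psi_n(X^{\otimes n})^*\,\psi_k(X^{\otimes k})\,\psi_n(X^{\otimes n})\subseteq\psi_0(A)\,\psi_k(X^{\otimes k})$, i.e.\ conjugation by elements $\psi_n(\xi)^*,\psi_n(\eta)$; the items you invoke instead (Lemma~\ref{lem:proposition_in_terms_of_representations}(ii),(iii) and the maps $Q_e$) only govern orthogonality between distinct path-subspaces and do not transport the $\F_n$-structure at all. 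The paper avoids the problem by going the opposite way: its Claim~1 replaces $a\in\F_n$ by $g=\psi_n(\xi)^*a\,\psi_n(\eta)\in\psi_0(A)$ with $\|g\|>\|a\|-\varepsilon/2$, runs the entire path argument at the level of $A$, and outputs the \emph{asymmetric} contractions $Q_1=Q\nu(\psi_n(\xi))^*$ and $Q_2=\nu(\psi_n(\eta))Q$. Note the statement only asks for contractions; your insistence that $Q_1=Q_2$ be a single projection is precisely what creates the unbridged transfer and makes the plan unfinishable as written.

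Two further problems. In case A1) your orthogonality argument runs in the wrong direction: you decompose $\nu(\psi_k(X^{\otimes k}))H_\pi$, i.e.\ you look at paths \emph{starting} at the label of $H_\pi$, but weak topological aperiodicity only asserts the existence of one path \emph{ending} in the open set $V$ whose range differs from all its sources; it says nothing about all labels reachable from a fixed starting vertex. The paper's Subcase~2a) therefore first lifts the given path to an extension sequence $\pi_0\le\pi_1\le\dots\le\pi_m$ of representations of $\F_0\subseteq\F_1\subseteq\cdots$ (its Claim~2) and then goes \emph{down} with adjoints: $H_k=\nu(\psi_k(X^{\otimes k})^*)H_{\pi_0}$ is a single irreducible (or zero) subspace whose label is the $k$-th vertex of that chosen path, whence $P_0\nu(\psi_k(X^{\otimes k})^*)P_0=0$. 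Finally, dismissing the terminating case as holding ``for trivial reasons'' is too quick: when some extension $\pi_N$ kills $\K_{n+N}$ one still needs the telescope of Lemma~\ref{Fowler-Reaburn lemma}, namely $Q_{\pi_N}\le P_i-P_{i+1}$ together with \eqref{essential projections versus fibers}, to see that $(P_i-P_{i+1})\nu(\psi_k(X^{\otimes k}))(P_i-P_{i+1})=0$; this is the paper's Case~1 and it is the one place where your symmetric ansatz $Q_1=Q_2=Q_\pi$ genuinely works.
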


\begin{proof}
Let $\varepsilon>0$ and $a\in \F_n$, then the functional $\widehat{\F_n}\rightarrow \C$ defined by $[\pi]\to \|\pi(a)\|$ is lower semicontinuous, and attains its upper bound equal to $\|a\|$ (see for instance \cite[App. A]{morita}). Accordingly, there exists a non-empty open set $U\subseteq \widehat{\F_n}$ such that 
$$
\|\pi(a)\|>\|a\|-\varepsilon\qquad \text{for every }[\pi]\in U.
$$
Let us consider the following two cases.

\textbf{Case 1).} Assume that there exists an irreducible representation $\pi:\F_n\rightarrow\mathcal{B}(H_\pi)$ with $[\pi]\in U$, for which there exists an extension $\pi_N\in \Irr(\F_{n+N})$, for some $N\geq 0$, such that $[\pi_N]$ belongs to $\widehat{\F_{n+N}}\setminus \widehat{\K_{n+N}}$ (i.e. $\pi_N(\K_{n+N})=0$). Let $i$ be the maximal number with $0\leq i\leq n+N$ such that $\pi_N(\K_i)\neq 0$. Thus, $\pi_N(\sum_{k=i+1}^{n+N}\K_k)=0$, and hence $[\pi_N]\in \widehat{\K_i}\setminus \widehat{\sum_{k=i+1}^{n+N}\K_k}$. In particular, $\pi_N:\F_{n+N}\rightarrow\mathcal{B}(H_{\pi_N})$ restricts to an irreducible representation of $\K_i$, and hence $\overline{\pi_N(\K_i)H_{\pi_N}}=H_{\pi_N}$.

Now given any extension $\nu:C^*(\psi)\rightarrow \mathcal{B}(H_\nu)$ of $\pi_N$, $\bar{\psi}:=(\nu\circ \psi_0,\nu\circ\psi_1)$ is a representation of $X$ on $H_\nu$.

Let $Q_{\pi_N}:H_\nu\rightarrow H_{\pi_N}$ be the orthogonal projection onto $H_{\pi_N}$, and for each $j>0$, let $P_j:H_\nu\rightarrow\overline{\nu(\K_j)H_\nu}$ be the orthogonal projection on $\overline{\nu(\K_j)H_\nu}$. Observe that $\overline{\nu(\K_j)H_\nu}=\overline{\nu(\psi_j(X^{\otimes j}))H_\nu}$ (cf. Lemma~\ref{Fowler-Reaburn lemma}). Because $\nu$ is an extension of $\pi_N$ we have that $H_{\pi_N}=\overline{\pi_N(\K_i)H_{\pi_N}}\subseteq \overline{\nu(\K_i)H_{\nu}}$, so $Q_{\pi_N}\leq P_i$. Moreover, since $\pi_N$ vanishes on the ideal $\sum_{k=i+1}^{n+N}\K_k$, it follows that $Q_{\pi_N}P_{i+1}=P_{i+1}Q_{\pi_N}=0$. Hence 
\begin{equation}\label{subprojection of the difference}
Q_{\pi_N}\leq (P_{i}-P_{i+1}).
\end{equation}
It follows from Equation \eqref{essential projections versus fibers} in Lemma~\ref{Fowler-Reaburn lemma} that $\nu(\psi_k(X^{\otimes k}))P_j=P_{k+j}\nu(\psi_k(X^{\otimes k}))P_j$ for every $j,k\geq 0$. Therefore, for $k>0$ we get 
$$
(P_i-P_{i+1})\nu(\psi_k(X^{\otimes k}))(P_i-P_{i+1})=0,
$$ 
and hence by \eqref{subprojection of the difference} it follows that $Q_{\pi_N}\nu(\psi_k(X^{\otimes k}))Q_{\pi_N}=0$. Let $Q_\pi\in \mathcal{B}(H_\nu)$ be the projection onto $H_\pi$. Since $Q_\pi\leq Q_{\pi_N}$ we get $Q_{\pi}\nu(\psi_k(X^{\otimes k}))Q_{\pi}=0$ for every $k>0$. In particular, putting $Q_1=Q_2=Q_\pi\in \nu(F_n)'$, we get $ \Vert Q_1\nu (a) Q_2\Vert =\|\pi(a)\|\ge \Vert a \Vert - \varepsilon$, and 
$$
Q_1\nu(\psi_{k}(X^{\otimes k}) \F_n)\, Q_{2 } =Q_\pi\nu(\psi_{k}(X^{\otimes k})) Q_2 \nu(\F_n)=0,
$$
and similarly $Q_1\nu(\F_n \psi_{k}(X^{\otimes k})^*)\, Q_{2 } = 0$, for every $k>0$, as desired.

\textbf{Case 2).} Assume that for every $\pi\in \Irr(\F_n)$ with $[\pi]\in U$, every $k\geq 0$ and every irreducible extension $\pi_k:\F_{n+k}\rightarrow \mathcal{B}(H_k)$ of $\pi$ we have $\pi_k(\K_{n+k})\neq 0$. It follows from Lemma~\ref{embedding of spectra} that given any $\pi\in \Irr(\F_n)$ with $[\pi]\in U$ and every extension $\pi_k\in \Irr(\F_{n+k})$, $k\geq 0$, of $\pi$ we have $\pi_k (\K_{n+k}^J)\neq 0$ that is $[\pi_k]\in \widehat{\K_{n+k}^J}$. In particular, $U\subseteq \widehat{\K^J_n}$.

\textbf{Claim 1.} There are $\xi$, $\eta \in X^{\otimes n}$ with $\|\xi\|=\|\eta\|=1$ and $\|\psi_n(\xi)^*a \psi_n(\eta)\|> \|a\|-\varepsilon/2$.

\medskip

\begin{Proof of}{Claim 1} Since $\K_n$ is an ideal in $\F_n$, $a$ acts on $\K_n$ as a multiplier, say $m(a)$. The inclusion $U\subseteq \widehat{\K_n}$ implies that $\|m(a)\|=\|a\|$. Indeed, we may identify multipliers of $\K_n$ with operators acting on the space of atomic representation of $\K_n$, and then there is $[\pi]\in U$ such that $\|a\|=\|\pi(a)\|\leq \|m(a)\|$. Moreover, the algebra of multipliers of $\K_n$ is isomorphic to $\LL(X^{\otimes n})$, with the isomorphism $\Psi:=\overline{\psi^{(n)}}^{-1}$ where $\overline{\psi^{(n)}}$ is the unique extension of the isomorphism $\psi^{(n)}:\K(X^{\otimes n})\to \K_n$. This implies that 
\begin{align*}\|a\|&=\|\Psi(m(a))\|=\sup\{ \|\langle \xi, \Psi(m(a)) \eta\rangle_A\|:\xi, \eta \in X^{\otimes n}, \|\xi\|=\|\eta\|=1\} \\
&=\sup\{ \|\psi_n(\xi)^* \psi_n(\Psi(m(a))\eta)\|:\xi, \eta \in X^{\otimes n}, \|\xi\|=\|\eta\|=1\} \\
&=\sup\{ \|\psi_n(\xi)^*a \psi_n(\eta)\|:\xi, \eta \in X^{\otimes n}, \|\xi\|=\|\eta\|=1\}.
\end{align*}
\end{Proof of}

Let $\xi$, $\eta \in X^{\otimes n}$ be as in Claim 1 and put $g:=\psi_n(\xi)^*a \psi_n(\eta)$. It follows that to prove the assertion it suffices to show the corresponding assertion for $g\in \psi_0(A)=\F_0$ instead of $a\in F_n$. In fact we will find $\nu: C^* (\psi)\to \B(H_\nu)$ and a projection $Q\in \B(H_\nu)$, such that 
$$
\Vert Q\nu (g) Q\Vert \ge \Vert g \Vert - \varepsilon/2,\qquad Q\nu(\psi_{k}(X^{\otimes k}))\, Q = 0 \text{ for $k=1,2,\dots,m$}.
$$ 
Then $Q_1:=Q\nu(\psi_n(\xi))^*$ and $Q_2:= \nu(\psi_n(\eta))Q$ will fulfill the desired conditions for $a\in \F_n$.

Accordingly, we let $V:=\{[\pi] \in \SA:\|\pi(\psi_0^{-1}(g))\|>\|g\|-\varepsilon/2\}$. This is an open nonempty subset of $\SA$. Case 1) for $g$ (instead of $a$) gives the assertion. Thus we may assume that we are in Case 2. Namely, we assume that for every $\pi\in \Irr(\psi_0(A))$ with $[\pi\circ \psi_0]\in V$ and every irreducible extension $\pi_k:\F_{k}\rightarrow \mathcal{B}(H_k)$, $k\geq 0$, of $\pi$ we have $[\pi_k]\in \widehat{\K_{k}^J}$. In particular, $V\subseteq \widehat{J}$.

In the following claim and below we treat the multivalued map $\SA$ and its restriction to $\widehat{JXJ}$ as directed graphs (with no multiple edges).

\textbf{Claim 2.} For every sequence $\pi_0\leq \pi_1 \leq \ldots \leq \pi_l$ where $\pi_k\in \Irr(\F_{k})$, $l>0$, and $[\pi_0\circ \psi_0]\in V$ there is a path $([v_0,v_1]$,$\ldots$,$[v_{l-1},v_l])$ in $\widehat{X}$ (treated as a graph) such that
\begin{equation}\label{relation between representations}
[\pi_k\circ\psi^{(k)} ]=[X^{\otimes k}\dashind]([v_k]),\quad \text{ for all } k =0,...,l.
\end{equation}
Moreover, for every path $([v_0,v_1]$,$\ldots$,$[v_{l-1},v_l])$ in $\widehat{X}$ with $v_0\in V$ there is a sequence $\pi_0:=v_0\circ \psi_0^{-1}\leq \pi_1 \leq \ldots \leq \pi_l$ where $\pi_k\in \Irr(\F_{k})$ and \eqref{relation between representations} holds. 
In particular, $([v_0,v_1]$,$\ldots$,$[v_{l-1},v_l])$, is necessarily a path in $\widehat{JXJ}$, i.e. $[v_k]\in \widehat{J}$ for every $k =0,...,l$.

\medskip

\begin{Proof of}{Claim 2} Let $\pi_0\leq \pi_1 \leq \ldots \leq \pi_l$ where $\pi_k\in \Irr(\F_{k})$, $l>0$, and $[\pi_0\circ \psi_0]\in V$. By our assumption we have $[\pi_k]\in \widehat{\K_{k}^J}$, for $k=0,...,l$. By Lemma~\ref{embedding of spectra2}, putting $\sigma_k:=\pi_k\circ \psi^{(k)}$ we get $[\sigma_k]\in \widehat{\K(X^{\otimes k} J)}$, for $k=0,...,l$, and $[\sigma_{k}]\in \widehat{\otimes 1_X}[\sigma_{k+1}]$ for $k=0,...,l-1$. Hence by Proposition~\ref{tensoring vs duals} there is a path $([v_0,v_1]$,$\ldots$,$[v_{l-1},v_l])$ in $\widehat{X}$ such that $[\sigma_k]=[X^{\otimes k}\dashind]([v_k])$, for $k=0,...,l$, so \eqref{relation between representations} is satisfied. In particular, cf. Proposition~\ref{tensoring vs duals}, we have $[v_k]\in \widehat{\langle X^{\otimes k}J, X^{\otimes k }J\rangle}_A\subseteq \widehat{J}$ for every $k =0,...,l$.

Conversely, let $([v_0,v_1]$,$\ldots$, $[v_{l-1},v_l])$ be a path in $\widehat{X}$ with $v_0\in V$. We put $\sigma_0=v_0$. Since $v_0\in \widehat{J}$, Proposition~\ref{tensoring vs duals} implies that there is $\sigma_1\in \Irr\LL(X)$ such that $[\sigma_1]\in \widehat{\K(X)}$ and $\sigma_0$ is equivalent to a subrepresentation of $\sigma_1\circ \phi_X$. Hence by Lemma~\ref{embedding of spectra2}, there is an extension of $\pi_0:=v_0\circ\psi_0^{-1}\in \Irr(\F_0)=\Irr(\psi_0(A))$ to $\pi_1\in \Irr(\F_1)$. Since $v_0\in V$ we get $[\pi_1]\in \widehat{\K_{1}^J}$ by our standing assumption. In particular, $[\pi_1\circ\psi^{(1)} ]=[X\dashind]([v_1])$ and hence $v_1\in \widehat{J}$. Thus we may apply the same argument to $v_1$ instead of $v_0$. Repeating this argument inductively, we get the desired sequence $\pi_0\leq \pi_1 \leq \ldots \leq \pi_l$. 
\end{Proof of}

\textbf{Subcase 2a).} Suppose that A1) in Theorem~\ref{uniqueness theorem} holds. By Claim 2 every $v\in V$ is an end of an infinite path in $\widehat{JXJ}$. Since $\widehat{X}$ is weakly topologically aperiodic on $\widehat{J}$, there is a path $([v_0,v_1],\ldots [v_{m-1},v_m])$ such that $[v_0]\in V$ and $[v_0]\neq [v_k]$ for $k=1,\ldots,m$. Let $\pi_0\leq \pi_1 \leq \ldots \leq \pi_m$, $\pi_k\in \Irr(\F_{k})$, be the corresponding sequence satisfying \eqref{relation between representations} as described in Claim 2.			Let $\nu:C^*(\psi)\rightarrow \mathcal{B}(H_\nu)$ be any extension of $\pi_m$. Let $k=1,\ldots,m$. We denote by $P_k:H_\nu\rightarrow H_k$ the orthogonal projection onto the space
$$
H_k:=\nu\big(\psi_{k}(X^{\otimes k})^*\big)H_{\pi_k}.
$$
By Lemma~\ref{representation structure lemma2} the formula $\nu_k(b)=\nu(\psi_0(b))|_{H_k}$, $b\in A$, defines a representation $\nu_k:A \rightarrow \mathcal{B}(H_k)$ and either $H_k=\{0\}$ or $\nu_k\in \Irr(A)$ and then
$$
[\nu_{k}] = [X^{\otimes k}\dashind]^{-1} ([\pi_k\circ\psi^{(k)} ])=[X^{\otimes k}\dashind]^{-1} [X^{\otimes k}\dashind]\dashind]([v_k])=[v_k].
$$ In particular, for every $k=1,...,m$, $P_k\in \nu(A)'$ and either $\nu_k=0$ or $[\nu_k]=[v_k]\neq [v_0]=[\nu_0]$. Since two inequivalent irreducible subrepresentations are disjoint, they act on orthogonal subspaces. Hence, we get 
\begin{equation}\label{relation between bla bla}
P_{0}P_k=P_kP_{0}=0, \qquad k=1,...,m. 
\end{equation}
Moreover, since $H_{\pi_k}=\psi^{(k)}(\K(X^{\otimes k}))H_{\pi_0}$ and $\psi_{k}(X^{\otimes k})^* \psi^{(k)}(\K(X^{\otimes k}))=\psi_{k}(X^{\otimes k})^* $, for each $k=1,...,m$ we have
$$
H_k= \nu(\psi_{k}(X^{\otimes k})^*)H_{\pi_0}.
$$
Combining this with \eqref{relation between bla bla}, we get $P_0 \nu(\psi_{k}(X^{\otimes k})^*)P_0=0$. Moreover, we have $ \Vert P_0\nu (g) P_0\Vert =\| \nu_0(g)\|=\| v_0(\psi_0^{-1}(g))\|\ge \Vert g \Vert - \varepsilon/2$. Thus putting $Q:=P_0$ we get the assertion.

\medskip

\textbf{Subcase 2b).} Suppose that A2) of Theorem~\ref{uniqueness theorem} holds. By Claim 2, every $v\in V$ is an end of an infinite path in the graph $E_{Y}$ dual to the $C^*$-correspondence $Y=JXJ$ over $J$. By strong topological freeness there is a path $(e_{l},...,e_1)\in r^{-l}(V)$ in $E_{Y}$ with $l > m$ such that $e_k\neq e_1$ for every $k=2,...,l$. In particular, $([r(e_{l}),s(e_{l})]$,$\ldots$,$[r(e_1),s(e_1)])$ is a path in $\widehat{Y}$. Let $\pi:=s(e_1)\circ \psi_0^{-1}$ be the irreducible representation of $\F_0=\psi_0(A)$ and extend it (in an arbitrary way) to a representation $\nu: C^* (\psi)\to \B(H_\nu)$. Let $P_\pi\in \nu(\F_0)$ be the projection onto $H_{\pi}\subseteq H_\nu$. Let $Q: E_{Y}^1\to \K(X)''$ be the mapping as in Proposition~\ref{space_decomposition_according_to_graph}. Let	$\nu'': C^* (\psi)''\to \B(H_\nu)$ be the weakly continuous extension of $\nu$. Using inductively Lemma~\ref{lem:proposition_in_terms_of_representations}(i) we get that the space
$$
H_Q:=	\nu''(\psi_1''(Q_{e_{l}}X) ... \psi_1''(Q_{e_2}X) \psi_1''(Q_{e_1}X))H_{\pi}
$$		is irreducible for $\nu(\F_0)$ and the equivalence class of the corresponding representation of $\F_0$ is $r(e_l)\in V$. Accordingly, if we let $Q$ to be the projection onto $H_Q$, then $\|\nu(g)Q\|\geq \|g\|- \varepsilon/2$. By Lemma~\ref{lem:proposition_in_terms_of_representations}(iv) we have
$$
\nu(\psi_k(X^{\otimes k}))H_Q \bot H_Q \qquad \text{ for }k=1,...,l-1.
$$
Hence $Q\nu(\psi_k(X^{\otimes k}))Q=0$ for $k=1,...,m\leq l-1$. 
\end{proof}

\begin{proof}[Proof of Theorem~\ref{uniqueness theorem}] 
By Lemma~\ref{lemma on intersection property} it suffices to prove that there is a conditional expectation $E$ from $C^*(\psi)$ onto $\F_\infty$ which sends elements from $\psi_m(X^{\otimes m})\F_n$, $n\ge 0$, $m\ge 1$, to zero. To this end, it suffices to show that for every element of the form
$$
b= \sum_{k=1}^{m} a_{-k}^* + a_0 + \sum_{k=1}^{m} a_{k}
$$
where $a_{\pm k}\in \psi_k(X^{\otimes k})\F_n$, $k=1,...,m$, the following inequality holds $ \|a_0\|\leq \|b\|. $ This follows immediately from Proposition~\ref{2.6}.
\end{proof}

\section{Conditions necessary for uniqueness property}\label{sec:Conditions necessary for uniqueness property}

Let $X$ be a $C^*$-correspondence over $A$ and let $J$ be an ideal of $J_X$. In this section we prove the converse to Theorem~\ref{uniqueness theorem} in case A2). Moreover, we extend it using a fairly algebraic property called \emph{acyclicity} whose definition is inspired by the condition introduced in \cite[Definition 4.1]{Carlsen_Ortega_Pardo}. This property could also be considered a version of pure outerness for $C^*$-correspondences, cf. \cite[Definition 4.3]{KM} and \cite[Definition 3.7]{Schweizer}.

\begin{defn}\label{def:cycling}
Let $X$ be a $C^*$-correspondence over $A$ and $I$ a positively $X$-invariant ideal of $A$. We say $X$ is \emph{cyclic with respect to $I$} if there exists $n\in\mathbb{N}$ such that $(IX)^{\otimes n}$ is isomorphic to the trivial $C^*$-correspondence $I$, i.e. there exists a bijective map $\Psi:(IX)^{\otimes n}\rightarrow I$ such that $a\Psi(x)b=\Psi(\phi_X(a)xb)$ and $\Psi(\langle x,y\rangle_{I})=\Psi(x)^*\Psi(y)$ for every $x,y\in (IX)^{\otimes n}$ and $a,b\in I$ (then $\Psi$ is necessarily isometric). In this case we say that $IX$ has \emph{period $n$}. Given an ideal $J$ of $J_X$ we say that $X$ is \emph{$J$-acyclic} if there are no non-zero ideals $I$ in $J$ such that $X$ is cyclic with respect to $I$.
\end{defn}

\begin{rem}\label{rem:cyclicity}
It follows readily from the definition that if $X$ is a cyclic with respect to $I$, then $I\subseteq (\ker \phi_X)^\bot$, $IX=IXI$ and $\langle IX,IX\rangle=I$. 
\end{rem}

The aim of the present section is to prove the following theorem.

\begin{thm}\label{thm:acyclicity}
Let $X$ be a $C^*$-correspondence over a $C^*$-algebra $A$, and let $J$ be an ideal of $J_X$. Consider the following conditions:
\begin{enumerate}
\item \label{en:uniqueness} the pair $(X,J)$ has the uniqueness property;
\item \label{en:acyclicity} $X$ is $J$-acyclic.
\end{enumerate}
Then \eqref{en:uniqueness}$\Rightarrow$\eqref{en:acyclicity}. If in addition $X(J)$ is liminal, then \eqref{en:uniqueness}$\Leftrightarrow$\eqref{en:acyclicity}$\Leftrightarrow$\textup{(3)} where 
\begin{enumerate}
\item[(3)] $X$ is topologically free on $J$.
\end{enumerate}
\end{thm}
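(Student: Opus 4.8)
The plan is to close the cycle of implications $(3)\Rightarrow(1)\Rightarrow(2)\Rightarrow(3)$, establishing $(1)\Rightarrow(2)$ in full generality and the other two arrows under the liminality hypothesis. The implication $(3)\Rightarrow(1)$ is essentially free given the earlier machinery: a liminal $C^*$-algebra is of Type I, so Proposition~\ref{prop:strong_vs_normal} promotes topological freeness of $X$ on $J$ to \emph{strong} topological freeness, and Theorem~\ref{uniqueness theorem}(A2) then delivers the uniqueness property.

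For $(1)\Rightarrow(2)$ I would argue by contraposition and reduce to an explicit obstruction. Suppose $X$ is not $J$-acyclic, so by Definition~\ref{def:cycling} there is a non-zero positively $X$-invariant ideal $I\subseteq J$ and an isomorphism $\Psi:(IX)^{\otimes n}\to I$ of correspondences over $I$. By Remark~\ref{rem:cyclicity} we have $I\subseteq(\ker\phi_X)^\bot$, $IX=IXI$ and $\langle IX,IX\rangle_A=I$, so $IX$ is an equivalence bimodule over $I$ with $I=J_{IX}$ and $\OO(I,IX)=\OO_{IX}$. Since $I$ is positively invariant and $I\cap J=I$, Proposition~\ref{prop:inducing non-detections} shows that the uniqueness property for $(X,J)$ would force it for $(IX,I)$, so it suffices to refute the latter. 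The key point is that $\Psi$ produces a unitary $U$ in the multiplier algebra of $\OO_{IX}$, obtained as the strict limit of $j_{(IX)^{\otimes n}}(\Psi^{-1}(u_\lambda))$ along an approximate unit $(u_\lambda)$ of $I$; because $(IX)^{\otimes n}$ is the trivial correspondence, $U$ is central and satisfies $\gamma_z(U)=z^nU$ for the gauge action, which forces $\mathrm{spec}(U)=\T$. Evaluating this central unitary at a point $\lambda\in\T$ produces a quotient of $\OO_{IX}$ whose composition with the universal representation is an injective representation $(\psi_0,\psi_1)$ satisfying \eqref{eq:strict_J__covariance} with ideal of covariance $I$ (indeed $\psi_0$ is injective and $I\subseteq I_\psi\subseteq J_{IX}=I$). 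The relation $\gamma_z(U)=z^nU$ cannot survive on $C^*(\psi)$, where $\psi(U)=\lambda$ is fixed; hence $C^*(\psi)$ admits no gauge circle action and, by Lemma~\ref{lemma on intersection property}(ii), $(IX,I)$ fails the uniqueness property, a contradiction.

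The substantial direction is $(2)\Rightarrow(3)$, again by contraposition. Assume $X$ is not topologically free on $J$; then, with $K=J+X(J)$ of Type I by Lemma~\ref{lem:stanislaw}, the graph $E_{KXK}$ carries, for some $n>0$, a non-empty open set $V\subseteq\widehat J$ of base points of length-$n$ cycles without entrances. I would first normalize $V$: using that the multivalued map dual to $JXJ$ is continuous (Corollary~\ref{continuity of dual graphs}) together with the path analysis behind \cite[Lemma 5.9]{ka1}, one may shrink $V$ so that every $v\in V$ lies on a single, backward-deterministic no-entrance cycle of one fixed length $N$, and so that the forward images $V_j:=\X^{\,j}(V)\cap\widehat J$, $0\le j<N$, are disjoint open sets cyclically permuted with multiplicity one by the homeomorphism induced by the dual map. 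Setting $\widehat I=\bigcup_{j=0}^{N-1}V_j$, continuity gives openness, and the absence of entrances together with Corollary~\ref{invariance via dual graphs} and Lemma~\ref{range of duals to correspondences}(ii) shows $\widehat I$ is negatively $\X$-invariant, i.e. $I$ is positively $X$-invariant with $I\subseteq J$. On $\widehat I$ the restricted correspondence $IX$ is then an equivalence bimodule whose dual homeomorphism is the period-$N$ cyclic permutation of the $V_j$.

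The remaining obstruction—and the precise point where liminality is indispensable—is to pass from this spectral periodicity to an honest isomorphism $(IX)^{\otimes N}\cong I$. Spectral triviality of $(IX)^{\otimes N}$ (it induces the identity homeomorphism with multiplicity one) does not by itself force the correspondence to be trivial, since the holonomy around the cycle can be a non-trivial ``line bundle'' over $V$; a globally non-trivial bundle would make $X$ acyclic while $(3)$ fails. The resolution is local triviality: in the liminal setting such bundles trivialize over sufficiently small open sets, so after shrinking the base $V$ to a trivializing open subset and re-saturating along the $N$ orbits, the cocycle around the cycle becomes trivial and $(IX)^{\otimes N}\cong I$ as correspondences over $I$. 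This exhibits a non-zero ideal $I\subseteq J$ with respect to which $X$ is cyclic, contradicting $J$-acyclicity. I expect this trivialization step to be the main difficulty, and it is exactly why the clean equivalence holds only when $X(J)$ is liminal; the rest is bookkeeping with the dual graph developed in the earlier sections.
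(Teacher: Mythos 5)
Your overall architecture coincides with the paper's: the paper also closes the cycle $(3)\Rightarrow(1)$ (Theorem~\ref{uniqueness theorem}(A2) plus Proposition~\ref{prop:strong_vs_normal}), $(1)\Rightarrow(2)$ (Corollary~\ref{cor_1}) and $(2)\Rightarrow(3)$ (Proposition~\ref{tf_liminal}), and your $(3)\Rightarrow(1)$ is exactly the paper's argument. The genuine gap is in your $(1)\Rightarrow(2)$. A first, smaller, problem: Remark~\ref{rem:cyclicity} does \emph{not} yield that $IX$ is an equivalence bimodule; it gives right-fullness and injectivity of the left action, but not surjectivity of $\phi_{IX}\colon I\to\K(IX)$ --- that is precisely the content of the paper's Lemma~\ref{lem:cyclicy_implies_equivalence}, whose proof (comparing the covariance ideals associated to $j^{(1)}$ and $j^{(n)}$ inside $\OO(I,IX)$) is not a formality. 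The serious problem is the claim that $U=\lim_\lambda j_n(\Psi^{-1}(u_\lambda))$ is \emph{central}. It does commute with $j_0(I)$, but for $y\in IX$ one computes $Uj_1(y)=j_{n+1}(S^{-1}y)$ and $j_1(y)U=j_{n+1}(T^{-1}y)$, where $S(\xi\otimes y)=\Psi(\xi)\cdot y$ and $T(y\otimes\xi)=y\cdot\Psi(\xi)$ are the two induced isomorphisms $(IX)^{\otimes(n+1)}\to IX$. Thus $U$ is central if and only if $S=T$, i.e.\ if and only if $\Psi\otimes 1_{IX}$ and $1_{IX}\otimes\Psi$ agree under the canonical identifications. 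The defect $S\circ T^{-1}$ is a bimodule automorphism of $IX$, hence a central unitary $c$ --- the holonomy around the cycle --- and removing it by modifying $\Psi$ is a cohomological problem with a genuinely non-trivial obstruction. For instance, take $I=C(V)$ with $V=S^3/\Z_4$, let $\theta$ be the free involution with $V/\theta=S^3/\Z_8$, and let $L$ be the line bundle with Chern class $2\in H^2(V;\Z)\cong\Z_4$; then $L\otimes\theta^*L$ is trivial, so the bimodule of sections of $L$ with left action twisted by $\theta$ is cyclic of period $2$, yet the norm (descent) bundle of $L$ on $V/\theta$ has class $4\neq 0$ in $\Z_8$, which is exactly the obstruction: no choice of $\Psi$ makes $U$ central. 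This is the very holonomy phenomenon you correctly flag in your $(2)\Rightarrow(3)$ discussion, resurfacing here unacknowledged. Without centrality, $(U-\lambda)\OO_{IX}$ is not an ideal and the fibre-evaluation argument collapses; the paper avoids the issue entirely in Lemma~\ref{lem:periodicity_implies_non-uniqueness} by reducing, via the Abadie--Eilers--Exel stabilization (Takai duality), to a crossed product by an automorphism and building a covariant pair $(\pi,U)$ with $U^n=1$ there.

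Your $(2)\Rightarrow(3)$ sketch follows the paper's Proposition~\ref{tf_liminal} in outline (invariant ideal assembled from no-entrance cycles, spectral periodicity, holonomy killed by local triviality after shrinking), but it omits the device that makes the words ``line bundle'' and ``locally trivial'' legitimate: the paper first passes, using the fact that liminal algebras contain essential ideals of Type~I$_0$ \cite{pedersen} together with the Fell-algebra Dixmier--Douady theorem \cite{HKS} and an inductive shrinking argument, to an ideal whose restricted correspondence is Morita equivalent (Lemma~\ref{lem:Eduard's theorem}) to a correspondence over a \emph{commutative} $C^*$-algebra. Only after this reduction does it prove that a bijective dual graph forces an equivalence bimodule (Lemma~\ref{cyclic implies equivalence}, whose proof uses Stone--Weierstrass and is stated only for commutative algebras), and only then is the line-bundle argument literal. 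You assert both steps directly at the liminal level, where neither is available as stated; with the Morita reduction inserted, your outline essentially becomes the paper's proof of this implication.
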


We start with facts leading to the proof of the implication \eqref{en:uniqueness}$\Rightarrow$\eqref{en:acyclicity}.

\begin{lem}\label{lem:cyclicy_implies_equivalence}
Suppose that $X$ is cyclic with respect to a positively invariant ideal $I$ of $J_X$. Then the $C^*$-correspondence $IX$ is an equivalence Hilbert bimodule.
\end{lem}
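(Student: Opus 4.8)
The plan is to check the two defining conditions for $W:=IX$ (regarded as a $C^*$-correspondence over $I$) to be an equivalence $I$-$I$-bimodule: that it is full on the right, $\langle W,W\rangle_I=I$, and that the left action $\phi_{IX}\colon I\to\LL(W)$ is an isomorphism onto $\K(W)$; a correspondence over $I$ is an equivalence bimodule exactly when these two hold. Remark~\ref{rem:cyclicity} already gives $\langle W,W\rangle_I=I$ together with $I\subseteq(\ker\phi_X)^\bot$ and $IX=IXI$. From $I\subseteq(\ker\phi_X)^\bot$ I would first see that $\phi_W=\phi_X|_I$ is injective: if $\phi_X(a)W=0$ with $a\in I$ then $\phi_X(ab)=0$ for all $b\in I$, so $ab\in I\cap\ker\phi_X=\{0\}$, and hence $a=0$ (approximating $a$ by $ae_\lambda$ for an approximate unit $(e_\lambda)$ of $I$). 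Next, since $I\subseteq J_X\subseteq J(X)$ we have $\phi_X(I)\subseteq\K(X)$, and a short cutting-down argument (approximating $\phi_X(a)$ by $\phi_X(e_\lambda)\phi_X(a)\phi_X(e_{\lambda'})$ and noting that $\Theta^X_{\xi,\eta}$ restricts to $\Theta^{W}_{\xi,\eta}$ once $\xi,\eta\in IX$) yields $\phi_W(I)\subseteq\K(W)$. In particular $J(W)=I$, so $W^{\otimes k}J(W)=W^{\otimes k}$ for every $k$, and $(\ker\phi_W)^\bot=I$.

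The crux is to promote $\phi_W(I)\subseteq\K(W)$ to the equality $\phi_W(I)=\K(W)$, and this is exactly where cyclicity is used. Because $\phi_W$ is injective and $\phi_W(I)\subseteq\K(W)$, Lemma~\ref{lemma on tensoring regular correspondences} applies to every tensor power: part (i), with $W^{\otimes k}J(W)=W^{\otimes k}$, shows that $T\mapsto T\otimes 1_W$ carries $\K(W^{\otimes k})$ into $\K(W^{\otimes(k+1)})$, while part (ii), with $W^{\otimes k}(\ker\phi_W)^\bot=W^{\otimes k}$, shows this map is isometric. Iterating $n-1$ times I obtain an isometric, hence injective, embedding
$$
\iota\colon\K(W)\longrightarrow\K(W^{\otimes n}),\qquad \iota(T)=T\otimes 1_{W^{\otimes(n-1)}}.
$$

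Finally I would invoke the hypothesis that $X$ is cyclic of period $n$ with respect to $I$. The isomorphism $(IX)^{\otimes n}\cong I$ of correspondences over $I$ identifies the left action of $I$ on $W^{\otimes n}$ with the left action of $I$ on the trivial correspondence $I$, whose left action $\phi_I\colon I\to\K(I_I)=I$ is an isomorphism; hence $\phi_{W^{\otimes n}}(I)=\K(W^{\otimes n})$. Since $\phi_{W^{\otimes n}}(a)=\phi_W(a)\otimes 1_{W^{\otimes(n-1)}}=\iota(\phi_W(a))$, this says $\iota(\phi_W(I))=\K(W^{\otimes n})$; comparing with $\iota(\phi_W(I))\subseteq\iota(\K(W))\subseteq\K(W^{\otimes n})$ forces $\iota(\phi_W(I))=\iota(\K(W))$, and injectivity of $\iota$ then gives $\phi_W(I)=\K(W)$. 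Together with $\langle W,W\rangle_I=I$ this shows that $\phi_{IX}$ is an isomorphism of $I$ onto $\K(IX)$ and that $IX$ is right full, i.e. $IX$ is an equivalence $I$-$I$-bimodule. (Conceptually, cyclicity says precisely that $IX$ is invertible in the tensor category of correspondences over $I$, with inverse $(IX)^{\otimes(n-1)}$, and the argument above extracts invertibility.) The only delicate point is the isometric embedding $\iota$, which rests on the injectivity of $\phi_W$; everything else is bookkeeping with Lemma~\ref{lemma on tensoring regular correspondences} and Remark~\ref{rem:cyclicity}.
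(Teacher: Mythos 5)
Your proof is correct, but it reaches the goal by a genuinely different route than the paper. Both arguments use Remark~\ref{rem:cyclicity} to reduce the lemma, via the criterion of \cite[3.3]{ka2} or \cite[Proposition 1.11]{kwa-doplicher}, to showing that the injective map $\phi_{IX}\colon I\to\K(IX)$ is \emph{surjective}; the difference lies in how that surjectivity is extracted from cyclicity. The paper works inside the relative Cuntz--Pimsner algebra $\OO(I,IX)$: it takes the universal (injective) covariant representation $(j_0,j_1)$, invokes the covariance-ideal identities $I=\{a\in I: j_0(a)\in j^{(1)}(\K(IX))\}$ and, at level $n$ (citing \cite[Lemma 3.6]{kwa-szym}), $I=\{a: j_0(a)\in j^{(n)}(\K((IX)^{\otimes n}))\}$; since cyclicity makes $\phi_{(IX)^{\otimes n}}$ bijective, this gives $j_0(I)=j^{(n)}(\K((IX)^{\otimes n}))$, which is pushed down to $j_0(I)=j^{(1)}(\K(IX))$ by multiplying by $j_n((IX)^{\otimes n})^*$ and $j_n((IX)^{\otimes n})$ and using positive invariance, and finally injectivity of $j^{(1)}$ yields surjectivity of $\phi_{IX}$. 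You instead stay entirely at the level of Hilbert modules: having checked that $J(IX)=I=(\ker\phi_{IX})^\bot$ and $IX\cdot I=IX$, Lemma~\ref{lemma on tensoring regular correspondences} legitimately applies to give the isometric (hence injective) embedding $\iota(T)=T\otimes 1_{(IX)^{\otimes(n-1)}}$ of $\K(IX)$ into $\K((IX)^{\otimes n})$; cyclicity forces $\phi_{(IX)^{\otimes n}}=\iota\circ\phi_{IX}$ to be onto $\K((IX)^{\otimes n})$, and your sandwich $\K((IX)^{\otimes n})=\iota(\phi_{IX}(I))\subseteq\iota(\K(IX))\subseteq\K((IX)^{\otimes n})$ together with injectivity of $\iota$ gives $\phi_{IX}(I)=\K(IX)$. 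Your argument is more elementary and self-contained --- it needs no Cuntz--Pimsner machinery and no appeal to \cite[Lemma 3.6]{kwa-szym}, only the paper's preliminary Lemma~\ref{lemma on tensoring regular correspondences} --- whereas the paper's proof is shorter given the covariance calculus it has already developed and will reuse elsewhere; both uses of positive invariance (yours to make $IX$ a correspondence over $I$ with $IX\cdot I = IX$, the paper's in the descent from level $n$ to level $1$) are essential and parallel.
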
 

\begin{proof}
Suppose that $IX$ has period $n$. By Remark~\ref{rem:cyclicity}, $\langle IX,IX\rangle=I$, i.e. $X$ is full on the right, and also we have $I\subseteq (\ker \phi_X)^\bot$. The latter implies that the homomorphism $\phi_{IX}=\phi_{X}|_{IX}:I\to \K(IX)=\phi_X(I)\K(X)\phi_X(I)$ is injective. Thus it suffices to show that $\phi_{IX}:I\to \K(IX)$ is surjective, cf. \cite[3.3]{ka2} or \cite[Proposition 1.11]{kwa-doplicher}. To this end, let us consider a universal representation $(j_0,j_{1})$ of $(I,IX)$ into $\OO(I,IX)$. Since $I\subseteq J_{IX}$, this representation is injective. Note that we have
$$
I=\{a\in I: j_0(a)=j^{(1)}(\phi_{IX}(a))\}=\{a\in I: j_0(a)\in j^{(1)}(\K(IX))\},
$$
cf. \cite[Corollary 11.7]{ka3} or \cite[Theorem 9.1]{kwa-doplicher}. Moreover, for the associated representation $j^{(n)}:\K((IX)^{\otimes n})\to \OO(I,IX)$ we also have
$$
I=\{a\in I: j_0(a)=j^{(n)}(\phi_{(IX)^{\otimes n}}(a))\}=\{a\in A: j_0(a)\in j^{(n)}(\K((IX)^{\otimes n}))\},
$$
see \cite[Lemma 3.6.]{kwa-szym}. Since $(IX)^{\otimes n}\cong I$ is an equivalence bimodule, $\phi_{(IX)^{\otimes n}}:I\to \K((IX)^{\otimes n})$ is bijective and we conclude that 
$$
j_0(I)=j^{(n)}(\K((IX)^{\otimes n})).
$$
Multiplying this equality by $j_n((IX)^{\otimes n})^*$ from left and by $j_n((IX)^{\otimes n})$ from right, and using positive invariance of $I$ we get
$$
j_0(I)=j^{(1)}(\K(IX)).
$$
Hence, for every $T\in \K(IX)$ there is $a\in I$ such that $j^{(1)}(\phi_{IX}(a))=j^{(1)}(T)$. Since $j^{(1)}$ is injective, this implies that $\phi_{IX}(a))=T$. Thus, $\phi_{IX}$ is surjective.
\end{proof}

\begin{lem}\label{lem:periodicity_implies_non-uniqueness}
Suppose $X$ is a periodic equivalence bimodule over $A$, i.e. there is $n>0$ such that $X^{\otimes n}\cong A$. Then the pair $(A,X)$ does not have the uniqueness property.
\end{lem}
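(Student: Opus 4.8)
The plan is to refute the uniqueness property directly, by producing an injective covariant representation of $X$ whose integrated form fails to be an isomorphism. First note that, $X$ being an equivalence bimodule, $\phi_X$ is an isomorphism onto $\K(X)$; hence $\ker\phi_X=\{0\}$ and $J_X=A$, so the pair under consideration is $(A,X)=(X,J_X)$ with $\OO(A,X)=\OO_X$, and by the remarks preceding Definition~\ref{defn:uniqueness_property} every injective covariant representation automatically satisfies \eqref{eq:strict_J__covariance}. Fix a faithful nondegenerate representation $\pi_0\colon A\to\B(\HH_0)$ and a correspondence isomorphism $\Psi\colon X^{\otimes n}\to A$ onto the trivial bimodule (necessarily isometric and $A$-bilinear). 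I will build a representation on the \emph{folded} Fock space $H:=\bigoplus_{k=0}^{n-1}H_k$, where $H_k:=X^{\otimes k}\otimes_{\pi_0}\HH_0$ and $H_0=A\otimes_{\pi_0}\HH_0\cong\HH_0$.

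I would let $\psi_0(a)$ act diagonally by the left actions $\phi_{X^{\otimes k}}(a)\otimes 1$ on each $H_k$, and let $\psi_1(x)$ raise the degree by one, sending $\xi\otimes h\in H_k$ to $(x\otimes\xi)\otimes h\in H_{k+1}$ for $k\le n-2$, while at the top level it wraps back to $H_0$ via $\Psi$, i.e. $x\otimes(\xi\otimes h)\mapsto\Psi_*(x\otimes\xi\otimes h)\in A\otimes_{\pi_0}\HH_0=H_0$, where $\Psi_*$ is induced by $\Psi$. The identities $\psi_1(x)^*\psi_1(y)=\psi_0(\langle x,y\rangle_A)$ and $\psi_1(a\cdot x)=\psi_0(a)\psi_1(x)$ then follow from the definition of the interior tensor product (the wrap being isometric because $\Psi$ preserves inner products), and $\psi_0$ is injective since its restriction to $H_0$ is unitarily equivalent to $\pi_0$.

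The first point requiring care is \emph{covariance} for $J_X=A$, i.e. $\psi_0(a)=\psi^{(1)}(\phi_X(a))$ for all $a\in A$. Since $\psi^{(1)}(\phi_X(a))$ vanishes off $\overline{\psi_1(X)H}$ and coincides with $\psi_0(a)$ on it (because $\psi^{(1)}(\phi_X(a))\psi_1(x)h=\psi_1(\phi_X(a)x)h=\psi_0(a)\psi_1(x)h$), covariance amounts to $\overline{\psi_1(X)H}=H$. This I would obtain from fullness of the equivalence bimodule: $\overline{X\cdot(X^{\otimes k}\otimes_{\pi_0}\HH_0)}=X^{\otimes k+1}\otimes_{\pi_0}\HH_0=H_{k+1}$ for $k\le n-2$, and at the top level $\overline{X\cdot(X^{\otimes n-1}\otimes_{\pi_0}\HH_0)}=X^{\otimes n}\otimes_{\pi_0}\HH_0\cong H_0$, so $\psi_1(X)$ sweeps out every summand. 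Thus $(\psi_0,\psi_1)$ is an injective covariant representation satisfying \eqref{eq:strict_J__covariance}.

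To finish I would invoke the characterisation Lemma~\ref{lemma on intersection property}(ii) and show that $C^*(\psi)$ admits no circle action $\gamma$ with $\gamma_z(\psi_0(a))=\psi_0(a)$ and $\gamma_z(\psi_1(x))=z\psi_1(x)$. The obstruction is a \emph{degree-$n$ collapse}: running once around the folded Fock space yields $\psi_n(\Psi^{-1}(a))=\psi_0(a)$ for every $a\in A$. This is transparent on $H_0$, where the wrap contributes $\Psi(\Psi^{-1}(a))=a$, and it propagates to the remaining summands by $A$-bimodularity of $\Psi$ together with associativity of the interior tensor product. Now if such a $\gamma$ existed then $\gamma_z(\psi_n(\Psi^{-1}(a)))=z^n\psi_n(\Psi^{-1}(a))=z^n\psi_0(a)$, whereas $\gamma_z(\psi_0(a))=\psi_0(a)$; since $\psi_n(\Psi^{-1}(a))=\psi_0(a)$ this forces $z^n\psi_0(a)=\psi_0(a)$ for all $z\in\T$, hence $\psi_0(a)=0$, contradicting injectivity of $\psi_0$ for $a\ne0$. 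Therefore no such circle action exists, and by Lemma~\ref{lemma on intersection property} the pair $(A,X)$ does not have the uniqueness property. I expect the covariance identity $\overline{\psi_1(X)H}=H$ and the collapse $\psi_n(\Psi^{-1}(a))=\psi_0(a)$ to be the only steps needing genuine verification; the rest is routine bookkeeping with interior tensor products.
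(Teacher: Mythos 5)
Your folded Fock construction is sound as far as it goes: $(\psi_0,\psi_1)$ is an injective representation of $X$ on $H=\bigoplus_{k=0}^{n-1}H_k$, the sweep argument does give $\overline{\psi_1(X)H}=H$ and hence covariance, and since $\phi_X$ is an isomorphism onto $\K(X)$ the condition \eqref{eq:strict_J__covariance} holds with $J=J_X=A$. The gap is the ``degree-$n$ collapse''. Unwinding $\psi_n$ on the summand $H_k$ for $1\le k\le n-1$ gives
\begin{equation*}
\psi_n(x_1\otimes\cdots\otimes x_n)(\eta\otimes h)=\bigl[(x_1\otimes\cdots\otimes x_k)\cdot\Psi\bigl((x_{k+1}\otimes\cdots\otimes x_n)\otimes\eta\bigr)\bigr]\otimes h,\qquad \eta\in X^{\otimes k},
\end{equation*}
so $\psi_n(\Psi^{-1}(a))=\psi_0(a)$ on $H_k$ requires that $1_{X^{\otimes k}}\otimes\Psi$ and $\Psi\otimes 1_{X^{\otimes k}}$ agree as maps $X^{\otimes(n+k)}\to X^{\otimes k}$. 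This coherence is \emph{not} a consequence of $A$-bimodularity of $\Psi$ and associativity of interior tensor products, and it can fail. Take $A=C(\T)$, $\alpha(f)(w)=f(-w)$, $X=B_\alpha$ (right module $A$, left action through $\alpha$), $n=2$, and $\Psi:=m\Psi_0$, where $\Psi_0(x\otimes y)=\alpha(x)y$ is the obvious isomorphism $X^{\otimes 2}\cong A$ and $m(w)=w$; then $\Psi$ is again an isomorphism of correspondences, but on $H_1$ one computes $\psi_2(x\otimes y)(\xi\otimes h)=\bigl(x\,m\,\alpha(y)\,\xi\bigr)\otimes h$ whereas $\psi_0\bigl(\Psi(x\otimes y)\bigr)(\xi\otimes h)=\bigl(\alpha(m)\,x\,\alpha(y)\,\xi\bigr)\otimes h$, and $\alpha(m)=-m$. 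Thus $\psi_2(\Psi^{-1}(a))$ equals $\psi_0(a)$ on $H_0$ but $-\psi_0(a)$ on $H_1$; the two operators are distinct, and your contradiction with the existence of a gauge action evaporates.

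Nor can this be repaired merely by choosing $\Psi$ more cleverly. If $A$ is unital and simple, all isomorphisms $X^{\otimes n}\to A$ differ by scalars, which do not affect coherence, and there are examples where coherence fails for every $\Psi$: let $A=A_\theta$ be an irrational rotation algebra with $UV=e^{2\pi i\theta}VU$, let $\alpha(U)=e^{2\pi i/n}U$, $\alpha(V)=e^{2\pi i\theta/n}V$, and $X=B_\alpha$. Then $\alpha^n=\Ad U$, so $X^{\otimes n}\cong A$, but $\alpha(U)=e^{2\pi i/n}U$ (a nontrivial Connes-type obstruction), and a direct computation shows that for every $\Psi$ the operator $\psi_n(\Psi^{-1}(a))$ differs from $\psi_0(a)$ on $H_k$ by the phase $e^{2\pi ik/n}$ (up to conjugation), so no collapse occurs at degree $n$ at all. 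The lemma is still true for such $X$ (for instance because $\alpha^n$ inner forces the Connes spectrum of $\alpha$ to be a proper subgroup of $\T$, so $A\rtimes_\alpha\Z$ is not simple while $A$ is), but that requires a different argument. This obstruction is exactly what the paper's proof is built to bypass: it identifies $\OO(A,X)$ with the crossed product $A\rtimes_X\Z$ of \cite{AEE}, passes via Morita equivalence and Takai duality to a crossed product by an automorphism $\alpha$ of a Morita equivalent algebra $B$, deduces that $\alpha^n$ is trivial modulo inner automorphisms, and produces there a covariant representation whose integrated form is faithful on $B$ but not injective; no coherence of any isomorphism $X^{\otimes n}\cong A$ is ever needed. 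To salvage your approach you would have to prove that a \emph{coherent} isomorphism $X^{\otimes N}\cong A$ exists for some multiple $N$ of $n$ (in the rotation-algebra example this works with $N=n^2$, but it needs proof in general); as written, the key step of your argument is false.
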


\begin{proof}
We may identify the algebra $\OO(A,X)$ with the crossed product $A\rtimes_X \Z$ introduced in \cite{AEE}, cf. \cite[Proposition 3.7]{ka2}. We need to show that $A$ does not detect ideals in $A\rtimes_X \Z$. Using Takai duality we may reduce our considerations to a classical crossed product by $\Z$. Indeed, there is an automorphism $\alpha:B\to B$ and an equivalence $A$-$B$ bimodule $M$ such that $X\cong M\otimes B_\alpha \otimes M^*$ where $B_\alpha$ is the Hilbert bimodule associated to $\alpha$, and $A$ does not detect ideals in $A\rtimes_X \Z$ if $B$ does not detect ideals in $B\rtimes_\alpha \Z$. However, we have natural isomorphisms 
$$B_{\alpha^n}\cong B_{\alpha}^{\otimes n}\cong (M^*\otimes X \otimes M)^{\otimes n}\cong (M^*\otimes X^{\otimes n} \otimes M)\cong M^*\otimes M\cong B.
$$
This implies that $\alpha^n=id$. Using this one readily constructs a covariant representation $(\pi,U)$ of $(B,\alpha)$ with $\pi$ injective $U$ such that $U^n=1$. Then the kernel of $\pi\rtimes_\alpha U$ does not intersect $B$ and contains differences $b-bu^n$, $b\in B$, where $u$ is the universal unitary in $M(B\rtimes_\alpha \Z)$. 
\end{proof}

\begin{cor}\label{cor_1} 
If a pair $(X,J)$ has the uniqueness property, then $X$ is $J$-acyclic.
\end{cor}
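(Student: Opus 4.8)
The plan is to argue by contradiction, deducing the corollary by assembling the restriction result Proposition~\ref{prop:inducing non-detections} with the two preceding lemmas. So suppose that $(X,J)$ has the uniqueness property but that $X$ fails to be $J$-acyclic. By Definition~\ref{def:cycling} there is then a non-zero ideal $I\subseteq J$ which is positively $X$-invariant and such that $X$ is cyclic with respect to $I$, i.e. $(IX)^{\otimes n}\cong I$ for some $n>0$.

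First I would transport the uniqueness property down to $I$. Since $I$ is positively $X$-invariant and $I\subseteq J\subseteq J_X$, Proposition~\ref{prop:inducing non-detections} applies and shows that the pair $(I\cap J,\,IX)$ has the uniqueness property. Because $I\subseteq J$ we have $I\cap J=I$, so in fact the $C^*$-correspondence $IX$ over the $C^*$-algebra $I$, together with the full ideal $I$, has the uniqueness property.

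Next I would identify $IX$ concretely and invoke the obstruction to uniqueness for periodic bimodules. By Lemma~\ref{lem:cyclicy_implies_equivalence} the cyclicity of $X$ with respect to $I$ forces $IX$ to be an equivalence Hilbert bimodule over $I$; combined with $(IX)^{\otimes n}\cong I$ this exhibits $IX$ as a periodic equivalence bimodule over $I$ (here the algebra $I$ plays the role of $A$, and since the left action is injective and by compacts one has $J_{IX}=I$). Lemma~\ref{lem:periodicity_implies_non-uniqueness}, applied to $IX$ over $I$, then states that the pair consisting of $IX$ and the full ideal $I$ does \emph{not} have the uniqueness property. This directly contradicts the conclusion of the previous step. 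Hence no such $I$ can exist, and $X$ is $J$-acyclic.

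The argument has no serious analytic content beyond the three cited results, so the main point requiring care is the bookkeeping of ideals rather than any genuine obstacle. Specifically, one must verify that $I$ is genuinely an ideal of $J_X$ and positively $X$-invariant (both of which are built into Definition~\ref{def:cycling}, cf. Remark~\ref{rem:cyclicity}), that the covariance ideal produced by Proposition~\ref{prop:inducing non-detections} collapses to all of $I$ precisely because $I\subseteq J$, and that the restricted correspondence $IX$ over $I$ satisfies $J_{IX}=I$ so that Lemma~\ref{lem:periodicity_implies_non-uniqueness} applies in its stated form with $I$ in the role of the base algebra.
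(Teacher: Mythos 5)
Your proof is correct and is essentially the paper's own argument: the paper proves the contrapositive by combining Lemma~\ref{lem:cyclicy_implies_equivalence}, Lemma~\ref{lem:periodicity_implies_non-uniqueness}, and Proposition~\ref{prop:inducing non-detections}, exactly the three ingredients you assemble (your version merely phrases it as a contradiction rather than a contrapositive). The bookkeeping points you flag — $I\cap J=I$, positive invariance built into Definition~\ref{def:cycling}, and $J_{IX}=I$ for the equivalence bimodule $IX$ — are the right ones and are implicitly used in the paper's proof as well.
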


\begin{proof}
Assume that $X$ is cyclic with respect to a positively invariant ideal $I\subseteq J$. Then by Lemmas~\ref{lem:cyclicy_implies_equivalence} and~\ref{lem:periodicity_implies_non-uniqueness}, the pair $(I, IX)$ does not have the uniqueness property. Hence $(X,J)$ does not have the uniqueness property by Proposition~\ref{prop:inducing non-detections}.
\end{proof}

Now we turn to the proof of implication \eqref{en:acyclicity}$\Rightarrow$\textup{(3)} in Theorem~\ref{thm:acyclicity}.

\begin{lem}\label{lem:Eduard's theorem}
Let $X$ and $Y$ be two $C^*$-correspondences over $A$ and $B$ respectively. Suppose that $X\sim_M Y$ are Morita equivalent.
\begin{itemize}
\item[(i)] The dual homeomorphism $\widehat{M}:\SB \to \SA$ intertwines the dual multivalued maps $\widehat{X}$ and $\widehat{Y}$.
\item[(ii)] If $A$ is of Type I, then $B$ is of Type I, and the homeomorphism $\widehat{M}:\SB \to \SA$ establishes isomorphism between the dual graphs $E_X$ and $E_Y$, i.e. $m_{[\pi],[\rho]}^Y=m_{\widehat{M}[\pi],\widehat{M}[\rho]}^X$ for all $[\pi], [\rho]\in \SB$.
\end{itemize}
\end{lem}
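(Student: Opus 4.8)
The plan is to reduce both statements to Rieffel's induction along the equivalence bimodule $M$. Recall that the homeomorphism $\widehat{M}\colon\SB\to\SA$ is precisely the one implemented by the induction functor $\Ind_M$, which sends a representation $\rho$ of $B$ to the representation of $A\cong\K(M)\subseteq\LL(M)$ on $M\otimes_\rho H_\rho$ (namely $M\dashind(\rho)$ restricted to $\K(M)$), so that $\widehat{M}[\rho]=[\Ind_M\rho]$. The only facts I will use about $\Ind_M$ are the standard ones: it is a unitary equivalence of the representation categories of $B$ and $A$, hence it preserves and reflects irreducibility, disjointness, and the subrepresentation order, and, being fully faithful, it preserves the dimension of every intertwiner space $\operatorname{Hom}(\sigma_1,\sigma_2)$.

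The computational heart of the proof is the identification, for each $\rho\in\Irr(B)$, of representations of $A$,
\[
X\dashind(\Ind_M\rho)\circ\phi_X\;\cong\;\Ind_M\sigma,\qquad \sigma:=Y\dashind(\rho)\circ\phi_Y .
\]
To establish it I would first use associativity of the internal tensor product to write $X\otimes_{\Ind_M\rho}(M\otimes_\rho H_\rho)\cong(X\otimes_A M)\otimes_\rho H_\rho$, under which $X\dashind(\Ind_M\rho)\circ\phi_X$ becomes the representation of $A$ on $(X\otimes_A M)\otimes_\rho H_\rho$ induced from $\rho$ through the left action $\phi_{X\otimes M}=\phi_X\otimes 1_M$. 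Next I would transport this along the correspondence isomorphism $W\colon M\otimes_B Y\to X\otimes_A M$ furnished by $X\sim_M Y$, which intertwines the left $A$-actions and preserves the right $B$-valued inner products; this turns the representation into the one on $(M\otimes_B Y)\otimes_\rho H_\rho$ with left action $\phi_{M\otimes Y}=\phi_M\otimes 1_Y$. A final application of associativity rewrites $(M\otimes_B Y)\otimes_\rho H_\rho\cong M\otimes_\sigma H_\sigma$ and identifies the left $A$-action with $\Ind_M\sigma$. At each stage one only has to check that the obvious unitaries $x\otimes(m\otimes\xi)\mapsto(x\otimes m)\otimes\xi$, etc., intertwine the relevant left actions, which is a direct inner-product computation.

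Granting the displayed isomorphism, part (i) is immediate: since $\Ind_M$ is an equivalence of categories, the irreducible subrepresentations $[\pi]$ of $X\dashind(\Ind_M\rho)\circ\phi_X$ are exactly the classes $\widehat{M}[\sigma']$ with $[\sigma']$ an irreducible subrepresentation of $\sigma$; that is, $\widehat{X}(\widehat{M}[\rho])=\widehat{M}(\widehat{Y}[\rho])$, so $\widehat{M}\circ\widehat{Y}=\widehat{X}\circ\widehat{M}$. For part (ii) I would first note that $B$ is of Type I because Type I is a Morita invariant and $A\sim_M B$, so the graph $E_Y$ is defined. Then, writing the multiplicity $m^Y_{[\pi],[\rho]}$ as $\dim\operatorname{Hom}(\pi,\sigma)$ and $m^X_{\widehat{M}[\pi],\widehat{M}[\rho]}$ as $\dim\operatorname{Hom}(\Ind_M\pi,\Ind_M\sigma)$, full faithfulness of $\Ind_M$ gives $\operatorname{Hom}(\pi,\sigma)\cong\operatorname{Hom}(\Ind_M\pi,\Ind_M\sigma)$, whence the two multiplicities coincide.

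I expect the main obstacle to be part (ii): one must verify that the multiplicity from the discussion preceding Definition~\ref{def:dual_graph} — defined through the central decomposition of a Type I representation — is genuinely computed by the intertwiner dimension and is transported faithfully by $\Ind_M$. The cleanest way to secure this is to observe that $\Ind_M$ induces a $*$-isomorphism of commutants $\sigma(B)'\cong(\Ind_M\sigma)(A)'$ carrying central projections to central projections and preserving the homogeneous summands together with their multiplicities; once this is in place the equality $m^Y_{[\pi],[\rho]}=m^X_{\widehat{M}[\pi],\widehat{M}[\rho]}$ is forced. By contrast the associativity identifications in the key isomorphism are conceptually routine, though they must be carried out with care so that all left actions and inner products are matched correctly.
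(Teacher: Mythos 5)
Your proposal is correct and follows essentially the same route as the paper's proof: both rest on the identification $(M\otimes_B Y)\otimes_\rho H_\rho\cong(X\otimes_A M)\otimes_\rho H_\rho$ furnished by $X\sim_M Y$, on Morita invariance of Type I, and on the fact that Rieffel induction along $M$ preserves the relevant structure of representations (the paper leaves the associativity identifications you spell out implicit). The only difference is cosmetic and lies in the final multiplicity comparison: where you invoke full faithfulness of $\Ind_M$ to equate intertwiner-space dimensions, the paper transports a maximal family of mutually orthogonal copies of $\pi$ (using that induction preserves direct sums), obtaining the inequality $m^Y_{[\pi],[\rho]}\leq m^X_{\widehat{M}[\pi],\widehat{M}[\rho]}$ and concluding equality by symmetry and the Cantor--Bernstein theorem.
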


\begin{proof} We will only show (ii). The reader will easily modify the prove to get (i).
Suppose that $A$ is of Type I. Then $B$ is of Type I because it is Morita equivalent to $A$, cf. \cite{HRW}. Let $[\pi], [\rho]\in \SB$. Recall that $m_{[\pi],[\rho]}^Y$ is the largest cardinal such that there are mutually orthogonal representations $\pi^i$, $i\in I$, $|I|=m_{[\pi],[\rho]}^Y$, with $[\pi^i]=[\pi]$ and $\oplus_{i\in I}\pi^i\leq Y\dashind (\rho)$. Since tensor product of $C^*$-correspondences preservers direct sums, cf. \cite[Proposition 2.69]{morita}, we get that $M\dashind(\pi^i)$, $i\in I$, are mutually orthogonal subrepresentations of $M\otimes Y\dashind (\rho)$ with $\widehat{M}([\pi^i])=[M\dashind(\pi^i)]=[M\dashind(\pi)]=\widehat{M}([\pi])$ for $i\in I$. Since $M\otimes Y\cong X\otimes M$ we have $M\otimes Y\dashind (\rho)\cong X\otimes M\dashind (\rho)$. This implies that $m_{[\pi],[\rho]}^Y\leq m_{\widehat{M}([\pi]),\widehat{M}([\rho])}^X$. By symmetry and the Cantor-Bernstein Theorem we get $m_{[\pi],[\rho]}^Y=m_{\widehat{M}([\pi]),\widehat{M}([\rho])}^X$.
\end{proof}

\begin{lem}\label{cyclic implies equivalence} 
Let $A$ and $B$ be commutative $C^*$-algebras, and let $X$ be a non-degenerate $C^*$-correspondence from $A$ to $B$, that is $AX=X$. Suppose that the dual graph $E_X=(E^1_X, r,s)$ from $\SB$ to $\SA$ is a ``bijection'', i.e. both $r:E^1_X\to \SA$ and $s:E^1_X\to \SB$ are bijections. Then $X$ is an equivalence bimodule (and $A$ is isomorphic to $B$). 
\end{lem}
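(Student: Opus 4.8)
The plan is to read off the structure of $X$ fibrewise over $\SB$ and to show that the bijectivity of $r$ and $s$ forces $X$ to be a rank-one field on which $A$ acts by pull-back along a homeomorphism. Throughout I identify $A=C_0(\SA)$ and $B=C_0(\SB)$, write $\rho_y\in\Irr(B)$ for the character at $y\in\SB$, and I work with the fibre Hilbert space $H_y:=X\otimes_{\rho_y}\C$ and the fibre representation $\sigma_y:=X\dashind(\rho_y)\circ\phi_X$ of $A$ on $H_y$. By the construction of the dual graph, the multiplicity $m^X_{[\pi],[\rho_y]}$ records precisely how the character $[\pi]\in\SA$ occurs as an irreducible subrepresentation of $\sigma_y$.

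First I would dispose of the two surjectivity halves of the hypothesis. Since $s$ is onto, every $\rho_y$ emits an edge, so in particular $H_y\neq 0$, i.e.\ $\rho_y(\langle X,X\rangle_B)\neq 0$ for all $y$; hence $\langle X,X\rangle_B=B$ and $X$ is full on the right. Since $r$ is onto, the range $\X(\SB)=r(E^1_X)$ equals $\SA$; but $\X=\widehat{\phi_X}\circ[X\dashind]$ has range inside the range of $\widehat{\phi_X}$, which by Lemma~\ref{range of duals to homomorphisms} is $\SA\setminus\widehat{\ker\phi_X}$. Thus $\widehat{\ker\phi_X}=\emptyset$ and $\phi_X$ is injective. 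Fullness makes $X$ a $\K(X)$--$B$ imprimitivity bimodule, so $[X\dashind]$ restricts to a homeomorphism $\SB\cong\widehat{\K(X)}$ and, for each $y$, $X\dashind(\rho_y)$ restricts to an \emph{irreducible} representation of $\K(X)$ on $H_y$; since $\K(X)$ is Morita equivalent to the commutative algebra $B$ it is liminal, whence $X\dashind(\rho_y)(\K(X))=\K(H_y)$.

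The heart of the matter is to prove that each $\sigma_y$ is irreducible, equivalently $\dim H_y=1$. Bijectivity of $r$ and $s$ with all multiplicities equal to $1$ says exactly that $\X=r\circ s^{-1}$ is a single-valued bijection $g\colon\SB\to\SA$ and that $\sigma_y$ contains the character $\chi_{g(y)}$ as its \emph{unique} irreducible subrepresentation, with multiplicity one. To upgrade this to irreducibility I would show that $\sigma_y(A)$ consists of compact operators: once $\phi_X(A)\subseteq\K(X)$ we get $\sigma_y(A)=X\dashind(\rho_y)(\phi_X(A))\subseteq\K(H_y)$, and a commutative $C^*$-algebra of compact operators acts with purely atomic spectrum and finite-dimensional spectral subspaces, so the mutual-disjointness and multiplicity-one bookkeeping leaves room for a single one-dimensional spectral subspace, forcing $H_y=\C$ and $\sigma_y=\chi_{g(y)}$. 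I expect this to be the main obstacle. The multiplicity data only sees the \emph{atomic} part of $\sigma_y$; ruling out a residual summand that carries no irreducible subrepresentation (a ``continuous-spectrum'' piece, which contributes no edge and is therefore invisible to the graph) is exactly where properness $\phi_X(A)\subseteq\K(X)$ — i.e.\ compactness of the fibre action — must be brought in. It is this compactness, rather than any of the surrounding bookkeeping, that carries the real content of the lemma, and care is needed to see that it is in force here.

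Granting $\dim H_y\equiv 1$, the conclusion is formal. Then $X$ is a rank-one field over $\SB$, so $\K(X)\cong C_0(\SB)=B$ and $\phi_X$ carries $A$ into $\K(X)$, acting on $H_y$ by the scalar $\chi_{g(y)}(a)=a(g(y))$. Together with injectivity of $\phi_X$ and surjectivity of $\X$ this identifies $\phi_X$ with pull-back along $g$ and exhibits it as an isomorphism $\phi_X\colon A\xrightarrow{\sim}\K(X)$; dualizing this isomorphism of commutative algebras and composing with the homeomorphism $[X\dashind]$ shows that $g$ is a homeomorphism and hence $A\cong B$. Finally, a full right Hilbert $B$-module whose left action is an isomorphism onto $\K(X)$ is an $A$--$B$ equivalence bimodule, cf.\ \cite[3.3]{ka2} or \cite[Proposition 1.11]{kwa-doplicher}, which is the desired statement.
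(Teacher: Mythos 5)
Your proposal reproduces the paper's proof in all its peripheral steps --- fullness of $X$ on the right from surjectivity of $s$, injectivity of $\phi_X$ from surjectivity of $r$ via Lemma~\ref{range of duals to homomorphisms}, the reduction to showing that each fibre $X\otimes_{\rho_y}\C$ is one-dimensional, and the final identification of $\phi_X$ as an isomorphism onto $\K(X)$ (you finish by dualizing where the paper invokes Stone--Weierstrass, a cosmetic difference). But at the step you yourself call the heart of the matter there is a genuine gap: you never prove $\phi_X(A)\subseteq\K(X)$, you only remark that ``care is needed to see that it is in force here.'' It is not in force, and no argument can make it so: the stated hypotheses of the lemma do not imply it, and in fact the lemma as literally stated is false. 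Take $A=B=C[0,1]$, $K=\C\oplus L^2[0,1]$, and $X=C([0,1],K)$ with the pointwise right Hilbert $C[0,1]$-module structure and with left action given in the fibre over $y$ by $a\mapsto a(y)\oplus M_a$, where $M_a$ is multiplication by $a$ on $L^2[0,1]$. Then $AX=X$ and $X$ is full; since the multiplication representation of $C[0,1]$ on $L^2[0,1]$ contains no irreducible subrepresentation, each fibre representation $\mathrm{ev}_y\oplus M$ contains exactly one irreducible subrepresentation, of multiplicity one, so the dual graph is the identity bijection on $[0,1]$. Yet $\K(X)=C([0,1],\K(K))$ is noncommutative, so $X$ cannot be an equivalence bimodule.

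That said, your diagnosis of where the real content lies is sharper than the paper's own proof, which disposes of the same point with the assertion that a non-degenerate representation of a commutative $C^*$-algebra containing exactly one irreducible subrepresentation must be one-dimensional --- the example above ($\mathrm{ev}_y\oplus M$) refutes precisely this implication: the dual graph is blind to a summand carrying no irreducible subrepresentations, the ``continuous-spectrum piece'' you describe. Both your argument and the paper's are repaired by adding the hypothesis $\phi_X(A)\subseteq\K(X)$: then $\K(X)$ is liminal (being Morita equivalent to the commutative algebra $B$), so each fibre representation of $A$ is by compact operators, hence a direct sum of irreducibles, and the multiplicity-one bookkeeping forces one-dimensional fibres, after which the remaining steps go through. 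The extra hypothesis does hold at the lemma's unique point of use: in Proposition~\ref{tf_liminal} it is applied to a correspondence Morita equivalent to $IX$ with $I\subseteq J\subseteq J(X)$, and having left action by compacts is preserved under that Morita reduction. So your outline, completed by this added hypothesis and its verification at the point of application, yields a correct proof; as written, its central step is missing, and it is exactly the step that cannot be supplied from the lemma's stated hypotheses.
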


\begin{proof} By assumption we have $\widehat{B}= \widehat{\langle X,X\rangle_B}$. Hence $B=\langle X,X\rangle_B$, i.e. $X$ is full on the right. Thus it suffices to show that the left action homomorphism $\phi_X:A\to \LL(X)$ is in fact an isomorphism $\phi_X:A\to \K(X)$, cf. \cite[3.3]{ka2} or \cite[Proposition 1.11]{kwa-doplicher}. By Lemma~\ref{range of duals to homomorphisms}, $\widehat{\ker\phi_X}\subseteq \widehat{A}\setminus \overline{r(E^1_X)}$, but by assumption $r(E^1_X)=\SA$. Hence $\phi_X$ is injective.

Since $X$ establishes a Morita equivalence between $\K(X)$ and $B$ we see that (up to unitary equivalence) every representation $\pi$ of $\K(X)$ is of the form $\pi=X\dashind (\sigma)$ for some $[\sigma]\in \widehat{B}$. Due to our assumptions, $A$ acts on the Hilbert space $X\otimes_\sigma H_\sigma$ in a non-degenerate way and this representation contains exactly one irreducible subrepresentation. Since $A$ is commutative, this implies that $X\otimes_\sigma H_\sigma$ is in fact one-dimensional. Hence $\K(X)$ is commutative (as all of its irreducible representations are one-dimensional). In fact we may identify $\K(X)$ with $C_0(\widehat{B})$ and $A$ with $C_0(\widehat{A})$. Then the left action of $A$ on $X$ becomes an injective homomorphism 
$$
\phi_X:C_0(\widehat{A})\to M(C_0(\widehat{B}))=C(\beta(\widehat{B}))
$$
where $\beta(\widehat{B})$ is the Stone-Cech compactification of $\SB$. We need to show that $\phi_X(C_0(\widehat{A}))=C_0(\widehat{B})$. If we assume that $b\in \phi_X(C_0(\widehat{A}))\setminus C_0(\widehat{B})$ then there is $t_0\in \beta(\widehat{B})$ such that $b(t_0)\neq 0$ and $b|_{\phi_X(C_0(\widehat{A}))}\equiv 0$. Such $t_0$ yields an irreducible representation of $C_0(\widehat{A})$ which contradicts surjectivity of $r:E^1_X\to \SA$. Hence $\phi_X(C_0(\widehat{A}))\subseteq C_0(\widehat{B})$. Since two different points $t_1$, $t_2$ in $\SB$, yield two different points $X\dashind (t_1)$ and $X\dashind (t_2)$ in $\SA$, we see that $\phi_X(C_0(\widehat{A}))$ separates the points $\widehat{B}$. Hence $\phi_X(C_0(\widehat{A}))=C_0(\widehat{B})$ by the Stone-Weierstrass Theorem.
\end{proof}

\begin{prop}\label{tf_liminal}
Let $X$ be a $C^*$-correspondence over a $C^*$-algebra $A$, and let $J$ be an ideal of $J_X$. Suppose that $X(J)$ is liminal and $X$ is not topologically free on $J$. Then $X$ is not $J$-acyclic.
\end{prop}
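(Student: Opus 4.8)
The plan is to prove the statement in the form actually needed: from the failure of topological freeness on $J$ I will manufacture a non-zero positively $X$-invariant ideal $I\subseteq J$ with respect to which $X$ is cyclic, which by Definition~\ref{def:cycling} shows $X$ is not $J$-acyclic. Throughout I use that $X(J)$ liminal forces $J$ and $K:=J+X(J)$ to be of Type I (Lemma~\ref{lem:stanislaw}) and that the dual multivalued map of $Y:=JXJ$ is continuous (Corollary~\ref{continuity of dual graphs}), so that $E_Y={_{\widehat J}}(E_{KXK})_{\widehat J}$ is a continuous graph. By Definition~\ref{top_free_graph}, the hypothesis that $X$ is not topologically free on $J$ provides an integer $n>0$ and a non-empty open set $V\subseteq\widehat J$ consisting of base points of cycles in $(E_Y)^n$ that have no entrances in $E_{KXK}$. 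After replacing $n$ by the least such integer and shrinking $V$ to an open subset, I may assume these cycles have minimal period $n$.

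First I would turn $V$ into an invariant open set. The no-entrance condition says that each base point $[\pi]$ of such a cycle has a unique incoming edge in the whole graph $E_{KXK}$, of multiplicity one, coming from its cyclic predecessor, which again lies in $\widehat J$ (all edges of a cycle in $E_Y$ have source and range in $\widehat J$). Consequently $\X^{-1}(V)$ equals the set of these predecessors, so $\X^{-1}(V)\subseteq\widehat J$; and it is open because $\X^{-1}(V)=\widehat{X(I_V)}$ by Lemma~\ref{range of duals to correspondences}(ii) (applicable since $X(I_V)\subseteq X(J)$ is liminal and $I_V\subseteq J(X)$), where $I_V$ is the ideal with $\widehat{I_V}=V$. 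Iterating, the shifts $V_j:=\X^{-j}(V)$, $j=0,\dots,n-1$, are open subsets of $\widehat J$ consisting of base points of the same no-entrance cycles, and $W:=\bigcup_{j=0}^{n-1}V_j$ satisfies $\X^{-1}(W)=\bigcup_{j=1}^{n}V_j=W$, using $V_n=V_0$. Thus $W$ is a non-empty open negatively $\X$-invariant subset of $\widehat J$. Letting $I$ be the ideal with $\widehat I=W$, Corollary~\ref{invariance via dual graphs} shows $I$ is positively $X$-invariant, and $0\neq I\subseteq J$. Since every incoming edge of a vertex of $W$ is one of the cyclic edges lying inside $W$, the dual graph of the correspondence $IX$ over $I$ is ${_W}(E_X)_W$, in which both $r$ and $s$ are bijections onto $W$ realising the $n$-cyclic shift $V_0\to V_1\to\cdots\to V_{n-1}\to V_0$.

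It then remains to deduce that $X$ is cyclic with respect to a (possibly smaller) $I$, i.e. that $(IX)^{\otimes n}\cong I$ as a correspondence. The dual graph of $(IX)^{\otimes n}$ is the $n$-fold composite of the $n$-cyclic shift, hence the identity bijection on $W$. To convert this combinatorial fact into an honest isomorphism I would pass to the commutative setting: restrict to an open negatively invariant subset of $W$ over which $I$ is Morita equivalent to a commutative $C^*$-algebra $D=C_0(W')$ (using the local structure of liminal algebras), transport $IX$ along this equivalence by Lemma~\ref{lem:Eduard's theorem}, which preserves dual graphs, and note that cyclicity is a Morita invariant: if $IX\sim_M N$ then $(IX)^{\otimes n}\sim_M N^{\otimes n}$, and $(IX)^{\otimes n}\cong I$ is equivalent to $N^{\otimes n}\cong D$. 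Lemma~\ref{cyclic implies equivalence} then applies to the transported correspondence $N$, whose dual graph is a bijection, to show it is an equivalence bimodule; after shrinking $W'$ so that the resulting twist trivialises one obtains $(I'X)^{\otimes n}\cong I'$ for a non-zero positively invariant $I'\subseteq J$, whence $X$ is not $J$-acyclic.

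The hard part is precisely this last passage from ``the dual graph of $(IX)^{\otimes n}$ is the identity bijection'' to ``$(IX)^{\otimes n}$ is the trivial correspondence $I$''. Lemma~\ref{cyclic implies equivalence} only delivers an equivalence bimodule, and an equivalence bimodule inducing the identity on the spectrum can be a non-trivial twist (a non-trivial line bundle already in the commutative case); this is the reason one cannot take $I$ to be the full invariant ideal $I_W$ but must localise to an open invariant subset of $\widehat I$ on which the reduction to the commutative case and the trivialisation of the twist can be carried out simultaneously, exploiting that only the existence of \emph{some} non-zero cyclic ideal is required.
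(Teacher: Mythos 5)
Your proposal is correct and follows the paper's own proof essentially step for step: you construct the positively $X$-invariant ideal $I\subseteq J$ whose spectrum is the orbit of the no-entrance cycles (via Lemma~\ref{range of duals to correspondences}(ii) and Corollary~\ref{invariance via dual graphs}, where the paper uses image ideals $F_i=X^i(F_0)$ in place of your preimages $\X^{-j}(V)$ --- the same sets by that lemma), note that the dual graph of $IX$ is a bijection, transport to a commutative base by a dual-graph-preserving Morita equivalence (Lemma~\ref{lem:Eduard's theorem}), apply Lemma~\ref{cyclic implies equivalence}, and localise to trivialise the line-bundle twist. The delicate points you flag --- that the equivalence bimodule may be a non-trivial twist, and that the reduction to the commutative case and the trivialisation must be done on a smaller invariant ideal propagated around the cycle --- are exactly the points the paper handles with its inductive shrinking argument, so your outline matches the published proof in both structure and substance.
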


\begin{proof}
Let $E_{Y}$ be the graph dual to the $C^*$-correspondence $Y=JXJ$ over $J$. Since $X$ is not topologically free on $J$, it follows that there exists an ideal $F_0$ in $J$ and $m\in \N$ such that every point in $\widehat{F}_0$ is a base point of a cycle in $E_Y$ of length $m$ without entrances in $E_Y$, and $\X^{-n}(\widehat{F}_0)\subseteq \widehat{J}$ for every $n\in \N$. Since $X(F_0)\subseteq X(J)$ is liminal and $F_0\subseteq J\subseteq J(X)$, we may apply Lemma~\ref{range of duals to correspondences}(ii) to $F_0$. Thus we get $\X^{-1}(\widehat{F}_0)=\widehat{X(F_0)}$, and hence $F_1:=X(F_0)$ is an ideal in $J$. Applying this argument inductively we conclude that the ideals $F_i:=X^i(F_0)$, $i\in \N$, are contained in $J$ and $\X^{-i}(\widehat{F}_0)=\widehat{X^i(F_0)}$ for $i\in \N$. In fact, since $\widehat{F}_0$ consists of base points of length $m$ cycles without entrances, we have $\X^{-m}(\widehat{F}_0)=\widehat{F}_0$, and hence $F_{i (mod\,\,m)}=X(F_{i-1})$ for $i=1,...,m$. In particular, the ideal $I:=F_0+ F_1+...+ F_{m-1}$ is contained in $J$ and $X(I)=I$, so $I$ is positively invariant. The set
$$
\widehat{I}=\bigcup_{i=0}^{m-1}\widehat{F}_i=\bigcup_{i=0}^{m-1} \X^{-i}(\widehat{F}_0)
$$
consists of all base points of cycles attached to points in $\widehat{F}_0$. Since the cycle of length $m$ in $E_Y$ attached to each point in $\widehat{I}$ is unique, we get that both $r:E_{IX}\to \widehat{I}$ and $s:E_{IX}\to \widehat{I} $ are bijections (the graph $E_{IX}$ dual to $IX=IXI$ may be treated as the restriction of $E_Y$ to $\widehat{I}$). We wish to apply Lemma~\ref{cyclic implies equivalence} to the $C^*$-correspondence $IX$. To this end, we claim that by taking a smaller ideal than $F_0$, we may assume that $IX$ is Morita equivalent to a Hilbert bimodule ${_\alpha}Y$ associated to an automorphism on a commutative $C^*$-algebra.

Indeed, since $A$ contains an essential ideal of Type I$_0$, cf. \cite[Theorem 6.2.11]{pedersen}, we may assume by passing to a smaller ideal that $F_0$ is Morita equivalent to a commutative $C^*$-algebra, cf. \cite[Theorem 3.3]{HKS}. For the inductive step, assume that $F_0+ F_1+...+ F_{k-1}$ is Morita equivalent to a commutative $C^*$-algebra for some $k<m$.

1) If $F_{i_0} \cap F_k\neq\{0\}$ for some $i_0=0,...,k-1$, then we may put $F_{i}':=X^{i+m-k}(F_{i_0} \cap F_k)$, for $i=0,...,k$. Then $0\neq F_i'\triangleleft X^i(X^{m-k}(F_k))=X^i(F_0)=F_i$ for $i=0,...,k-1$ and $F_k'=X^{m}(F_{i_0} \cap F_k)=F_{i_0} \cap F_k \triangleleft F_{i_0}$. Hence $F_0'+ F_1'+...+ F_{k}'\triangleleft F_0+ F_1+...+ F_{k-1}$ is Morita equivalent to a commutative $C^*$-algebra.

2) Assume that $F_i \cap F_k=\{0\}$ for all $i_0=0,...,k-1$. Take any non-zero ideal $F_k'$ in $F_k$ which is Morita equivalent to a commutative $C^*$-algebra. Then $F_0+ F_1+...+ F_{k-1} +F_k'$	is Morita equivalent to a commutative $C^*$-algebra (as a direct sum of algebras with this property). Moreover, putting $F'_{i}:=X^{i+m-k}(F_k') \subseteq F_i$ for $i=0,...,k-1$. We get that $F_0'+ F_1'+...+ F_{k}' \triangleleft F_0+ F_1+...+ F_{k-1} +F_k'$ is Morita equivalent to a commutative $C^*$-algebra, and $F_i'=X^i(F_0')$, $i=0,...,k$.

This, by induction, proves our claim. Thus we assume that $I$ is Morita equivalent to a commutative algebra.

Let $M$ be an equivalence bimodule establishing Morita equivalence from a commutative $C^*$-algebra $C_0(V)$ to the ideal $I$. Then $Y:=M\otimes_I IX \otimes_I M^*$ is a $C^*$-correspondence over $C_0(V)$ which is Morita equivalent to $IX$. The graphs dual to $Y$ and $IX$ are equivalent, by Lemma~\ref{lem:Eduard's theorem}. Thus $Y$ is an equivalence bimodule by Lemma~\ref{cyclic implies equivalence}. The structure of an equivalence bimodule over $C_0(V)$ is well known. Namely, there is a homeomorphism $\theta:V\to V$ induced by $Y$ (it is equal to $r\circ s^{-1}$ where $(E_Y,r,s)$ is a graph dual to $Y$). There is a line bundle $L$ over $V$ such that $Y$ is isomorphic to the space of sections of $L$, with $C_0(V)$ acting by pointwise multiplication on the right and by pointwise multiplication composed with $\theta^*$ on the left. In our case we also have that $\theta^m=id$. Since every line bundle is locally trivial, using the same inductive argument as in the proof of the claim above, we may find a non-empty open set $U\subseteq V$ such that $\theta(U)=U$ and the restricted Hilbert bimodule $C_0(U)Y=C_0(U)YC_0(U)$ is isomorphic to the canonical equivalence bimodule associated to the homeomorphism $\theta:U\to U$. Then it is straightforward to see that $(C_0(U)Y)^{\otimes m}\cong C_0(U)$. Since $C_0(U)$ is positively $Y$-invariant, the ideal $I':=M(C_0(U))$ is positively $X$-invariant ideal in $I$:
\begin{align*}
I'X&=I' IX\cong I' (M^*\otimes Y \otimes M) = M^*\otimes C_0(U)Y \otimes M \\
&= M^*\otimes C_0(U)YC_0(U) \otimes M \subseteq (M^*\otimes Y \otimes M) I' \subseteq X I'.
\end{align*}
Moreover, using that $(C_0(U)Y)^{\otimes m}\cong C_0(U)$ we get
\begin{align*}
(I'X)^{\otimes m}&=(M^*\otimes C_0(U)Y \otimes M)^{\otimes m}\cong M^*\otimes ( C_0(U)Y )^{\otimes m} \otimes M \\
&\cong M^*\otimes C_0(U)\otimes M \cong I'.
\end{align*}
Thus, $X$ is cyclic with respect to $I'$. 
\end{proof}

Theorem~\ref{thm:acyclicity} follows from Corollary~\ref{cor_1}, Proposition~\ref{tf_liminal} and Theorem~\ref{uniqueness theorem}.

\section{Applications and examples}\label{sec:Applications}

\subsection{Toeplitz algebras.}
Let $X$ be a $C^*$-correspondence over a $C^*$-algebra $A$. \emph{Toeplitz algebra of $X$}, denoted by $\mathcal{T}_X$, is the $C^*$-algebra generated by the universal representation of $X$. In other words, it is the relative Cuntz--Pimsner algebra relative the zero ideal , that is $\mathcal{T}_X=\OO(\{0\},X)$. Fowler and Raeburn showed in \cite[Theorem 2.1]{Fow-Rae} that if $(\psi_0,\psi_1)$ is a representation of $X$ on Hilbert space $H$ that satisfies the following geometric condition:
\begin{equation}\label{Coburn condition666}
\psi_0\,\, \text{ acts faithfully on }(\psi_1(X)H)^\bot, 
\end{equation}
then $C^*(\psi)\cong \mathcal{T}_X$ with the isomorphism given by $\psi_0\rtimes_0 \psi_1$. This result was later generalized to product systems and more general structures, see \cite{F99}, \cite{kwa-larI}, \cite{kwa-larII}. One of the reasons for further development is that condition \eqref{Coburn condition666} when applied directly to a $C^*$-correspondence, whose left action is not by compacts, is too strong - it is not equivalent to faithfulness of $\psi_0\rtimes_0 \psi_1$. For instance, one can not deduce from \cite[Theorem 2.1]{Fow-Rae} simplicity of the Cuntz algebra $\OO_\infty$ viewed as the Toeplitz algebra $\mathcal{T}_X$ associated to an infinite dimensional Hilbert space $X$.

Our uniqueness theorem shows that the following weaker, algebraic condition:
\begin{equation}\label{Toeplitz condition666}
\forall_{a\in A}\,\,\, \psi_0(a)\in \psi^{(1)}(\K(X)) \,\,\Longrightarrow\,\, a=0; 
\end{equation}
is in fact equivalent to faithfulness of $\psi_0\rtimes_0 \psi_1$:

\begin{thm}[Uniqueness theorem for Toeplitz algebras]\label{theorem for Toeplitz algebras}
Let $X$ be an arbitrary $C^*$-correspondence over a $C^*$-algebra $A$ and let $(\psi_0,\psi_1)$ be a representation of $X$. We have $C^*(\psi)\cong \mathcal{T}_X$, with the isomorphism determined by $\psi_0(a)\mapsto j_A(a)$ and $\psi_1(x)\mapsto j_X(x)$, if and only if $(\psi_0,\psi_1)$ satisfies \eqref{Toeplitz condition666}.
\end{thm}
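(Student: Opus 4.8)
The plan is to observe that $\mathcal{T}_X=\OO(\{0\},X)$ and to deduce the statement as the special case $J=\{0\}$ of our general uniqueness theorem. First I would note that every representation of $X$ is automatically $\{0\}$-covariant, so the universal representation $(j_A,j_X)$ yields a canonical epimorphism $\psi_0\rtimes_0\psi_1:\mathcal{T}_X\to C^*(\psi)$ with $j_A(a)\mapsto\psi_0(a)$ and $j_X(x)\mapsto\psi_1(x)$ for every representation $(\psi_0,\psi_1)$ of $X$. A homomorphism $C^*(\psi)\to\mathcal{T}_X$ determined by $\psi_0(a)\mapsto j_A(a)$ and $\psi_1(x)\mapsto j_X(x)$ exists precisely when this epimorphism is injective, in which case the two maps are mutually inverse isomorphisms. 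Hence it suffices to characterise when $\psi_0\rtimes_0\psi_1$ is an isomorphism. The key elementary observation is that, for $J=\{0\}$, condition \eqref{eq:strict_J__covariance0} reads $\{a\in A:\psi_0(a)\in\psi^{(1)}(\K(X))\}=\{0\}$, which is exactly \eqref{Toeplitz condition666}.

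For necessity I would invoke the general fact recalled at the beginning of Section~\ref{sec:The uniqueness property}: if $\psi_0\rtimes_0\psi_1$ is injective, then $\psi_0$ is injective and \eqref{eq:strict_J__covariance0} holds. Specialising to $J=\{0\}$ delivers \eqref{Toeplitz condition666} immediately.

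For sufficiency, assume \eqref{Toeplitz condition666}. As observed in the text following \eqref{eq:strict_J__covariance}, condition \eqref{eq:strict_J__covariance0} with $J=\{0\}$ already forces $\psi_0$ to be injective; moreover, since $\{0\}\subseteq J_X$, conditions \eqref{eq:strict_J__covariance0} and \eqref{eq:strict_J__covariance} are equivalent, so $(\psi_0,\psi_1)$ is an injective representation satisfying \eqref{eq:strict_J__covariance}. It then remains to apply Theorem~\ref{uniqueness theorem}. The point is that hypothesis A1) is vacuous for $J=\{0\}$: since $\widehat{\{0\}}=\emptyset$, no non-empty open subset of $\SA$ can have a past in the relevant restricted graph (the $0$-th iterate already forces such a set into $\emptyset$), so the dual multivalued map $\widehat{X}$ is weakly topologically aperiodic on $\widehat{\{0\}}$ (Definition~\ref{def:weakly_top_aperiod}) for trivial reasons. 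Thus $(X,\{0\})$ has the uniqueness property, and therefore $\psi_0\rtimes_0\psi_1$ is an isomorphism, as desired.

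I do not anticipate a genuine obstacle, as the result is essentially the $J=\{0\}$ corner of Theorem~\ref{uniqueness theorem}; the only thing requiring care is the bookkeeping identifying \eqref{Toeplitz condition666} with the $J=\{0\}$ instance of \eqref{eq:strict_J__covariance0}, together with the routine verification that weak topological aperiodicity holds vacuously over the empty spectrum.
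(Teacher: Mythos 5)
Your proposal is correct and takes essentially the same route as the paper: identify \eqref{Toeplitz condition666} with the $J=\{0\}$ instance of \eqref{eq:strict_J__covariance0} (noting this forces injectivity of $\psi_0$), obtain necessity from the general facts recalled at the start of Section~\ref{sec:The uniqueness property}, and obtain sufficiency from Theorem~\ref{uniqueness theorem}, whose hypothesis A1) is vacuously satisfied since $\widehat{\{0\}}=\emptyset$. The paper's own proof is a terser version of exactly this argument.
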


\begin{proof}
Clearly, $(\psi_0,\psi_1)$ satisfies \eqref{Toeplitz condition666} if and only if $\psi_0$ is injective and \eqref{eq:strict_J__covariance0} holds with $J=\{0\}$. Moreover, the assumptions of Theorem~\ref{uniqueness theorem} with $J=\{0\}$ are trivially satisfied. 
\end{proof}

The above result supports the conjecture discussed below \cite[Theorem 2.19]{kwa-larII}.

\subsection{Simple Cuntz--Pimsner algebras.}\label{sub:Simple Cuntz-Pimsner algebras}

Let $X$ be a $C^*$-correspondence over a $C^*$-algebra $A$. We present here a number of simplicity criteria for $\OO_X$. We start by noting that a relative Cuntz--Pimsner algebra is not simple unless it is the unrelative Cuntz--Pimsner algebra $\OO_X$. Moreover, the class of $C^*$-correspondences that yield simple $\OO_X$ divides into two subclasses where either the left action of $A$ on $X$ is injective, or $X$ is \emph{quasi-nilpotent}, i.e. $\lim_{n\to \infty}\|\phi_{X^{ \otimes n}}(a)\|=0$ for every $a\in A$. We say that \emph{$X$ is minimal} if there are no non-trivial $X$-invariant ideals in $A$.

\begin{lem}\label{dichotomy lemma}
Suppose that $J\subseteq J_X$ and $\OO(J,X)$ is simple. Then necessarily $J=J_X$; $X$ is minimal; and either $\phi_X$ is injective, or $X$ is quasi-nilpotent and $J(X)=A$.

Moreover, $\OO_X$ is simple if and only if $X$ is minimal and has the uniqueness property.
\end{lem}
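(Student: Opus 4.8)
The entire argument is organised around the bijection between gauge-invariant ideals of $\OO(J,X)$ and $T$-pairs $(I,I')$ with $J\subseteq I'$ \cite[Proposition 11.9]{ka3}, combined with Lemma~\ref{lemma on intersection property}. I would first dispose of the claim $J=J_X$. Since $X^{-1}(\{0\})=\ker\phi_X$, one computes $J(\{0\})=\{a\in A:\phi_X(a)\in\K(X),\ a\ker\phi_X=\{0\}\}=J(X)\cap(\ker\phi_X)^\bot=J_X$, so the $T$-pairs with first coordinate $\{0\}$ are exactly the $(\{0\},I')$ with $J\subseteq I'\subseteq J_X$. As $(\{0\},J)$ corresponds to the zero ideal, if $J\subsetneq J_X$ then $(\{0\},J_X)$ produces a non-zero gauge-invariant ideal which is proper (its first coordinate is $\{0\}\neq A$), contradicting simplicity. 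Hence $J=J_X$ and $\OO(J,X)=\OO_X$.

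The heart of the matter is the reduction: \emph{a positively $X$-invariant ideal $I$ occurs as the first coordinate of a $T$-pair $(I,I')$ with $J_X\subseteq I'$ if and only if $I$ is $X$-invariant.} Such an $I'$ (which must satisfy $I+J_X\subseteq I'\subseteq J(I)$) exists precisely when $J_X\subseteq J(I)$; as $\phi_X(J_X)\subseteq\K(X)$ maps automatically into $\K(X/XI)$, this is equivalent to $J_X\,X^{-1}(I)\subseteq I$, which in turn is equivalent to negative invariance $X^{-1}(I)\cap J_X\subseteq I$ — one implication is immediate because $I$ is an ideal, and the converse follows from $c\in J_X\cap X^{-1}(I)\Rightarrow c^*c\in J_X\,X^{-1}(I)\subseteq I\Rightarrow c\in I$. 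Since $I=\{0\}$ forces $I'=J_X$ (giving the zero ideal) and $I=A$ gives the whole algebra, a non-trivial gauge-invariant ideal of $\OO_X$ exists if and only if a non-trivial $X$-invariant ideal of $A$ exists; that is, $\OO_X$ has no non-trivial gauge-invariant ideal if and only if $X$ is minimal. In particular simplicity forces minimality.

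The ``Moreover'' equivalence is now formal. By Lemma~\ref{lemma on intersection property}, $(X,J_X)$ has the uniqueness property iff every non-zero ideal of $\OO_X$ contains a non-zero gauge-invariant ideal. Thus if $X$ is minimal and $(X,J_X)$ has the uniqueness property, every non-zero ideal contains a non-zero gauge-invariant ideal, which by the previous paragraph is all of $\OO_X$; hence $\OO_X$ is simple. Conversely, a simple $\OO_X$ has no non-trivial gauge-invariant ideals, so $X$ is minimal, and condition (v) of Lemma~\ref{lemma on intersection property} holds trivially, so $(X,J_X)$ has the uniqueness property.

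Finally, the dichotomy for $\phi_X$. I would look at the increasing ideals $N_k:=\ker\phi_{X^{\otimes k}}$ and $N_\infty:=\overline{\bigcup_k N_k}$. Using $\phi_{X^{\otimes k+1}}=\phi_X\otimes 1_{X^{\otimes k}}$ and the inner-product identity $\langle x\otimes u,(\phi_X(a)\otimes 1)(y\otimes v)\rangle=\langle u,\phi_{X^{\otimes k}}(\langle x,\phi_X(a)y\rangle)v\rangle$, one gets $X(N_k)\subseteq N_{k-1}$ and $X^{-1}(N_k)=N_{k+1}$; hence $N_\infty$ is the $X$-invariant ideal generated by $\ker\phi_X$. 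By minimality $N_\infty\in\{\{0\},A\}$, so if $\phi_X$ is not injective then $N_1=\ker\phi_X\neq\{0\}$ forces $N_\infty=A$, and since the $N_k$ increase, approximating $a$ by $b\in N_k$ gives $\|\phi_{X^{\otimes m}}(a)\|=\|\phi_{X^{\otimes m}}(a-b)\|\to 0$, i.e. $X$ is quasi-nilpotent. The remaining claim $J(X)=A$ is the step I expect to be the main obstacle: the naive reduction $N_k\subseteq J(X)$ fails, because acting as zero on $X^{\otimes k}$ does not make $\phi_X(a)$ compact, so it must be extracted from simplicity by a separate argument (showing that $J(X)\neq A$ would yield a non-trivial ideal of $\OO_X$), and this is where the genuine work lies.
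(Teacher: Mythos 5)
Your treatment of $J=J_X$, of minimality, and of the ``Moreover'' equivalence is correct and follows the paper's own route, namely the $T$-pair/gauge-invariant-ideal dictionary of \cite[Proposition 11.9]{ka3} combined with Lemma~\ref{lemma on intersection property}(v); your verification that a positively $X$-invariant ideal $I$ occurs as the first coordinate of a $T$-pair $(I,I')$ with $J_X\subseteq I'$ exactly when $I$ is also negatively invariant is in fact more explicit than what the paper records. The genuine gap is the one you flagged yourself: $J(X)=A$ is left unproved, and your prediction of how it should be closed (a separate ideal-theoretic argument from simplicity of $\OO_X$) is not what the paper does. The paper chooses the chain so that it never leaves $J(X)$: starting from $I_0=\ker\phi_X$ it enlarges by increments of the form $J_X\cap X^{-1}(\,\cdot\,)$, i.e. $I_n=I_{n-1}+J_X\cap X^{-1}(I_{n-1})$ (this is how its $I_1$ is displayed, and it is the reading that makes its later claim $I_n\subseteq J(X)$ valid -- otherwise it would face exactly the obstruction you identified), and it invokes \cite[Lemma 4.15 and Proposition 4.16]{ka3} for the $X$-invariance of $I_\infty=\overline{\bigcup_n I_n}$. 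Since $\ker\phi_X$ and $J_X$ both lie in the closed ideal $J(X)$, one gets $I_\infty\subseteq J(X)$; and since $I_n\subseteq\ker\phi_{X^{\otimes n+1}}$ (by the same tensor-product computation you made), minimality forces $I_\infty=A$, which yields quasi-nilpotence and $J(X)=A$ in one stroke. Your observation that $\ker\phi_{X^{\otimes k}}\subseteq J(X)$ can fail is correct -- for instance, infinitely many edges from distinct vertices into a single vertex $v$ gives $X^{\otimes 2}=0$ while $\delta_v\notin J(X)$ -- and this is precisely why the increments must be cut down to $J_X$.

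There is also a second, unacknowledged soft spot: the assertion that $N_\infty=\overline{\bigcup_k\ker\phi_{X^{\otimes k}}}$ is ``the $X$-invariant ideal generated by $\ker\phi_X$''. The relations $X(N_k)\subseteq N_{k-1}$ and $X^{-1}(N_k)=N_{k+1}$ do give positive invariance of $N_\infty$, but not negative invariance: $X^{-1}$ does not commute with closures of increasing unions, and for $a\in J_X\cap X^{-1}(N_\infty)$ the compactness of $\phi_X(a)$ only yields $\lim_k\|\phi_{X^{\otimes k}}(a)\|=0$, not approximability of $a$ by elements of the exact kernels. Since the paper's notion of minimality excludes only ideals that are invariant in Katsura's two-sided sense, this step requires the citation the paper uses (\cite[Lemma 4.15 and Proposition 4.16]{ka3}) or a substitute. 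One workable substitute is to replace $N_\infty$ by the a priori larger set $Q=\{a\in A:\lim_k\|\phi_{X^{\otimes k}}(a)\|=0\}$, which is a closed ideal and can be checked directly to be positively and negatively invariant (for $a\in J(X)\cap X^{-1}(Q)$ one shows $\phi_X(a)\in\K(X,XQ)$, whence $\|\phi_{X^{\otimes k+1}}(a)\|\to 0$); this repairs quasi-nilpotence, but, as you correctly diagnosed, neither $N_\infty$ nor $Q$ gives $J(X)=A$, so the $J_X$-restricted chain cannot be avoided.
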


\begin{proof}
As $(\{0\}, J)$ and $(\{0\}, (\ker\phi_X)^\bot)$ are always $T$-pairs of $X$, we see by \cite[Theorem 11.9]{ka3} that $\OO(J,X)$ is not simple unless $J=J_X$. Similarly, $\OO_X$ is not simple unless there are no $X$-invariant ideals in $A$ (for any such ideal $I$ the pair $(I,I+J_X)$ is a $T$-pair). Suppose now that $\phi_X$ is not injective. Then $I_0=\ker\phi_X$ is a non-zero $X$-invariant ideal in $A$. We define $I_1=I_0+ J_X\cap X^{-1}(I_0)$ and $I_{n}:= I_{n-1} + X^{-1}(I_{n-1})$ for $n\geq 1$. Then $I_\infty:=\lim_{n\to \infty} I_n$ is $X$-invariant ideal by \cite[Lemma 4.15 and Proposition 4.16]{ka3}. Thus $I_\infty=A$ by minimality of $A$. For every $a\in I_n$, $n\in \N$, we have $a\in J(X)$ and $ \|\phi_{X^{ \otimes n}}(a)\|=0$. This implies that $X$ is quasi-nilpotent and $J(X)=A$.

Suppose that $X$ is minimal. Then the only $T$-pairs $(I,I')$ for $X$ with $J_X\subseteq I'$ are $(0,J_X)$ and $(A,A)$. Hence $\OO_X$ contains no non-trivial gauge invariant ideals. Thus, cf. Lemma~\ref{lemma on intersection property}(v), $\OO_X$ is simple if and only if $X$ has the uniqueness property.
\end{proof}

\begin{cor}\label{cor:simplicity1}
If the dual multivalued map $\widehat{X}$ is weakly topologically aperiodic and $X$ is minimal, then $\OO_X$ is simple.
\end{cor}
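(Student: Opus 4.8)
The plan is to obtain the statement by feeding case A1) of the Uniqueness Theorem~\ref{uniqueness theorem} into the dichotomy of Lemma~\ref{dichotomy lemma}. Indeed, the last sentence of Lemma~\ref{dichotomy lemma} asserts that for a minimal $C^*$-correspondence $X$ the algebra $\OO_X=\OO(J_X,X)$ is simple if and only if $X$ has the uniqueness property, that is, if and only if the pair $(X,J_X)$ has the uniqueness property in the sense of Definition~\ref{defn:uniqueness_property}. Since minimality of $X$ is among the hypotheses, it therefore suffices to verify that $(X,J_X)$ has the uniqueness property.

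To deduce this from Theorem~\ref{uniqueness theorem} applied with $J:=J_X$, I would check hypothesis A1), namely that the dual multivalued map $\widehat{X}$ is weakly topologically aperiodic on $\widehat{J_X}$. We are given weak topological aperiodicity on the whole spectrum $\widehat{A}$, and $\widehat{J_X}$ is identified with an open subset of $\widehat{A}$, so the only point to settle is that the ``on $U$'' clause is preserved under passing to a smaller open set. Once this monotonicity is in place, Theorem~\ref{uniqueness theorem} A1) immediately yields the uniqueness property for $(X,J_X)$.

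This monotonicity is direct from the definitions. By Definition~\ref{def:weakly_top_aperiod}, weak topological aperiodicity on an open set $U$ means that the graph $E_{\widehat{X}}$ satisfies condition (iii) of Lemma~\ref{lem:topological_freeness_vs_aperiodicity} for that $U$. If $U'\subseteq U$ are open, then ${_{U'}E}_{U'}$ is a subgraph of ${_{U}E}_{U}$, whence $({_{U'}E}_{U'})^n\subseteq({_{U}E}_{U})^n$ for all $n$; consequently any open $V$ having a past in ${_{U'}E}_{U'}$ automatically has a past in ${_{U}E}_{U}$. Since the conclusion of (iii), the existence for each $n>0$ of a path $(e_n,\dots,e_1)\in r^{-n}(V)$ with $r(e_n)\neq s(e_k)$ for every $k$, is formulated in the ambient graph $E_{\widehat{X}}$ and makes no reference to $U$, condition (iii) for $U$ implies condition (iii) for $U'$. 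Taking $U=\widehat{A}$ and $U'=\widehat{J_X}$ gives weak topological aperiodicity of $\widehat{X}$ on $\widehat{J_X}$, as required. I expect no serious obstacle: the substance is contained entirely in the cited results, and the only delicate point is precisely this bookkeeping of the ``on $U$'' clause, which as noted behaves monotonically in $U$.
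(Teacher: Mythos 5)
Your proposal is correct and follows exactly the paper's route: the paper's proof is simply ``Combine Theorem~\ref{uniqueness theorem} and the second part of Lemma~\ref{dichotomy lemma}.'' Your extra verification that weak topological aperiodicity on $\widehat{A}$ passes to the open subset $\widehat{J_X}$ (since any $V$ with a past in ${_{\widehat{J_X}}}E_{\widehat{J_X}}$ also has a past in $E$, and condition (iii)'s conclusion does not reference $U$) is a correct piece of bookkeeping that the paper leaves implicit.
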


\begin{proof}
Combine Theorem~\ref{uniqueness theorem} and the second part of Lemma~\ref{dichotomy lemma}.
\end{proof}

When $X$ is a Hilbert $A$-bimodule, then $\widehat{X}$ is a partial homeomorphism. Thus in this case, it follows from \cite[Theorem 9.12]{KM} that the implication in Corollary~\ref{cor:simplicity1} is in fact an equivalence (at least when $A$ contains an essential ideal which is separable or of Type I). Moreover, for any quasi-nilpotent $C^*$-correspondence $X$ the dual map $\X$ has no ``periodic points''. Indeed, if there is $[\pi]\in \SA$ and $n>0$ such that $X^{\otimes n}\dashind (\pi) \cong\pi$, then for any $a\in A$ with $\|\pi(a)\| \geq 1$ and any $m\in \N$ we get that $\|\phi_{X^{\otimes mn}}(a)\|\geq \|X^{\otimes mn}\dashind (\pi)(a)\|=\|\pi(a)\| \geq 1$. Thus when the left action on $X$ is not injective we have the following characterization of simplicity of $\OO_X$ in terms of aperiodicity of $\X$:

\begin{prop}\label{simple:non-injective}
Let $A$ be a $C^*$-algebra and $X$ a Hilbert $A$-bimodule. Suppose that the left action of $A$ on $X$ is not injective. The following statements are equivalent:
\begin{enumerate}
\item $\OO_X$ is simple; 
\item $X$ is minimal and quasi-nilpotent;
\item $X$ is minimal and the dual multivalued map $\X$ is topologically aperiodic.
\end{enumerate}
\end{prop}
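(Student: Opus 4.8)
The plan is to prove the cyclic chain of implications $(1)\Rightarrow(2)\Rightarrow(3)\Rightarrow(1)$, using Lemma~\ref{dichotomy lemma} as the bridge between simplicity and the uniqueness property, the computation recorded just before the proposition, Theorem~\ref{uniqueness theorem}, and the key structural fact recalled above that for a Hilbert bimodule the dual multivalued map $\X$ is a partial homeomorphism of $\SA$.

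For $(1)\Rightarrow(2)$ I would invoke the first part of Lemma~\ref{dichotomy lemma}: since $\OO_X=\OO(J_X,X)$ is simple, $X$ is minimal, and since the left action $\phi_X$ is assumed not to be injective the same lemma forces $X$ to be quasi-nilpotent. For $(2)\Rightarrow(3)$ only the implication ``quasi-nilpotent $\Rightarrow$ $\X$ topologically aperiodic'' is new, minimality being carried along. Here I would use the argument displayed before the proposition: if $[\pi]\in\X^n([\pi])$ for some $n>0$, then — because $\X^n=\widehat{X^{\otimes n}}$ is single valued and, for the Hilbert bimodule $X^{\otimes n}$, the fibre representation $X^{\otimes n}\dashind(\pi)\circ\phi_{X^{\otimes n}}$ is a representation by compact operators — this representation is irreducible and equivalent to $\pi$, i.e. $X^{\otimes n}\dashind(\pi)\cong\pi$. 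Iterating the isomorphism gives $X^{\otimes mn}\dashind(\pi)\cong\pi$, whence $\|\phi_{X^{\otimes mn}}(a)\|\ge\|\pi(a)\|$ for every $m$, contradicting quasi-nilpotency for any $a$ with $\|\pi(a)\|\ge1$. Thus the periodic set $\{[\pi]:[\pi]\in\X^n([\pi])\}$ is empty for every $n>0$, so $\X$ is trivially topologically aperiodic.

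The main work is $(3)\Rightarrow(1)$, and the plan is to upgrade topological aperiodicity of $\X$ to weak topological aperiodicity on $\widehat{J_X}$ so as to apply Theorem~\ref{uniqueness theorem}(A1). Since $X$ is a Hilbert bimodule, $\X$ is a partial homeomorphism, so in the associated graph $E_{\X}$ both the source and range maps are injective (single valuedness of $\X$ makes $s$ injective, single valuedness of $\X^{-1}$ makes $r$ injective). Consequently, applying Lemma~\ref{lem:topological_freeness_vs_aperiodicity} with $U=\widehat{J_X}$, conditions (i), (iii) and (iv) there are all equivalent; in particular topological aperiodicity of ${}_{U}\X_{U}$ is equivalent to weak topological aperiodicity of $\X$ on $\widehat{J_X}$ in the sense of Definition~\ref{def:weakly_top_aperiod}. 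Global topological aperiodicity of $\X$ from (3) restricts to topological aperiodicity on the open set $\widehat{J_X}$, since the relevant periodic sets only shrink under restriction, so $\X$ is weakly topologically aperiodic on $\widehat{J_X}$. Theorem~\ref{uniqueness theorem}(A1) then shows that $(X,J_X)$ has the uniqueness property, and combining this with minimality of $X$ and the second part of Lemma~\ref{dichotomy lemma} yields that $\OO_X$ is simple.

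The step I expect to require most care is the passage, used in both $(2)\Rightarrow(3)$ and $(3)\Rightarrow(1)$, between the analytic periodicity condition $X^{\otimes n}\dashind(\pi)\cong\pi$ and the combinatorial condition $[\pi]\in\X^n([\pi])$ underlying topological aperiodicity; this rests on the identification $\X^n=\widehat{X^{\otimes n}}$ together with the fact that for a Hilbert bimodule $\widehat{X^{\otimes n}}$ is single valued and its fibre representations carry no residual non-compact part, so that mere containment of $\pi$ already forces equivalence. The injectivity of both $r$ and $s$ in $E_{\X}$, which is exactly what makes the topological and weak topological versions of aperiodicity coincide, is the other point to verify with care.
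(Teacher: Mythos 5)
Your proposal is correct and follows essentially the same route as the paper: (1)$\Rightarrow$(2) via the first part of Lemma~\ref{dichotomy lemma}, (2)$\Rightarrow$(3) via the quasi-nilpotency/periodic-point computation displayed before the proposition, and (3)$\Rightarrow$(1) by combining Theorem~\ref{uniqueness theorem}(A1) with the second part of Lemma~\ref{dichotomy lemma}, which is exactly the content of Corollary~\ref{cor:simplicity1} that the paper cites. Your extra care in passing from topological aperiodicity to weak topological aperiodicity on $\widehat{J_X}$ (using that $\X$ is a partial homeomorphism, so $r$ and $s$ of $E_{\X}$ are injective and the conditions of Lemma~\ref{lem:topological_freeness_vs_aperiodicity} coincide) just makes explicit what the paper leaves implicit; in fact only the unconditional implication (i)$\Rightarrow$(iii) of that lemma is needed there.
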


\begin{proof} 
We have (1)$\Rightarrow$(2) by Lemma~\ref{dichotomy lemma}, and (2)$\Rightarrow$(3) is clear by the discussion above. Implication (3)$\Rightarrow$(1) follows from Corollary~\ref{cor:simplicity1}.
\end{proof}

When the left action is injective we can use the main result of \cite{Schweizer} to get the following characterization of simplicity in terms of acyclicity.

\begin{prop}
Let $X$ be a $C^*$-correspondence over a unital $C^*$-algebra $A$. Suppose also that $X$ is full on the right and the left action of $A$ on $X$ is injective and $AX=X$. The following statements are equivalent:
\begin{enumerate}
\item $\OO_X$ is simple;
\item $X$ is minimal and non-periodic;
\item $X$ is minimal and $A$-acyclic.
\end{enumerate}
Moreover, if $J(X)\neq A$, then $\OO_X$ is simple if and only if $X$ is minimal.
\end{prop}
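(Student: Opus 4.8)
The plan is to establish the equivalences via the cycle $(1)\Rightarrow(3)\Rightarrow(2)\Rightarrow(1)$, using our own results for the first implication and the simplicity criterion of \cite{Schweizer} for the last. Before starting I would record two reductions coming from injectivity of $\phi_X$: first, $(\ker\phi_X)^\perp=A$, so $J_X=J(X)$ and $\OO_X=\OO(J_X,X)$; second, if $X$ is cyclic with respect to $I$ then $(IX)^{\otimes n}\cong I$ exhibits $IX$ as invertible, hence as an equivalence bimodule over $I$, so that $\phi_X(I)=\K(IX)\subseteq\K(X)$ and thus $I\subseteq J(X)=J_X$. Consequently $A$-acyclicity---the absence of \emph{any} non-zero cyclic ideal---coincides with $J_X$-acyclicity, and condition (3) is unambiguous.

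For $(1)\Rightarrow(3)$ I would apply the second assertion of Lemma~\ref{dichotomy lemma}: simplicity of $\OO_X$ is equivalent to $X$ being minimal together with the uniqueness property. Minimality is half of (3), and Corollary~\ref{cor_1} converts the uniqueness property of $(X,J_X)$ into $J_X$-acyclicity, i.e. $A$-acyclicity by the reduction above.

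For $(3)\Rightarrow(2)$ it is enough to see that $A$-acyclicity forces non-periodicity, minimality being carried along unchanged. Arguing contrapositively, if $X^{\otimes n}\cong A$ for some $n>0$ then, because $AX=X$, we have $(AX)^{\otimes n}=X^{\otimes n}\cong A$; since $A$ is trivially positively $X$-invariant, this exhibits $A$ as a non-zero cyclic ideal, contradicting $A$-acyclicity. For $(2)\Rightarrow(1)$ I would invoke the main theorem of \cite{Schweizer}: under the present hypotheses (full on the right, injective and non-degenerate left action, $A$ unital) $\OO_X$ is simple provided $X$ is minimal and aperiodic. The step I expect to be the main obstacle is reconciling Schweizer's aperiodicity \cite[Definition~3.7]{Schweizer} with our non-periodicity; the point is that under minimality the only non-zero positively $X$-invariant ideal is $A$ itself, so Schweizer's condition collapses to the demand that $X^{\otimes n}\not\cong A$ for all $n>0$, which is precisely non-periodicity. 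With this identification (2) supplies both hypotheses of Schweizer's theorem.

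Finally, for the ``moreover'' clause I would note that periodicity is impossible when $J(X)\neq A$: an isomorphism $X^{\otimes n}\cong A$ makes $X$ invertible, hence an equivalence bimodule, whence $\phi_X(A)=\K(X)$ and $J(X)=\phi_X^{-1}(\K(X))=A$. Therefore $J(X)\neq A$ makes $X$ automatically non-periodic, so condition (2) reduces to minimality alone, and the already-established equivalence $(1)\Leftrightarrow(2)$ gives that $\OO_X$ is simple if and only if $X$ is minimal.
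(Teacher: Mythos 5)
Your cycle $(1)\Rightarrow(3)\Rightarrow(2)\Rightarrow(1)$ is a reasonable architecture, and deriving $(1)\Rightarrow(3)$ from Lemma~\ref{dichotomy lemma} together with Corollary~\ref{cor_1} is a correct alternative to the paper, which instead takes $(1)\Leftrightarrow(2)$ straight from \cite[Theorem 3.9]{Schweizer} and then proves $(2)\Leftrightarrow(3)$; the step $(3)\Rightarrow(2)$ and the ``moreover'' clause are fine. The problem is the step you yourself single out as the main obstacle in $(2)\Rightarrow(1)$: you assert, with no argument, that under minimality the only non-zero positively $X$-invariant ideal is $A$. That assertion is precisely the substantive (non-citation) content of the paper's proof, and it is not automatic: minimality as defined in the paper excludes only the $X$-invariant ideals (invariant in Katsura's sense, i.e. positively \emph{and} negatively invariant), and a positively invariant ideal need not be of this kind. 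The paper's argument runs as follows: if $I_0$ is positively $X$-invariant with $0\neq I_0\neq A$, set $I_n:=X^{-1}(I_{n-1})$; positive invariance gives $I_{n-1}\subseteq I_n$, and no $I_n$ contains $1$, because $1\in X^{-1}(I)$ forces $\langle X,X\rangle_A\subseteq I$, i.e. $I=A$ by fullness; then $I_\infty:=\overline{\bigcup_n I_n}$ is still proper (each $I_n$ is a closed proper ideal, so $1$ stays at distance $1$ from the union; this is where unitality enters) and is $X$-invariant by \cite[Proposition 4.16]{ka3}, contradicting minimality. Until you supply this lemma, the identification of condition (2) with Schweizer's minimality/aperiodicity hypotheses is unjustified, and $(2)\Rightarrow(1)$ is incomplete.

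A secondary soft spot is your opening reduction that every cyclic ideal automatically lies in $J_X$, which you base on the claim that $(IX)^{\otimes n}\cong I$ exhibits $IX$ as an equivalence bimodule. The paper's Lemma~\ref{lem:cyclicy_implies_equivalence} proves this only under the extra hypothesis $I\subseteq J_X$; in general you would need the (true but non-trivial) fact that a correspondence invertible up to isomorphism in the correspondence category is an imprimitivity bimodule, plus an argument that $\phi_X(I)\subseteq\K(X)$, not merely $\phi_{IX}(I)=\K(IX)$. Note that once the missing lemma of the previous paragraph is in place, this reduction becomes unnecessary: minimality then leaves $A$ as the only possible cyclic ideal, so $A$-acyclicity and non-periodicity coincide, which is exactly how the paper identifies (3) with (2).
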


\begin{proof} 
We have (1)$\Leftrightarrow$(2) by \cite[Theorem 3.9]{Schweizer}. To see that (2)$\Leftrightarrow$(3) it suffices to note that our assumptions and minimality of $X$ imply that there are no non-trivial positively $X$-invariant ideals in $A$. Indeed, suppose that $X$ is minimal and $I_0$ is a positively $X$-invariant ideal in $A$ and $I_0\neq A$. Then $I_1:=X^{-1}(I)=\{a\in A: \langle x,a\cdot y\rangle_A \in I \textrm{ for all } x,y\in X \}$ contains $I_0$ but does not contain $1$. Proceeding in this way we get an increasing sequence $\{I_n\}_{n=1}^\infty$ of ideals in $A$, each of which does not contain $1$. The ideal $I_\infty= \overline{\bigcup_{n=1}^\infty I_n}$ is not equal to $A$ (does not contain $1$) and it is $X$-invariant, cf. \cite[Proposition 4.16]{ka3}. Thus minimality of $X$ implies that $I_\infty=\{0\}$ and hence $I_0=\{0\}$.

If $J(X)\neq A$, then $X$ cannot be an equivalence bimodule and all the more it cannot be periodic. 
\end{proof}

Finally, in the liminal case, using our main results we get:

\begin{prop}
Let $X$ be a $C^*$-correspondence over a $C^*$-algebra $A$ such that $X(J_X)$ is liminal. The following statements are equivalent:
\begin{enumerate}
\item $\OO_X$ is simple;
\item $X$ is minimal and the graph dual to $X$ is topologically free on $\widehat{J_X}$;
\item $X$ is minimal and $J_X$-acyclic.
\end{enumerate}
\end{prop}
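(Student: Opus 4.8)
The plan is to deduce this entirely from the two main results already in hand, namely the ``Moreover'' part of Lemma~\ref{dichotomy lemma} and Theorem~\ref{thm:acyclicity}, both applied with $J=J_X$. First I would check that the standing hypothesis places us squarely in the setting of those results. Since $X(J_X)$ is liminal, Lemma~\ref{lem:stanislaw} gives that $J_X$ itself is liminal; moreover $K:=J_X+X(J_X)$ is an extension of $K/X(J_X)\cong J_X/(J_X\cap X(J_X))$, a quotient of $J_X$, by $X(J_X)$, hence $K$ is of Type I. Consequently the dual graph $E_{KXK}$ to the restricted $C^*$-correspondence $KXK$ over $K$ is well defined, and condition (2)---that the graph dual to $X$ is topologically free on $\widehat{J_X}$---is by definition the assertion that $X$ is topologically free on $J_X$ in the sense of Definition~\ref{defn:topological_free_correspondence}.

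Next I would invoke the last sentence of Lemma~\ref{dichotomy lemma}, which states that $\OO_X$ is simple if and only if $X$ is minimal and the pair $(X,J_X)$ has the uniqueness property. This rewrites condition (1) as the conjunction of minimality of $X$ with the uniqueness property for $(X,J_X)$. I would then apply Theorem~\ref{thm:acyclicity} with $J=J_X$: because $X(J_X)$ is liminal, that theorem supplies the full three-way equivalence
$$
(X,J_X)\ \text{has the uniqueness property}\iff X\ \text{is}\ J_X\text{-acyclic}\iff X\ \text{is topologically free on}\ J_X.
$$
Substituting the uniqueness property by either of the two equivalent algebraic or geometric conditions inside the characterization of simplicity yields (1)$\Leftrightarrow$(3) and (1)$\Leftrightarrow$(2) simultaneously, since the common hypothesis of minimality of $X$ is carried along unchanged in all three statements.

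There is no genuine obstacle here: the proposition is a direct corollary of the two cited results, and the work is confined to the bookkeeping that makes both of them applicable with $J=J_X$. The only points deserving a word of care are that the liminality of $X(J_X)$ must be seen to propagate---via Lemma~\ref{lem:stanislaw} and the extension argument above---so that $K$ is of Type I and the full equivalence of Theorem~\ref{thm:acyclicity} (not merely the implication of the uniqueness theorem) is available, and that the terminology of condition (2) is correctly identified with ``$X$ is topologically free on $J_X$'', i.e. with condition (3) of Theorem~\ref{thm:acyclicity}.
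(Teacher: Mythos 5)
Your proposal is correct and follows exactly the paper's own route: the paper proves this proposition by combining the ``Moreover'' part of Lemma~\ref{dichotomy lemma} (simplicity of $\OO_X$ $\Leftrightarrow$ minimality plus the uniqueness property for $(X,J_X)$) with the three-way equivalence of Theorem~\ref{thm:acyclicity} under liminality of $X(J_X)$. Your additional bookkeeping (liminality of $J_X$ via Lemma~\ref{lem:stanislaw}, Type I of $K=J_X+X(J_X)$ via the extension argument, and matching condition (2) with Definition~\ref{defn:topological_free_correspondence}) is accurate and merely makes explicit what the paper leaves implicit.
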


\begin{proof} 
Combine the last parts of Theorem~\ref{thm:acyclicity} and Lemma~\ref{dichotomy lemma}.
\end{proof}

\subsection{Topological correspondences.}\label{sub:Topological correspondences} 

Let us fix a quadruple $E=(E^0,E^1,r,s)$ consisting of locally compact Hausdorff spaces $E^0$, $E^1$ and continuous maps $r,s:E^{1}\to E^0$. We will refer to $E$ as to a \emph{topological graph}. We also fix a \emph{continuous family of measures along fibers of $s$}, i.e. a family of Radon measures $\lambda=\{\lambda_v\}_{v\in E^0}$ on $E^1$ such that
\begin{itemize}
\item[(Q1)] $\supp \lambda_v\subseteq s^{-1}(v)$ for all $v \in E^0$,
\item[(Q2)] $v \to \int_{E^1} a(e) d\lambda_{v} (e)$ is an element of $C_c(E^0)$ for all $a\in C_c(E^1)$.
\end{itemize}
In \cite{BHM}, the quintuple $\QQ=(E^0,E^1,r,s,\lambda)$ is called a \emph{topological correspondence} (from $E^0$ to $E^0$). It is a mild but important generalization of a topological quiver introduced in \cite[Example 5.4]{ms} and studied in \cite{mt}. Namely, a \emph{topological quiver} is a topological correspondence where in (Q1) we have $\supp \lambda_v= s^{-1}(v)$ for all $v \in E^0$. We note that if $E=(E^0,E^1,r,s)$ is a topological graph in the sense of Katsura \cite{ka1}, i.e. if we additionally assume that $s$ is a local homeomorphism, then each set $s^{-1}(v)$ is discrete and we may treat $E$ as a topological quiver equipped with the family $\lambda=\{\lambda_v\}_{v\in E^0}$ of counting measures on $s^{-1}(v)$, $v\in E^0$. Conversely, if $\QQ$ is a topological quiver such that each $\lambda_v$ is a counting measure on $s^{-1}(v)$, then $s$ is necessarily a local homeomorphism.

We define the \emph{support of the family of measures} $\lambda$ as the following union:
$$
\supp \lambda :=\bigcup_{v\in E^0} \supp \lambda_v.
$$
Axiom (Q2) implies that the set map $v \mapsto \supp \lambda_v$ is lower semicontinuous, cf. \cite[Lemma 3.28]{kwa-exel}. This together with axiom (Q1) gives that the restricted source map $s:\supp \lambda \to E^0$ is open. Note that the topological correspondence $\QQ$ is a topological quiver if and only if $\supp \lambda=E^{1}$. Moreover, the $C^*$-correspondence we associate to $\QQ$ depends only on the closure of $\supp \lambda$ in $E^{1}$. Thus without loss of generality we could assume that $\overline{\supp \lambda}=E^{1}$.

We define the \emph{$C^*$-correspondence $X_\QQ$ associated to} $Q$ as the Hausdorff completion of the semi-inner $C^*$-correspondence over $A=C_0(E^0)$ defined on $C_c(E^1)$ via
$$
(a \cdot f \cdot b) =(a \circ r) f (b\circ s) \quad \textrm{ and }\quad\langle f , g \rangle_{ C_0(E^0)} (v)=\int_{s^{-1}(v)} \overline{f} g d\lambda_v,
$$
$f,g \in C_c(E^1)$, $a,b \in C_0(E^0)$, see \cite[Definition 2.5]{BHM}, \cite[3.1]{mt}. We define the \emph{quiver $C^*$-algebra} associated to $\QQ$ as $\OO_{X_\QQ}$, cf. \cite[Definition 3.17]{mt}. For every $V\subseteq \widehat{J(X)}$ we define the corresponding \emph{relative quiver $C^*$-algebra} as $\OO(C_0(V),X_\QQ)$, cf. \cite[Section 7]{mt}.

We wish to compare the topological graph $E$ with the graph $E_\QQ$ dual to the $C^*$-correspondence $X_\QQ$.

\begin{lem}\label{lemma:dual_graph_to_topological_quiver}
The multiplicity of a pair of vertices $(w,v)\in E^0\times E^0$ for the graph dual $E_\QQ$ to $X_\QQ$ is the dimension of the Hilbert space $L^2( r^{-1}(v)\cap s^{-1}(w), \lambda_w)$:
$$
m_{w,v}^\QQ=\dim (L^2( r^{-1}(w)\cap s^{-1}(v), \lambda_v)).
$$
\end{lem}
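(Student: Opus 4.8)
The plan is to compute directly the representation whose spectral multiplicities define the dual graph, and to identify its $\rho_w$-isotypic components with the fiber $L^2$-spaces. First I would use that $A=C_0(E^0)$ is commutative, so that $\SA=E^0$, a point $v\in E^0$ corresponding to the one-dimensional representation $\rho_v:=\mathrm{ev}_v$ on $H_{\rho_v}=\C$. Fixing such a $v$, the representation relevant to the multiplicities $m^{X_\QQ}_{[\pi],[\rho_v]}$ is $\sigma:=X_\QQ\dashind(\rho_v)\circ\phi_{X_\QQ}$ on $H_\sigma=X_\QQ\otimes_{\rho_v}\C$. Since the inner product on $X_\QQ\otimes_{\rho_v}\C$ is $\langle f\otimes 1,g\otimes 1\rangle=\langle f,g\rangle_A(v)=\int_{s^{-1}(v)}\overline{f}g\,d\lambda_v$, and since $C_c(E^1)$ is dense in $L^2(E^1,\lambda_v)$ while $\lambda_v$ is concentrated on $s^{-1}(v)$ by (Q1), the assignment $f\otimes 1\mapsto f$ extends to a unitary $H_\sigma\cong L^2(s^{-1}(v),\lambda_v)$. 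Under this identification one has $\sigma(a)f=a\cdot f=(a\circ r)f$, so $\sigma(a)$ is multiplication by $a\circ r$ on $L^2(s^{-1}(v),\lambda_v)$.

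Next I would decompose this multiplication representation into isotypic pieces. For $w\in E^0$ set $H_w:=\{\xi\in L^2(s^{-1}(v),\lambda_v):\sigma(a)\xi=a(w)\xi\text{ for all }a\in A\}$, the $\rho_w$-isotypic subspace, and I claim $H_w=L^2(r^{-1}(w)\cap s^{-1}(v),\lambda_v)$, regarded as the functions vanishing $\lambda_v$-a.e.\ off the fiber $r^{-1}(w)\cap s^{-1}(v)$. The inclusion $\supseteq$ is immediate, as $a\circ r$ takes the constant value $a(w)$ on $r^{-1}(w)$. For $\subseteq$, given $\xi\in H_w$, consider the finite Borel measure $\nu:=(r|_{s^{-1}(v)})_*(|\xi|^2\lambda_v)$ on $E^0$, which is inner regular by compact sets as a pushforward of a finite Radon measure; it satisfies $\int_{E^0}|a(u)-a(w)|^2\,d\nu(u)=\|(a\circ r-a(w))\xi\|^2=0$ for every $a\in A$. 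Since $C_0(E^0)$ separates points, each $u\neq w$ admits, by Urysohn, a function $a\in A$ with $a(w)=0$ and $a(u)>0$, whence $u$ has a neighbourhood of $\nu$-measure zero; covering any compact subset of the open set $E^0\setminus\{w\}$ by finitely many such neighbourhoods and using inner regularity forces $\nu(E^0\setminus\{w\})=0$. Thus $|\xi|^2\lambda_v$ is concentrated on $r^{-1}(w)\cap s^{-1}(v)$, proving the claim. Consequently $\sigma$ restricted to $H_w$ equals $a\mapsto a(w)\cdot\mathrm{Id}$, i.e.\ a multiple of $\rho_w$ of multiplicity $\dim H_w$.

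Finally I would translate this into the graph multiplicity. By the construction preceding Definition~\ref{def:dual_graph}, $m_{w,v}^\QQ$ is the multiplicity with which the irreducible $\rho_w$ occurs as a subrepresentation of $\sigma$; since $H_w$ is exactly its $\rho_w$-isotypic part and carries $\rho_w$ with multiplicity $\dim H_w$, we obtain $m_{w,v}^\QQ=\dim H_w=\dim L^2(r^{-1}(w)\cap s^{-1}(v),\lambda_v)$, as asserted. I expect the only genuine obstacle to be the inclusion $H_w\subseteq L^2(r^{-1}(w)\cap s^{-1}(v),\lambda_v)$: one must exclude contributions to the $\rho_w$-isotypic space coming from points other than $w$, which is precisely where the separation-of-points and inner-regularity argument for $\nu$ enters; the nonatomic part of $r_*\lambda_v$ contributes only to the residual subspace $H_0$ of Proposition~\ref{space_decomposition_according_to_graph} and hence to no $H_w$.
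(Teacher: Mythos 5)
Your proposal is correct and takes essentially the same route as the paper's proof: both identify $X_\QQ\dashind(v)\circ\phi_{X_\QQ}$ with the representation of $C_0(E^0)$ by multiplication by $a\circ r$ on $L^2(s^{-1}(v),\lambda_v)$ and then read off the multiplicity of the character $w$ as the dimension of the fiber subspace $L^2(r^{-1}(w)\cap s^{-1}(v),\lambda_v)$. The only difference is one of detail: the paper obtains the unitary $X_\QQ\otimes_v\C\cong L^2_{\lambda_v}(s^{-1}(v))$ by citing a lemma of Buss--Holkar--Meyer and treats the isotypic identification as ``plain,'' whereas you verify both directly (the latter via the pushforward-measure and separation-of-points argument), which is a legitimate filling-in of the same argument.
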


\begin{proof}
We identify points of $E^0$ with irreducible representations of $C_0(E^0)$ by putting $v(a):=a(v)$ for $v\in E^0$, $a\in C_0(E^0)$. Let $v\in E^0$ and consider a representation $\pi_v:C_0(E^0)\to \B(L^2_{\lambda_v}(s^{-1}(v)))$ defined by
$$
(\pi_v(a)f)(e)=a(r(e))f(e), \qquad a \in C_0(E^0), \,\, f\in L^2_{\lambda_v}(s^{-1}(v)). 
$$ 
Then the formula $ \big(U( x\otimes_{v}\lambda )\big)(e):=\lambda x(e)$, $e\in s^{-1}(v),\,\, x\in X_\QQ,\,\,\lambda\in \C, $ defines a unitary establishing the equivalence $X_\QQ\dashind (v) \cong \pi_v$, see \cite[Lemma 2.3]{BHM}. Plainly, we have $w\leq \pi_v$ if and only if $L^2( r^{-1}(v)\cap s^{-1}(w), \lambda_w)$ is a non-zero subspace of $L^2_{\lambda_v}(s^{-1}(v))$, and the multiplicity of the subrepresentation $w$ of $\pi_v$ is equal to the dimension of $L^2( r^{-1}(v)\cap s^{-1}(w), \lambda_w)$. This implies the assertion.
\end{proof}

The above lemma implies that if $E=(E^0,E^1,r,s)$ is a topological graph in the sense of Katsura, equipped with counting measures, then the graph dual $E_\QQ$ to $X_\QQ$ is equivalent to $E$. For general topological correspondences the graph $E_\QQ$ is equivalent to a proper subgraph of $E$.

\begin{ex}\label{ex:topological correspondences without multiplicities} 
Let $\QQ=(E^0,E^1,r,s,\lambda)$ be a topological correspondence such that the graph $E=(E^0,E^1,r,s)$ has no multiple edges. It follows from Lemma~\ref{lemma:dual_graph_to_topological_quiver} that the graph $E_\QQ$ dual to $X_\QQ$ is a subgraph of $E$ where $E_\QQ^1$ consists of edges $e\in E^1$ with $\lambda_{s(e)}(\{e\}) >0$. Thus for instance if we put $E^0=[0,1]$, $E^1=[0,1]\times [0,1]$, $s(x,y)=x$, $r(x,y)= y$ and let $\lambda_x$ be the Lebesgue measure on $[0,1]$, for $x,y \in [0,1]$. Then $(E^0,E^1,r,s, \{\lambda_{x}\}_{x\in E^0})$ is a topological quiver and the graph $E_\QQ$ has no edges.
\end{ex}

Even though, as we have seen above, the graph $E_\QQ$ might have much fewer edges than $E$, this difference does not affect topological freeness (at least for topological quivers).

\begin{prop}\label{prop:topological freeness for quivers}
Let $\QQ=(E^0,E^1,r,s, \{\lambda_{v}\}_{v\in E^0})$ be a topological quiver. The graph $E=(E^0,E^1,r,s)$ is topologically free if and only if the graph $E_\QQ$ dual to $X_\QQ$ is topologically free on $\widehat{J_X}$.
\end{prop}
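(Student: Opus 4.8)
The plan is to match, on the regular part $\widehat{J_X}\subseteq E^0=\SA$, the base points of cycles without entrances in the topological graph $E$ with those in the dual graph $E_\QQ$, and then read off both versions of topological freeness directly from Definition~\ref{top_free_graph}. First I would record two structural facts. By Lemma~\ref{lemma:dual_graph_to_topological_quiver} a pair $(w,v)$ spans an edge of $E_\QQ$ only if $r^{-1}(w)\cap s^{-1}(v)\neq\emptyset$, so on the level of connected pairs $E_\QQ$ is a subgraph of $E$; moreover an edge $e$ of $E$ contributes to $E_\QQ$ precisely when it carries positive $\lambda_{s(e)}$-mass. Since $\QQ$ is a topological quiver we have $\supp\lambda_v=s^{-1}(v)$ for all $v$, whence $s\colon E^1\to E^0$ is open and every nonempty relatively open subset of a fibre $s^{-1}(v)$ has positive $\lambda_v$-measure. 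The second fact, which I regard as the crux, is a matching statement at regular vertices: if $w\in\widehat{J(X)}$ then $r^{-1}(w)$ is finite and every $e\in r^{-1}(w)$ is a $\lambda_{s(e)}$-atom; combined with Lemma~\ref{lemma:dual_graph_to_topological_quiver} this gives that the number of $E_\QQ$-edges ending at $w$ (counted with multiplicity) equals $|r^{-1}(w)|$. In particular, for $w\in\widehat{J(X)}$, the vertex $w$ has a unique incoming edge in $E$ if and only if it has a unique incoming edge in $E_\QQ$.

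Next I would prove that $E$ fails to be topologically free if and only if $E_\QQ$ fails to be topologically free on $\widehat{J_X}$. For the forward implication, suppose there are $n>0$ and a nonempty open $U\subseteq E^0$ consisting of base points of length-$n$ cycles without entrances in $E$. Along such cycles each visited vertex $w$ satisfies $r^{-1}(w)=\{e_w\}$, so the predecessor map $T(w):=s(r^{-1}(w))$ is single valued on the set $B$ of these base points, and $T^n=\mathrm{id}$ there; hence $T$ is injective on the forward-invariant set $U^*:=\bigcup_{k=0}^{n-1}T^k(U)$. Because $s$ is open and $r$ continuous, each $T^k(U)=s(r^{-1}(T^{k-1}(U)))$ is open, so $U^*$ is open. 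The key computation is that for $w\in U^*$ the set $r^{-1}(U^*)\cap s^{-1}(T(w))$ is a nonempty relatively open subset of the fibre $s^{-1}(T(w))$ which, by injectivity of $T$, reduces to the single edge $\{e_w\}$; full support then forces $\lambda_{T(w)}(\{e_w\})>0$, i.e.\ $e_w$ is an atom. Thus over $U^*$ every cycle edge is an atom and $r^{-1}(w)$ is a singleton, so $U^*\subseteq\widehat{J(X)}$ (finite atomic incoming structure) and $U^*\subseteq\widehat{(\ker\phi_X)^\bot}$ (it is open and consists of ranges), giving $U^*\subseteq\widehat{J_X}$. Finally the matching fact shows these cycles have no entrances in $E_\QQ$ either, so $U^*$ is a nonempty open subset of $\widehat{J_X}$ witnessing the failure of topological freeness of $E_\QQ$ on $\widehat{J_X}$.

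For the converse I would start from $n>0$ and a nonempty open $V\subseteq\widehat{J_X}$ consisting of base points of cycles without entrances in ${}_{\widehat{J_X}}(E_\QQ)_{\widehat{J_X}}$ and staying in $\widehat{J_X}$. Every such base point is a regular vertex, so the matching fact from the first paragraph applies verbatim: at each visited vertex a unique incoming $E_\QQ$-edge forces a unique incoming $E$-edge, and since all visited vertices lie in $\widehat{J_X}$ the given cycles are cycles without entrances in $E$ as well. As $\widehat{J_X}$ is open in $E^0$, the set $V$ is open in $E^0$ and consists of base points of length-$n$ cycles without entrances in $E$; hence $E$ is not topologically free. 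Combining the two implications yields the asserted equivalence.

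The hard part will be the measure-theoretic mismatch between $E$ and $E_\QQ$: as Example~\ref{ex:topological correspondences without multiplicities} shows, a cycle of $E$ may be entirely invisible in $E_\QQ$ (non-atomic edges), and, a priori, a cycle without entrances in $E_\QQ$ might acquire ``invisible'' entrances in $E$. Both difficulties are absorbed by the two devices above: the injective-predecessor argument, which uses openness of $s$ and full support to upgrade an open set of no-entrance cycles of $E$ to atomic ones, and the matching statement at regular vertices, which I expect to follow from the compactness description of $J(X)$ for topological quivers (every $a\in J(X)$ acting as a generalized compact operator forces the incoming edge structure over $\widehat{J(X)}$ to be finite and atomic). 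Verifying this matching statement carefully is where most of the work lies.
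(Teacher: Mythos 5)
Your overall route is the same as the paper's: show that over the regular part $\widehat{J(X)}$ the graphs $E$ and $E_\QQ$ have the same (finite, atomic) incoming-edge structure, use this matching to transfer no-entrance cycles from $E_\QQ$ to $E$, and in the hard direction saturate the open set of base points, use openness of $s$ and periodicity of the predecessor map, and land inside $\widehat{J_X}$. Your converse direction and your atoms computation are sound; in particular the observation that $r^{-1}(U^*)\cap s^{-1}(T(w))=\{e_w\}$ is relatively open in the fibre, hence has positive mass by the full-support (quiver) hypothesis, is exactly the right use of that hypothesis, and is a slightly more self-contained derivation than the paper's (which gets atomicity from the graph equivalence only after it knows the set lies in $\widehat{J(X)}$).

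The genuine gap is the step ``$U^*\subseteq\widehat{J(X)}$ (finite atomic incoming structure)''. Membership in $\widehat{J(X)}$ is neither defined nor characterized by each vertex having finitely many atomic incoming edges: by the description the paper quotes from \cite[Proposition 3.15]{mt}, $\widehat{J(X)}=E^0_{fin}$, i.e.\ $w$ must admit a neighborhood $U'$ such that $r^{-1}(\overline{U'})$ is compact and $s|_{r^{-1}(U')}$ is a local homeomorphism, and moreover $\widehat{J_X}=E^0_{fin}\cap\Int(\overline{r(E^1)})$. Pointwise unique atomic incoming edges on an open set do not by themselves give the required compactness (properness of $r$); one needs the cycle structure: $s\colon r^{-1}(U^*)\to U^*$ is a continuous open injection, hence a homeomorphism onto $U^*$, the successor map $h=r\circ s^{-1}$ is a continuous bijection with $h^n=\mathrm{id}$, hence a homeomorphism, so $r|_{r^{-1}(U^*)}=h\circ s$ is a homeomorphism onto $U^*$, and therefore $r^{-1}(\overline{U'})$ is compact whenever $\overline{U'}\subseteq U^*$ is compact. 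This verification of $U^*\subseteq E^0_{fin}$ is precisely the paper's closing argument and is absent from your proposal. Relatedly, you mislocate the difficulty: your paragraph-one ``matching statement'' (regular $\Rightarrow$ atomic incoming, which does follow from $E^0_{fin}$ and is what the easy direction needs) is an implication in the wrong direction to supply this step, so no amount of care in proving it will close the gap. Two smaller inaccuracies: for $w\in\widehat{J(X)}$ only the sets $r^{-1}(w)\cap s^{-1}(v)$ need be finite, not $r^{-1}(w)$ itself; and identifying $E_\QQ$ with the ``atomic subgraph'' of $E$ is false in general, since a non-atomic part of $\lambda_v$ on $r^{-1}(w)\cap s^{-1}(v)$ contributes infinitely many $E_\QQ$-edges from $v$ to $w$ (this is harmless here only because over $\widehat{J(X)}$ the relevant measures are purely atomic).
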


\begin{proof} By \cite[Proposition 3.15]{mt}, we have $E^0_{fin}=\widehat{J(X_\QQ)}$ where 
\begin{align*}
E^0_{fin}:=\{v \in E^0:\,\,\, & \textrm{there exists a neighborhood } U \textrm{ of }v\textrm{ such that } \\
& r^{-1}(\overline{U}) \textrm{ is compact and }s|_{r^{-1}(U)} \textrm{ is a local homeomorphism} \}.
\end{align*}
Thus, for every $w,v\in E^0$ with $w\in \widehat{J(X)}$ the measure $\lambda_v$ restricted to the set $r^{-1}(w)$ is discrete (is equivalent to a counting measure on $r^{-1}(w)\cap \supp \lambda_v$). In view of Lemma~\ref{lemma:dual_graph_to_topological_quiver}, this implies that the number of edges from $v$ to $w$ in graphs $E_\QQ$ and $E$ is the same. In other words, the restricted graphs ${_{\widehat{J(X)}}}E_{\QQ}$ and ${_{\widehat{J(X)}}}E$ are equivalent. Moreover, we have $ \widehat{J_X}= E^0_{fin}\cap \Int(\overline{r(E^1)})$, cf. \cite[Proposition 3.15]{mt}.

Now, assume that $E_\QQ$ is not topologically free on $\widehat{J_X}$. Let $V\subseteq \widehat{J_X}$ be an open non-empty set consisting of base points of cycles in ${_{\widehat{J_X}}}(E_\QQ)_{\widehat{J_X}}$ of length $n$ without entrances in $E_\QQ$. Since ${_{\widehat{J(X)}}}E_\QQ$ and ${_{\widehat{J(X)}}}E$ are equivalent, this implies that $V$ consists of base points of cycles in $E$ without entrances in $E$. Hence $E$ is not topologically free.

Conversely, assume that $E$ is not topologically free. Let $V_0\subseteq E^0$ be an open non-empty set consisting of base points of cycles in $E^n$ without entrances in $E$. Defining inductively $V_k=s(r^{-1}(V_{k-1}))$ for $k=1,...,n-1$, we see that $V:=\bigcup_{k=0}^{n-1} V_k$ is an open set consisting of base points of cycles in $({_VE}_V)^n$ without entrances in $E$. Note that $s:s^{-1}(V)\to V$ is a homeomorphism (a continuous open bijection). It follows that the map $h:V\to V$ given by $h:=r \circ s^{-1}$ is continuous and bijective. Moreover, we have $h^n=id$ and in particular $h^{-1}=h^{n-1}$ is continuous. Hence $h$ is a homeomorphism. Since $r:r^{-1}(V)\to V$ is equal to the composition $h\circ s$ of homeomorphisms, this map is a homeomorphism. Thus we see that $V\subseteq \widehat{J_X}= E^0_{fin}\cap \Int(\overline{r(E^1)})$. Accordingly, the graph $E_\QQ$ is not topologically free on $\widehat{J_X}$.
\end{proof}

\begin{rem}
It seems very likely that the preceding proposition holds for every topological correspondence with $\overline{\supp \lambda}=E^{1}$. The proof would require description of the spectrum of $J(X)$, which in case of topological quivers is provided by \cite[Proposition 3.15]{mt}.
\end{rem}

Suppose that $\QQ=(E^0,E^1,r,s, \{\lambda_{v}\}_{v\in E^0})$ is a topological quiver. It is shown in \cite[Theorem 6.16 and Corollary 7.16]{mt} that topological freeness of $E=(E^0,E^1,r,s)$ implies the uniqueness property of every pair $(C_0(V),X)$ with $V\subseteq \widehat{J(X)}$. We extend this result to topological correspondences. In addition we get that topological freeness of $E$ is not only sufficient but also necessary for the uniqueness property for $(X,J_X)$. For relative Cuntz--Pimsner algebras we improve upon results of \cite{mt} by showing that a weaker form of topological freeness of $E$ is sufficient (in fact equivalent) to the uniqueness property for $\OO(C_0(V),X)$ with $V\subseteq \widehat{J_X}$. The following result generalizes also Katsura's uniqueness theorems \cite[Theorem 5.12]{ka1}, \cite[Theorem 6.14]{ka4}.

\begin{thm}\label{thm:Cuntz-Krieger uniqueness for quivers} 
Let $\QQ=(E^0,E^1,r,s, \{\lambda_{v}\}_{v\in E^0})$ be a topological correspondence and let $V\subseteq \widehat{J_{X_\QQ}}$ be an open set. Let $E_\QQ$ be a subgraph of $E$ with multiplicities given in Lemma~\ref{lemma:dual_graph_to_topological_quiver}. Every injective representation of $X_\QQ$ satisfying \eqref{eq:strict_J__covariance} integrates to a faithful representation of $\OO(C_0(V),X)$ if and only if $E_\QQ$ is topologically free on $V$.
\end{thm}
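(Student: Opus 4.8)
The plan is to recognize the displayed property as the uniqueness property of the pair $(X_\QQ, C_0(V))$ and then to feed it into the machinery of Sections~\ref{sec:The uniqueness theorem} and~\ref{sec:Conditions necessary for uniqueness property}. Indeed, by Definition~\ref{defn:uniqueness_property} the assertion that every injective representation of $X_\QQ$ satisfying \eqref{eq:strict_J__covariance} integrates to a faithful representation of $\OO(C_0(V),X_\QQ)$ is exactly the assertion that $(X_\QQ, J)$ has the uniqueness property, where $J:=C_0(V)$; here the integrated map $\psi_0\rtimes_J\psi_1$ is automatically surjective, so faithfulness amounts to its being an isomorphism. Since $V$ is an open subset of $\widehat{J_{X_\QQ}}$, the ideal $J=C_0(V)$ is contained in $J_{X_\QQ}$, so we are within the scope of the theory.

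The crucial structural feature is that $A=C_0(E^0)$ is commutative. Hence every ideal, and in particular the image $X_\QQ(J)$, is commutative and therefore liminal. This is precisely the hypothesis under which Theorem~\ref{thm:acyclicity} gives a two-sided equivalence, so I would invoke it to conclude that $(X_\QQ, J)$ has the uniqueness property if and only if $X_\QQ$ is topologically free on $J$. By Definition~\ref{defn:topological_free_correspondence} the latter means that the dual graph $E_{KX_\QQ K}$ of the restriction $KX_\QQ K$ is topologically free on $\widehat{J}=V$, where $K:=J+X_\QQ(J)$ (an ideal that is again commutative, hence liminal).

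It then remains to see that this coincides with topological freeness of $E_\QQ$ on $V$. Up to equivalence $E_\QQ$ is the graph dual to $X_\QQ$ (Lemma~\ref{lemma:dual_graph_to_topological_quiver}), and topological freeness is an invariant of the equivalence class. By the Remark following Definition~\ref{def:dual_graph}, the dual graph of the restriction is simply the restriction of the dual graph, $E_{KX_\QQ K}={}_{\widehat{K}}(E_\QQ)_{\widehat{K}}$. I would then compute $\widehat{K}=\widehat{J}\cup\widehat{X_\QQ(J)}=V\cup\widehat{X_\QQ(J)}$ and, using that $X_\QQ(J)$ is liminal and $J\subseteq J(X_\QQ)$, apply Lemma~\ref{range of duals to correspondences}(ii) to get $\widehat{X_\QQ(J)}=\widehat{X_\QQ}^{-1}(V)=s(r^{-1}(V))=E_\QQ^{-1}(V)$ (recall that the multivalued map attached to $E_\QQ$ is $r\circ s^{-1}=\widehat{X_\QQ}$). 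Thus $\widehat{K}=V\cup E_\QQ^{-1}(V)$, and Lemma~\ref{lem:stupid_lemma} says exactly that $E_\QQ$ is topologically free on $V$ if and only if its restriction to $V\cup E_\QQ^{-1}(V)=\widehat{K}$, namely $E_{KX_\QQ K}$, is topologically free on $V$. Chaining the equivalences finishes the argument.

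I do not expect a serious obstacle: the genuine content is packaged in Theorem~\ref{thm:acyclicity}, whose liminal hypothesis is free of charge because $A$ is commutative, and everything else is the dictionary between ideals of $C_0(E^0)$ and open subsets of $E^0=\SA$ together with the straightforward spectral identities $\widehat{I_1+I_2}=\widehat{I_1}\cup\widehat{I_2}$ and the matching of $\widehat{X_\QQ}^{-1}(V)$ with the graph-theoretic preimage $s(r^{-1}(V))$. The only point requiring a little care is keeping track of which objects live over which ideal, so that the identity $E_{KX_\QQ K}={}_{\widehat{K}}(E_\QQ)_{\widehat{K}}$ and the computation of $\widehat{K}$ are applied consistently. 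If one preferred not to cite Theorem~\ref{thm:acyclicity}, the same conclusion splits into sufficiency, via Theorem~\ref{uniqueness theorem} in case A2) together with Proposition~\ref{prop:strong_vs_normal}, and necessity, via Corollary~\ref{cor_1} and Proposition~\ref{tf_liminal}.
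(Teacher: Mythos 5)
Your proposal is correct and follows exactly the paper's route: the paper's entire proof is ``Apply Theorem~\ref{thm:acyclicity}'', relying implicitly on the facts you spell out (commutativity of $C_0(E^0)$ gives liminality of $X_\QQ(J)$, and the identification $E_{KX_\QQ K}={}_{\widehat{K}}(E_\QQ)_{\widehat{K}}$ with $\widehat{K}=V\cup E_\QQ^{-1}(V)$ together with Lemma~\ref{lem:stupid_lemma} matches topological freeness of $X_\QQ$ on $J$ with topological freeness of $E_\QQ$ on $V$). Your write-up just makes these routine verifications explicit, which is a faithful expansion of the paper's one-line argument.
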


\begin{proof}
Apply Theorem~\ref{thm:acyclicity}.
\end{proof}

\begin{cor}
If $\QQ=(E^0,E^1,r,s, \{\lambda_{v}\}_{v\in E^0})$ is a topological quiver, then the graph $E=(E^0,E^1,r,s)$ is topologically free if and only if every injective covariant representation of $X_\QQ$ integrates to a faithful representation of the quiver $C^*$-algebra $\OO_{X_\QQ}$.
\end{cor}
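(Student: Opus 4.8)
The plan is to recognize that the condition on the right-hand side is exactly the uniqueness property for the pair $(X_\QQ, J_{X_\QQ})$, and then to chain two results already established above. First I would note that, since we work with the ideal $J = J_{X_\QQ}$, condition \eqref{eq:strict_J__covariance} is automatically satisfied by every injective covariant representation of $X_\QQ$ (as recorded just after \eqref{eq:strict_J__covariance}). Hence the assertion ``every injective covariant representation of $X_\QQ$ integrates to a faithful representation of $\OO_{X_\QQ}$'' is precisely the statement that the pair $(X_\QQ, J_{X_\QQ})$ has the uniqueness property in the sense of Definition~\ref{defn:uniqueness_property}.

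Next I would apply Theorem~\ref{thm:Cuntz-Krieger uniqueness for quivers} with the open set $V = \widehat{J_{X_\QQ}}$. Since $A = C_0(E^0)$ is commutative, the Katsura ideal $J_{X_\QQ}$ corresponds to the open subset $\widehat{J_{X_\QQ}}\subseteq E^0$, so $C_0(\widehat{J_{X_\QQ}}) = J_{X_\QQ}$ and therefore $\OO(C_0(V), X_\QQ) = \OO(J_{X_\QQ}, X_\QQ) = \OO_{X_\QQ}$. Theorem~\ref{thm:Cuntz-Krieger uniqueness for quivers} then gives that $(X_\QQ, J_{X_\QQ})$ has the uniqueness property if and only if the dual graph $E_\QQ$ is topologically free on $\widehat{J_{X_\QQ}}$.

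Finally I would invoke Proposition~\ref{prop:topological freeness for quivers}, which applies precisely because $\QQ$ is a topological quiver, to identify topological freeness of $E_\QQ$ on $\widehat{J_{X_\QQ}}$ with topological freeness of the underlying topological graph $E = (E^0, E^1, r, s)$. Chaining these three equivalences yields the claim.

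There is no genuine obstacle here, as all of the substantial work has been carried out in Theorem~\ref{thm:Cuntz-Krieger uniqueness for quivers} and Proposition~\ref{prop:topological freeness for quivers}. The only points that require care are the identification $\OO(C_0(\widehat{J_{X_\QQ}}), X_\QQ) = \OO_{X_\QQ}$, and the observation that for $J = J_{X_\QQ}$ the covariance requirement already forces \eqref{eq:strict_J__covariance}, so that the corollary's hypothesis matches exactly the uniqueness property appearing in Theorem~\ref{thm:Cuntz-Krieger uniqueness for quivers}.
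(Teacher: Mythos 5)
Your proposal is correct and is essentially the paper's own proof, which simply says "Combine Theorem~\ref{thm:Cuntz-Krieger uniqueness for quivers} and Proposition~\ref{prop:topological freeness for quivers}." The additional care you take — identifying $\OO(C_0(\widehat{J_{X_\QQ}}),X_\QQ)$ with $\OO_{X_\QQ}$ and noting that for $J=J_{X_\QQ}$ injectivity plus covariance already gives \eqref{eq:strict_J__covariance} — correctly fills in the details the paper leaves implicit.
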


\begin{proof}
Combine Theorem~\ref{thm:Cuntz-Krieger uniqueness for quivers} and Proposition~\ref{prop:topological freeness for quivers}.
\end{proof}

Generalization of the theory of topological quivers to topological correspondences is important for instance in the theory of crossed products by (completely) positive maps:

\begin{ex}[Crossed products by positive maps on $C_0(V)$]\label{ex:positive maps}
Let $P:C_0(V)\to C_0(V)$ be a positive map where $V$ is a locally compact Hausdorff space. When $V$ is compact and $P(1)=1$, such maps are called Markov operators in \cite{imv}. As in \cite[Lemma 3.30]{kwa-exel}, we associate to $V$ a topological correspondence where the space of edges $E^1$ is the closure of a relation $R\subseteq V\times V$ defined by 
$$
(v,w)\in R\, \, \, \stackrel{def}{\Longleftrightarrow} \, \, \, \left(\forall_{a\in C_0(V)_+}\,\, a(w)>0 \, \Longrightarrow\, P(a)(v)> 0\right).
$$
Source and range maps are given by $s(v,w):=v$, $r(v,w):=w$. The system of measures $\lambda=\{\lambda_{v}\}_{v\in E^0}$ is determined by
$$
P(a)(v)=\int_{s^{-1}(w)} a(w) d\lambda_v (w), \qquad v\in V, a\in A.
$$
Then $\QQ=(V,\overline{R},r,s, \{\lambda_{v}\}_{v\in V})$ is a topological correspondences and $R=\supp \lambda$. By \cite[Proposition 3.32 and Theorem 3.13]{kwa-exel}, the crossed product $C^*(C_0(V), P)$ constructed in \cite[Definition 3.5]{kwa-exel} is naturally isomorphic to the quiver $C^*$-algebra $\OO_{X_\QQ}$. One easily finds examples, cf. \cite[Example 3.5]{kwa-exel}, of Markov operators for which the topological correspondence $\QQ$ is not the topological quiver in the sense of \cite{mt}, i.e. $\supp\lambda_v= s^{-1}(v)$ does not hold in general (equivalently $R$ is not closed in $V\times V$). The graph $E=(V,\overline{R},r,s)$ has no multiple edges. Hence the graph $E_\QQ$ dual to $X_\QQ$ arises by taking all atoms of measures $\lambda_v$, $v\in V$, see Example~\ref{ex:topological correspondences without multiplicities}. When $P$ is multiplicative then $s$ is injective and the associated algebras are crossed products by endomorphisms, see Example~\ref{endomorphisms of C(V)-algebras} below. When $P$ is a transfer operator then $r$ is injective and the associated algebras are Exel's crossed products, see the next example.
\end{ex}

\begin{ex}[Exel's crossed products]\label{Exel's crossed products}
Suppose that $\LL:C_0(V)\to C_0(V)$ is a \emph{transfer operator}, that is $\LL$ is positive and there exists an endomorphism $\alpha:C_0(V)\to C_0(V)$ such that $\LL(\alpha(a)b)=a\LL(b)$ for all $a,b \in C_0(V)$. The Exel's crossed product $C_0(V)\rtimes_{\alpha,\LL}\N$ is naturally isomorphic to the crossed product $C^*(C_0(V), \LL)$ by $\LL$, see \cite[Theorem 4.7]{kwa-exel}. Here we consider Exel's crossed products as defined in \cite{er} (the crossed product originally defined in \cite{exel2} coincides with the modified one in a number of natural cases, see \cite{kwa-exel}). Hence $C_0(V)\rtimes_{\alpha,\LL}\N\cong \OO_{X_\QQ}$ where $\QQ$ is the topological correspondence associated to $\LL$ as in Example~\ref{ex:positive maps}. Endomorphism $\alpha$ is given by a composition with a continuous proper map $\varphi:\Delta \to V$ defined on an open set $\Delta\subseteq V$. For each $v \in V$ we have $\supp\lambda_v\subseteq \varphi^{-1}(v)$. Hence the graph $E_\QQ$ can be identified with the graph associated to the map $\varphi$ restricted to the set $\Lambda:=\{ x\in \Delta: \lambda_{\varphi(x)}(\{x\})>0 \}$. By Example~\ref{partial map example}, we conclude that Exel's crossed product $C_0(V)\rtimes_{\alpha,\LL}\N$ has the uniqueness property if and only if $ \varphi:\Lambda \to V\text{ is topologically aperiodic on } V$. This generalizes uniqueness theorems \cite[Theorem 9.1]{exel_vershik}, \cite[Theorem 6]{CS}, \cite[Theorem 6.1]{brv} proved in the case $\varphi$ is a local homeomorphism and $\Lambda=\Delta=V$.
\end{ex}

\subsection{Crossed products by endomorphisms.}\label{sub:Crossed products by endomorphisms} 

We start with recalling a general definition of crossed products by endomorphisms from \cite{kwa-endo}. Such $C^*$-algebras are special cases of crossed products by completely positive maps, see \cite[Proposition 3.26]{kwa-exel}.

Let $\alpha:A\to A$ be an endomorphism of a $C^*$-algebra $A$. A \emph{representation} of an endomorphism $\alpha$ in $C^*$-algebra $B$ is a pair $(\pi,U)$ where $\pi:A\to B$ is non-degenerate homomorphism of $A$ and $U\in M_\ell(B)$ is a left multiplier of $B$ such that
$$
U\pi(a)U^* =\pi(\alpha(a)),\qquad \textrm{ for all }a \in A.
$$
Then $U$ is necessarily a partial isometry and $U^*U\in \pi(A)'$. We call $C^*(\pi, U):=C^*(\pi(A)\cup U\pi(A))\subseteq B$ the \emph{$C^*$-algebra generated by $(\pi,U)$}. Then $U\in M_\ell(C^*(\pi, U))$ is a left multiplier of $C^*(\pi, U)$. If $\alpha$ is extendible, i.e. it extends to a strictly continuous endomorphism of the multiplier algebra $M(A)$, then $U\in M(C^*(\pi, U))$, see \cite[Remark 4.1]{kwa-rever}. Let $J$ be an ideal in $(\ker\alpha)^\bot$. We say that a representation $(\pi,U)$ of $\alpha$ is \emph{$J$-covariant} if $\{a: \pi(a)U^*U=\pi(a)\}\subseteq J$. We say that $(\pi,U)$ is \emph{strictly $J$-covariant} if $\{a: \pi(a)U^*U=\pi(a)\}= J$. The corresponding \emph{relative crossed product} can be defined, see \cite[Definition 2.7]{kwa-endo}, as a $C^*$-algebra $C^*(A,\alpha;J)$ generated by a universal $J$-covariant representation $(\iota_A, u)$. The representation $(\iota_A, u)$ is necessarily strictly $J$-covariant and injective, in the sense that $\iota_A$ is injective. By definition every $J$-covariant representation integrates to a representation of $C^*(A,\alpha;J)$. In the case when $J=(\ker\alpha)^\bot$ we write $ C^*( A,\alpha):=C^*( A,\alpha;(\ker\alpha)^\bot) $ and call it the (unrelative) \emph{crossed product of $A$ by $\alpha$}.

We associate to $\alpha$ a $C^*$-correspondence $X_\alpha$ defined by the formulas: 
$$
X_\alpha:=\alpha(A)A, \,\, \,\, \langle x, y\rangle_A :=x^*y, \,\,\,\, a\cdot x\cdot b:=\alpha(a)xb, \,\,\,\, \,\,x,y\in \alpha(A)A,\,\, a,b \in A.
$$ 
Then $J_{X_\alpha}=(\ker \alpha)^\bot$ and there is a bijective correspondence between $J$-covariant representations of $\alpha$ and of $X_\alpha$, see \cite[Proposition A.8]{kwa-endo}. Thus, for every ideal $J\subseteq (\ker \alpha)^\bot$, we have
$$
C^*( A,\alpha;J)\cong \OO(J,X_\alpha),\qquad C^*( A,\alpha)\cong \OO_{X_\alpha}.
$$
Applying case A1) in our uniqueness theorem and results of \cite{KM} we get the following theorem. The second part extends the known fact that topological freeness of an automorphism on a separable $C^*$-algebra is equivalent to the uniqueness property, cf. \cite[Theorem 6.6]{OlPe}, see also \cite[Corollary 1]{Ortega_Pardo}, \cite[Theorem 4.2]{kwa-rever}.

\begin{thm}\label{thm:uniqueness for crossed products1}
Let $J$ be an ideal in $(\ker \alpha)^\bot$. Consider the following conditions
\begin{enumerate}
\item the multivalued map $\widehat{\alpha}$ is weakly topologically aperiodic on $\widehat{J}$;
\item every injective strictly $J$-covariant representation $(\pi,U)$ of $\alpha$ integrates to an isomorphism $ C^*(\pi,U)\cong C^*( A,\alpha;J)$.
\end{enumerate}
Then (1)$\Rightarrow$(2). If $\alpha$ has a complemented kernel and a hereditary range, $A$ contains an essential ideal which is either separable or of Type I, and $J=(\ker\alpha)^\bot$, then (1)$\Leftrightarrow$(2).
\end{thm}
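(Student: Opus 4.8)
The plan is to transport everything through the identification $C^*(A,\alpha;J)\cong\OO(J,X_\alpha)$ and the bijective correspondence between $J$-covariant representations of $\alpha$ and of $X_\alpha$ recorded in \cite[Proposition A.8]{kwa-endo}. The first step I would carry out is to observe that the multivalued map $\widehat{\alpha}$ dual to the endomorphism coincides with the multivalued map $\widehat{X_\alpha}$ dual to the correspondence $X_\alpha=\overline{\alpha(A)A}$. Indeed, for $\rho\in\Irr(A)$ the assignment $x\otimes_\rho h\mapsto \rho(x)h$ identifies $X_\alpha\otimes_\rho H_\rho$ with the essential subspace $\overline{\rho(\alpha(A))H_\rho}$ of $\rho\circ\alpha$, and under this identification $X_\alpha\dashind(\rho)\circ\phi_{X_\alpha}$ becomes $(\rho\circ\alpha)$ restricted to that subspace. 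Since every non-zero irreducible subrepresentation of $\rho\circ\alpha$ is supported on its essential subspace, we obtain $\widehat{\alpha}([\rho])=\widehat{X_\alpha}([\rho])$ for all $[\rho]\in\SA$. In particular, condition (1) is precisely the assertion that $\widehat{X_\alpha}$ is weakly topologically aperiodic on $\widehat{J}$.

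Granting this, the implication (1)$\Rightarrow$(2) is immediate: condition (1) is exactly hypothesis A1) of Theorem~\ref{uniqueness theorem} for the pair $(X_\alpha,J)$, so $(X_\alpha,J)$ has the uniqueness property. Unwinding the correspondence of representations (and using $J\subseteq(\ker\alpha)^\bot=J_{X_\alpha}$), an injective strictly $J$-covariant representation $(\pi,U)$ of $\alpha$ corresponds to an injective representation $\psi=(\psi_0,\psi_1)$ of $X_\alpha$ whose ideal of covariance is exactly $J$, i.e. one satisfying \eqref{eq:strict_J__covariance}. The uniqueness property then forces $\psi_0\rtimes_J\psi_1$ to be an isomorphism, which is precisely the statement that $C^*(\pi,U)\cong C^*(A,\alpha;J)$.

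For the converse under the additional hypotheses, the point is that $\alpha$ having a complemented kernel and a hereditary range guarantees that $X_\alpha$ is a Hilbert $A$-bimodule; consequently $\widehat{X_\alpha}=\widehat{\alpha}$ is a \emph{partial homeomorphism} of $\SA$ and $C^*(A,\alpha)=C^*(A,\alpha;(\ker\alpha)^\bot)\cong\OO_{X_\alpha}$ is the crossed product $A\rtimes_{X_\alpha}\Z$ of \cite{AEE}. With $J=(\ker\alpha)^\bot=J_{X_\alpha}$, condition (2) says exactly that $(X_\alpha,J_{X_\alpha})$ has the uniqueness property. Invoking the converse direction from \cite{KM} (applicable because $A$ contains an essential ideal which is separable or of Type I), the uniqueness property of $A\rtimes_{X_\alpha}\Z$ forces topological freeness of the partial homeomorphism $\widehat{X_\alpha}=\widehat{\alpha}$; by the dictionary of Example~\ref{partial map example} this is exactly weak topological aperiodicity of $\widehat{\alpha}$ on $\widehat{J}=\widehat{(\ker\alpha)^\bot}$, which is (1).

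The main obstacle I anticipate lies entirely in the converse. One must (i) verify carefully that the hypotheses on $\alpha$ genuinely render $X_\alpha$ a Hilbert bimodule, so that $\widehat{\alpha}$ is a \emph{partial} (rather than merely multivalued) homeomorphism and the theory of \cite{KM} is available, and (ii) match the exact form of topological freeness appearing in \cite{KM} with weak topological aperiodicity on $\widehat{(\ker\alpha)^\bot}$ through Example~\ref{partial map example}, checking in particular that the relevant open set $\SA\setminus\widehat{J}=\widehat{\ker\alpha}$ is the correct exceptional set. By contrast, the forward direction and the representation bookkeeping are routine once the identification $\widehat{\alpha}=\widehat{X_\alpha}$ is established.
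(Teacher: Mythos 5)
Your proposal is correct and follows essentially the same route as the paper: it first establishes $\widehat{\alpha}=\widehat{X_\alpha}$ (the paper via the homeomorphism dual to the isomorphism $\K(X_\alpha)\cong\alpha(A)A\alpha(A)$, you via the explicit unitary $x\otimes_\rho h\mapsto\rho(x)h$ --- the same identification), then deduces (1)$\Rightarrow$(2) from case A1) of Theorem~\ref{uniqueness theorem} together with the correspondence of covariant representations, and for the converse observes that a complemented kernel and hereditary range make $X_\alpha$ a Hilbert bimodule with $\widehat{\alpha}$ a partial homeomorphism, so that the converse result of \cite{KM} applies.
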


\begin{proof}
Identifying the spectra $\widehat{A\alpha(A)A}$ and $\widehat{\alpha(A)A\alpha(A)}$ with open subsets of $\widehat{A}$, we have $\widehat{A\alpha(A)A}=\widehat{\alpha(A)A\alpha(A)}$. In particular, composing $X_\alpha\dashind:\widehat{A\alpha(A)A}\to \widehat{\K(X_\alpha)}$ with the homeomorphism dual to the isomorphism $\K(E_\alpha)\ni \Theta_{x,y}\mapsto xy^* \in \alpha(A)A\alpha(A)$, see \cite[Lemma A.7]{kwa-endo}, we get an identity on $\widehat{A\alpha(A)A}$. This implies that the multivalued maps $\widehat{\alpha}$ and $\widehat{X_\alpha}$ coincide. Hence (1)$\Rightarrow$(2) by Theorem~\ref{uniqueness theorem}.

Assume that $J=(\ker\alpha)^\bot$ is complemented ideal in $A$ and $\alpha(A)=\alpha(A)A\alpha(A)$ is a hereditary subalgebra of $A$. Then $X_\alpha$ is a Hilbert bimodule and $\widehat{\alpha}=\widehat{X_\alpha}$ is a partial homeomorphism of $\widehat{A}$. If $A$ contains an essential ideal which is either separable or of Type I, then \cite[Theorem 8.1]{KM} implies that (1)$\Leftrightarrow$(2).
\end{proof}

Suppose now that $K$ is an ideal in $A$ and that $K$ is liminal. We can construct a graph on $\widehat{K}$ as follows. For any $[\rho]\in \widehat{K}$ to $[\pi]\in \widehat{K}$ we define the number $m_{[\pi],[\rho]}$ to be the multiplicity of subrepresentations of $\widetilde{\rho}\circ \alpha|_K$ equivalent to $\pi$, where $\widetilde{\rho}$ is the unique extension of $\rho$ from $K$ to $A$. We denote the corresponding graph by $E_\alpha^K=(\widehat{K}, E^1_\alpha, r,s)$.

\begin{defn}
Let $J$ be an ideal in $(\ker\alpha)^\bot$ and assume that $A\alpha(J)A$ is liminal. Then $K:=J+A\alpha(J)A$ is liminal ($J$ is liminal because it is isomorphic to the subalgebra $\alpha(J)$ of $A\alpha(J)A$). We say that $\alpha$ is \emph{topologically free on $J$} if the graph $E_\alpha^K$ constructed above is topologically free on $\widehat{J}$.
\end{defn}

\begin{defn}
We say that an endomorphism $\alpha:A\to A$ is \emph{inner} if there is an isometry $u\in M_\ell(A)\subseteq A{''}$ such that $\alpha(a)=uau^*$. Otherwise we say that $\alpha$ is \emph{outer}.
\end{defn}

\begin{lem}\label{lem:outerness}
An endomorphism $\alpha:A\to A$ is inner if and only if $X_\alpha$ is isomorphic to the trivial $C^*$-correspondence $A$.
\end{lem}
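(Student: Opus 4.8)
The plan is to prove both implications through a single explicit formula: the implementing isometry $u$ should induce the correspondence isomorphism $\Phi:A\to X_\alpha$ given by $\Phi(c)=uc$. Throughout recall that the trivial correspondence $A$ carries left action $a\cdot c=ac$, right action $c\cdot b=cb$ and inner product $\langle c,d\rangle_A=c^*d$, while $X_\alpha=\overline{\alpha(A)A}$ has left action $a\cdot x=\alpha(a)x$, right action $x\cdot b=xb$ and inner product $\langle x,y\rangle_A=x^*y$.

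First I would treat the ``only if'' direction. Assume $\alpha$ is inner, with $u\in M_\ell(A)\subseteq A''$ an isometry ($u^*u=1$) such that $\alpha(a)=uau^*$. Since $u$ is a left multiplier, $uc\in A$ for every $c\in A$, so $\Phi(c):=uc$ defines a linear map $A\to A$. The module axioms then follow directly from $u^*u=1$: right linearity is $\Phi(cb)=ucb=\Phi(c)b$; preservation of inner products is $\langle uc,ud\rangle_A=c^*u^*ud=c^*d$, which simultaneously shows that $\Phi$ is isometric (hence injective with closed range $uA$); and left equivariance is $\Phi(ac)=uac=uau^*uc=\alpha(a)(uc)$. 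Thus $\Phi$ is an injective correspondence morphism, and it only remains to check that it is onto $X_\alpha$.

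The surjectivity step is the only genuinely nontrivial point, and I expect this fullness computation to be the main obstacle. The claim is that $uA=\overline{\alpha(A)A}=\overline{uAu^*A}$. The key trick is that for $a,b\in A$ one has $au^*ub=a(u^*u)b=ab$, because $u^*u=1$ and---crucially---$ub\in A$, so that $ab=au^*(ub)\in Au^*A$; consequently $A=\overline{A^2}\subseteq\overline{Au^*A}\subseteq A$, giving $\overline{Au^*A}=A$. Applying the bounded map $u$ then yields $\overline{uAu^*A}=\overline{uA}=uA$, the last equality because $uA$ is closed, so $X_\alpha=uA=\Phi(A)$ and $\Phi$ is an isomorphism. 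The point requiring care here is that all intermediate products stay inside $A$: each of $uc$, $au^*$, $au^*b$ lies in $A$ by the left and right multiplier properties of $u$ and $u^*$.

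For the ``if'' direction I would start from a correspondence isomorphism $\Phi:A\to X_\alpha$ and recover $u$ by passing to biduals: put $u:=\Phi''(1)\in X_\alpha''\subseteq A''$, where $\Phi''$ is the weak-$*$-continuous extension of the isometric map $\Phi$ and $1\in A''$ is the unit. Preservation of inner products extends weakly to $\Phi''(b)^*\Phi''(c)=b^*c$, whence $u^*u=1$, so $u$ is an isometry. Using that on both correspondences the right action is ordinary multiplication, weak continuity of right linearity gives $\Phi(c)=\Phi''(1\cdot c)=\Phi''(1)c=uc$ for all $c\in A$; in particular $uc\in X_\alpha\subseteq A$, so $u\in M_\ell(A)$ and $X_\alpha=\Phi(A)=uA$. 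Left equivariance now reads $uac=\alpha(a)uc$ for all $a,c\in A$, hence $ua=\alpha(a)u$; taking adjoints and replacing $a$ by $a^*$ yields $au^*=u^*\alpha(a)$. Combining these, $uau^*=u(u^*\alpha(a))=uu^*\alpha(a)$, and since $\alpha(a)\in X_\alpha=uA$ (as $\alpha(a)=\lim_\lambda\alpha(a)e_\lambda\in\overline{\alpha(A)A}$ for an approximate unit $e_\lambda$) while $uu^*$ fixes $uA$ from the left because $uu^*(uc)=uc$, we conclude $uu^*\alpha(a)=\alpha(a)$, i.e.\ $\alpha(a)=uau^*$. Thus $\alpha$ is inner, completing the proof.
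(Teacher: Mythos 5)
Your proof is correct and takes essentially the same route as the paper: in the forward direction you use the same map $c\mapsto uc$ and the same $u^*u=1$ trick to identify $uA$ with $\overline{\alpha(A)A}$, and in the converse you, like the paper, realize the given isomorphism as left multiplication by an isometric left multiplier $u\in M_\ell(A)\subseteq A''$ and deduce $\alpha(a)=uau^*$ from the intertwining relations together with the fact that $uu^*$ acts as the identity on $X_\alpha=uA$. The only cosmetic difference is that you construct $u$ by hand as $\Phi''(1)$ in the bidual, where the paper instead cites the standard identification of left centralizers with left multipliers and runs the corresponding computation with an approximate unit.
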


\begin{proof}
Suppose that $\alpha:A\to A$ is inner and let $u\in M_\ell(A)\subseteq A''$ be an isometry such that $\alpha(a)=uau^*$. We may treat $u$ as the left centralizer $u:A\to A$ of $A$, cf. \cite[3.12.3]{pedersen}. Then $u$ is an isometry of the trivial Hilbert $A$-module $A$. Moreover, $\alpha(A)A=uAu^*A\subseteq uA$ and for every $a,b\in A$ we have $uab=ua u^*u b=\alpha(a)ub\in \alpha(A)A$. Hence $u:A\to X_\alpha=\alpha(A)A$ is a unitary, and since $\alpha(a)ub= uau^*ub=uab$, for $a,b\in A$, we see that $u$ is an isomorphism of $C^*$-correspondences.

Conversely assume that we have an isomorphism of $C^*$-correspondences $u:A\to X_\alpha$. Since $X_\alpha=\alpha(A)A\subseteq A$, we see that $u$ is a left centralizer of $A$ and hence we may treat it is a left multiplier. For an approximate unit in $\{\mu_\lambda\}$ in $A$ and every $a\in A$ we have $a=\lim_{\lambda} \langle \mu_\lambda, a \rangle = \lim_{\lambda} \langle u \mu_\lambda, ua \rangle= \lim_{\lambda} \mu_\lambda u^*u a= u^*ua$. This implies that $u^*u=1\in M_\ell(A)\subseteq A''$. Furthermore, $\alpha(a)=\lim_{\lambda}\alpha(a)\alpha(\mu_\lambda)=\lim_{\lambda}u^*u\alpha(a)\alpha(\mu_\lambda)= \lim_{\lambda} u(a u^*\alpha(\mu_\lambda))=ua u^*$. 
\end{proof}

\begin{thm}\label{thm:uniqueness for crossed products2}
Let $A$ be a $C^*$-algebra and $\alpha:A\to A$ an endomorphism. Let $J$ be an ideal in $(\ker \alpha)^\bot$ such that $A\alpha(J)A$ is liminal. The following conditions are equivalent:
\begin{enumerate}
\item every injective strictly $J$-covariant representation $(\pi,U)$ of $\alpha$ integrates to an
isomorphism $ C^*(\pi,U)\cong C^*( A,\alpha;J)$;
\item $\alpha$ is topologically free on $J$;
\item for every non-zero ideal $I$ in $J$ such that $\alpha(I)\subseteq I$, every power $(\alpha|_{I})^{n}$, $n>0$, of $\alpha|_{I}$ is outer. 
\end{enumerate}
\end{thm}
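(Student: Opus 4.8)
The plan is to reduce the whole statement to the uniqueness theorem for the associated $C^*$-correspondence $X_\alpha$ and then read off the three conditions from Theorem~\ref{thm:acyclicity}. First I would record the computation $X_\alpha(J)=\langle X_\alpha,\phi_{X_\alpha}(J)X_\alpha\rangle_A=A\alpha(J)A$, which identifies the standing hypothesis ``$A\alpha(J)A$ is liminal'' with ``$X_\alpha(J)$ is liminal''. Since $J\subseteq(\ker\alpha)^\bot=J_{X_\alpha}$, Theorem~\ref{thm:acyclicity} applies to the pair $(X_\alpha,J)$ and supplies the chain: $(X_\alpha,J)$ has the uniqueness property $\Leftrightarrow$ $X_\alpha$ is $J$-acyclic $\Leftrightarrow$ $X_\alpha$ is topologically free on $J$. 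It then remains to match (1), (2), (3) with these three conditions respectively.

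For (1), I would invoke the identification $C^*(A,\alpha;J)\cong\OO(J,X_\alpha)$ together with the bijective correspondence between $J$-covariant representations of $\alpha$ and of $X_\alpha$ from \cite[Proposition A.8]{kwa-endo}. Under this bijection an injective strictly $J$-covariant representation $(\pi,U)$ of $\alpha$ corresponds exactly to an injective representation of $X_\alpha$ satisfying \eqref{eq:strict_J__covariance}, so (1) is literally the uniqueness property for $(X_\alpha,J)$. For (2), I would prove that the graph $E_\alpha^K$ and the dual graph $E_{KX_\alpha K}$ are equivalent (they share the vertex set $\widehat{K}$ with $K=J+A\alpha(J)A$), so that, since topological freeness on $\widehat{J}$ depends only on the equivalence class of a graph, the two notions of topological freeness agree. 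The key computation is that for $\rho\in\Irr(A)$ the representation $X_\alpha\dashind(\rho)\circ\phi_{X_\alpha}$ is unitarily equivalent to the nondegenerate part of $\rho\circ\alpha$, via $\alpha(a)b\otimes_\rho h\mapsto\rho(\alpha(a)b)h$. Hence for $[\pi],[\rho]\in\widehat{K}$ the dual-graph multiplicity $m^{X_\alpha}_{[\pi],[\rho]}$ equals the multiplicity of $\pi$ in $\rho\circ\alpha$; restricting to the ideal $K$ and using that $[\pi]\mapsto[\pi|_K]$ preserves multiplicities of irreducibles not vanishing on $K$, this equals the multiplicity of $\pi$ in $\widetilde{\rho}\circ\alpha|_K$, which is the defining multiplicity of $E_\alpha^K$. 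The restriction identity $E_{KX_\alpha K}={_{\widehat{K}}}(E_{X_\alpha})_{\widehat{K}}$ (the Remark following Definition~\ref{def:dual_graph}) then closes the matching.

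For (3) I would translate $J$-acyclicity of $X_\alpha$ into a statement about powers of $\alpha$. For an ideal $I\subseteq J$, approximate units in $A$ show $\alpha(I)\subseteq\overline{A\alpha(I)A}=X_\alpha(I)$, so $\alpha(I)\subseteq I$ is equivalent to positive $X_\alpha$-invariance of $I$; thus the ideals ranged over in condition (3) are exactly the positively invariant ones of Definition~\ref{def:cycling}. For such $I$, Cohen factorization gives $IX_\alpha=\overline{(IX_\alpha)I}=\overline{\alpha(I)I}$, which is precisely $X_{\alpha|_I}$ viewed as a correspondence over $I$; combined with the standard isomorphism $(X_{\alpha|_I})^{\otimes n}\cong X_{(\alpha|_I)^n}$ this yields $(IX_\alpha)^{\otimes n}\cong X_{(\alpha|_I)^n}$. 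By Lemma~\ref{lem:outerness} the latter is isomorphic to the trivial correspondence $I$ if and only if $(\alpha|_I)^n$ is inner. Hence $X_\alpha$ is cyclic with respect to $I$ if and only if some power of $\alpha|_I$ is inner, and negating over all non-zero $I\subseteq J$ turns $J$-acyclicity into condition (3).

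The hardest parts will be the two ``dictionary'' identifications. In (2) the bookkeeping with the unique extension $\widetilde{\rho}$, the passage between $\rho\circ\alpha$ and its nondegenerate part, and the multiplicity-preserving restriction to the ideal $K$ must be arranged so that no subrepresentation is gained or lost; this is the step I expect to be the main obstacle. In (3) the decisive points are to verify cleanly that $IX_\alpha\cong X_{\alpha|_I}$ and that tensor powers of endomorphism correspondences compute the correspondence of the power, after which Lemma~\ref{lem:outerness} does the rest. Once all three matchings are in place, the theorem follows by combining them with the equivalences of Theorem~\ref{thm:acyclicity}.
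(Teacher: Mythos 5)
Your proposal is correct and is essentially the paper's own proof: the paper likewise reduces everything to Theorem~\ref{thm:acyclicity} applied to $(X_\alpha,J)$, shows that $E_\alpha^K$ is equivalent to the graph dual to ${_K}(X_\alpha)_K$ (arguing as in the proof of Theorem~\ref{thm:uniqueness for crossed products1}), and matches condition (3) with $J$-acyclicity via Lemma~\ref{lem:outerness}. Your write-up simply fills in details the paper leaves implicit, such as the explicit unitary $\alpha(a)b\otimes_\rho h\mapsto\rho(\alpha(a)b)h$ identifying induced representations with the nondegenerate part of $\rho\circ\alpha$, and the identifications $IX_\alpha\cong X_{\alpha|_I}$ and $(X_{\alpha|_I})^{\otimes n}\cong X_{(\alpha|_I)^n}$.
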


\begin{proof} Note that $K:=J+A\alpha(J)A$ is liminal ($J$ is liminal because it is isomorphic to the subalgebra $\alpha(J)$ of $A\alpha(J)A$).
Arguing as in the proof of Theorem~\ref{thm:uniqueness for crossed products1} one sees that the graph $E_\alpha^K=(\widehat{K}, E^1_\alpha, r,s)$ is equivalent to the graph dual to the $C^*$-correspondence ${_K}(X_\alpha)_{K}$. Hence $\alpha$ is topologically free on $J$ if and only if $X_\alpha$ is topologically free on $J$. Using that and Lemma~\ref{lem:outerness}, we get the assertion by Theorem~\ref{thm:acyclicity}.
\end{proof}

\begin{ex}[Endomorphisms of $K(H)$]
Let $A=K(H)$ be a $C^*$-algebra of compact operators on a Hilbert space $H$. It is well known that every (non-zero) endomorphism $\alpha:A\to A$ is of the form $\alpha(a)=\sum_{i=1}^n U_i a U_i^*$ where $\{U_i\}_{i=1}^n\subseteq B(H)$ are isometries with mutually orthogonal ranges and $n$ is a natural number called \emph{Power's index of $\alpha$}. It follows that the graph dual to $\alpha$ consists of one vertex and $n$-edges. In particular,
$$
\alpha\text{ is topologically free on } A \,\,\Longleftrightarrow\,\, \text{ Power's index of $\alpha$ is greater than }1.
$$
Thus, by Theorem~\ref{thm:uniqueness for crossed products2}, the crossed product $C^*(A,\alpha)$ has the uniqueness property if and only if $n>1$. Note that the dual multivalued map $\widehat{\alpha}$ is always an identity on the singleton $\SA$. Thus, it is never weakly aperiodic. If $n=1$, then the monomorphism $\alpha$ has a hereditary range and in this case we could infer the lack of uniqueness property for $C^*(A,\alpha)$ from the second part of Theorem~\ref{thm:uniqueness for crossed products1}.
\end{ex}

Let us generalize the previous example.

\begin{ex}[Endomorphisms of $C_0(V,K(H))$]\label{endomorphisms of C(V)-algebras}
Let $V$ be a locally compact Hausdorff space and let $A=C_0(V,K(H))$. Suppose that $(A,\alpha)$ is a $C_0(V)$-dynamical system in the sense of \cite[Definition 3.7]{kwa-endo}, i.e. $\alpha:A\to A$ is an endomorphism of the form
$$
\al(a)(x)=\begin{cases}
\al_x(a(\varphi(x)), & x \in \Delta,
\\
0 & x \notin \Delta,
\end{cases}
$$
where $\varphi:\Delta \to V$ is a continuous proper map defined on an open set $\Delta\subseteq V$, and $\{\al_{x}\}_{x\in \Delta}$ is a continuous bundle of (non-zero) endomorphisms of $K(H)$. The graph $E_\alpha=(\SA, E^1_\alpha, r,s)$ dual to $\alpha$ can be described as follows: there is an edge from $x\in V$ to $y \in V$ if and only if $x\in \Delta$ and $y=\varphi(x)$. Moreover, the multiplicity $m_{\varphi(x),x}^\alpha$, $x\in \Delta$, is equal to Power's index of $\alpha_x$. In particular, the (multivalued) map $\widehat{\alpha}$ coincides with $\varphi$. Let $J=C_0(U)$ be an ideal in $(\ker\alpha)^\bot=C_0(\Delta)$ and put $Y=X\setminus U$. In view of Example~\ref{partial map example} we have 
$$
\widehat{\alpha}\text{ is weakly topologically aperiodic on $\widehat{J}$} \,\,\Longleftrightarrow\,\, \varphi\text{ is topologically free outside $Y$}.
$$
Thus, in this case Theorem~\ref{thm:uniqueness for crossed products1} recovers \cite[Theorem 4.11]{kwa-endo}. In order to improve it let $Z$ be the set of points $x\in \Delta$ for which Power's index of $\alpha_x$ is $1$. One readily sees that 
$$
\alpha\text{ is topologically free on } J \,\,\Longleftrightarrow\,\, \varphi\text{ is topologically free outside $Y\cup Z$}.
$$
Thus, Theorem~\ref{thm:uniqueness for crossed products2} leads to an improvement of \cite[Theorem 4.11]{kwa-endo}. If $\dim(H)<\infty$ then $Z=\emptyset$ and the uniqueness property for $C^*(A,\alpha;J)$ is equivalent to topological freeness of $\varphi$ outside $Y$. This generalizes \cite[Proposition 4.8]{kwa-rever} proved in the commutative case. 
\end{ex}

\end{document}